\numberwithin{equation}{section}
\newtheorem{theorem}{Theorem}[section]
\newtheorem{lemma}[theorem]{Lemma}
\newtheorem{corollary}[theorem]{Corollary}
\newtheorem{proposition}[theorem]{Proposition}
\theoremstyle{definition}
\newtheorem{definition}[theorem]{Definition}
\theoremstyle{remark}
\newtheorem{remark}[theorem]{Remark}
\newcommand{\Div}{\operatorname{div}}
\newcommand{\Curl}{\operatorname{curl}}
\newcommand{\Grad}{\nabla}
\newcommand{\vr}{\varrho}
\newcommand{\vc}[1]{{\bm{#1}}}
\newcommand{\weak}{\rightharpoonup}
\newcommand{\weakstar}{\overset{\star}\rightharpoonup}
\newcommand{\dist}{\operatorname{dist}}
\newcommand{\norm}[1]{\left\Vert#1\right\Vert}
\newcommand{\Set}[1]{\left\{#1\right\}}
\newcommand{\jump}[1]{\left\llbracket #1\right\rrbracket}
\newcommand{\vrho}{\varrho}
\newcommand{\inb}{\in_{\text{b}}}
\newcommand{\Dt}{\Delta t}
\newcommand{\R}{\mathbb{R}}
\newcommand{\weakto}{\rightharpoonup}
\newcommand{\Om}{\ensuremath{\Omega}}
\newcommand{\pOm}{\ensuremath{\partial\Omega}}
\newcommand{\Dom}{(0,T)\times\Omega}
\newcommand{\eps}{\epsilon}
\newcommand{\up}{\operatorname{Up}}
\newcommand{\ov}{\widehat}
\begin{document}

\title[A convergent method for  compressible Navier-Stokes]
{A convergent FEM-DG method for \\ the compressible Navier-Stokes equations} 

\author[Karper]{Trygve K. Karper}\thanks{This research is supported by the Research Council of Norway (proj. 205738)}

\address[Karper]{\newline
Center for Scientific Computation and Mathematical Modeling, University of Maryland, College Park, MD 20742}
\email[]{\href{karper@gmail.com}{karper@gmail.com}}
\urladdr{\href{http://folk.uio.no/~trygvekk}{folk.uio.no/\~{}trygvekk}}

\date{\today}

\subjclass[2010]{Primary: 35Q30,74S05; Secondary: 65M12}

\keywords{ideal gas, isentropic, Compressible Navier-Stokes, Convergence, Compactness, Finite elements, Discontinuous Galerkin, Finite Volume, upwind,
Weak convergence, Compensated compactness}

\maketitle
\begin{abstract}
This paper presents a new numerical method
for the compressible Navier-Stokes equations governing the flow of an ideal isentropic gas. To approximate the continuity equation,
the method utilizes a discontinuous Galerkin discretization on piecewise constants and a basic upwind flux. 
For the momentum equation, the method is a new combined discontinuous Galerkin 
and finite element method approximating the velocity in the Crouzeix-Raviart finite element space.
While the diffusion operator is discretized in a standard fashion,
the convection and time-derivative are discretized using discontinuous Galerkin on the element average  
 velocity and a Lax-Friedrich type flux. Our main result is convergence of the method to a global weak solution
as discretization parameters go to zero. The convergence analysis
constitutes a numerical version of the existence analysis of Lions and Feireisl.

\end{abstract}
\setcounter{tocdepth}{1}
\tableofcontents

\section{Introduction}
In this paper, we will construct 
a convergent numerical method for  
 the compressible Navier-Stokes equations:
\begin{align}
	\vr_t + \Div(\vr u) &= 0 \qquad\qquad\qquad\quad \text{ in $(0,T)\times \Om$},\label{eq:cont}\\
	(\vr u)_t + \Div(\vr u\otimes u)  &= - \Grad p(\vr) + \Delta u,\quad \text{ in $(0,T)\times \Om$}, \label{eq:moment}
\end{align}
where $\Om \subset \R^3$ is an open, bounded, domain with Lipschitz boundary $\pOm$,
and $T>0$ is a given finite final time.
The unknowns are the fluid density $\vr$
and vector velocity $u$, while the operator $\otimes$ denotes the tensor 
product of two vectors ($(v \otimes v)_{i,j} = v_iv_j$). The mechanism driving the flow is the pressure $p(\vr)$
which is assumed to be that of an ideal isentropic gas (constant entropy);
\begin{equation*}
	p(\vr) = a\vr^\gamma.
\end{equation*}
To close the system of equations \eqref{eq:cont} - \eqref{eq:moment},
 standard no-slip boundary condition is assumed
\begin{equation}\label{eq:boundary}
	u|_{\pOm} = 0,
\end{equation}
together with initial data
\begin{equation}\label{eq:init}
	\vr(0,\cdot) = \vr_0 \in L^{\gamma+1}(\Om),\qquad  \vr u(0, \cdot)  = m_0 \in L^\frac{3}{2}(\Om).
\end{equation}

From the point of view of applications, 
the system \eqref{eq:cont} - \eqref{eq:moment} is the simplest 
form of the equations governing the flow of an ideal viscous and isentropic gas \cite{Batchelor, Serrin}.
In the available engineering literature, the reader can find a variaty of flows 
for which the assumption of constant entropy (reversibility) is a reasonable approximation. 
However, while  viscosity in \eqref{eq:moment} is modeled by the Laplace operator ($\Delta u$),
 practical applications will make use of a more appropriate
stress tensor, the simplest being that of a Newtonian fluid
with constant coefficients
\begin{equation*}
	\Div \mathbb{S} = \mu\Div \left(\Grad u + \Grad u^T\right) + \lambda \Grad \Div u.
\end{equation*}
Note that our diffusion is a special case of $\mathbb{S}$. Indeed, $\mathbb{S}$ reduces to the Laplace operator when $\mu = 1$ and $\lambda = \frac{1}{2}$.
The analysis in this paper can be generalized to  all relevant cases 
of constant non-vanishing $\mu$ and $\lambda$. However, this comes at the cost of 
more terms as the chosen finite element space (Crouzeix-Raviart) does not satisfy Korn's inequality \cite{Mardal},
but with a negligible gain in terms of ideas and novelty. The more physically 
relevant case where $\mu$ and $\lambda$ are functions of the density of the form $\vr^\frac{\gamma-1}{2}$
is not included in the available existence theory (cf.~\cite{cc, Jenssen})
and there does not seem to be any obvious way of including it here either.

In terms of physical applicability of the forthcoming results,
a more pressing issue is the equation of state for the pressure.
For purely technical reasons, we will be forced to require that
 $\gamma$ is greater than the spatial dimension;
$$
\gamma > 3.
$$ 
This is a severe restriction on $\gamma$ with no physical applications (to the author's knowledge).
Kinetic theory predicts a value of $\gamma$ depending on the specific 
gas in question: $\sim \frac{5}{3}$ for monoatomic gas (e.g~helium), $\sim \frac{7}{5}$ for diatomic 
gas (e.g~ air), and creeping towards one for more complex molecules and/or higher temperatures. 
It should be noted that global existence is only known for $\gamma > \frac{3}{2}$
and hence only in the case of monoatomic gas (see \cite{Feireisl} and the references therein). 
In this paper, the restriction on $\gamma$ 
seems absolutely necessary to prove convergence of the method, but is not required for stability.
In fact, the strict condition on $\gamma$ is related to the numerical diffusion introduced by
the method and is in perfect analogy to the problems encountered in a vanishing diffusion limit for
 the continuity equation \eqref{eq:cont} 
(see for instance \cite{Feireisl}). 
That being said, it might be calming that the upcoming analysis  can be easily extended to 
pressures of the form
\begin{equation*}
	p(\vr) = a\vr^{\gamma_1} + \kappa \vr^{\gamma_2}, \quad \gamma_1 \in (1,2), \quad \gamma_2 > 3,
\end{equation*}
where $\kappa > 0$ may be chosen arbitrarily small. Hence, for all practical purposes, 
the numerical method presented herein is convergent for cases 
very close to the physically relevant ones.

The numerical literature contains a vast body of  methods for compressible fluid flows. 
Many of them are widely applied and constitute an indispensable 
tool in a variety of disciplines such as engineering, meteorology, or astrophysics. 
While the complexity and range of applicability of numerical methods for compressible 
viscous fluids is increasing, the convergence properties of any of these methods 
have thus far remained unknown.
In fact, prior to this paper, there have not been reported any 
convergence results for numerical approximations of compressible viscous flow 
in more than one spatial dimension.
In one dimension, the available results are due to David Hoff and collaborators \cite{Chen:2000yq, Zarnowski:1991uq, Zhao:1994fk, Zhao:1997qy} (see also \cite{Jenssen}) 
and concerns the equations posed in Lagrangian variables and 
relies on the {\small \sc 1D} existence theory which have yet to be extended to multiple dimensions.
That being said, in the recent years there have appeared a number of convergent methods 
 for simplified versions of \eqref{eq:cont}-\eqref{eq:moment}. 
In \cite{Eymard, Gallouet2, Gallouet1}, convergent finite volume and 
finite element methods for the stationary compressible Stokes equations 
was developed. Simultaneously, in \cite{Karlsen1, Karlsen2, Karlsen3}, K. Karlsen and the author developed 
convergent finite element methods for the non-stationary compressible Stokes 
equations. In the upcoming analysis, we will utilize ideas from all of these recent papers.

Let us now discuss the choice of numerical method.
For the approximation of the continuity equation, we will use the standard 
upwind finite volume method. However, we will formulate this method 
as a discontinuous Galerkin method where the density $\vr$ 
is approximated by piecewise constants. For the velocity we will use 
the Crouzeix-Raviart finite element space. The method and some its properties 
was originally inspired by \cite{Walkington:2005jl, Liu:2007fe} (cf. \cite{Karlsen1}), but variants 
can be found several places in the literature  (see for instance \cite{Gallouet:2006lr}).
For the momentum equation, we are leaning on the knowledge gained through 
\cite{Eymard, Gallouet2, Gallouet1, Karlsen1, Karlsen2, Karlsen3}, 
from which it is evident that proving convergence for any numerical method is a hard task.
In particular, to develop a numerical analogy of the continuous existence 
theory, it seems necessary that the numerical discretization of the diffusion 
and pressure respects orthogonal Hodge decompositions (see Section \ref{sec:hodge}
for an explanation) in some form.

The distinctively new and completely original feature of the upcoming method   is the 
discretization of the material derivative in the momentum equation. 
Our discretization will be of the form
\begin{equation*}
	\begin{split}
		&(\vr u)_t + \Div (\vr  u\otimes u) \\
		&\qquad \approx \int_\Om \partial_t (\vr_h \widehat u_h) v_h~dx - \sum_{ \Gamma} \int_{\Gamma} \up(\vr u\otimes \ov u)\jump{\ov v_h}_\Gamma ~dS(x),
		\quad \forall v_h,
		\end{split}
\end{equation*}	
where $\widehat{v_h}$ denotes the $L^2$ projection of $v_h$ onto piecewise constants.
The operator $\up(\vr u \otimes u)$ is the specific upwind flux which we shall use (see Section \ref{sec:num} for precision). 
Hence, the material derivative is discretized using discontinuous Galerkin \emph{with the same polynomial order} 
as the continuity discretization. 
This stands in contrast to the diffusion and pressure terms which are solved with the Crouzeix-Raviart element space 
and hence in particular with piecewise constant divergence $\Div v_h$ matching the density (and pressure) space.
In the language of finite differences, this means that the pressure and diffusion is solved using staggered grid 
while the material derivative are solved at the same nodal values as the density.
The great benefit of this approach is that it solves the long-lasting problem 
of incorporating both the hyperbolic nature of the material derivative and the  
nature of the diffusion-pressure coupling. In particular, our main result yields stability and convergence 
for all Mach and Reynolds numbers.

By proving convergence of a numerical method
we will in effect also give an alternative
existence proof for global weak solutions to the equations \eqref{eq:cont}-\eqref{eq:moment}. 
While the first global existence result for  incompressible 
Navier-Stokes  was achieved by Leray more than 
80 years ago, a similar result for compressible flow
was obtained by P-L. Lions in the mid 90s. In the celebrated book \cite{Lions:1998ga}, 
Lions obtains 
weak solutions of \eqref{eq:cont}-\eqref{eq:moment} 
as the a.e weak limit of a sequence of approximate solutions.
Consequently, from the point of view of analysis, we will in 
this paper perform similar analysis to that of \cite{Lions:1998ga},
but for a numerical method. That is, we will develop a numerical 
analogy of the continuous existence theory.
The key difficulty when passing to a limit is 
presented by the non-linear pressure $p(\vr)$. Specifically, 
 compactness of $\vr$ is needed, while the available 
estimates provides no form of continuity of $\vr$.
In all current proofs of existence, the necessary compactness is 
derived using renormalized solutions together with a remarkable 
sequential continuity result for the quantity $p(\vr) - \Div u$. 
The result provides a.e convergence of the density, but gives 
no insight into continuity properties of $\vr$.
In the original proof, Lions needed that
 $\gamma > \frac{9}{5}$. The existence theory was further developed by Feireisl 
and the requirement lowered to $\gamma > \frac{3}{2}$. This seems to be optimal
in the absence of pointwise bounds on the density. However, it is interesting
that this still does not include the case of air in three dimensions.
The reader is strongly encouraged to consult \cite{Straskraba} for a thorough and well-written introduction to 
the mathematical theory of solutions to \eqref{eq:cont}-\eqref{eq:moment}.

\subsection*{Organization of the paper:}
In the next section, we will go through some preliminary 
knowledge where we attempt to make clear the solution concept, basic compactness 
results, and the finite element theory used in the analysis.
Then, in Section \ref{sec:num}, we present the numerical method, 
give some basic properties of the method, and state the main convergence result (Theorem \ref{thm:main}).
We will then move on to Section \ref{sec:stability} in which we establish 
stability of the method and draw conclusions in terms of uniform integrability.
The following section, Section \ref{sec:operators}, is a
fundamental section  where we will estimate the weak
error (in a weak norm) of the transport operators in the discretization. 
The material contained in this section will be used ubiquitously in the convergence analysis. 
In Section \ref{sec:higher} we prove higher integrability 
of the density. That is, the density enjoys more integrability than the energy 
estimate provides. Then, in Section \ref{sec:weak}, we will pass to the limit 
in the method and conclude that the limit is almost a global weak solution.
It will then only remain to prove strong convergence of the density approximation.
In Section \ref{sec:flux}, we establish the fundamental ingredient in 
the proof of density compactness; - weak sequential continuity of the effective 
viscous flux. Finally, in Section \ref{sec:strong} we prove strong convergence 
of the density and conclude the main result (Theorem \ref{thm:main}).
The paper ends with an appendix section containing the proof of well-posedness for the numerical method.

\section{Preliminary material}
The purpose of this section is to state
some  results that  will be needed in the upcoming 
convergence analysis.

\subsection{Weak and renormalized solutions}

\begin{definition}\label{def:weak}
We say that a pair $(\vr, u)$ is a weak solution 
of \eqref{eq:cont} - \eqref{eq:moment} with initial condition \eqref{eq:init}
and boundary condition \eqref{eq:boundary} provided:
\begin{enumerate}
	\item The continuity equation \eqref{eq:cont} holds in the sense of distributions
	\begin{equation*}
		\int_0^T\int_\Om \vr(\phi_t + u \cdot \Grad \phi)~dxdt = -\int_\Om \vr_0 \phi(0,\cdot)~dx,
	\end{equation*}	
	for all $\phi \in C^\infty_0([0,T) \times \overline{\Om})$.
	\item The momentum equation \eqref{eq:moment} holds in the sense
	\begin{equation*}
		\begin{split}
					&\int_0^T\int_\Om -\vr u v_t - \vr u \otimes u : \Grad v - p(\vr)\Div v + \Grad u \Grad v~dxdt\\
					&\qquad= \int_\Om m_0 v(0,\cdot)~dx,
		\end{split}
	\end{equation*}
	for all $v \in [C_0^\infty([0,T) \times \Om)]^3$.
	\item The energy inequality holds
	\begin{equation}\label{eq:cont-energy}
		\begin{split}
				&\sup_{t \in (0,T)}\int_\Om \frac{\vr u^2}{2} + \frac{1}{\gamma-1}p(\vr)~dx \\
				&\qquad + \int_0^T\int_\Om |\Grad u|^2~dxdt 
				 \leq \int_\Om \frac{m_0^2}{2\vr_0 } + \frac{1}{\gamma-1}p(\vr_0)~dx.
		\end{split}
	\end{equation}
\end{enumerate}
\end{definition}

\begin{definition}[Renormalized solutions]
\label{renormlizeddef}
Given $u \in L^2(0,T; W^{1,2}_{0}(\Omega))$, we 
say that $\varrho\in  L^\infty(0,T;L^\gamma(\Omega))$ 
is a renormalized solution of \eqref{eq:cont} if 
$$
B(\vrho)_t + \Div \left(B(\vrho)u\right) + b(\vrho)\Div u = 0,
$$
in $\mathcal{D}'\left([0,T)\times \overline{\Om}\right)$
for any $B\in C[0,\infty)\cap C^1(0,\infty)$ with $B(0)=0$ and $b(\vrho) := B'(\vrho)\vrho - B(\vrho)$.
\end{definition}

We shall need the following well-known lemma \cite{Lions:1998ga} stating that 
square-integrable weak solutions $\vrho$ are also renormalized solutions.

\begin{lemma}
\label{lemma:feireisl}
Suppose $(\varrho, u)$ is a weak solution according to Definition \ref{def:weak}.
If $\varrho \in L^2((0,T)\times \Omega))$, then $\vrho$ is a renormalized solution 
according to Definition \ref{renormlizeddef}.
\end{lemma}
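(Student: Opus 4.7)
The approach I would take is the classical DiPerna--Lions regularization argument. First, I would extend $u$ by zero outside $\Om$ (consistent with the no-slip boundary condition) and introduce a family of radial mollifiers $\eta_\eps$ on $\R^3$. Setting $\vr_\eps(t,\cdot) := \vr(t,\cdot) \ast \eta_\eps$ and convolving the distributional continuity equation with $\eta_\eps$ yields, away from the boundary,
\begin{equation*}
	\partial_t \vr_\eps + \Div(\vr_\eps u) = r_\eps, \qquad r_\eps := \Div(\vr_\eps u) - \bigl(\Div(\vr u)\bigr)\ast \eta_\eps.
\end{equation*}
Because $\vr_\eps$ is smooth in space, the pointwise chain rule applies and produces, for any admissible $B$ with derivative $B'$,
\begin{equation*}
	\partial_t B(\vr_\eps) + \Div(B(\vr_\eps) u) + b(\vr_\eps)\Div u = B'(\vr_\eps) r_\eps.
\end{equation*}
The plan is to pass to the limit $\eps \to 0$ in this identity tested against $\phi \in C^\infty_c([0,T)\times \overline{\Om})$.

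The main obstacle is showing $r_\eps \to 0$ in $L^1_{\rm loc}((0,T)\times\Om)$, which is the classical DiPerna--Lions commutator estimate. Using the finite-difference representation $u(t,y)-u(t,x) = \int_0^1 \Grad u(t,x+\tau(y-x))(y-x)\,d\tau$ together with the cancellation structure of $\Grad \eta_\eps$, the commutator $r_\eps$ is pointwise dominated by a convolution of $\vr\cdot|\Grad u|$ against a scaled kernel. This bounds $r_\eps$ in $L^1((0,T)\times K)$ for any compact $K\subset\Om$ uniformly in $\eps$, and this is precisely where the hypothesis $\vr \in L^2((0,T)\times\Om)$ enters: combined with $\Grad u \in L^2$, it yields the needed uniform $L^1$ estimate via Cauchy--Schwarz. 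Together with a.e.\ convergence of $r_\eps$ to zero at Lebesgue points (using strong continuity of translations in $L^2$), dominated convergence delivers $r_\eps \to 0$ in $L^1_{\rm loc}$.

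With this in hand, the limit passage is routine. For $B$ with $B'$ bounded, strong convergence $\vr_\eps \to \vr$ in $L^p$ for suitable $p$ handles the three left-hand terms, while $B'(\vr_\eps) r_\eps \to 0$ directly in $L^1$. A general admissible $B$ is treated by first establishing the identity for a truncated family $B_k$ with bounded derivatives and then letting $k\to\infty$, once more invoking $\vr \in L^2$ to control the error outside the truncation zone. A minor technical point is to handle the spatial convolution near $\pOm$; this is standard and can be managed either by a suitable extension of $\vr$ near the boundary or by first establishing the renormalized identity against compactly supported test functions in $\Om$ and then using the a~priori estimates to extend to test functions in $C^\infty_c([0,T)\times\overline{\Om})$.
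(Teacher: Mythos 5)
The paper does not actually prove this lemma: it is stated as ``well-known'' with a citation to Lions's book, so there is no internal argument to compare against. Your DiPerna--Lions regularization argument is precisely the proof that underlies the cited result, and it is correct as outlined. The key points are all in place: $u\in L^2(0,T;W^{1,2}_0(\Om))$ permits extension by zero to $\R^3$ without losing the $W^{1,2}$ regularity; the exponent pairing $\vr\in L^2$, $\Grad u\in L^2$ is exactly the conjugate-Hölder condition $\tfrac1p+\tfrac1q\le 1$ that the commutator lemma requires to force $r_\eps\to 0$ in $L^1_{\rm loc}$; and the passage from interior test functions to $\phi\in C_0^\infty([0,T)\times\overline\Om)$ is handled via the zero extensions and the fact that item (1) of Definition~\ref{def:weak} already holds for test functions not vanishing on $\pOm$.

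One minor point you should make explicit when you fill in the details: the truncation $B\rightsquigarrow B_k$ has to cut off at \emph{both} ends of $(0,\infty)$, not just at infinity. The paper's admissible class only requires $B\in C[0,\infty)\cap C^1(0,\infty)$ with $B(0)=0$, so $B'$ may be singular at the origin (e.g.\ $B(z)=z\log z$), and after mollification $\vr_\eps$ can vanish on sets of positive measure even if $\vr\ge 0$. The standard remedy is to replace $B$ by a function that is affine on $[0,1/k]$ and on $[k,\infty)$, apply your argument to $B_k$ (whose derivative is bounded and continuous on $[0,\infty)$), and then send $k\to\infty$ using $\vr\in L^2$ and dominated convergence. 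With that caveat the proposal is complete and matches the reference the paper defers to.
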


\subsection{Compactness results}
In the analysis, we shall need a number of 
compactness results. 

\begin{lemma}\label{lem:timecompactness}
Let $X$ be a separable Banach space, and suppose $v_n\colon [0,T]\to
X^\star$, $n=1,2,\dots$, is a sequence for which 
$\norm{v_n}_{L^\infty([0,T];X^\star)}\le C$, for some constant $C$ independent of $n$. 
Suppose the sequence $[0,T]\ni t\mapsto \langle v_n(t),\Phi \rangle_{X^\star,X}$, $n=1,2,\dots$, 
is equi-continuous for every $\Phi$ that belongs to a dense subset of $X$.  
Then $v_n$ belongs to $C\left([0,T];X^\star_{\mathrm{weak}}\right)$ for every
$n$, and there exists a function $v\in 
C\left([0,T];X^\star_{\mathrm{weak}}\right)$ such that along a 
subsequence as $n\to \infty$ there holds $v_n\to v$ in 
$C\left([0,T];X^\star_{\mathrm{weak}}\right)$.
\end{lemma}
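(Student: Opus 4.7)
The plan is to prove this by a classical Arzelà--Ascoli argument carried out in the dual topology, combined with a diagonal extraction over a countable dense subset of $X$ and a density/uniform-bound argument to pass from the dense subset to all of $X$. Fix a countable dense subset $\{\Phi_k\}_{k\in\N}$ of $X$ (which exists by separability), chosen inside the dense set on which equicontinuity is assumed.

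First I would argue that each $v_n$ already belongs to $C([0,T];X^\star_{\mathrm{weak}})$. For $\Phi$ in the dense equicontinuous subset, continuity of $t\mapsto\langle v_n(t),\Phi\rangle$ is immediate. For an arbitrary $\psi\in X$ and $\Phi_k$ close to $\psi$ one writes
\begin{equation*}
\bigl|\langle v_n(t)-v_n(s),\psi\rangle\bigr|\le \bigl|\langle v_n(t)-v_n(s),\Phi_k\rangle\bigr|+2C\,\|\psi-\Phi_k\|_X ,
\end{equation*}
and the uniform bound $\|v_n(t)\|_{X^\star}\le C$ combined with equicontinuity in the first term yields continuity in $t$ of $\langle v_n(t),\psi\rangle$ for every $\psi\in X$.

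Next I would perform the diagonal extraction. For each fixed $k$, the scalar-valued functions $t\mapsto\langle v_n(t),\Phi_k\rangle$ form a sequence that is uniformly bounded (by $C\|\Phi_k\|_X$) and equicontinuous on $[0,T]$; classical Arzelà--Ascoli therefore yields uniform convergence along a subsequence to some continuous $\alpha_k\in C([0,T])$. A Cantor diagonal procedure produces a single subsequence (not relabeled) along which $\langle v_n(t),\Phi_k\rangle\to\alpha_k(t)$ uniformly in $t\in[0,T]$ for every $k$. To define the limit object $v(t)$, I use that for each fixed $t$ the bound $\|v_n(t)\|_{X^\star}\le C$ and density of $\{\Phi_k\}$ let me extend the convergence to all $\psi\in X$ by the same two-term splitting as above; the limit $\alpha(t,\psi):=\lim_n\langle v_n(t),\psi\rangle$ is linear in $\psi$ and bounded by $C\|\psi\|_X$, so it defines an element $v(t)\in X^\star$ with $\|v(t)\|_{X^\star}\le C$.

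Finally, I would verify the two remaining claims. That $v\in C([0,T];X^\star_{\mathrm{weak}})$ follows because $t\mapsto\alpha_k(t)=\langle v(t),\Phi_k\rangle$ is a uniform limit of continuous functions, and the density/uniform-bound argument extends weak-$\star$ continuity to every $\psi\in X$. For convergence $v_n\to v$ in $C([0,T];X^\star_{\mathrm{weak}})$ one needs $\sup_{t\in[0,T]}|\langle v_n(t)-v(t),\psi\rangle|\to 0$ for each $\psi\in X$; for $\psi=\Phi_k$ this is exactly the uniform convergence produced by the diagonal step, and for general $\psi$ the estimate
\begin{equation*}
\sup_{t}\bigl|\langle v_n(t)-v(t),\psi\rangle\bigr|\le \sup_{t}\bigl|\langle v_n(t)-v(t),\Phi_k\rangle\bigr|+2C\,\|\psi-\Phi_k\|_X
\end{equation*}
closes the argument by first sending $n\to\infty$ and then $k\to\infty$. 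The only point that demands any care is the interchange of limits hidden in this last step, namely that the diagonal extraction yields \emph{uniform} convergence in $t$ on the dense set and not merely pointwise; this is what the equicontinuity hypothesis is designed to provide, and it is the genuine obstacle of the proof.
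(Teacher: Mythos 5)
The paper states Lemma \ref{lem:timecompactness} in its preliminary section without proof; it is a standard weak--Arzel\`a--Ascoli result and the author simply cites it as background. Your proof is correct and is the standard argument: pick a countable dense set $\{\Phi_k\}$ inside the dense set where equicontinuity is assumed (possible because a subset of a separable metric space is separable), apply classical Arzel\`a--Ascoli to the scalar families $t\mapsto\langle v_n(t),\Phi_k\rangle$, diagonalize, recover the limit functional $v(t)\in X^\star$ from the uniform bound plus density, and then upgrade convergence from the dense set to all of $X$ by the two-term $2C\|\psi-\Phi_k\|_X$ estimate. All steps hold, including the point you flag as the genuine obstacle: Arzel\`a--Ascoli is what converts the equicontinuity hypothesis into uniform-in-$t$ convergence on the dense set, and the uniform $L^\infty([0,T];X^\star)$ bound is what lets you pass from the dense set to arbitrary $\psi\in X$. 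Since there is no proof in the paper to compare against, the only remark to make is that your argument is complete and self-contained.
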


To obtain strong compactness of the density 
approximation, we will utilize the following lemma.
\begin{lemma}\label{lem:prelim} 
Let $O$ be a bounded open subset of $\R^M$, $M\ge 1$.  
Suppose $g\colon \R\to (-\infty,\infty]$ is a lower semicontinuous 
convex function and $\Set{v_n}_{n\ge 1}$ is a sequence of 
functions on $O$ for which $v_n\weakto v$ in $L^1(O)$, $g(v_n)\in L^1(O)$ for each 
$n$, $g(v_n)\weakto \overline{g(v)}$ in $L^1(O)$. Then 
$g(v)\le \overline{g(v)}$ a.e.~on $O$, $g(v)\in L^1(O)$, and
$\int_O g(v)\ dy \le \liminf_{n\to\infty} \int_O g(v_n) \ dy$. 
If, in addition, $g$ is strictly convex on an open interval
$(a,b)\subset \R$ and $g(v)=\overline{g(v)}$ a.e.~on $O$, 
then, passing to a subsequence if necessary, 
$v_n(y)\to v(y)$ for a.e.~$y\in \Set{y\in O\mid v(y)\in (a,b)}$.
\end{lemma}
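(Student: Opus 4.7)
The plan is to establish Part 1 via Mazur's lemma together with lower semicontinuity of $g$, and then derive Part 2 from a Jensen-defect argument that exploits strict convexity on $(a,b)$.

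For Part 1, since $(v_n,g(v_n))\weakto(v,\overline{g(v)})$ weakly in $L^1(O)\times L^1(O)$, Mazur's lemma furnishes a sequence of convex combinations
\[
w_n=\sum_{k=n}^{N_n}\lambda_k^n v_k,\qquad \hat w_n=\sum_{k=n}^{N_n}\lambda_k^n g(v_k),\qquad \lambda_k^n\ge0,\ \sum_k\lambda_k^n=1,
\]
converging strongly in $L^1(O)$ to $v$ and $\overline{g(v)}$, respectively. Passing to a subsequence yields $w_n\to v$ and $\hat w_n\to\overline{g(v)}$ a.e.\ on $O$. By convexity $g(w_n)\le \hat w_n$ pointwise, and lower semicontinuity at $v(y)$ gives $g(v)\le\overline{g(v)}$ a.e. The integrability $g(v)\in L^1(O)$ follows because the convex l.s.c.\ function $g$ admits an affine minorant, so $g(v)^-$ is dominated by $|v|\in L^1(O)$, while $g(v)^+\le \overline{g(v)}^+ + g(v)^- \in L^1(O)$. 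Testing the weak convergence $g(v_n)\weakto\overline{g(v)}$ against $1\in L^\infty(O)$ (recall $O$ is bounded) then yields $\int_O g(v)\,dy\le\int_O\overline{g(v)}\,dy=\lim_n\int_O g(v_n)\,dy$.

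For Part 2, assume $g(v)=\overline{g(v)}$ a.e. The central device is the Jensen defect
\[
\epsilon_n(y):=g(v_n(y))+g(v(y))-2\,g\!\left(\tfrac{v_n(y)+v(y)}{2}\right)\ge 0,
\]
and the key claim is $\int_O\epsilon_n\,dy\to0$. The terms $\int g(v_n)$ and $\int g(v)$ both converge to $\int g(v)$ by weak convergence. For the midpoint term $h_n:=g\bigl(\tfrac{v_n+v}{2}\bigr)$, convexity supplies the envelope $h_n\le\tfrac12(g(v_n)+g(v))$, which is equi-integrable, and an affine minorant controls the negative part, so $(h_n)$ is weakly relatively compact in $L^1(O)$. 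Applying Part 1 to the sequence $\tfrac{v_n+v}{2}\weakto v$ shows any weak limit $\psi$ of $h_n$ obeys $g(v)\le\psi$, hence $\int g(v)\le\liminf\int h_n$. Combined with the envelope bound $\limsup\int h_n\le\int g(v)$, we conclude $\int h_n\to\int g(v)$, and substitution into $\int\epsilon_n$ gives $\int_O\epsilon_n\,dy\to 0$. Since $\epsilon_n\ge 0$, a subsequence (not relabeled) satisfies $\epsilon_n(y)\to 0$ for a.e.\ $y\in O$.

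It remains to show that $\epsilon_n(y)\to0$ forces $v_n(y)\to v(y)$ for a.e.\ $y$ in the set $E:=\{v\in(a,b)\}$. Assume along a further subsequence $v_{n_k}(y)\to\ell\in[-\infty,+\infty]$ with $\ell\ne v(y)=:r$. The elementary observation that a convex function equal to the midpoint of a chord must be affine on that chord, combined with strict convexity of $g$ on the open interval $(a,b)\ni r$, shows that for any finite $\ell$ in the effective domain of $g$ the value $g(\ell)+g(r)-2g\bigl(\tfrac{\ell+r}{2}\bigr)$ is strictly positive, contradicting $\epsilon_{n_k}(y)\to0$. Lower semicontinuity eliminates finite $\ell$ outside the domain (there $g(\ell)=+\infty$ forces $\epsilon_{n_k}\to+\infty$), while an affine minorant together with strict convexity at $r$ shows the defect tends to a strictly positive limit when $\ell=\pm\infty$. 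Thus $\ell=r$ along every subsubsequence, so $v_n(y)\to v(y)$ a.e.\ on $E$. The delicate point, which I expect to be the main obstacle, is precisely this last step: ruling out escape of $v_n(y)$ to the boundary of the effective domain of $g$ or to $\pm\infty$, which requires using the structural properties (recession behavior, affine minorants) of convex l.s.c.\ functions rather than strict convexity alone.
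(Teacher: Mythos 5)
The paper does not actually prove Lemma~\ref{lem:prelim}; it records it without proof as a standard compactness fact (it is essentially Theorem~2.11/Corollary~3.33 in Feireisl's book and appears verbatim in \cite{Karlsen1, Karlsen2}). So there is no in-text argument to compare against, and your proposal must be judged on its own merits.

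On those merits, your argument is essentially correct and follows the standard route for this lemma. Part~1 via joint Mazur on $(v_n,g(v_n))$ in $L^1\times L^1$, followed by Jensen and lower semicontinuity along the a.e.\ convergent convex combinations, is exactly right, as is the affine-minorant argument for $g(v)^-\in L^1$ and the use of the constant test function on the bounded set $O$. Part~2 via the nonnegative Jensen defect $\epsilon_n=g(v_n)+g(v)-2g\bigl(\tfrac{v_n+v}{2}\bigr)$ and the squeeze $\int h_n\to\int g(v)$ (upper bound from the convexity envelope, lower bound from Dunford--Pettis compactness of $h_n$ plus Part~1 applied to $(v_n+v)/2\weakto v$) is also the standard route and is carried out correctly. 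You rightly flag the one delicate point: converting $\epsilon_{n_k}(y)\to 0$ into $v_{n_k}(y)\to v(y)$ on $\{v\in(a,b)\}$ requires showing the defect $D(\ell,r):=g(\ell)+g(r)-2g\bigl(\tfrac{\ell+r}{2}\bigr)$ is bounded away from zero uniformly as $\ell$ ranges outside a neighborhood of $r$, including $\ell$ at the boundary of $\operatorname{dom}(g)$ and $\ell=\pm\infty$. Two facts close this: (i) $D(\ell,r)=0$ with $\ell\ne r$ forces $g$ affine on $[\min(\ell,r),\max(\ell,r)]\supset$ a neighborhood of $r$ inside $(a,b)$, contradicting strict convexity; and (ii) for fixed $r$, $t\mapsto D(t,r)$ is nondecreasing in $|t-r|$ (compare slopes of $g$ at $t$ and at the midpoint), so the defect cannot collapse as $v_{n_k}(y)$ escapes to $\pm\infty$, while closedness of $g$ gives $g(v_{n_k})\to g(\ell)$ (possibly $+\infty$) along segments in $\operatorname{dom}(g)$. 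With those two observations written out, your proof is complete.
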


In what follows, we will often obtain a priori estimates for a sequence $\Set{v_n}_{n\ge 1}$ 
that we write as ``$v_n\inb X$'' for some functional space $X$. What this really means is that 
we have a bound on $\norm{v_n}_X$ that is independent of $n$.
The following lemma is taken from \cite{Karlsen1}.
\begin{lemma}\label{lemma:aubinlions}
Given $T>0$ and a small number $h>0$, write 
$(0,T] = \cup_{k=1}^M(t_{k-1}, t_{k}]$ with $t_{k} = hk$ and $Mh = T$. 
Let $\{f_{h}\}_{h>0}^\infty$, $\{g_{h}\}_{h>0}^\infty $ be two sequences such that:
\begin{enumerate}
\item{} the mappings $t \mapsto g_{h}(t,x)$ and $t\mapsto f_{h}(t,x)$ 
are constant on each interval $(t_{k-1}, t_{k}],\ k=1, \ldots, M$.

\item{}$\{f_{h}\}_{h>0}$ and $\{g_{h}\}_{h>0}$ converge weakly to 
$f$ and $g$ in $L^{p_{1}}(0,T;L^{q_{1}}(\Om))$ and 
$L^{p_{2}}(0,T;L^{q_{2}}(\Om))$, respectively, 
where $1 < p_{1},q_{1}< \infty$ and
$$
\frac{1}{p_{1}} + \frac{1}{p_{2}} = \frac{1}{q_{1}} + \frac{1}{q_{2}} = 1.
$$

\item{} the discrete time derivative satisfies
$$
\frac{g_{h}(t,x) - g_{h}(t-h,x)}{h} \in_{b} L^1(0,T;W^{-1,1}(\Om)).
$$

\item{}$\|f_{h}(t,x) - f_{h}(t,x-\xi)\|_{L^{p_{2}}(0,T;L^{q_{2}}(\Om))} 
\rightarrow 0$ as $|\xi|\rightarrow 0$, uniformly in $h$.
\end{enumerate}

Then, $g_{h}f_{h} \weak gf$ in the sense of distributions on $\Dom$.
\end{lemma}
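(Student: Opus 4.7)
The plan is to decouple the two weak convergences using a spatial regularization of $f_h$ (exploiting hypothesis (4)) combined with the temporal equicontinuity of $g_h$ afforded by hypothesis (3). Fix a test function $\phi\in C_c^\infty(\Dom)$ and let $\omega_\epsilon$ be a standard spatial mollifier supported in $B(0,\epsilon)$; set $f_h^\epsilon := f_h\ast_x \omega_\epsilon$. Minkowski's inequality applied to (4) gives $\|f_h-f_h^\epsilon\|\to 0$ uniformly in $h$ as $\epsilon\to 0$, in the norm needed to pair against $g_h$ via H\"older. Decomposing
\begin{equation*}
\int g_hf_h\phi - \int gf\phi = \int g_h(f_h-f_h^\epsilon)\phi + \Big(\int g_hf_h^\epsilon\phi - \int gf^\epsilon\phi\Big) + \int g(f^\epsilon-f)\phi,
\end{equation*}
the uniform $L^{p_2}(L^{q_2})$-bound on $g_h$ and $g$ then makes the first and third terms arbitrarily small, uniformly in $h$, by choosing $\epsilon$ small.

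The heart of the argument lies in the middle bracket, for each fixed $\epsilon>0$. After mollification, the test function $f_h^\epsilon\phi$ is smooth in $x$, bounded in $L^{p_1}(W_0^{1,q_1}(\Om))$ uniformly in $h$, and converges weakly to $f^\epsilon\phi$ in that space. On the $g_h$ side, items (1) and (3) combine to yield temporal equicontinuity: telescoping the piecewise-constant-in-time structure,
\begin{equation*}
g_h(t_k,\cdot)-g_h(t_j,\cdot) = h\sum_{m=j+1}^k \partial_t^h g_h(t_m,\cdot),
\end{equation*}
so that, pairing against any $\psi\in W_0^{1,\infty}(\Om)$,
\begin{equation*}
\left|\int_\Om(g_h(t)-g_h(s))\psi\,dx\right| \le \|\psi\|_{W^{1,\infty}}\int_s^t\|\partial_t^h g_h(\tau)\|_{W^{-1,1}}\,d\tau,
\end{equation*}
whose right-hand side vanishes with $|t-s|$ uniformly in $h$ by (3). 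Combined with the $L^{p_2}(L^{q_2})$-bound and Lemma \ref{lem:timecompactness} (after a Chebyshev-type truncation to reduce to the $L^\infty$-in-time setting), one extracts a subsequence along which $g_h\to g$ in $C([0,T];X^\star_{\mathrm{weak}})$ for a suitable separable $X\subset W_0^{1,\infty}(\Om)$. Pairing this strongly-in-time convergent sequence against the weakly convergent, spatially smoothed $f_h^\epsilon\phi$ yields $\int g_hf_h^\epsilon\phi\to \int gf^\epsilon\phi$; sending $\epsilon\to 0$ then delivers the claim.

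The main obstacle is the derivation of strong temporal compactness of $g_h$ from a merely $L^{p_2}$-in-time bound (in place of the $L^\infty$-in-time hypothesis of Lemma \ref{lem:timecompactness}). The truncation indicated above sidesteps this, using that the equicontinuity modulus derived above is independent of $h$; the remainder of the argument is a routine exercise in H\"older estimates and absolute continuity of the Lebesgue integral.
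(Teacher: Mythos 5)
Your strategy---mollify $f_h$ in space so that hypothesis (4) makes the first and third pieces small uniformly in $h$, then reduce the middle bracket to a strong-against-weak pairing---is the right one and matches the Lions--Feireisl proof of the continuous counterpart (the paper defers the proof of this lemma to \cite{Karlsen1}). The gap is in your temporal-compactness step for $g_h$. You assert that the right-hand side of
$$\left|\int_\Om(g_h(t)-g_h(s))\,\psi\,dx\right| \le \|\psi\|_{W^{1,\infty}}\int_s^t\|\partial_t^h g_h(\tau)\|_{W^{-1,1}}\,d\tau$$
``vanishes with $|t-s|$ uniformly in $h$ by (3).'' It does not: (3) is only a uniform $L^1$-in-time bound, which is compatible with concentration. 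For example, $\|\partial_t^h g_h(\cdot)\|_{W^{-1,1}} = h^{-1}\mathbf 1_{(0,h)}$ satisfies (3) with constant $1$, yet $\int_0^h = 1$ independently of $h$. Thus the equicontinuity hypothesis of Lemma \ref{lem:timecompactness} is not verified by (3), and the Chebyshev truncation you invoke to compensate for the missing $L^\infty$-in-time bound does not repair this; it also fails to preserve hypothesis (3) for the truncated piece.

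What (3) \emph{does} give, by the telescoping you set up combined with a Fubini count across the mesh intervals, is the averaged translation estimate
$$\|g_h(\cdot+s)-g_h\|_{L^1(0,T-s;\,W^{-1,1}(\Om))}\le |s|\,\|\partial_t^h g_h\|_{L^1(0,T;W^{-1,1}(\Om))}\le C|s|,$$
uniformly in $h$. Interpolating this against the $L^{p_2}(0,T;L^{q_2}(\Om))$ bound and using the compact embedding $L^{q_2}(\Om)\hookrightarrow W^{-m,q_2}(\Om)$ on the bounded domain, a Fr\'echet--Kolmogorov/Simon compactness argument yields strong relative compactness of $g_h$ in $L^r(0,T;W^{-m,q_2}(\Om))$ for suitable $r<p_2$ and $m\ge 1$. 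This strong Bochner-norm convergence is what should be paired with $f_h^\epsilon\phi\weak f^\epsilon\phi$ weakly in $L^{p_1}(0,T;W^{m,q_1}_0(\Om))$ to close the middle bracket. Note also that $C([0,T];X^\star_{\mathrm{weak}})$ convergence, even were it available, is not directly sufficient here: it controls pairings against fixed $\Phi\in X$, whereas your test function $f_h^\epsilon\phi$ depends on $t$ and on $h$; upgrading it to strong Bochner convergence requires precisely the $L^\infty$-in-time control you do not have.
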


\subsection{Finite element spaces and some basic properties} \label{sec:CRM}

Let $E_h$ denote a shape regular tetrahedral mesh of $\Om$. Let 
$\Gamma_h$ denote 
the set of faces in $E_h$. We will approximate the density 
in the space of piecewise constants on $E_{h}$ and 
denote this space by $Q_{h}(\Om)$. For the approximation of the velocity we 
use the Crouzeix--Raviart element space \cite{Crouzeix:1973qy}:
\begin{equation}\label{def:CR}
	\begin{split}
		V_{h}(\Om) &= \Bigg{\{}v_h \in L^2(\Om) : 
		~v_{h}|_{E} \in \mathbb{P}_{1}^3(E), \ 
		\forall E \in E_{h}, \\
		&\qquad \qquad\qquad\qquad
		\int_{\Gamma}\jump{v_{h}}_\Gamma\ dS(x) = 0, \ 
		\forall \Gamma \in \Gamma_{h}\Bigg\},		
	\end{split}
\end{equation}
where $\jump{\cdot}_\Gamma$ denotes the jump across a face $\Gamma$.
To incorporate the boundary condition, we let the 
degrees of freedom of $V_{h}(\Om)$ vanish at the boundary.
Consequently, the finite element method is nonconforming 
in the sense that the velocity approximation 
space is not a subspace of the corresponding 
continuous space, $W_0^{1,2}(\Om)$.

For the purpose of analysis, we shall also 
need the div-conforming N{\'e}d{\'e}lec finite element 
space of first order and kind \cite{Nedelec:1980ec, Nedelec:1986bv}
\begin{equation}\label{def:nedelec}
	\begin{split}
		\mathcal{N}_h(\Om) = \Bigg\{v_h, ~\Div v_h \in L^2(\Om): 
		~ &v_h|_E \in \mathbb{P}_0^3 \oplus \mathbb{P}_0^1 \vc{x}, ~\forall E \in E_h, \\
		&\int_\Gamma \jump{v\cdot \nu}\ dS(x)= 0, 
		~\forall \Gamma \in \Gamma_h \Bigg\}.
	\end{split}
\end{equation}
We introduce the canonical interpolation operators
\begin{equation}\label{def:interpolant}
	\begin{split}
		\Pi_h^V: W^{1,2}_0(\Om) & \mapsto V_h(\Om), \\
		\Pi_h^Q: L^2(\Om) &\mapsto Q_h(\Om), \\
		\Pi_h^\mathcal{N}: W^{1,2}(\Om) &\mapsto \mathcal{N}_h(\Om),
	\end{split}
\end{equation}
defined by
\begin{equation}\label{eq:opdef}
	\begin{split}
		\int_\Gamma \Pi_h^V v_h \ dS(x) & 
		= \int_\Gamma v_h \ dS(x), 
		\quad\forall \Gamma \in \Gamma_h, \\
		\int_\Gamma \Pi_h^\mathcal{N}v_h \cdot \nu~ dS(x) &= \int_\Gamma v_h \cdot \nu~ dS(x), \quad\forall \Gamma \in \Gamma_h,\\
		\int_E \Pi_h^Q \phi~dx &= \int_E \phi \ dx, 
		\quad \forall E \in E_h.
	\end{split}
\end{equation}
Then, by virtue of \eqref{eq:opdef} and Stokes' theorem,
\begin{equation}\label{eq:commute}
	\Div \Pi_{h}^\mathcal{N} v = \Div_{h} \Pi_{h}^Vv = \Pi_{h}^Q \Div v,
	\qquad  
	\Curl_{h} \Pi_{h}^Vv = \Pi_{h}^Q \Curl v, 
\end{equation}
for all $v \in W_0^{1,2}(\Om)$. Here, $\Curl_h$ and $\Div_h$ denote 
the curl and divergence operators, respectively, taken inside each element.

Let us now state some basic properties of the finite element spaces. 
We start by recalling from \cite{Brezzi:1991lr,Crouzeix:1973qy, Nedelec:1980ec} 
a few interpolation error estimates. 

\begin{lemma}\label{lemma:interpolationerror} 
There exists a constant $C>0$, depending only on the 
shape regularity of $E_h$, such 
that for any $1\leq p\leq\infty$,
\begin{equation*}
	\begin{split}
		&\|\Pi_h^Q \phi-\phi \|_{L^p(E)} 
		\leq Ch\|\Grad \phi\|_{L^p(E)}, \\
		&\|\Pi_{h}^V v-v\|_{L^p(E)} 
		+h\|\Grad (\Pi_{h}^V v-v)\|_{L^p(E)} 
		\leq ch^s\|\Grad^sv\|_{L^p(E)},\quad s = 1,2, \\
		& \|\Pi_h^\mathcal{N}v - v\|_{L^p(E)}
		+ h\|\Div (\Pi_h^\mathcal{N} v - v)\|_{L^p(E)} 
		\leq ch^s\|\Grad^sv\|_{L^p(E)},\quad s = 1,2, \\
	\end{split}
\end{equation*}
for all $\phi \in W^{1,p}(E)$ and $v \in W^{s,p}(E)$.
\end{lemma}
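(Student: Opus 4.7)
The plan is to prove each of the three estimates by the standard Bramble--Hilbert machinery on an affine reference element $\hat E$, combined with a scaling argument. The general template is: (i) verify that the operator is well-defined and bounded on $W^{s,p}(\hat E)$ for the relevant $s$; (ii) verify that the operator reproduces polynomials of some degree $r$; (iii) deduce via Bramble--Hilbert that $\|\hat\Pi \hat\phi - \hat\phi\|_{L^p(\hat E)} \le C|\hat\phi|_{W^{s,p}(\hat E)}$ for $1\le s\le r+1$; (iv) push this inequality forward through the affine map $F_E:\hat E\to E$ (with Jacobian scaling like $h^d$ in volume and $h$ in derivatives) to obtain the claimed $h^s$ power on the physical element $E$. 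Shape regularity of the family $\{E_h\}$ ensures the constants are uniform in $h$.

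For $\Pi_h^Q$, the definition in \eqref{eq:opdef} says $\Pi_h^Q\phi|_E$ is the $L^2$-average of $\phi$ over $E$; it is trivially bounded on $L^p$ and reproduces constants ($r=0$), so only the case $s=1$ is available, giving the first bound. For $\Pi_h^V$, I would first show boundedness on $W^{1,p}(\hat E)$: since the CR degrees of freedom are face averages $\frac{1}{|\Gamma|}\int_\Gamma v\, dS$, the trace inequality $\|v\|_{L^p(\Gamma)}\lesssim \|v\|_{W^{1,p}(\hat E)}$ on the reference element controls each degree of freedom, and since $\dim V_h(\hat E)<\infty$ all norms are equivalent; hence $\|\Pi_h^V v\|_{L^p(\hat E)}\lesssim \|v\|_{W^{1,p}(\hat E)}$. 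Because the face averages determine a linear polynomial uniquely and $\Pi_h^V$ equals the identity on $\mathbb{P}_1^3(\hat E)$, we have $r=1$ and Bramble--Hilbert yields the $s=1,2$ estimates for $\|\Pi_h^V v - v\|_{L^p(\hat E)}$. The gradient part follows the same way after noting that $\nabla(\Pi_h^V v - v)$ also vanishes when $v\in\mathbb{P}_1$, so Bramble--Hilbert applies again (losing one $h$ in the scaling, which is built into the stated inequality). The Nédélec case $\Pi_h^\mathcal{N}$ is entirely analogous: the degrees of freedom are face fluxes $\int_\Gamma v\cdot\nu\, dS$, again controlled by a trace inequality, and $\Pi_h^\mathcal{N}$ reproduces at least constant fields so the Bramble--Hilbert lemma delivers the estimate for $s=1,2$ on both $v-\Pi_h^\mathcal{N}v$ and $\Div(v-\Pi_h^\mathcal{N}v)$, the latter using the commuting diagram property $\Div\Pi_h^\mathcal{N}v = \Pi_h^Q\Div v$ already recorded in \eqref{eq:commute}.

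The main obstacle I foresee is the boundedness of $\Pi_h^V$ and $\Pi_h^\mathcal{N}$ on $W^{1,p}(\hat E)$ when $p$ is close to $1$ or $\infty$: the trace theorem gives $\|v\|_{L^p(\Gamma)}\le C\|v\|_{W^{1,p}(\hat E)}$ for every $1\le p\le\infty$, so strictly speaking this is not an obstacle but does require citing the right trace inequality. A more conceptual point is the $s=2$ estimate, which needs both interpolants to preserve all of $\mathbb{P}_1$ (true by construction of the degrees of freedom) and needs the Bramble--Hilbert lemma in the slightly stronger form that bounds $\|\phi-\Pi\phi\|_{L^p}$ by the seminorm $|\phi|_{W^{2,p}}$; this is standard once polynomial reproduction of order $1$ is established. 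After this, the scaling argument is routine: one tracks $|\hat F_E'|\sim h$, $|\det F_E'|\sim h^3$, and checks that each seminorm on $\hat E$ pulls back with the expected power of $h$, yielding the final inequalities on $E$ with constants depending only on the shape-regularity constant of $E_h$.
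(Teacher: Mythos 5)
The paper does not supply a proof of this lemma; it simply recalls the estimates from the cited references (Brezzi--Fortin, Crouzeix--Raviart, Nédélec). Your Bramble--Hilbert plus affine-scaling template is exactly the standard route by which each of these is proved in those sources, and your treatment of $\Pi_h^Q$ (reproduces constants, hence $s=1$) and of $\Pi_h^V$ (Crouzeix--Raviart reproduces all of $\mathbb{P}_1^3$, so the trace inequality gives stability on $W^{1,p}(\hat E)$ and Bramble--Hilbert gives $s=1,2$ for both the function and its gradient) is correct and matches the classical argument.

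There is, however, a genuine gap in your handling of the $s=2$ case for $\Pi_h^\mathcal{N}$. You write that $\Pi_h^\mathcal{N}$ ``reproduces at least constant fields so the Bramble--Hilbert lemma delivers the estimate for $s=1,2$.'' Reproduction of constants only buys you $s=1$. To obtain the $\mathcal{O}(h^2)$ rate for $\|\Pi_h^\mathcal{N}v - v\|_{L^p(E)}$ from Bramble--Hilbert you would need $\Pi_h^\mathcal{N}$ to reproduce all of $\mathbb{P}_1^3(E)$, but the lowest-order div-conforming space $\mathbb{P}_0^3 \oplus \mathbb{P}_0\,\vc{x}$ defined in \eqref{def:nedelec} has dimension four and does not contain $\mathbb{P}_1^3$ (dimension twelve). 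So the second-order bound for $\|\Pi_h^\mathcal{N} v-v\|_{L^p(E)}$ is not obtainable by your argument, and in fact it is false for this element: the face-flux interpolant is only first-order accurate in $L^p$. What \emph{is} second-order accurate is the divergence part, but only because of the commuting-diagram identity: $\Div(\Pi_h^\mathcal{N} v - v) = \Pi_h^Q(\Div v) - \Div v$, so the first estimate applied to $\phi = \Div v$ gives $h\|\Div(\Pi_h^\mathcal{N} v - v)\|_{L^p(E)} \le Ch^2\|\Grad\Div v\|_{L^p(E)} \le Ch^2\|\Grad^2 v\|_{L^p(E)}$. You do invoke the commuting diagram, but you fold it into a blanket claim that covers the function part too, which is where the slip occurs. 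That said, this imprecision is inherited from the lemma statement itself (which lumps both terms under a single $s=1,2$ clause), and the remainder of the paper only ever invokes the $s=1$ case for $\Pi_h^\mathcal{N}$, so the consequences for the rest of the analysis are nil.
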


By scaling arguments, the trace theorem, and 
the Poincar\'e inequality, we obtain 

\begin{lemma}\label{lemma:toolbox}
For any $E \in E_{h}$ and $\phi \in W^{1,2}(E)$, 
we have
\begin{enumerate}
\item{}trace inequality,
$$
\|\phi\|_{L^2(\Gamma)} \leq 
ch_{E}^{-\frac{1}{2}}\left(\|\phi\|_{L^2(E)}
+h_{E}\|\Grad \phi\|_{\vc{L}^2(E)}\right), 
\quad \forall 
\Gamma \in \Gamma_{h}\cap \partial E.
$$
\item{} Poincar\'e inequality,
$$
\left\|\phi - \frac{1}{|E|}\int_{E}\phi\ dx \right\|_{L^2(E)} 
\leq Ch_{E}\|\Grad \phi\|_{\vc{L}^2(E)}.
$$
\end{enumerate}
In both estimates, $h_E$ is the diameter of the element $E$.
\end{lemma}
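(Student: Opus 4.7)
The plan is to prove both estimates by pulling back to a fixed reference tetrahedron $\hat E$ of unit size, invoking the classical trace and Poincar\'e inequalities there, and pushing forward the bounds while tracking powers of $h_E$ via shape regularity. Concretely, for each $E \in E_h$ I would fix an affine map $F_E(\hat x) = B_E \hat x + b_E$ with $F_E(\hat E) = E$. Shape regularity of $E_h$ ensures that there is a constant, independent of $h$ and $E$, such that $\|B_E\| \le c h_E$, $\|B_E^{-1}\| \le c h_E^{-1}$, and $c^{-1} h_E^3 \le |\det B_E| \le c h_E^3$, with analogous bounds on the surface Jacobian of any face $\Gamma \subset \partial E$ giving $|\det B_E|/|B_E^{-T}\hat\nu| \sim h_E^2$.

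Setting $\hat\phi(\hat x) := \phi(F_E(\hat x))$, the standard chain rule and change of variables then yield the scalings
\begin{equation*}
\|\phi\|_{L^2(E)}^2 \sim h_E^3 \|\hat\phi\|_{L^2(\hat E)}^2, \quad
\|\nabla\phi\|_{L^2(E)}^2 \sim h_E \|\hat\nabla\hat\phi\|_{L^2(\hat E)}^2, \quad
\|\phi\|_{L^2(\Gamma)}^2 \sim h_E^2 \|\hat\phi\|_{L^2(\hat\Gamma)}^2.
\end{equation*}
For part (1), the continuity of the trace operator $W^{1,2}(\hat E) \hookrightarrow L^2(\hat\Gamma)$ on the fixed reference element gives $\|\hat\phi\|_{L^2(\hat\Gamma)} \le \hat c (\|\hat\phi\|_{L^2(\hat E)} + \|\hat\nabla\hat\phi\|_{L^2(\hat E)})$. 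Substituting the scaling identities above and multiplying through by the appropriate power of $h_E$ yields exactly $\|\phi\|_{L^2(\Gamma)} \le c h_E^{-1/2}(\|\phi\|_{L^2(E)} + h_E\|\nabla\phi\|_{L^2(E)})$.

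For part (2), I would use that the affine pullback preserves averages, $\frac{1}{|E|}\int_E \phi\,dx = \frac{1}{|\hat E|}\int_{\hat E}\hat\phi\,d\hat x =: \hat\phi_{\hat E}$, since $|\det B_E|$ is constant on $E$. The classical Poincar\'e--Wirtinger inequality on $\hat E$ gives $\|\hat\phi - \hat\phi_{\hat E}\|_{L^2(\hat E)} \le \hat C \|\hat\nabla\hat\phi\|_{L^2(\hat E)}$, and the $L^2(E)$ and $H^1$ scalings above translate this directly into $\|\phi - \phi_E\|_{L^2(E)} \le C h_E \|\nabla\phi\|_{L^2(E)}$, as required.

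There is no serious obstacle: both statements are textbook consequences of reference-element scaling under shape regularity. The only point deserving mild care is bookkeeping the surface versus volume Jacobians to confirm the $h_E^{-1/2}$ factor in the trace bound, and noting that the use of the \emph{affine} (rather than isoparametric) map is what makes the average-preserving property in step (2) immediate; if one wanted to allow curved elements, one would instead invoke a local Bramble--Hilbert argument, but for straight tetrahedra no such refinement is needed.
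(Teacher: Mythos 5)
Your proof is correct and is exactly the standard scaling argument the paper alludes to (the paper gives no proof, only the remark ``By scaling arguments, the trace theorem, and the Poincar\'e inequality, we obtain''). The Jacobian bookkeeping (volume $\sim h_E^3$, gradient $\sim h_E^{-1}$, surface $\sim h_E^2$) and the observation that the affine pullback preserves averages are precisely what is needed, and they yield both inequalities as stated.
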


Scaling arguments and the equivalence of finite dimensional norms yields
the classical inverse estimate (cf.~\cite{Brenner}):
\begin{lemma}\label{lemma:inverse}
There exists a positive constant $C$, depending only on the shape 
regularity of $E_h$, such that for $1\leq q,p \leq \infty$ 
and $r= 0,1$,
\begin{equation*}
	\norm{\phi_h}_{W^{r,p}(E)} 
	\leq C h^{-r + \min\{0, \frac{3}{p}-\frac{3}{q}\}}
	\norm{\phi_h}_{L^q(E)}, 
\end{equation*}
for any $E \in E_h$ and all polynomial 
functions $\phi_h \in \mathbb{P}_k(E)$, $k=0,1,\ldots$.
\end{lemma}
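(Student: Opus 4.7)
The plan is to reduce to the reference element and use equivalence of norms on a finite-dimensional space. Fix $E \in E_h$ and let $F_E : \hat{E} \to E$ be the affine diffeomorphism from a fixed reference tetrahedron $\hat{E}$, so that $\phi_h = \hat{\phi}_h \circ F_E^{-1}$ with $\hat{\phi}_h \in \mathbb{P}_k(\hat{E})$. By the shape regularity of $E_h$, the Jacobian $J_{F_E}$ satisfies $\|J_{F_E}\| \sim h_E$, $\|J_{F_E}^{-1}\| \sim h_E^{-1}$, and $|\det J_{F_E}| \sim h_E^3$, where the implied constants depend only on the shape regularity.

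Next I would record the standard scaling identities: a change of variables gives
\begin{equation*}
\|\phi_h\|_{L^p(E)} \sim h_E^{3/p} \|\hat{\phi}_h\|_{L^p(\hat{E})}, \qquad \|\Grad \phi_h\|_{L^p(E)} \sim h_E^{-1+3/p} \|\hat{\Grad}\hat{\phi}_h\|_{L^p(\hat{E})},
\end{equation*}
with the obvious modification if $p = \infty$. On $\hat{E}$ the polynomial space $\mathbb{P}_k(\hat{E})$ is finite-dimensional, so all norms are equivalent; in particular there is a constant $C = C(k,p,q)$ with
\begin{equation*}
\|\hat{\phi}_h\|_{L^p(\hat{E})} + \|\hat{\Grad}\hat{\phi}_h\|_{L^p(\hat{E})} \leq C\,\|\hat{\phi}_h\|_{L^q(\hat{E})}.
\end{equation*}

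Combining these with the scaling estimates and $\|\hat{\phi}_h\|_{L^q(\hat{E})} \sim h_E^{-3/q}\|\phi_h\|_{L^q(E)}$ yields
\begin{equation*}
\|\phi_h\|_{W^{r,p}(E)} \leq C\, h_E^{-r + 3/p - 3/q}\,\|\phi_h\|_{L^q(E)}, \qquad r = 0,1.
\end{equation*}
If $p > q$ this already gives the claimed exponent $-r + \min\{0, 3/p - 3/q\}$. If $p \leq q$, then $3/p - 3/q \leq 0$ and, since $h_E$ is bounded above by $\operatorname{diam}(\Omega)$, the factor $h_E^{3/p - 3/q}$ is bounded; absorbing it into the constant produces the bound $h_E^{-r}$, which matches $-r + \min\{0, 3/p - 3/q\} = -r$. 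Finally, shape regularity lets us replace $h_E$ by $h$ up to another harmless constant.

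There is no genuine obstacle here — the result is entirely a matter of bookkeeping of scaling exponents together with the finite-dimensional norm equivalence. The only thing that deserves slight care is making sure the two regimes $p \leq q$ and $p > q$ are handled consistently so that the final exponent is uniformly the stated minimum, and that the constant depends only on $k$, $p$, $q$, and the shape regularity of $E_h$.
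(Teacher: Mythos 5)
Your proof is correct and follows exactly the route the paper indicates: pull back to a reference tetrahedron via an affine map controlled by shape regularity, invoke equivalence of norms on the finite-dimensional polynomial space $\mathbb{P}_k(\hat E)$, and track the Jacobian scalings $h_E^{3/p}$, $h_E^{-1+3/p}$, $h_E^{-3/q}$ to assemble the exponent. The paper does not spell this out (it cites Brenner--Scott and states it is a consequence of "scaling arguments and the equivalence of finite dimensional norms"), so your write-up is simply the standard argument made explicit, including the correct observation that for $p\le q$ the scaling exponent $3/p-3/q\ge 0$ is absorbed into the constant to yield the stated $\min$.
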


Since the Crouzeix-Raviart element space is not a subspace 
of $W^{1,2}$, it is not a priori clear that functions in this space 
are compact in $L^p$, $p<6$. However, from the Sobolev inequality
on each element and an interpolation argument 
we obtain the needed result.

\begin{lemma}\label{lem:translation}
For $u_h \in V_h(\Om)$, let $p$ satisfy $2 \leq p < 6$ and determine $a$ such that
$$
\frac{1}{p} = \frac{a}{2} + \frac{(1-a)}{6}.
$$
The following space translation estimate holds
\begin{equation*}
	\|u_h(\cdot) - u_h(\cdot -\xi)\|_{L^p(\Om\setminus \Set{x: \dist(x, \partial \Om)}< |\xi|)} \leq C\left(h^2 + |\xi|^2\right)^\frac{a}{2} \|\Grad u_h\|_{L^2(\Om)},
\end{equation*}
where $C$ is independent of $h$ and $\xi$.
\end{lemma}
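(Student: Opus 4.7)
The plan is to first establish the estimate in the endpoint case $p=2$ (which corresponds to $a=1$) and then interpolate with an $L^6$ Sobolev bound for $u_h$. The main obstruction is that $V_h(\Omega) \not\subset W^{1,2}_0(\Omega)$, so we cannot directly apply the classical $W^{1,2}$ translation inequality $\|v(\cdot)-v(\cdot-\xi)\|_{L^2} \leq |\xi|\|\Grad v\|_{L^2}$ to $u_h$.

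First I would introduce a conforming companion $\widetilde{u}_h \in W^{1,2}_0(\Omega)$ obtained from $u_h$ by an Oswald/enriching procedure (averaging degrees of freedom at vertices of $E_h$ and setting boundary values to zero). Using Lemma \ref{lemma:interpolationerror} together with standard estimates for such enriching operators on nonconforming spaces, one obtains
\begin{equation*}
\|u_h - \widetilde{u}_h\|_{L^2(\Om)} \leq Ch\|\Grad u_h\|_{L^2(\Om)}, \qquad \|\Grad \widetilde{u}_h\|_{L^2(\Om)} \leq C\|\Grad u_h\|_{L^2(\Om)},
\end{equation*}
where $\Grad u_h$ denotes the broken gradient. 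Applying the classical translation estimate to $\widetilde{u}_h$ and a triangle inequality on the interior set $\Om_\xi := \Om \setminus \{x: \dist(x,\partial\Om) < |\xi|\}$ then gives
\begin{equation*}
\|u_h(\cdot) - u_h(\cdot - \xi)\|_{L^2(\Om_\xi)} \leq 2\|u_h - \widetilde{u}_h\|_{L^2(\Om)} + |\xi| \|\Grad \widetilde{u}_h\|_{L^2(\Om)} \leq C(h+|\xi|)\|\Grad u_h\|_{L^2(\Om)}.
\end{equation*}

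Next I would establish an $L^6$ bound by combining the conforming approximation $\widetilde{u}_h$ with the continuous Sobolev embedding $W^{1,2}_0(\Om) \hookrightarrow L^6(\Om)$, and using the inverse estimate from Lemma \ref{lemma:inverse} together with $\|u_h - \widetilde{u}_h\|_{L^2} \leq Ch\|\Grad u_h\|_{L^2}$ to control $\|u_h - \widetilde{u}_h\|_{L^6}$. This yields the discrete Sobolev inequality
\begin{equation*}
\|u_h\|_{L^6(\Om)} \leq C\|\Grad u_h\|_{L^2(\Om)},
\end{equation*}
and hence trivially $\|u_h(\cdot) - u_h(\cdot - \xi)\|_{L^6(\Om_\xi)} \leq C\|\Grad u_h\|_{L^2(\Om)}$.

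Finally, the stated estimate follows by log-convexity of $L^p$ norms: for $\frac{1}{p} = \frac{a}{2} + \frac{1-a}{6}$,
\begin{equation*}
\|u_h(\cdot) - u_h(\cdot-\xi)\|_{L^p(\Om_\xi)} \leq \|u_h(\cdot) - u_h(\cdot-\xi)\|_{L^2(\Om_\xi)}^a \|u_h(\cdot) - u_h(\cdot-\xi)\|_{L^6(\Om_\xi)}^{1-a} \leq C(h+|\xi|)^a\|\Grad u_h\|_{L^2(\Om)},
\end{equation*}
and the elementary bound $(h+|\xi|)^a \leq 2^{a/2}(h^2+|\xi|^2)^{a/2}$ delivers the result. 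The main technical hurdle is the construction and analysis of the conforming enriching operator $\widetilde{u}_h$; given the well-developed theory for CR elements, however, this is standard and only the appropriate reference or a short direct verification is needed.
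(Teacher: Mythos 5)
Your proof is correct, and the overall skeleton (an $L^2$ translation estimate interpolated against an $L^6$ Sobolev bound) is exactly the paper's strategy. The difference is in how the two endpoint estimates are obtained. The paper simply cites Stummel's 1980 compactness result for nonconforming spaces to get $\|u_h(\cdot)-u_h(\cdot-\xi)\|_{L^2(\Om)}\leq C(h^2+|\xi|^2)^{1/2}\|\Grad_h u_h\|_{L^2(\Om)}$ and invokes ``the standard Sobolev inequality'' for the $L^6$ bound, then interpolates — a three-line proof that leans entirely on external references. You instead re-derive both endpoints from scratch through a conforming enriching operator: the Oswald average $\widetilde u_h\in W^{1,2}_0(\Om)$ with $\|u_h-\widetilde u_h\|_{L^2}\lesssim h\|\Grad_h u_h\|_{L^2}$ and $\|\Grad\widetilde u_h\|_{L^2}\lesssim\|\Grad_h u_h\|_{L^2}$, combined with the classical translation inequality, the continuous Sobolev embedding, and the inverse estimate. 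Your derivation is more self-contained and makes explicit where the factor $h$ comes from (the nonconformity costs $O(h)$ in $L^2$, which is what Stummel's result encodes), at the cost of invoking the enriching-operator theory which the paper does not otherwise use. One minor caveat worth saying aloud: when you pass from the elementwise inverse estimate $\|\phi_h\|_{L^6(E)}\lesssim h^{-1}\|\phi_h\|_{L^2(E)}$ to the global bound, use $\sum_E\|\cdot\|_{L^2(E)}^6\leq\bigl(\sum_E\|\cdot\|_{L^2(E)}^2\bigr)^3$; and note your temporary notation $\widetilde u_h$ collides with the paper's standing notation $\widetilde v=\Pi_h^{\mathcal N}v$, so in situ you would want a different symbol for the Oswald companion.
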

\begin{proof}
From the work of Stummel \cite{Stummel:1980fk}, we have that
\begin{equation}\label{stummel1}
	\|u_h(\cdot) - u_h(\cdot -\xi)\|_{L^2(\Om)} \leq C\left(h^2 + |\xi|^2\right)^\frac{1}{2}\|\Grad_h u_h\|_{L^2(\Om)}.
\end{equation}
The standard Sobolev inequality gives
\begin{equation}\label{stummel2}
	\|u_h(\cdot) - u_h(\cdot -\xi)\|_{L^6(\Om)} \leq C\|\Grad_h u_h\|_{L^2(\Om)}.
\end{equation}
Hence, the proof follows from interpolation between \eqref{stummel1} and \eqref{stummel2}.

\end{proof}

Finally, we recall the following well-known property 
of the Crouzeix-Raviart element space. 

\begin{lemma}\label{lem:amazing}
For any $u_h \in V_h(\Om)$ and $v \in W^{1,2}_0(\Om)$,
\begin{equation}\label{eq:magic1}
	\int_\Om \Grad_h u_h \Grad_h \left(\Pi_h^V v - v\right)~dx = 0.
\end{equation}
\end{lemma}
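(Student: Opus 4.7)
My proof plan is to reduce the identity \eqref{eq:magic1} to a sum of face integrals that vanish individually thanks to the defining property of the Crouzeix--Raviart interpolant. First I would split the broken integral into a sum over elements,
\begin{equation*}
\int_\Om \Grad_h u_h \, \Grad_h (\Pi_h^V v - v) \, dx = \sum_{E \in E_h} \int_E \Grad u_h \, \Grad (\Pi_h^V v - v) \, dx,
\end{equation*}
and integrate by parts componentwise on each element. Because $u_h|_E \in \mathbb{P}_1^3(E)$, the Laplacian of each component of $u_h$ vanishes on $E$, so the bulk term drops out and only the boundary integral over $\partial E$ survives.

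The next step is to exploit that $\Grad u_h|_E$ is a constant matrix: on every face $\Gamma \subset \partial E$ the factor $\Grad u_h|_E \cdot \nu_E$ can be pulled outside the surface integral, leaving precisely $\int_\Gamma (\Pi_h^V v - v)|_E \, dS(x)$. This quantity is zero by the interpolation identity \eqref{eq:opdef}, which sets the mean of $\Pi_h^V v$ over each face equal to the mean of $v$. For a boundary face $\Gamma \subset \pOm$ the same conclusion holds: the CR degree of freedom of $\Pi_h^V v$ on $\Gamma$ vanishes by the boundary treatment in \eqref{def:CR}, and $\int_\Gamma v \, dS(x) = 0$ since $v \in W_0^{1,2}(\Om)$ has vanishing trace.

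I do not anticipate a genuine obstacle in this argument; it amounts to orthogonality of the Crouzeix--Raviart interpolation error against piecewise-constant broken gradients. The one subtlety worth flagging is that at an interior face the interpolant is two-valued and $\Grad u_h$ jumps across the face, but the element-by-element integration by parts treats each side separately, so no matching identity between neighboring elements is required, and the proof closes immediately after summing over all $E \in E_h$.
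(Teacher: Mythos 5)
Your proposal is correct and follows essentially the same route as the paper: element-by-element integration by parts, vanishing of $\Delta u_h$ on each $\mathbb{P}_1$ element, constancy of $\Grad u_h\cdot\nu$ on each face, and the defining face-average property of $\Pi_h^V$ to kill each surface integral. The extra remark about boundary faces is a harmless elaboration of the same fact.
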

\begin{proof}
By direct calculation, we obtain
\begin{equation*}
	\begin{split}
		&\int_\Om \Grad_h u_h \Grad_h (\Pi_h^V v-v)~dx \\
		&\qquad  = -\sum_E\int_E \Delta u_h(\Pi_h^V v - v)~dx + \int_{\partial E}(\Grad u_h\cdot \nu)(\Pi_h^V v - v)~dS(x).
	\end{split}
\end{equation*}
Now, since $u_h$ is linear on each element $\Delta u_h = 0$ and $\Grad u_h$ is constant. 
Moreover, since the normal vector $\nu$ is constant on each face of the element, we have that
\begin{equation*}
	\begin{split}
		&\int_\Om \Grad_h u_h \Grad_h (\Pi_h^V v-v)~dx \\
		&\qquad  = \sum_{E}\sum_{\Gamma \subset \partial E}\left(\int_{\Gamma }(\Pi_h^V v - v)~dS(x)\right)(\Grad u_h\cdot \nu) = 0,
	\end{split}
\end{equation*}
by definition of the interpolation operator $\Pi_h^V$. Hence, we have proved \eqref{eq:magic1}.

\end{proof}

\section{Numerical method and main result}\label{sec:num}

For a given timestep $\Dt>0$, we divide the time interval $[0,T]$ in 
terms of the points $t^k=k\Dt$, $k=0,\dots,M$, where we assume $M\Dt=T$. 
To discretize space, we let $\{E_{h}\}_{h}$ be a shape-regular 
family of tetrahedral meshes of $\Omega$, where $h$ is the maximal diameter.
It will be a standing assumption that $h$ and $\Delta t$ are 
related like $\Delta t = c h$ for some constant $c$. 
We also let $\Gamma_{h}$ denote the set of faces in $E_{h}$. 

The functions that are piecewise constant with 
respect to the elements of a mesh $E_{h}$ are 
denoted by $Q_h(\Om)$ and by $V_h(\Om)$ we denote
the Crouzeix--Raviart finite element space \eqref{def:CR} formed on $E_h$.
To incorporate the boundary condition, we let the 
degrees of freedom of $V_h(\Om)$ vanish at the boundary: 
$$
\int_\Gamma v_h~dS(x) = 0
\quad \forall \Gamma \in \Gamma_h\cap \partial \Om, 
\quad \forall v_h \in V_h(\Om).
$$

We will need some additional notation 
to accommodate discontinuous Galerkin discretization. 
Related to the boundary $\partial E$ of an element $E$, we write $f_{+}$ 
for the trace of the function $f$ taken within the element $E$, 
and $f_{-}$ for the trace of $f$ from the outside. 
Related to a face $\Gamma$ shared between two 
elements $E_{-}$ and $E_{+}$, we will write $f_{+}$ for 
the trace of $f$ within $E_{+}$, and $f_{-}$ for the trace
of $f$ within $E_{-}$. Here $E_{-}$ and $E_{+}$ are 
defined such that $\nu$ points from $E_{-}$ to $E_{+}$, where $\nu$ is 
fixed as one of the two possible 
normal components on  $\Gamma$.
The jump of $f$ across the face $\Gamma$ is denoted $\jump{f}_{\Gamma}= f_{+} - f_{-}$.

To pose the method, and in the convergence analysis, we shall need the canonical interpolation operators 
\eqref{def:interpolant}. In fact, we shall need the operators $\Pi_h^Q$ and
$\Pi_h^\mathcal{N}$ to such an extent that we introduce the convenient notation 
\begin{equation}\label{def:tilde}
	\widetilde v = \Pi_h^\mathcal{N}v, \qquad \widehat \phi = \Pi_h^Q \phi.
\end{equation} 

To discretize the convective operator $\Div(\vr u)$ in the continuity equation \eqref{eq:cont}, 
we will utilize a standard upwind method in 
the degrees of freedom of $u_h$. The upwind discretization 
is defined as follows
\begin{equation}\label{def:up}
	\begin{split}
		\up(\vr u)|_{\Gamma} 
		 &= \vr_- \left(\frac{1}{|\Gamma|}\int_\Gamma u_h \cdot \nu ~ dS(x)\right)^+ 
		\!\!+ \vr_+ \left(\frac{1}{|\Gamma|}\int_\Gamma u_h \cdot \nu ~ dS(x)\right)^-,\\
		 &= \vr_- (\widetilde u_h \cdot \nu)^+ + \vr_+(\widetilde u_h \cdot \nu)^-, \quad \forall \Gamma \in \Gamma_h,
	\end{split}
\end{equation}
where we have used the notation \eqref{def:tilde}.

For the convective operator $\Div(\vr u \otimes u)$
in the momentum equation \eqref{eq:moment} we will use the following 
Lax-Friedrich type upwind flux
\begin{equation}\label{def:upp}
	\begin{split}
		\up(\vr u \otimes \ov u) &= \up^+(\vr u)\ov u_+ + \up^-(\vr u)\ov u_-.
	\end{split}
\end{equation}
Observe that this operator is posed for the average value of $u_h$ over each element.  
This is non-standard and has to the author's knowledge not been studied 
previously. We are now ready to pose the method.
\begin{table}
	\begin{center}
	\begin{tabular}{l c l}
		\\
		$E_h$& - & the mesh. \\
		$E$ & - & an element in the mesh. \\
		$\partial E$ & - &  the boundary of $E$. \\
		$\Gamma$ & - & a face in the mesh. \\
		$\Gamma_h$ & - & all faces in the mesh. \\
		$Q_h(\Om)$ & - & the space of piece constant scalars on $E_h$.\\
		$V_h(\Om)$ & - & the Crouzeix-Raviart vector space on $E_h$.\\
		$\mathcal{N}_h(\Om)$ & - & the div conforming N{\'e}d{\'e}lec space \\
		&&of first order and kind. \\
		$\Pi_h^Q$ & - & the $L^2$ projection operator onto $Q_h$. \\
		$\Pi_h^V$ & - & the canonical interpolation operator onto $V_h$.\\
		$\Pi_h^N$ & - & the canonical interpolation operator onto $\mathcal{N}_h$.\\
		$\widehat f$ & - &  $\Pi_h^Q f$ (the piecewise constant projection).\\
		$\widetilde v$ & - & $\Pi_h^N v$ (the N{\'e}d{\'e}lec interpolation).\\
		$f^+$ & - &  $\max\{f,0\}$.\\
		$f^-$ & - &  $\min\{f,0\}$.\\
		$f_+|_{\partial E}$ & - & the 
					trace of $f$ taken from within $E$. \\
		$f_-|_{\partial E}$ & - & the 
						trace of $f$ taken from outside $E$. \\
		$f_+|_{\Gamma}$ & - & the 
					trace of $f$ taken against the normal vector $\nu$. \\
		$f_-|_{\Gamma}$ & - & the 
					trace of $f$ taken in the direction of $\nu$. \\
		$\jump{f}_\Gamma$ & - & $f_+ - f_-$.\\
		$\up(\vr u)|_{\partial E}$ & - & $\vr_+(\widetilde u_h \cdot \nu)^+ + \vr_-(\widetilde u_h \cdot \nu)^-$.\\
		$\up(\vr u)|_{\Gamma}$ & - & $\vr_-(\widetilde u_h \cdot \nu)^+ + \vr_+(\widetilde u_h \cdot \nu)^-$. \\
		$\up(\vr u \otimes \widehat u)_{\partial E}$ & - & $\up^+(\vr u)\ov u_+ + \up^-(\vr u)\ov u_-$.\\
		$\up(\vr u \otimes \widehat u)_{\Gamma}$ & - & $\up^-(\vr u)\ov u_+ + \up^+(\vr u)\ov u_-$.\\
		\\
	\end{tabular}
	\end{center}
	\caption{Notation}
\end{table}

\begin{definition}[Numerical method]\label{def:scheme}
Let $\vr_0 \in L^{\gamma+1}(\Om)$ and $m_0 \in L^\frac{3}{2}(\Om)$ be given initial data and assume that 
$T>0$ is a given finite final time. 
Define the numerical initial data
\begin{equation}\label{def:init}
	\vr_h^0 = \Pi_h^Q (\vr_0 + \kappa h), \qquad u_h^0 = \Pi_h^V\left[\frac{m_0}{\vr_0 + \kappa h}\right],
\end{equation}
where $\kappa$ is a small positive number. Determine sequentially 
	\begin{equation*}
		(\vr_h^k, u_h^k) \in Q_h(\Om)\times V_h(\Om), \quad k=1, \ldots, M,
	\end{equation*}
	satisfying, for all $q_h \in Q_h(\Om)$,
	\begin{equation}\label{fem:cont}
		\begin{split}
			\int_\Om \frac{\vr^k_h - \vr_h^{k-1}}{\Delta t} q_h~dx 
			&- \sum_\Gamma \int_{\Gamma} \up^k(\vr u)\jump{q_h}_\Gamma~dS(x) \\
			&\qquad + h^{1-\epsilon}\sum_\Gamma \int_{\Gamma}\jump{\vr^k_h}_\Gamma\jump{q_h}_\Gamma~dS(x) = 0,
		\end{split}
	\end{equation}
	and for all $v_h \in V_h(\Om)$,
	\begin{equation}\label{fem:moment}
		\begin{split}
			&\int_\Om \frac{\vr^k_h\ov u^k_h - \vr_h^{k-1}\ov u^{k-1}_h}{\Delta t} v_h~dx - \sum_{ \Gamma} \int_{\Gamma} \up^k(\vr u\otimes \ov u)\jump{\ov v}_\Gamma ~dS(x) \\
			&\qquad \qquad + \int_\Om \Grad_h u^k_h \Grad_h v_h - p(\vr_h^k)\Div v_h~dx \\
			&\qquad \qquad \qquad+ h^{1-\epsilon}\sum_E \int_{\partial E} \left(\frac{\ov u^k_- + \ov u^k_+}2\right) \jump{\vr^k_h}\jump{\ov v_h}_\Gamma~dS(x) = 0,
			\end{split}
	\end{equation}	
	where $\eps > \frac{1}{6}$ should be chosen as small as possible.
\end{definition}

For the purpose of analysis, we will need to extend the pointwise-in-time numerical solution $(\vr_h^k, u_h^k)$, $k =0, \ldots, M$, 
to a piecewise constant in time. For this purpose, we will use the following definition
\begin{equation}\label{def:schemeII}
	(\vr_h, u_h)(t, \cdot) = (\vr_h^k, u_h^k), \quad t \in [t^{k}, t^{k+1}), \quad k=0, \ldots, M.
\end{equation}

\begin{remark}
The terms involving $h^{1-\eps}$ in \eqref{fem:cont} - \eqref{fem:moment} 
are needed for purely technical reasons. In particular, they 
are not needed to obtain stability of the method or to prove convergence 
of the continuity method \eqref{fem:cont}.
See Section \ref{sec:hodge}
and \cite{Gallouet2} for more on why they are needed. 

\end{remark}

\subsection{The method is well-defined}

Since the numerical method is nonlinear and implicit,
it is not trivial that it is actually well-defined. 
In addition, the transport operators in the momentum
equation is posed for the element average velocity 
and hence does not provide a full set of equations
in themselves. In fact, it is only due to the diffusion that
the system has a sufficient number of equations 
for the degrees of freedom of $u_h$.
We shall prove the existence of a numerical solution 
through a topological degree argument. 
The proof is very similar to that of \cite{Gallouet:2006lr, Karlsen1} and 
is for this reason deferred to the appendix.

\begin{proposition}\label{pro:defined}
For each fixed $h > 0$, there exists 
a solution 
\begin{equation*}
	(\vr_h^k, u_h^k) \in Q_h(\Om)\times V_h(\Om),\quad \vr^k_h(\cdot) > 0, \quad k=1, \ldots, M,
\end{equation*}
to the numerical method posed in Definition \ref{def:scheme}.
\end{proposition}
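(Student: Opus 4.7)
The proof proceeds by induction on the time step $k$, with the base case $k=0$ given by the initial data \eqref{def:init} (note $\vr_h^0 > 0$ thanks to the regularisation $\kappa h$). For the inductive step, fix $(\vr_h^{k-1}, u_h^{k-1})$ with $\vr_h^{k-1} > 0$ and look for $(\vr_h^k, u_h^k) \in Q_h(\Om) \times V_h(\Om)$ solving \eqref{fem:cont}--\eqref{fem:moment}, via a Brouwer topological degree argument in the finite-dimensional space of unknowns.

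As a first reduction, observe that for any given $u_h \in V_h(\Om)$, the continuity scheme \eqref{fem:cont} is a linear system in $\vr_h \in Q_h(\Om)$. Its coefficient matrix has strictly positive diagonal entries (contributions from $\frac{|E|}{\Dt}$, from the positive upwind outflow $\up^+$, and from the jump stabilization) and non-positive off-diagonal entries (from the upwind inflow and the $h^{1-\eps}$ jump stabilization), and is strictly diagonally dominant. Hence it is an $M$-matrix, giving a unique solution $\vr_h = \mathcal{R}(u_h)$ that depends continuously on $u_h$ and is positive whenever $\vr_h^{k-1}$ is. Substituting $\vr_h = \mathcal{R}(u_h)$ into \eqref{fem:moment} then yields a continuous map $F: V_h(\Om) \to V_h(\Om)$ (identifying the space with its dual through the Euclidean inner product) whose zeros are exactly the admissible velocities $u_h^k$.

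To exhibit a zero of $F$, I would introduce a homotopy $F_\lambda$, $\lambda \in [0,1]$, with $F_1 = F$ and $F_0(u_h)(v_h) = \int_\Om \Grad_h u_h \Grad_h v_h\,\dx$. The map $F_0$ is (up to the no-slip boundary condition) the discrete Laplacian on $V_h(\Om)$, which is coercive and hence has $u_h = 0$ as its unique zero and nonzero Brouwer degree on any ball centered at the origin. The intermediate $F_\lambda$ is built by scaling the convective, pressure, stabilization, and discrete time-derivative terms of \eqref{fem:moment} by $\lambda$, and simultaneously interpolating $\mathcal{R}$ with a constant reference density at $\lambda = 0$. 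By homotopy invariance of the degree, existence at $\lambda = 1$ follows once every zero $u_h$ of $F_\lambda$ is confined to a fixed ball $B_R \subset V_h(\Om)$ independently of $\lambda$.

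The main obstacle, and the heart of the argument, is the derivation of this uniform a priori estimate; I would obtain it from a discrete analogue of the energy inequality \eqref{eq:cont-energy}. The plan is to test the $\lambda$-scaled \eqref{fem:moment} with $v_h = u_h$ and the corresponding continuity relation with an appropriate combination of $q_h = \tfrac{1}{2}|\ov u_h|^2$ and $q_h = P'(\mathcal{R}_\lambda(u_h))$, where $P(\vr) = \tfrac{a}{\gamma-1}\vr^\gamma$. The specific design of the Lax--Friedrich type flux \eqref{def:upp} on the averaged velocity $\ov u_h$, together with the upwind flux \eqref{def:up}, is precisely what makes the convective contributions cancel modulo non-negative numerical dissipation; the diffusion then controls $\|\Grad_h u_h\|_{L^2(\Om)}^2$, and the pressure--continuity coupling controls $\|\mathcal{R}_\lambda(u_h)\|_{L^\gamma(\Om)}^\gamma$. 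Since $h$ is fixed, norm equivalence in the finite-dimensional space $V_h(\Om)$ converts this into $\|u_h\|_{V_h} \le R$ uniformly in $\lambda$. Homotopy invariance of the Brouwer degree then produces $u_h^k$ with $F(u_h^k) = 0$; the corresponding $\vr_h^k = \mathcal{R}(u_h^k)$ is positive by the $M$-matrix argument, closing the induction.
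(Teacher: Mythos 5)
Your proposal is correct and is, at heart, the same homotopy/topological-degree argument the paper uses in its appendix: deform the full nonlinear scheme to a linear, diffusion-dominated problem, obtain a homotopy-parameter-independent a priori bound from the discrete energy inequality and mass conservation, and invoke invariance of the Brouwer degree. The one structural difference is that you eliminate $\vr_h^k$ first via the solution map $\mathcal{R}(u_h)$ of the linear continuity system and then track the degree of a map on $V_h(\Om)$ alone, whereas the paper's $H(\vr_h,u_h,\alpha)$ acts on the product $Q_h(\Om)\times V_h(\Om)$ with the same $\alpha$-scaling of the nonlinear terms; in both cases the $\alpha=0$ (resp.\ $\lambda=0$) problem is the well-posed discrete Laplace equation. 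One small slip: the continuity matrix in $\vr_h$ is \emph{not} in general row diagonally dominant — the row sum is $|E|/\Dt + \int_E \Div_h \widetilde u_h\,\dx$, which can be negative — but it is strictly diagonally dominant by \emph{columns} (each column sums to $|E|/\Dt>0$) with the correct sign pattern, which is what gives the $M$-matrix property and the unique positive solution $\mathcal R(u_h)$; this is also what underlies Lemma~\ref{lemma:vrho-props}, which the paper invokes for the same purpose. Your reduction buys a cleaner single-space degree argument, at the cost of needing $\mathcal{R}$ to depend continuously on $u_h$ (true, since the inverse of a nonsingular matrix depends continuously on its entries) and a slightly more delicate bookkeeping of how $\mathcal{R}_\lambda$ interacts with the $\lambda$-scaled momentum equation when you derive the uniform bound.
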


In the upcoming analysis, we will need that the density solution is positive. 
For this purpose, we recall the following result from \cite{Karlsen1} (see also \cite{Gallouet:2006lr}):
\begin{lemma} 
\label{lemma:vrho-props}
Fix any $k=1,\dots,M$ and suppose $\varrho^{k-1}_{h} 
\in Q_{h}(\Om)$, $u^k_{h} \in V_{h}(\Om)$ are given bounded functions. 
Then the solution $\varrho^{k}_{h} \in Q_{h}(\Om)$ of the discontinuous 
Galerkin method \eqref{fem:cont} satisfies
$$
\min_{x \in \Omega}\varrho_{h}^k 
\geq \min_{x \in \Omega}\varrho_{h}^{k-1}\left(\frac{1}{1 + 
\Delta t \|\Div_h u^k_{h}\|_{L^\infty(\Omega)}}\right).
$$
Consequently, if $\varrho^{k-1}_{h}(\cdot)>0$, then $\varrho^{k}_{h}(\cdot)>0$.
\end{lemma}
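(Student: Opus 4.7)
The plan is to test the discrete continuity equation \eqref{fem:cont} with the indicator of the element where the density attains its minimum, and then exploit monotonicity of the upwind flux together with the fact that the penalty jump term has a favorable sign at that element.

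More precisely, fix $k$ and assume (inductively) that $\varrho^{k-1}_h > 0$. Let $E^\star \in E_h$ be an element realizing $m := \min_\Omega \varrho^k_h = \varrho^k_h|_{E^\star}$, and insert $q_h = \mathbf{1}_{E^\star}$ into \eqref{fem:cont}. The time-derivative contribution is simply $|E^\star|(m - \varrho^{k-1}_h|_{E^\star})/\Delta t$. For the convective term I would use the standard DG identity
\begin{equation*}
-\sum_{\Gamma}\int_\Gamma \up(\varrho u)\jump{q_h}_\Gamma\, dS(x) = \sum_E \int_{\partial E} \up(\varrho u)\big|_{\partial E}\, q_+\, dS(x),
\end{equation*}
which, for $q_h = \mathbf{1}_{E^\star}$, collapses to a single boundary integral over $\partial E^\star$ with the outward-normal upwind convention
$\up(\varrho u)|_{\partial E^\star} = \varrho_+(\widetilde u_h\cdot\nu)^+ + \varrho_-(\widetilde u_h\cdot\nu)^-$ where $+$ denotes the value inside $E^\star$ and $\nu$ the outward normal.

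Since $\varrho_+ = m$ and $\varrho_- \ge m$ by choice of $E^\star$, and $(\widetilde u_h\cdot\nu)^- \le 0$, we get $\varrho_-(\widetilde u_h\cdot\nu)^- \le m (\widetilde u_h\cdot\nu)^-$, hence
\begin{equation*}
\up(\varrho u)|_{\partial E^\star} \le m\,(\widetilde u_h\cdot\nu).
\end{equation*}
Integrating over $\partial E^\star$, using the divergence theorem elementwise and the commutation property \eqref{eq:commute} $\Div\widetilde u_h = \Div_h u_h$, gives
\begin{equation*}
\int_{\partial E^\star} \up(\varrho u)|_{\partial E^\star}\, dS(x) \le m\int_{E^\star} \Div_h u_h\, dx \le m|E^\star|\,\|\Div_h u_h\|_{L^\infty(\Omega)}.
\end{equation*}
Boundary faces are harmless since the boundary condition forces $\int_\Gamma u_h\cdot\nu\, dS = 0$, hence $\widetilde u_h\cdot\nu = 0$ on $\Gamma\subset \partial\Omega$, and the corresponding upwind contribution vanishes.

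For the penalty term, on every face $\Gamma\subset\partial E^\star$ the jumps $\jump{\varrho^k_h}_\Gamma$ and $\jump{q_h}_\Gamma$ have opposite signs (indeed, on the side where $q_h$ jumps up, $\varrho^k_h$ jumps up from $m$ to a value $\ge m$, while on the side where it jumps down the opposite holds), so
\begin{equation*}
h^{1-\epsilon}\sum_\Gamma \int_\Gamma \jump{\varrho^k_h}_\Gamma\jump{q_h}_\Gamma\, dS(x) \le 0.
\end{equation*}
Combining the three contributions in \eqref{fem:cont} yields
\begin{equation*}
|E^\star|\,\frac{m - \varrho^{k-1}_h|_{E^\star}}{\Delta t} \ge -m|E^\star|\,\|\Div_h u_h\|_{L^\infty(\Omega)},
\end{equation*}
which on rearrangement and using $\varrho^{k-1}_h|_{E^\star} \ge \min_\Omega \varrho^{k-1}_h$ gives the desired estimate. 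The positivity statement is then immediate. The only delicate point I foresee is bookkeeping the signs when converting the face-based upwind sum to an element-boundary expression and verifying that the sign of the penalty term really is favorable; once the orientation conventions are unravelled carefully these steps are routine.
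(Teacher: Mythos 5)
Your overall strategy --- test \eqref{fem:cont} with the indicator $q_h$ of an element $E^\star$ realizing the minimum, rewrite the face-based upwind sum as $\int_{\partial E^\star}\up(\varrho u)|_{\partial E^\star}\,dS$, bound the upwind flux there by $m\,(\widetilde u_h\cdot\nu)$ using $\varrho_-\ge m$ and $(\widetilde u_h\cdot\nu)^-\le 0$, and exploit the favourable sign of the penalty term --- is the natural and standard route (the paper itself only cites \cite{Karlsen1, Gallouet:2006lr} for this fact rather than reproducing a proof). Your face/element-boundary sign bookkeeping, the sign of the penalty term, and the use of Stokes' theorem with \eqref{eq:commute} are all correct as far as they go.

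There is, however, a genuine circularity in the chain
$$\int_{\partial E^\star}\up(\varrho u)|_{\partial E^\star}\,dS\ \le\ m\int_{E^\star}\Div_h u_h\,dx\ \le\ m\,|E^\star|\,\|\Div_h u_h\|_{L^\infty(\Omega)}.$$
The last inequality replaces $(\Div_h u_h)|_{E^\star}$ by the larger quantity $\|\Div_h u_h\|_{L^\infty}$, and multiplying by $m$ preserves the direction of that inequality only if $m\ge 0$ --- which is precisely what you are trying to prove. What your computation actually delivers is $m\bigl(1+\Delta t\,(\Div_h u_h)|_{E^\star}\bigr)\ge \varrho^{k-1}_h|_{E^\star}$, and that inequality \emph{alone} does not exclude $m<0$: if $(\Div_h u_h)|_{E^\star}<-1/\Delta t$ the factor $1+\Delta t(\Div_h u_h)|_{E^\star}$ is negative and the inequality is perfectly consistent with $m<0$. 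You must first establish $\varrho^k_h\ge 0$ by a separate argument. One clean way is to observe that the linear system \eqref{fem:cont} in $\varrho^k_h$ (with the upwind flux and the jump penalty written elementwise) has a coefficient matrix with strictly positive diagonal, nonpositive off-diagonal entries, and column sums equal to $|E|/\Delta t>0$ (the upwind and penalty contributions across each interior face cancel in the column sum, and the boundary flux vanishes because $\widetilde u_h\cdot\nu=0$ on $\partial\Omega$); the transpose is therefore strictly diagonally dominant, so the matrix is an invertible M-matrix with nonnegative inverse, and $\varrho^{k-1}_h>0$ forces $\varrho^k_h>0$. Equivalently, test with the indicator of $\{E:\varrho^k_E<0\}$ and derive a contradiction. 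Once $m>0$ is secured, your final step is legitimate and the stated bound follows exactly as you wrote it.
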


\subsection{Main result}
Our main result is that the numerical method converges to
a weak solution of the compressible Navier-Stokes equations \eqref{eq:cont} - \eqref{eq:init}.

\begin{theorem}\label{thm:main}
Suppose $\gamma >3$, $T>0$ is a given finite final time, and that the initial data $(\vr_0, m_0)$ satisfies
\begin{equation*}
	\int_\Om \frac{m_0^2}{2\vr_0} + \frac{1}{\gamma - 1}p(\vr_0)~dx \leq C, \quad \vr_0 \in L^\infty(0,T;L^{\gamma+1}(\Om)).
\end{equation*}
Let $\{(\vr_h, u_h)\}_{h>0}$ be a sequence of numerical solutions constructed according 
to Definition \ref{def:scheme} and \eqref{def:schemeII} with $\Delta t = c h$. Along a subsequence as $h \rightarrow 0$, 
\begin{align*}
	u_h &\weak u \text{ in $L^2(0,T;L^6(\Om))$},\\
	\Grad_h u_h &\weak \Grad u \text{ in $L^2(0,T;L^2(\Om))$}\\
	\vr_h  u_h, ~ \vr_h\widetilde u_h &\weak \vr u \text{ in } L^2(0,T;L^{\frac{6\gamma}{3+\gamma}}(\Om)), \nonumber\\
	\vr_h \widehat u_h &\weak \vr u \text{ in } C(0,T;L^{\frac{2\gamma}{\gamma+1}}(\Om)). \\
	\vr_h \widetilde u_h \otimes \widehat u_h &\weak \vr u \otimes u \text{ in }L^2(0,T;L^{\frac{3\gamma}{3+\gamma}}(\Om)), \nonumber \\
	\vr_h \rightarrow \vr \text{ a.e}& \text{ and in $L_\text{loc}^p((0,T)\times \Om)$, $p < \gamma + 1$},
\end{align*}
where $(\vr, u)$ is a weak solution of the isentropic compressible Navier-Stokes equations \eqref{eq:cont} - \eqref{eq:moment} in the sense of Definition \ref{def:weak}.
\end{theorem}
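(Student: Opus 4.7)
The plan is to perform a discrete analog of the Lions-Feireisl existence theory, following the section structure outlined in the introduction. First I would derive a discrete energy inequality by testing \eqref{fem:cont} with a suitable combination of $H'(\vr_h^k) = \frac{a\gamma}{\gamma-1}(\vr_h^k)^{\gamma-1}$ and $\tfrac12|\widehat u_h^k|^2$, and \eqref{fem:moment} with $v_h = u_h^k$, then adding and telescoping over $k$. Because the material derivative in \eqref{fem:moment} is discretized on $\widehat u_h$ while the diffusion is written in $u_h$, care is needed: one uses the identity $\int_E u_h \cdot \widehat v_h = \int_E \widehat u_h \cdot \widehat v_h$, completes squares across time levels, and exploits the Lax-Friedrich structure of \eqref{def:upp} to extract positive numerical dissipation from the upwind fluxes and the $h^{1-\eps}$ stabilizations. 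This yields uniform bounds
\begin{equation*}
\|\Grad_h u_h\|_{L^2(\Dom)} + \|\vr_h\|_{L^\infty(0,T;L^\gamma(\Om))} + \|\sqrt{\vr_h}\,\widehat u_h\|_{L^\infty(0,T;L^2(\Om))} \leq C,
\end{equation*}
from which Sobolev embedding (together with Lemma \ref{lem:translation} for $u_h$) and interpolation deliver every integrability exponent appearing in the theorem.

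Next I would improve the integrability of the pressure by testing \eqref{fem:moment} against a discrete Bogovskii-type test function $v_h = \Pi_h^V \Bop{\vr_h^\theta - \avg{\vr_h^\theta}}$ for some small $\theta > 0$. All terms but the discrete material derivative are controlled directly by the a priori estimates; for that term one invokes the weak operator error estimates of Section \ref{sec:operators} to transfer the time derivative onto the test function. The outcome is a uniform bound on $\vr_h$ in $L^{\gamma+\theta}(\Dom)$, in particular making $\{p(\vr_h)\}$ equi-integrable. The condition $\gamma > 3$ enters here to ensure that the convective contribution in the Bogovskii identity can be absorbed.

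With stability in place, I would extract weakly convergent subsequences and pass to the limit in \eqref{fem:cont} and \eqref{fem:moment}. The nonlinear products $\vr_h \widehat u_h$ and $\vr_h \widetilde u_h \otimes \widehat u_h$ are identified by combining Lemma \ref{lem:translation} (spatial translations of Crouzeix-Raviart functions), the discrete time-derivative bound for $\vr_h \widehat u_h$ read off from \eqref{fem:moment}, and Lemmas \ref{lem:timecompactness}--\ref{lemma:aubinlions}. Every term in the continuity and momentum equations then passes to the limit to yield \eqref{eq:cont}--\eqref{eq:moment}, with the single exception of the pressure, for which we only get $p(\vr_h) \weak \overline{p(\vr)}$.

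The main obstacle is therefore the strong convergence $\vr_h \to \vr$, handled by the effective viscous flux method. Testing \eqref{fem:moment} against the discrete Helmholtz test function $v_h = \Pi_h^V \gradlaplaceinv{\vr_h - \avg{\vr_h}}$ and passing to the limit, then contrasting with the corresponding manipulation performed on the limit equations, one establishes the discrete analog of
\begin{equation*}
\lim_{h \to 0} \int_0^T\!\!\int_\Om \bigl(p(\vr_h) - \Div_h u_h\bigr)\vr_h\,\dxdt = \int_0^T\!\!\int_\Om \bigl(\overline{p(\vr)} - \Div u\bigr)\vr\,\dxdt.
\end{equation*}
The critical point is that the commute relations \eqref{eq:commute} make the chosen mixed Crouzeix-Raviart/N\'ed\'elec framework compatible with the orthogonal Hodge decomposition, so the commutator errors vanish in the limit by the operator estimates of Section \ref{sec:operators}. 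Combining this effective-viscous-flux identity with a discrete renormalized version of \eqref{fem:cont} (derived by testing with $B'(\vr_h^k)$, where the $h^{1-\eps}$ stabilization controls the resulting jump defect) and Lemma \ref{lemma:feireisl} for the limit $(\vr,u)$, one arrives at $\overline{\vr\log\vr} \leq \vr\log\vr$ a.e. Strict convexity of $s\log s$ together with Lemma \ref{lem:prelim} forces $\vr_h \to \vr$ a.e., and Vitali together with the higher integrability from step two upgrades this to the stated $L^p_{\text{loc}}$ convergence and identifies $\overline{p(\vr)} = p(\vr)$, completing the proof.
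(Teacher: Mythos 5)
Your overall roadmap is the correct one and matches the paper's: discrete energy estimate from Lemma~\ref{lem:renorm}, improved pressure integrability, weak compactness via Lemmas~\ref{lem:timecompactness}--\ref{lemma:aubinlions}, effective viscous flux, and renormalized continuity plus strict convexity to upgrade to a.e.\ convergence. Two points are genuine gaps.

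The more serious one concerns the effective viscous flux. You assert that the commutation relations \eqref{eq:commute} make the discrete Laplacian compatible with the orthogonal Hodge decomposition, so that all commutator errors vanish by the operator estimates of Section~\ref{sec:operators}. That is not the case, and the paper devotes all of Section~\ref{sec:hodge} precisely to repairing this. The identities \eqref{eq:commute} concern only $\Div$ and $\Curl$ of the interpolants; they do \emph{not} give
\begin{equation*}
\int_\Om \Grad_h u_h \Grad_h v_h\,dx = \int_\Om \Curl_h u_h \Curl_h v_h + \Div_h u_h \Div_h v_h\,dx,\quad v_h\in V_h(\Om),
\end{equation*}
because the Crouzeix--Raviart space is nonconforming: the elementwise integration by parts in Lemma~\ref{lem:ortho} leaves a boundary defect $E(v,u_h)$ that is only small when $v\in W^{2,2}$. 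Since the effective-flux test function is built from the merely piecewise-constant $\vr_h$, one must replace $\vr_h$ by its continuous piecewise-linear interpolant $\Pi_h^L\vr_h$ and control the jumps $\jump{\vr_h}_\Gamma$, and it is exactly the artificial $h^{1-\eps}$ stabilization terms that supply that control via \eqref{stupid:2} (see Lemma~\ref{lem:yes}). The operator estimates of Section~\ref{sec:operators} bound only $P_1,\dots,P_4$ and the consistency of the material derivative; they say nothing about the diffusion term, so your argument does not close without Section~\ref{sec:hodge}.

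Second, your Bogovskii test function $v_h=\Pi_h^V\Bop{\vr_h^\theta-\avg{\vr_h^\theta}}$ yields $\vr_h\inb L^{\gamma+\theta}(\Dom)$ for ``some small $\theta>0$''. That is enough to make $\{p(\vr_h)\}$ equi-integrable, but it does not reach the theorem's claim of $L^p_{\text{loc}}$ convergence for all $p<\gamma+1$. The paper tests with $v_h=\Pi_h^V\!\left(\phi\,\mathcal{A}^\Grad[\phi\vr_h]\right)$ and obtains the sharper $\vr_h\inb L^{\gamma+1}_{\text{loc}}$ (Proposition~\ref{pro:higher}); moreover, the role of $\gamma>3$ there is the embedding $W^{1,\gamma}(\Om)\hookrightarrow L^\infty(\Om)$ used to control $\mathcal{A}^\Grad[\phi\vr_h]$ pointwise, not simply absorbing a convective contribution of a Bogovskii identity. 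If you retain the Bogovskii route you need to explain how to push $\theta$ up to $1$, or weaken the stated $L^p_{\text{loc}}$ conclusion accordingly.
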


The proof of Theorem \ref{thm:main} will be developed in the remaining 
sections of this paper. The final conclusion will come in Section \ref{sec:proof}.

\section{Energy and stability}\label{sec:stability}
In this section we will prove that our method 
is stable and satisfies a numerical analogy 
of the continuous energy inequality \eqref{eq:cont-energy}. 
Both the stability estimate and large parts of 
the subsequent convergence analysis relies 
on a renormalized type identity derived 
from the continuity scheme \eqref{fem:cont}.
The proof of this identity can be found in \cite{Karlsen1, Karlsen2}. 
We shall utilize the following form:
\begin{lemma}\label{lem:renorm}
Let $(\vr^h, u^h)$ solve the continuity scheme \eqref{fem:cont}. Then, 
for any $B \in C^2(\R_+)$ with $B'' \geq 0$, there holds
\begin{equation*}\label{eq:renorm}
	\begin{split}
	&		\int_\Om \frac{B(\vr^k_h)-B(\vr_h^{k-1})}{\Delta t}~dx +(\Delta t)^{-1}\int_\Om B''(\vr_\ddagger)(\vr_h^{k} - \vr_h^{k-1})^2~ dx\\
	&\qquad \qquad+ \sum_\Gamma \int_\Gamma B''(\vr_\dagger)\jump{\vr^k_h}^2\left(|\widetilde{u^k_h}\cdot\nu|+ h^{1-\eps}\right)~dS(x) \\
	&\qquad \qquad
	 \leq -\int_\Om (\vr B'(\vr) - B(\vr))\Div u_h~dx,
	\end{split}
\end{equation*}
where $\vr_\ddagger$ and $\vr_\dagger$ are some numbers in the range $[\vr^{k-1}, \vr^{k}]$ and 
$[\vr^k_-, \vr^k_+ ]$, respectively.
\end{lemma}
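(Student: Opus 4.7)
The plan is to test the continuity scheme~\eqref{fem:cont} with the admissible choice $q_h = B'(\vr_h^k) \in Q_h(\Om)$ (which is strictly positive and piecewise constant by Lemma~\ref{lemma:vrho-props}), and then to resolve each of the three resulting terms through a second-order Taylor expansion of $B$.

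For the discrete time derivative, the Taylor identity
\[
B'(\vr_h^k)\bigl(\vr_h^k - \vr_h^{k-1}\bigr) = B(\vr_h^k) - B(\vr_h^{k-1}) + \tfrac12 B''(\vr_\ddagger)\bigl(\vr_h^k - \vr_h^{k-1}\bigr)^2,
\]
with $\vr_\ddagger$ between $\vr_h^{k-1}$ and $\vr_h^k$, delivers the discrete time difference together with a nonnegative quadratic dissipation. The mean value identity $\jump{B'(\vr_h^k)}_\Gamma = B''(\vr_\dagger)\jump{\vr_h^k}_\Gamma$ converts the artificial-diffusion face term directly into $h^{1-\eps} B''(\vr_\dagger)\jump{\vr_h^k}_\Gamma^2$, also nonnegative since $B'' \geq 0$.

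The crux is the upwind convective term. Using the algebraic decomposition
\[
\up(\vr u)\big|_\Gamma = \tfrac12\bigl(\vr_- + \vr_+\bigr)(\widetilde u_h^k\cdot\nu) - \tfrac12\,\jump{\vr_h^k}_\Gamma\,|\widetilde u_h^k\cdot\nu|,
\]
combined again with $\jump{B'(\vr_h^k)}_\Gamma = B''(\vr_\dagger)\jump{\vr_h^k}_\Gamma$, the face integrand $\up(\vr u)\jump{B'(\vr_h^k)}_\Gamma$ splits into a centered part and a manifestly nonnegative upwind dissipation proportional to $B''(\vr_\dagger)\jump{\vr_h^k}_\Gamma^2|\widetilde u_h^k\cdot\nu|$ — this is the $|\widetilde u_h^k\cdot\nu|$-piece of the face integral in the claimed inequality. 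For the centered part, I would introduce the primitive $\phi(s) := sB'(s) - B(s)$, which satisfies $\phi'(s) = sB''(s)$; a weighted mean value argument then gives
\[
\jump{\phi(\vr_h^k)}_\Gamma = \int_{\vr_-^k}^{\vr_+^k} sB''(s)\,ds = \tfrac12(\vr_- + \vr_+)\,B''(\vr_\dagger)\jump{\vr_h^k}_\Gamma
\]
after a permissible relabeling of $\vr_\dagger \in [\vr_-^k,\vr_+^k]$. Summing over faces with the orientation convention and invoking the Stokes commutation $\int_{\partial E}\widetilde u_h^k\cdot\nu\,dS = \int_E \Div u_h^k\,dx$ from~\eqref{eq:commute} collapses the centered sum into the bulk expression $\int_\Om \phi(\vr_h^k)\,\Div_h u_h^k\,dx = \int_\Om (\vr_h^k B'(\vr_h^k) - B(\vr_h^k))\Div_h u_h^k\,dx$, which is precisely the negative of the right-hand side of the lemma. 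Collecting all contributions, moving the nonnegative dissipation terms to the left and absorbing the innocuous factors $\tfrac12$ by the freedom in choosing $\vr_\ddagger$ and $\vr_\dagger$ within their respective intervals, yields the stated estimate (in fact an equality).

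The principal obstacle is the identification of the centered-flux sum with the elementwise bulk quantity $\int_\Om (\vr B'(\vr) - B(\vr))\Div_h u_h\,dx$; introducing the potential $\phi$ and exploiting the commuting identity~\eqref{eq:commute} is the discrete analog of the continuous chain-rule computation $\int B'(\vr)\Div(\vr u)\,dx = \int (\vr B'(\vr) - B(\vr))\Div u\,dx$. Everything else reduces to routine Taylor expansion with definite sign.
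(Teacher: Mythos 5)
Your plan is the standard one and is largely correct: test \eqref{fem:cont} with $q_h=B'(\vr_h^k)$, expand the discrete time increment with a second--order Taylor remainder, and use a mean value argument on the stabilization jump. The defect is in the upwind term. After the split $\up(\vr u)|_\Gamma = \tfrac12(\vr_-+\vr_+)(\widetilde u_h\cdot\nu) - \tfrac12\jump{\vr_h}_\Gamma|\widetilde u_h\cdot\nu|$, you assert that the centered piece $\tfrac12(\vr_-+\vr_+)\jump{B'(\vr_h)}_\Gamma$ coincides with $\jump{\vr_hB'(\vr_h)-B(\vr_h)}_\Gamma$ after ``a permissible relabeling'' of $\vr_\dagger$. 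It does not: the two mean-value applications produce different intermediate points, and in fact
\begin{equation*}
\tfrac12(\vr_-+\vr_+)\jump{B'(\vr_h)}_\Gamma - \jump{\vr_hB'(\vr_h)-B(\vr_h)}_\Gamma
= B(\vr_+)-B(\vr_-)-\tfrac12(\vr_+-\vr_-)\bigl(B'(\vr_-)+B'(\vr_+)\bigr),
\end{equation*}
a trapezoidal-rule error of order $\jump{\vr_h}_\Gamma^2$, nonzero unless $B$ is quadratic (for $B(z)=z^3$ it equals $-\tfrac12\jump{\vr_h}_\Gamma^3$). So your path does not give the claimed identity, and the parenthetical ``in fact an equality'' cannot be reached. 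Worse, this discrepancy is multiplied by $\widetilde u_h\cdot\nu$, not $|\widetilde u_h\cdot\nu|$, so it has no sign. Using the convexity bounds $B'(\vr_-)\jump{\vr_h}_\Gamma\le\jump{B(\vr_h)}_\Gamma\le B'(\vr_+)\jump{\vr_h}_\Gamma$ one sees its modulus is at most $\tfrac12\jump{\vr_h}_\Gamma\jump{B'(\vr_h)}_\Gamma$, which is precisely the $|\widetilde u_h\cdot\nu|$-dissipation you isolated --- so you can still salvage an inequality, but only by surrendering that face dissipation term entirely, and it is needed downstream (it is exactly what bounds $I_1$ in the proof of Proposition~\ref{pro:transport} via the choice $B(z)=\tfrac12 z^2$).

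The route that delivers the bulk term and retains the face dissipation avoids the centered split. Convert to element boundaries, $\sum_\Gamma\int_\Gamma\up(\vr u)\jump{B'(\vr_h)}_\Gamma\,dS(x)=-\sum_E\int_{\partial E}\up(\vr u)\,B'(\vr_{h,E})\,dS(x)$, where on $\partial E$ (with $\nu$ outward) $\up(\vr u)=\vr_{h,E}\,\widetilde u_h\cdot\nu+(\vr_{\mathrm{ext}}-\vr_{h,E})(\widetilde u_h\cdot\nu)^-$. A single Taylor expansion $(\vr_{\mathrm{ext}}-\vr_{h,E})B'(\vr_{h,E})=B(\vr_{\mathrm{ext}})-B(\vr_{h,E})-\tfrac12B''(\xi)(\vr_{\mathrm{ext}}-\vr_{h,E})^2$, with one Lagrange point $\xi$, then does everything at once: on summing over $E$ and using $\int_E\Div\widetilde u_h\,dx=\int_E\Div_h u_h\,dx$ from \eqref{eq:commute} (the same commutation you invoked), the $\vr_{h,E}\,\widetilde u_h\cdot\nu$ and $B(\vr_{\mathrm{ext}})-B(\vr_{h,E})$ contributions assemble into $-\int_\Om(\vr_hB'(\vr_h)-B(\vr_h))\Div_hu_h\,dx$, while the remainder carries $(\widetilde u_h\cdot\nu)^-\le 0$ and hence is a genuine one-sided dissipation of the form $\tfrac12 B''(\xi)\jump{\vr_h}_\Gamma^2|\widetilde u_h\cdot\nu|$, with no need for two mean-value points to agree.
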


We now prove our main stability result.

\begin{proposition}\label{prop:energy}
For given $\Delta t$, $h>0$, let $(u_h^k, \vr_h^k)$, $k=0,\ldots,M$ 
be the numerical approximation of \eqref{eq:cont}-\eqref{eq:moment} 
in the sense of Definition \ref{def:scheme}. Then,
\begin{equation}\label{eq:energy}
	\begin{split}
		&\max_{m=1, \ldots, M}\int_\Om \frac{\vr_h^m |\ov u^m_h|^2}{2} + \frac{1}{\gamma -1}p(\vr^m_h)~dx  \\
		&\qquad\qquad\quad  + \Delta t \sum_{k=1}^M\int_\Om |\Grad_h u_h^k|^2~dx + \Delta t\sum_{i=1}^5\sum_{k=1}^M D^k_i \\
	 	&\qquad \leq \int_\Om \frac{\vr_h^0 |\ov u^0_h|^2}{2} + \frac{1}{\gamma -1}p(\vr^0_h)~dx,
	\end{split}
\end{equation}
where the numerical diffusion terms are given by 
\begin{equation*}
	\begin{split}
		D^k_1 &= \sum_\Gamma \int_{\Gamma}P''(\vr_\dagger)\jump{\vr^k_h}_\Gamma^2|u_h^k\cdot \nu|~dS(x), \\
		D^k_2 &=\sum_\Gamma \int_{\Gamma}\left|\up^k(\vr u)\right|\jump{\ov u^k_h}_\Gamma^2~dS(x), \\
		D^k_3 &= h^{1-\eps}\sum_\Gamma \int_0^T\int_\Gamma \vr_{\dagger\dagger} \jump{\vr_h}_\Gamma^2~dS(x)dt,\\
		D^k_4 &=  (\Delta t)^{-1}\int_\Om P''(\vr_\ddagger)(\vr^k_h - \vr_h^{k-1})^2~dx, \\
		D^k_5 &=  (\Delta t)^{-1}\int_\Om \vr_h^{k-1}|u^k_h - u_h^{k-1}|^2~dx.
	\end{split}
\end{equation*}
\end{proposition}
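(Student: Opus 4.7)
The plan is to derive a per-time-step energy identity by testing the momentum equation \eqref{fem:moment} with $v_h = u_h^k$, the continuity equation \eqref{fem:cont} with the piecewise-constant $q_h = \tfrac{1}{2}|\widehat u_h^k|^2$, and combining the output with Lemma \ref{lem:renorm} applied to $B(\vr) = P(\vr) = \frac{a}{\gamma-1}\vr^\gamma$. The choice of $B$ is dictated by $\vr P'(\vr) - P(\vr) = p(\vr)$: with this $B$, the pressure work $-\int_\Om p(\vr_h^k)\Div u_h^k\, dx$ appearing on the right in Lemma \ref{lem:renorm} exactly cancels the pressure contribution from the momentum test, leaving only dissipative terms on the left.

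For the kinetic energy balance, observe first that $\vr_h^k \widehat u_h^k - \vr_h^{k-1}\widehat u_h^{k-1}$ is piecewise constant on $E_h$, so pairing it with $u_h^k$ is the same as pairing it with $\widehat u_h^k$. The algebraic identity
$$(\vr^k \widehat u^k - \vr^{k-1}\widehat u^{k-1})\cdot \widehat u^k = \tfrac{1}{2}\bigl(\vr^k|\widehat u^k|^2 - \vr^{k-1}|\widehat u^{k-1}|^2\bigr) + \tfrac{1}{2}(\vr^k - \vr^{k-1})|\widehat u^k|^2 + \tfrac{1}{2}\vr^{k-1}|\widehat u^k - \widehat u^{k-1}|^2$$
then splits the time term into an exact increment, a term to be eliminated by the continuity test, and the dissipation producing $D_5^k$. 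The continuity test contributes the upwind flux $\sum_\Gamma \int_\Gamma \up(\vr u)\jump{|\widehat u|^2/2}_\Gamma dS$ and an $h^{1-\eps}$ stabilization. Using the two elementary identities
$$\widehat u_\pm \cdot \jump{\widehat u}_\Gamma - \jump{|\widehat u|^2/2}_\Gamma = \pm\tfrac{1}{2}\jump{\widehat u}_\Gamma^2,\qquad \up^+(\vr u) - \up^-(\vr u) = |\up(\vr u)|,$$
the combined effect of the momentum convection $-\sum_\Gamma \int_\Gamma \up(\vr u\otimes \widehat u)\jump{\widehat u}_\Gamma dS$ and the continuity flux just introduced delivers precisely the positive upwind dissipation $D_2^k$. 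Meanwhile, the identity $\tfrac{1}{2}(\widehat u_+ + \widehat u_-)\cdot \jump{\widehat u}_\Gamma = \jump{|\widehat u|^2/2}_\Gamma$ shows that the $h^{1-\eps}$ stabilization in \eqref{fem:moment} exactly cancels the $h^{1-\eps}$ part coming from the continuity test. Finally, Lemma \ref{lem:renorm} with $B = P$ supplies the dissipations $D_1^k$, $D_3^k$, and $D_4^k$ (all manifestly non-negative since $P''(\vr) = a\gamma \vr^{\gamma-2}\ge 0$), together with the pressure term that cancels as above. Summing the resulting per-step inequality over $k = 1, \dots, m$ telescopes the kinetic and potential increments and yields \eqref{eq:energy}.

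The main obstacle is the bookkeeping of face-trace orientations ($\Gamma$ versus $\partial E$) needed to combine the two upwind fluxes into $D_2^k$ with the correct sign; once the three identities above are in place, every cancellation is forced. A secondary technical point is that the time-derivative manipulation naturally yields $\int_\Om \vr_h^{k-1}|\widehat u_h^k - \widehat u_h^{k-1}|^2\,dx$ rather than the $D_5^k$ stated with $u_h^k - u_h^{k-1}$. This gap is closed by writing $u_h = \widehat u_h + (u_h - \widehat u_h)$ and using the Poincar\'e estimate $\|w - \widehat w\|_{L^2(E)} \le Ch_E\|\Grad w\|_{L^2(E)}$ of Lemma \ref{lemma:toolbox} to absorb the fluctuation $u_h^k - \widehat u_h^k$ (and likewise at time $k-1$) into the already-controlled diffusion term $\int_\Om |\Grad_h u_h^k|^2\,dx$ via a Cauchy--Schwarz split, using the CFL-type relation $\Dt = ch$.
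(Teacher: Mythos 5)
Your strategy — test \eqref{fem:moment} with $v_h=u_h^k$, test \eqref{fem:cont} with $q_h=\tfrac12|\widehat u_h^k|^2$, and invoke Lemma~\ref{lem:renorm} with $B(z)=\tfrac{1}{\gamma-1}p(z)$ so that $\vr B'(\vr)-B(\vr)=p(\vr)$ cancels the pressure work — is exactly the paper's argument, and the three face identities you isolate are the ones that drive the cancellations in \eqref{eq:tr1}--\eqref{en:last}. (Your algebraic kinetic-energy identity even has the correct factor $\tfrac12$ in front of $\vr^{k-1}|\widehat u^k-\widehat u^{k-1}|^2$; the paper's printed version drops this $\tfrac12$, as it also does in writing $D_2^k$ without the $\tfrac12$ present in $\tfrac{(\ov u_+^k)^2}{2}+\tfrac{(\ov u_-^k)^2}{2}-\ov u_+^k\ov u_-^k=\tfrac12\jump{\ov u^k}_\Gamma^2$.)

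The one place you go wrong is the proposed ``secondary fix.'' You correctly observe that the time-difference identity produces $\int_\Om \vr_h^{k-1}|\ov u_h^k-\ov u_h^{k-1}|^2\,dx$ and not the quantity with bare $u_h$ that $D_5^k$ is printed with. But your Poincar\'e/CFL absorption does not work: after the split $u_h=\ov u_h+(u_h-\ov u_h)$, the fluctuation contribution is of the form
\begin{equation*}
\frac{1}{\Delta t}\int_\Om \vr_h^{k-1}\bigl|(u_h^k-\ov u_h^k)-(u_h^{k-1}-\ov u_h^{k-1})\bigr|^2\,dx
\;\le\;\frac{C h^2}{\Delta t}\,\|\vr_h^{k-1}\|_{L^\infty(\Om)}\Bigl(\|\Grad_h u_h^k\|_{L^2}^2+\|\Grad_h u_h^{k-1}\|_{L^2}^2\Bigr),
\end{equation*}
and $\|\vr_h^{k-1}\|_{L^\infty(\Om)}$ is not under control (only $L^\gamma$ is). Even granting a pointwise bound, the prefactor is $Ch\,\|\vr_h\|_{L^\infty}$, not $\le 1$, so one cannot absorb it into $\Delta t\sum_k\int|\Grad_h u_h^k|^2$ without changing the statement. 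In fact no such fix is needed: the $D_5^k$ appearing in the Proposition is a misprint for the quantity with $\ov u_h$, which is what the proof produces and what the paper itself uses downstream (see Lemma~\ref{lem:timederiv}, which estimates $\sum_k\|\sqrt{\vr_h^{k-1}}[\ov u_h^k-\ov u_h^{k-1}]\|_{L^2}^2$). Drop the Poincar\'e patch and instead state and use $D_5^k$ with $\ov u_h$; the rest of your argument then stands.
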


\begin{proof}
	Let $v_h = u^k_h$ in the momentum scheme \eqref{fem:moment}, to obtain
	\begin{equation}\label{en:beg}
		\begin{split}
			&\int_\Om \frac{\vr_h^k\ov u_h^k - \vr_h^{k-1}\ov u_h^{k-1}}{\Delta t} u_h^k~dx + \int_\Om |\Grad_h u_h^k|^2~dx\\
			& \qquad \quad = - \sum_{E} \int_{\partial E} \up^k(\vr u\otimes \ov u)\ov u^k_+~dS(x) + \int_\Om p(\vr_h^k)\Div u_h^k~dx \\
			&\qquad \qquad + h^{1-\epsilon}\sum_E \int_{\partial E} \left(\frac{\ov u^k_- + \ov u^k_+}2\right) \jump{\vr^k_h}\ov u^k_h~dS(x).
		\end{split}
	\end{equation}
	From Lemma \ref{lem:renorm} with $B(z) = \frac{1}{\gamma-1}p(z)$, we see that the pressure term
	\begin{equation}\label{en:pres}
		\begin{split}
			&\int_\Om p(\vr_h^k)\Div u_h^k~dx \\
			&\qquad \geq - \int_\Om \frac{1}{\gamma-1}\frac{p(\vr_h^k) - p(\vr_h^{k-1})}{\Delta t}~dx - D_1^k-D_3^k- D_4^k.
		\end{split}
	\end{equation}
	
 	Next, we turn our attention to the first term after the equality in \eqref{en:beg}.
	From the definition of $\up(\vr u\otimes \ov u)$, we have that
	\begin{align}\label{eq:tr1}
			&\sum_{E} \int_{\partial E} \up^k(\vr u\otimes \ov u)\ov u^k_+~dS(x) \nonumber\\
			&\quad\qquad = \sum_{E} \int_{\partial E}\left(\up^+(\vr u)\ov u^k_+ + \up^-(\vr u)\ov u^k_-\right)\ov u^k_+~dS(x) \nonumber \\
			&\quad\qquad = \sum_{E} \int_{\partial E}\up^k(\vr u)\frac{(\ov u^k_+)^2}{2} - \up^-(\vr u)\frac{(\ov u^k_+)^2}{2} \\
			&\quad\qquad\qquad \qquad + \up^+(\vr u)\frac{(\ov u^k_+)^2}{2} + \up^-(\vr u)\ov u^k_- \ov u^k_+ dS(x). \nonumber
	\end{align}	
	 By setting $q_h = (1/2)(\ov u_h^k)^2$ in the continuity scheme \eqref{fem:cont}, we see that the first 
	term after the equality in \eqref{eq:tr1} appears 
	\begin{align}\label{eq:tr2}
				&\int_\Om \frac{\vr_h^k - \vr_h^{k-1}}{\Delta t} \frac{|\ov u^k_h|^2}{2}~dx \nonumber\\
				&\qquad = -\sum_{E} \int_{\partial E}\up^k(\vr u)\frac{(\ov u^k_+)^2}{2}~dS(x) 
				 + h^{1-\eps}\sum_E\int_{\partial E}\jump{\vr_h^k}\frac{(\ov u^k_+)^2}{2}~dS(x) \nonumber\\
				&\qquad = -\sum_{E} \int_{\partial E}\up^k(\vr u)\frac{(\ov u^k_+)^2}{2}~dS(x) \\
				&\qquad \qquad\qquad
				 + h^{1-\eps}\sum_E\int_{\partial E}\jump{\vr_h^k}\left(\frac{\ov u^k_+ + \ov u^k_-}{2}\right)\ov u^k_+~dS(x). \nonumber
	\end{align}
	To see the contribution of the second term in \eqref{eq:tr1}, we first recall that
	\begin{equation*}
		\left.\up^+(\vr u)\right|_{\partial E^+} = -\left.\up^-(\vr u)\right|_{\partial E^-}, \qquad \partial E^+ \cap \partial E^- = \Gamma,
	\end{equation*}
	 since the normal vector has opposite signs.
	Using this, we obtain 
	\begin{equation*}
		\begin{split}
			&\sum_{E}\int_{\partial E}\up^+(\vr u)\frac{(\ov u^k_+)^2}{2}- \up^-(\vr u)\frac{(\ov u^k_+)^2}{2}+ \up^-(\vr u)\ov u^k_- \ov u^k_+ ~dS(x) \\
			&= \sum_\Gamma \int_\Gamma \up^+(\vr u)\frac{(\ov u_+^k)^2}{2} - \up^-(\vr u)\frac{(\ov u^k_-)^2}{2} - \up^-(\vr u)\frac{(\ov u^k_+)^2}{2}\\
			&\qquad \qquad  - \up^+(\vr u)\frac{(\ov u^k_-)^2}{2} + \up^-(\vr u)\ov u^k_- \ov u^k_+ - \up^+(\vr u) \ov u^k_+ \ov u^k_-~dS(x)\\
			&= \sum_\Gamma \int_\Gamma |\up^k(\vr u)|\left(\frac{(\ov u^k_+)^2}{2} + \frac{(\ov u^k_-)^2}{2} - \ov u^k_+ \ov u^k_-  \right)~dS(x) \equiv D^k_2.
		\end{split}
	\end{equation*}
	By applying this together with \eqref{eq:tr2} in \eqref{eq:tr1}, we discover
	\begin{equation}\label{en:last}
		\begin{split}
				&-\sum_{E} \int_{\partial E} \up^k(\vr u\otimes \ov u)\ov u^k_+~dS(x) \\
				&\qquad + h^{1-\eps}\sum_E\int_{\partial E}\jump{\vr_h^k}\left(\frac{\ov u^k_+ + \ov u^k_-}{2}\right)\ov u^k_+~dS(x) \\
				&\qquad = \int_\Om \frac{\vr_h^k - \vr_h^{k-1}}{\Delta t} \frac{|\ov u^k_h|^2}{2}~dx - D^k_2.
		\end{split}
	\end{equation}
	
	Now, setting \eqref{en:last} and \eqref{en:pres} into \eqref{en:beg} reveals
	\begin{equation*}
		\begin{split}
			&\int_\Om \frac{\vr_h^k\ov u_h^k - \vr_h^{k-1}\ov u_h^{k-1}}{\Delta t} u_h^k - \frac{\vr_h^k - \vr_h^{k-1}}{\Delta t} \frac{|\ov u^k_h|^2}{2} 
			+ \frac{1}{\gamma-1}\frac{p(\vr_h^k) - p(\vr_h^{k-1})}{\Delta t}~dx \\
			&\quad + \int_\Om |\Grad_h u^k_h|^2 ~dx + D_1^k + D_2^k + D_3^k + D_4^k \leq 0.
		\end{split}
	\end{equation*}
	A simple calculation gives
	\begin{equation*}
		\begin{split}
			&\int_\Om \frac{\vr_h^k\ov u_h^k - \vr_h^{k-1}\ov u_h^{k-1}}{\Delta t} u_h^k - \frac{\vr_h^k - \vr_h^{k-1}}{\Delta t}\frac{|\ov u^k_h|^2}{2}~dx \\
			&\qquad = \int_\Om \frac{\vr_h^k |u_h^k|^2 - \vr_h^{k-1}|u_h^{k-1}|^2}{2\Delta t} + \frac{\vr_h^{k-1}|u_h^k - u_h^{k-1}|^2}{\Delta t}~ dx \\
			& \qquad \equiv \int_\Om\frac{\vr_h^k |u_h^k|^2 - \vr_h^{k-1}|u_h^{k-1}|^2}{2\Delta t}~dx + D_5^k.
		\end{split}
	\end{equation*}
	Consequently, by combining the two previous inequalities, 
	\begin{equation*}
		\begin{split}
			&\int_\Om\frac{\vr_h^k |u_h^k|^2 - \vr_h^{k-1}|u_h^{k-1}|^2}{2\Delta t}~dx
			+ \frac{p(\vr_h^k) - p(\vr_h^{k-1})}{(\gamma - 1)\Delta t}~dx \\
			&\quad + \int_\Om |\Grad_h u^k_h|^2~dx + \sum_{i=1}^5 D_i^k \leq 0.
		\end{split}
	\end{equation*}
	We conclude by multiplying with $\Delta t$ and summing over $k=1, \ldots, M$.
\end{proof}

Observe that the energy estimate does not provide $L^\infty$ control in time on $\vr_h |u_h|^2$.
Instead, we only gain this control on the projection $\vr_h |\widehat u_h|^2$. 
The following corollary is an immediate consequence of 
the energy estimate (Proposition \ref{prop:energy}) 
and the H\"older inequality (cf.~\cite{Feireisl}).

\begin{corollary}\label{cor:energy}
Under the conditions of Proposition \ref{prop:energy},
\begin{align*}
	\vr_h \inb L^\infty(0,T; L^\gamma(\Om)), \quad p(\vr_h) \inb L^\infty(0,T;L^1(\Om))\\
	u_h \inb L^2(0,T;L^6(\Om)), \quad\Grad_h u_h \inb L^2(0,T;L^2(\Om)),
\end{align*}
\begin{equation*}
	\vr_h \widehat u_h \inb L^\infty(0,T;L^{m_\infty}(\Om)), \quad \vr_h u_h  \inb L^2(0,T;L^{m_2}(\Om)),
\end{equation*}
\begin{equation*}
	\vr_h |\widehat u_h|^2 \in L^\infty(0,T;L^1(\Om)), \quad \vr_h |u_h|^2 \inb L^2(0,T;L^{c_2}(\Om)),
\end{equation*}
where the exponents are given by (since $\gamma > 3$)
\begin{align*}
	m_\infty = \frac{2\gamma}{\gamma + 1} > \frac{3}{2}, \qquad m_2 = \frac{6\gamma}{3 + \gamma} > 3,
	\qquad c_2 = \frac{3\gamma}{3 + \gamma} > \frac{3}{2}.
\end{align*}
\end{corollary}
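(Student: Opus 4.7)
The strategy is to read off the direct bounds from Proposition \ref{prop:energy}, then combine them via Hölder's inequality and a discrete Sobolev embedding for the Crouzeix--Raviart space.

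First I would extract the ``free'' bounds from the left-hand side of \eqref{eq:energy}. Taking the maximum-in-$m$ term controls both $\int_\Om p(\vr_h^m)~dx$ and $\int_\Om \vr_h^m|\widehat u_h^m|^2~dx$ uniformly in $h$; since $p(\vr)=a\vr^\gamma$, this yields $p(\vr_h)\inb L^\infty(0,T;L^1(\Om))$, $\vr_h\inb L^\infty(0,T;L^\gamma(\Om))$, and $\vr_h|\widehat u_h|^2\in L^\infty(0,T;L^1(\Om))$. The Riemann-sum term $\Delta t\sum_k\|\Grad_h u_h^k\|_{L^2}^2$ directly gives $\Grad_h u_h\inb L^2(0,T;L^2(\Om))$.

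Next I would upgrade $\Grad_h u_h\in L^2 L^2$ to $u_h\in L^2 L^6$. Because $V_h(\Om)$ is nonconforming, I cannot invoke the continuous Sobolev embedding directly; instead I would use the discrete Poincaré--Sobolev inequality for Crouzeix--Raviart functions with vanishing mean on boundary faces. This can be obtained by combining the per-element Sobolev inequality, the trace lemma (Lemma \ref{lemma:toolbox}), and the Stummel translation estimate \eqref{stummel1}--\eqref{stummel2} used in Lemma \ref{lem:translation} to control the jump defect across faces. The end result, which is classical, is
\[
\|u_h\|_{L^6(\Om)}\leq C\|\Grad_h u_h\|_{L^2(\Om)},
\]
with $C$ independent of $h$, and thus $u_h\inb L^2(0,T;L^6(\Om))$.

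Finally I would obtain the remaining mixed-norm bounds by Hölder. Writing $\vr_h\widehat u_h=\sqrt{\vr_h}\cdot\sqrt{\vr_h}\widehat u_h$ and using $\sqrt{\vr_h}\in L^\infty(0,T;L^{2\gamma}(\Om))$ together with $\sqrt{\vr_h}\widehat u_h\in L^\infty(0,T;L^2(\Om))$ gives $\vr_h\widehat u_h\inb L^\infty(0,T;L^{m_\infty}(\Om))$ with $\tfrac{1}{m_\infty}=\tfrac{1}{2\gamma}+\tfrac{1}{2}$. For the space--time bound, I would pair $\vr_h\inb L^\infty_t L^\gamma_x$ with $u_h\inb L^2_t L^6_x$ to obtain $\vr_h u_h\inb L^2(0,T;L^{m_2}(\Om))$, and similarly $\vr_h|u_h|^2\inb L^2(0,T;L^{c_2}(\Om))$ by pairing $\vr_h\inb L^\infty_t L^\gamma_x$ with $|u_h|^2\inb L^1_t L^3_x$ (from $u_h\inb L^2 L^6$).

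The only non-mechanical step is the discrete Sobolev embedding; everything else is bookkeeping with Hölder exponents and direct readings of \eqref{eq:energy}.
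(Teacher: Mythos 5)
Your approach---reading off the direct bounds from \eqref{eq:energy}, invoking a discrete Sobolev inequality for the Crouzeix--Raviart space, and then closing with H\"older---is precisely what the paper intends; the paper itself only remarks that the corollary is an immediate consequence of the energy estimate and H\"older. The direct readings of $\vr_h$, $p(\vr_h)$, $\vr_h|\widehat u_h|^2$, $\Grad_h u_h$, the broken Sobolev step $\|u_h\|_{L^6}\leq C\|\Grad_h u_h\|_{L^2}$, and the $m_\infty$ factorization $\vr_h\widehat u_h=\sqrt{\vr_h}\cdot\sqrt{\vr_h}\widehat u_h$ with $\sqrt{\vr_h}\inb L^\infty L^{2\gamma}$ and $\sqrt{\vr_h}\widehat u_h\inb L^\infty L^2$ are all correct.

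The last two H\"older pairings, however, do not produce the exponents you assert. Pairing $\vr_h\inb L^\infty(0,T;L^\gamma)$ with $u_h\inb L^2(0,T;L^6)$ yields $\vr_h u_h\inb L^2(0,T;L^p)$ with
\begin{equation*}
\frac1p=\frac1\gamma+\frac16=\frac{6+\gamma}{6\gamma},
\end{equation*}
so $p=\tfrac{6\gamma}{6+\gamma}$, which is strictly smaller than $m_2=\tfrac{6\gamma}{3+\gamma}$; the latter corresponds to $\tfrac1{m_2}=\tfrac1{2\gamma}+\tfrac16$ and would require $\vr_h\inb L^\infty L^{2\gamma}$, which the energy estimate does not give. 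Likewise, pairing $\vr_h\inb L^\infty L^\gamma$ with $|u_h|^2\inb L^1L^3$ yields $\vr_h|u_h|^2\inb L^1(0,T;L^{c_2})$, i.e. $L^1$ in time, not the asserted $L^2$; reaching $L^2$ in time would need $|u_h|^2\inb L^2L^3$, i.e.\ $u_h\inb L^4L^6$, which is not available. To reach $m_2$ you would need an intermediate bound of the form $\sqrt{\vr_h}\,u_h\inb L^2L^6$, which you do not control (the energy only controls $\sqrt{\vr_h}\,\widehat u_h$ in $L^\infty L^2$). As written, these two steps therefore do not deliver the exponents claimed in the corollary; the discrepancy should either be bridged with a different decomposition or flagged explicitly rather than passed over silently as ``bookkeeping.''
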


\section{Estimates on the numerical operators}\label{sec:operators}
To prove convergence of the numerical method, our strategy 
will be to adapt the continuous existence theory 
to the numerical setting. We will succeed with
this by controlling the weak error of the numerical 
operators relative to their continuous counterparts. 
The purpose of this section, is to derive the needed error estimates. 

For notational convenience, let us define
\begin{equation*}
	D_t^h f = \frac{f(t) - f(t-\Delta t)}{\Delta t},
\end{equation*}
and observe that this satisfies 
\begin{equation*}
	D_t^h \vr_h(t) = \frac{\vr^k_h - \vr_h^{k-1}}{\Delta t} \quad t \in [t^{k}, t^{k+1}).
\end{equation*}

\subsection{The convective discretizations}
We begin by deriving identities for the distributional error 
of the numerical convection operators.

\begin{lemma}\label{lem:transport}
Fix 
two functions $\phi \in C_0^\infty(\Om)$, $v \in [C_0^\infty(\Om)]^d$. The numerical transport operators 
in \eqref{fem:cont} and \eqref{fem:moment} satisfies the following identities
	\begin{align}
				\sum_\Gamma \int_{\Gamma} \up(\vr u)~\jump{\Pi_h^Q \phi}_\Gamma~dS(x) 
				 &= \int_\Om \vr_h \widetilde u_h \Grad \phi~dx + P_1(\phi), \label{eq:cont-trans} \\
			\sum_{\Gamma} \int_{\Gamma} \up(\vr u\otimes \ov u)\jump{\widehat{\Pi_h^V v}}_\Gamma~dS(x) 
			& = \int_\Om \vr_h \widetilde u_h\otimes \ov u_h:\Grad  v~dx + \sum_{i=2}^4 P_i(v) \label{eq:moment-trans}, 
	\end{align}
	where the error functionals $P_i$, $i=1,\ldots, 4$, are 
	\begin{align*}
		P_1(\phi) & = \sum_E \int_{\partial E} \jump{\vr_h}_{\partial E}(\widetilde u_h \cdot \nu)^-(\Pi_h^Q\phi - \phi)~dS(x), \\
		P_2(v) & = \sum_E \int_{\partial E} \jump{\vr_h}_{\partial E}(\widetilde u_h \cdot \nu)^-\ov u_h \left(\widehat{\Pi_h^V v} - v\right)~dS(x),\\
		P_3(v) & = \sum_E \int_{\partial E}\up_-(\vr u)\jump{\ov u_h}_{\partial E}\left(\widehat{\Pi_h^V v} - v\right)~dS(x), \\
		P_4(v) &= \sum_E \int_{E}\vr_h \Div u_h \ov u_h \left(\Pi_h^V v - v\right)~dx.
	\end{align*}
\end{lemma}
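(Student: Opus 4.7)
My plan is to establish both identities by the same template: convert each face sum on the left into a sum over element boundaries using the standard DG bookkeeping identity
\[
\sum_\Gamma \int_\Gamma \up(\vr u)\jump{q_h}_\Gamma\,dS = -\sum_E \int_{\partial E} \up(\vr u)|_{\partial E}\,q_h|_E\,dS,
\]
algebraically decompose $\up(\vr u)|_{\partial E} = \vr_+(\widetilde u_h\cdot\nu) - \jump{\vr_h}_{\partial E}(\widetilde u_h\cdot\nu)^-$, and match the result against an element-wise integration by parts of the right-hand side. Three structural properties will be used throughout: (a) the commutation $\Div \widetilde u_h = \Pi_h^Q \Div u_h$ from \eqref{eq:commute} makes $\Div \widetilde u_h$ piecewise constant and equal to $\Div_h u_h$; (b) $\widetilde u_h\cdot\nu$ is \emph{constant} on every face $\Gamma$ (a special feature of the lowest-order N{\'e}d{\'e}lec element of the first kind, since on any plane with normal $\nu$ the linear function $x\cdot\nu$ is constant); (c) for piecewise constant $g$ one has $\int_\Om g\,\widehat\phi\,dx = \int_\Om g\,\phi\,dx$ by the defining property of $\Pi_h^Q$.

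For \eqref{eq:cont-trans} the bookkeeping identity and the flux decomposition rewrite the LHS as $-\int_\Om \vr_h \Div_h \widetilde u_h\,\widehat\phi\,dx + \sum_E \widehat\phi|_E \int_{\partial E}\jump{\vr_h}_{\partial E}(\widetilde u_h\cdot\nu)^-\,dS$. By (c) the first term is $-\int_\Om \vr_h \Div_h\widetilde u_h\,\phi\,dx$, which by element-wise integration by parts equals $\int_\Om \vr_h \widetilde u_h\cdot\Grad\phi\,dx - \sum_E\vr_h|_E\int_{\partial E}\phi(\widetilde u_h\cdot\nu)\,dS$. Using (b) and a short telescoping over faces, this last boundary sum equals $-\sum_E \int_{\partial E}\jump{\vr_h}_{\partial E}(\widetilde u_h\cdot\nu)^-\phi\,dS$, which combines with the second sum above to produce exactly $P_1(\phi)$.

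For \eqref{eq:moment-trans} I split the flux twice: first as $\up(\vr u\otimes\ov u)|_{\partial E} = \up(\vr u)|_{\partial E}\,\ov u_+ - \up^-(\vr u)|_{\partial E}\jump{\ov u_h}_{\partial E}$, then $\up(\vr u)|_{\partial E}$ as before. This yields a \emph{central} piece and two jump corrections on the LHS. The central piece, after using that $\vr_h \Div_h u_h\,\ov u_h$ is piecewise constant and writing $\Pi_h^V v = v + (\Pi_h^V v - v)$, produces $-\int_\Om \vr_h\Div_h u_h\,\ov u_h\cdot v\,dx$ together with the $P_4(v)$ contribution. Element-wise integration by parts of $\int_\Om \vr_h\widetilde u_h\otimes\ov u_h:\Grad v\,dx$, using that $\ov u_h$ is piecewise constant inside each element, converts this bulk integral into the target convective integral plus a boundary flux $B:=\sum_E \vr_h|_E\int_{\partial E}(\widetilde u_h\cdot\nu)(\ov u_h\cdot v)\,dS$. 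The two jump-correction pieces yield $P_2(v)+P_3(v)$ plus leftover $v$-dependent face sums; by the jump-product identity $\jump{ab}_\Gamma = \tfrac12(a_++a_-)\jump{b}_\Gamma + \jump{a}_\Gamma\tfrac12(b_++b_-)$ applied face-by-face with $a=\vr_h$, $b=\ov u_h$, these leftovers cancel exactly $B$.

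The main obstacle is not conceptual but combinatorial: one has to track carefully the two sign conventions (inside/outside on $\partial E$ versus $\pm$-relative-to-$\nu$ on $\Gamma$) and how $\up^\pm$ and $\jump{\cdot}$ transform under each. The only non-routine step is the final face-by-face cancellation in \eqref{eq:moment-trans}, which after expansion reduces to the jump-product identity above combined with the definition $\up(\vr u)|_\Gamma = \vr_-(\widetilde u_h\cdot\nu)^+ + \vr_+(\widetilde u_h\cdot\nu)^-$.
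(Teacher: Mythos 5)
Your plan is correct and at heart is the same computation as the paper's: convert the face sum to a sum over element boundaries, split $\up(\vr u)|_{\partial E}=\vr_+(\widetilde u_h\cdot\nu)-\jump{\vr_h}_{\partial E}(\widetilde u_h\cdot\nu)^-$ and $\up(\vr u\otimes\ov u)|_{\partial E}=\up(\vr u)\,\ov u_+ - \up^-(\vr u)\jump{\ov u_h}_{\partial E}$, and then match the ``central'' contribution against an element-wise integration by parts using $\Div\widetilde u_h=\Div_h u_h$, $\Grad\Pi_h^Q\phi=0$, and $\vr_h,\ov u_h$ piecewise constant. The main bookkeeping difference is at the first step: the paper immediately replaces $\Pi_h^Q\phi$ by $\Pi_h^Q\phi-\phi$ (and $\widehat{\Pi_h^V v}$ by $\widehat{\Pi_h^V v}-v$), which is free because $\phi$ and $v$ are single-valued across faces and so contribute nothing to a face-jump sum; after that, a single application of the divergence theorem produces the target volume integral and all the $P_i$ at once, with no residual boundary terms. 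You instead keep the discrete test function throughout, integrate the target $\int\vr_h\widetilde u_h\otimes\ov u_h:\Grad v$ by parts to produce a boundary flux $B$, and then have to show that the $v$-dependent leftovers from the two flux splittings cancel $B$. That cancellation is true, and it is exactly the statement that $\sum_E\int_{\partial E}\up(\vr u\otimes\ov u)\cdot v\,dS=\sum_\Gamma\int_\Gamma\up(\vr u\otimes\ov u)\jump{v}_\Gamma\,dS=0$; recognizing this up front (as the paper does) is cleaner than the face-by-face jump algebra you sketch.

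One point of imprecision: the leftover sums, written per face with $s^\pm=(\widetilde u_h\cdot\nu_\Gamma)^\pm$, amount to $\jump{\vr_h}_\Gamma(s^-\ov u_{h,-}+s^+\ov u_{h,+})+\up(\vr u)|_\Gamma\jump{\ov u_h}_\Gamma$, and you need this to equal $s\jump{\vr_h\ov u_h}_\Gamma$ with $s=s^++s^-$. The jump-product identity $\jump{ab}=\tfrac12(a_++a_-)\jump{b}+\jump{a}\tfrac12(b_++b_-)$ alone gives $s\jump{ab}=s\tfrac12(a_++a_-)\jump{b}+s\jump{a}\tfrac12(b_++b_-)$, which is not term-by-term the expression above; you additionally have to observe that the two mismatches, $\pm\tfrac12|s|\,\jump{\vr_h}\jump{\ov u_h}$, are equal and opposite and hence cancel. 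That extra half-page of sign chasing is precisely what the paper's early subtraction of $v$ avoids. So: correct strategy, equivalent decompositions, but a less economical route than the paper's through the final cancellation, and a claimed shortcut (the jump-product identity by itself) that needs one more step to be airtight.
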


\begin{proof}
Using the continuity of $\up(\vr u)$ and $\phi$ across edges, we calculate
\begin{equation*}
	\begin{split}
		&\sum_\Gamma \int_{\Gamma} \up(\vr u)~\jump{\Pi_h^Q \phi}_\Gamma~dS(x) \\
		&\quad = -\sum_E \int_{\partial E} \up(\vr u)~\Pi_h^Q \phi~dS(x) \\
		&\quad = -\sum_E \int_{\partial E} \up(\vr u)~\left(\Pi_h^Q \phi - \phi \right)~dS(x) \\
		&\quad = -\sum_E \int_{\partial E} \left(\vr_+(\widetilde u_h \cdot \nu)^+ + \vr_- (\widetilde u_h \cdot \nu)^-\right)~\left(\Pi_h^Q \phi - \phi \right)~dS(x),
	\end{split}
\end{equation*}
where the last identity is the definition of $\up(\vr u)$.
Next, we add and subtract to deduce
\begin{equation*}
\begin{split}
		&\sum_\Gamma \int_{\Gamma} \up(\vr u)~\jump{\Pi_h^Q \phi}_\Gamma~dS(x) \\
		&\quad = -\sum_E \int_{\partial E} \vr_+(\widetilde u_h \cdot \nu)\left(\Pi_h^Q \phi - \phi \right) \\
		&\qquad \qquad \qquad	+ (\vr_- - \vr_+) (\widetilde u_h \cdot \nu)^-\left(\Pi_h^Q \phi - \phi \right)~dS(x) \\
		&\quad = -\sum_E \int_E \Div\left(\vr_h \widetilde u_h\left(\Pi_h^Q \phi - \phi \right)\right)~dS(x) + P_1(\phi) \\
		&\quad = \int_\Om \vr_h  \widetilde u_h \Grad \phi~dx - \sum_E \int_E \vr_h \Div u_h\left(\Pi_h^Q \phi - \phi \right)~dx + P_1(\phi).
	\end{split}
\end{equation*}	
We conclude \eqref{eq:cont-trans} by recalling that  $\Div_h u_h$ is constant on each element and hence the second term is zero.

To derive \eqref{eq:moment-trans}, we apply the definition \eqref{def:upp} of $\up(\vr u \otimes \ov u)$ and 
add to subtract to obtain
\begin{equation*}
	\begin{split}
		&\sum_{\Gamma} \int_{\Gamma} \up(\vr u\otimes \ov u)\jump{\widehat{\Pi_h^V v}}_\Gamma~dS(x)  \\
		&\quad = -\sum_{ E} \int_{\partial E} \up(\vr u\otimes \ov u)~\left(\widehat{\Pi_h^V v}- v\right)~dS(x) \\
		&\quad = -\sum_{ E} \int_{\partial E} \left(\up_+(\vr u)\ov u_+ + \up_-(\vr u)\ov u_-\right)~\left(\widehat{\Pi_h^V v}- v\right)~dS(x) \\
		&\quad = -\sum_{ E} \int_{\partial E} \left(\up(\vr u)\ov u_+ + \up_-(\vr u)(\ov u_-- \ov u_+)\right)~\left(\widehat{\Pi_h^V v}- v\right)~dS(x) \\
		&\quad = -\sum_{ E} \int_{\partial E} \up(\vr u)\ov u_+\left(\widehat{\Pi_h^V v}- v\right)~dS(x) + P_3(v).
\end{split}
\end{equation*}
To proceed, we apply the definition of $\up(\vr u)$ \eqref{def:up}
and add and subtract to obtain
\begin{equation*}
	\begin{split}
		&\sum_{\Gamma} \int_{\Gamma} \up(\vr u\otimes \ov u)\jump{\widehat{\Pi_h^V v}}_\Gamma~dS(x)  \\
		&\quad = -\sum_{ E} \int_{\partial E} \left(\vr_+ (\widetilde u_h\cdot \nu)^+ + \vr_-(\widetilde u_h \cdot \nu)^-\right)\ov u_+\left(\widehat{\Pi_h^V v}- v\right)~dS(x) + P_3(v) \\
		&\quad = -\sum_{ E} \int_{\partial E} \vr_+ (\widetilde u_h\cdot \nu)\ov u_+\left(\widehat{\Pi_h^V v}- v\right) \\
		&\qquad \qquad \qquad + (\vr_--\vr_+)(\widetilde u_h \cdot \nu)^-\ov u_+\left(\widehat{\Pi_h^V v}- v\right)~dS(x) + P_3(v) \\
		&\quad = -\sum_{ E} \int_{\partial E} \vr_+ (\widetilde u_h\cdot \nu)\ov u_+\left(\widehat{\Pi_h^V v}- v\right)~dS(x) + P_2(v) + P_3(v) \\
	\end{split}
	\end{equation*}
We can then apply the divergence theorem to the first term to obtain
	\begin{equation*}
		\begin{split}
		&\sum_{\Gamma} \int_{\Gamma} \up(\vr u\otimes \ov u)\jump{\widehat{\Pi_h^V v}}_\Gamma~dS(x)  \\
		&\quad = \int_\Om \vr_h \widetilde u_h\otimes \ov u_h:\Grad  v~dx -\sum_E \int_E\vr_h \Div u_h \ov u_h \left(\Pi_h^V v - v\right)~dx\\
		&\quad \qquad \qquad \quad + P_2(v) + P_3(v) \\
		&\quad = \int_\Om \vr_h \widetilde u_h\otimes \ov u_h:\Grad  v~dx +P_4(v)+ P_2(v) + P_3(v),
	\end{split}
\end{equation*}	
which is \eqref{eq:moment-trans}.
\end{proof}

Identities like \eqref{eq:cont-trans} and \eqref{eq:moment-trans} 
can be derived for any numerical method. The difficult part is to 
control the error terms, in our case is given by $P_i$, $i=1, \ldots, 4$.
The following proposition provides sufficient control on the error terms.
Note in particular the integrability required of the test-functions. 

\begin{proposition}\label{pro:transport}
Let $(\vr_h^k, u_h^k)$, $k=1, \ldots, M$ be the numerical solution obtained using 
the scheme \eqref{fem:cont} - \eqref{fem:moment}. Let $(\vr_h, u_h)$ be 
the piecewise constant extension of $(\vr_h^k, u_h^k)$, $k=1, \ldots, M$ in time to all of $[0, M\Delta t]$ 
(i.e~\eqref{def:schemeII}). 
Then, the $P_i$, $i=1, \ldots, 4$ in Proposition \ref{lem:transport} are also 
piecewise constant in time and there exists a constant $C > 0$, independent of $h$ and $\Delta t$, 
such that
	\begin{align}
		\int_0^T |P_1(\phi)|~dt &\leq h^{\frac{1}{2}-\min\{3\frac{4-\gamma}{4\gamma},0\}}C\|\Grad \phi\|_{L^4(0,T;L^\frac{12}{5}(\Om)}, \label{eq:p1}\\
		\int_0^T|P_2(v)|~dt &\leq h^{\frac{1}{2}-\min\{3\frac{4-\gamma}{4\gamma},0\}}C\|\Grad v\|_{L^\infty(0,T;L^3(\Om))}, \label{eq:p2}\\
		\int_0^T |P_3(v)|~dt &\leq h^\frac{1}{2}C\|\Grad v\|_{L^4(0,T;L^\frac{6\gamma}{5\gamma-6}(\Om))} \label{eq:p3}, \\
		\int_0^T |P_4(v)|~dt &\leq h^\frac{2(\gamma-9/4)}{\gamma}C\|\Grad v\|_{L^\infty(0,T;L^\gamma(\Om))} \label{eq:p4}.
	\end{align}
	In particular, for $\gamma > 3$, we have that
	\begin{equation}\label{eq:pcombined}
		\begin{split}
				\int_0^T |P_1(\phi)|~dt & \leq h^\frac{1}{4}C\|\Grad \phi\|_{L^4(0,T;L^\frac{12}{5}(\Om)},\\
				 \int_0^T |P_2(v)| + |P_3(v)|~dt &\leq Ch^\frac{1}{4}\|\Grad v\|_{L^\infty(0,T;L^\gamma(\Om))}.
		\end{split}
	\end{equation}

\end{proposition}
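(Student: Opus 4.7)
My strategy is to handle each of $P_1,\ldots,P_4$ by a Cauchy--Schwarz (or H\"older) split that isolates three groups of factors: (a) a jump of $\vr_h$ or $\ov u_h$, absorbed by one of the numerical dissipation quantities $D_1^k,D_2^k,D_3^k$ from Proposition~\ref{prop:energy}; (b) an interpolation error such as $\Pi_h^Q\phi-\phi$, $\widehat{\Pi_h^V v}-v$, or $\Pi_h^V v-v$, for which Lemma~\ref{lemma:interpolationerror} produces a power of $h$ and the trace inequality of Lemma~\ref{lemma:toolbox} converts a boundary integral into an element integral at the cost of $h^{-1/2}$; and (c) the remaining factors of $\vr_h$, $\widetilde u_h$, and $\ov u_h$, whose space-time norms are supplied by Corollary~\ref{cor:energy}. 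That each $P_i$ is piecewise constant in time is immediate from the extension \eqref{def:schemeII}.

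For $P_1(\phi)$ I would write $|P_1(\phi)|\le \sum_\Gamma\int_\Gamma |\jump{\vr_h}_\Gamma|\,|\widetilde u_h\cdot\nu|\,|\Pi_h^Q\phi-\phi|\,dS(x)$ and apply Cauchy--Schwarz with weight $(|\widetilde u_h\cdot\nu|+h^{1-\eps})^{1/2}$, so that one factor is controlled by $\sqrt{D_1^k+D_3^k}$ (modulo density weights) and the other becomes a trace integral of $|\widetilde u_h|\,|\Pi_h^Q\phi-\phi|^2$. Passing to elements via Lemma~\ref{lemma:toolbox} costs $h^{-1/2}$, the interpolation estimate gains $h$, and H\"older in space-time distributes the remaining norms among $\vr_h\in L^\infty_tL^\gamma_x$, $\widetilde u_h\in L^2_tL^6_x$, and $\nabla\phi\in L^4_tL^{12/5}_x$, yielding the claimed $h^{1/2}$ up to a loss $h^{3(4-\gamma)/(4\gamma)}$ when $\gamma<4$ because the density weight in $D_3^k$ can no longer be absorbed cleanly into $\|\vr_h\|_{L^\infty_tL^\gamma_x}$. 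The treatment of $P_2(v)$ is essentially identical after replacing $\Pi_h^Q\phi-\phi$ by $\ov u_h(\widehat{\Pi_h^V v}-v)$: the extra $\ov u_h$ occupies an additional slot in the H\"older chain, which is why the appropriate test-function norm shifts to $\|\nabla v\|_{L^\infty_tL^3_x}$.

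For $P_3(v)$ the jump sits on $\ov u_h$ and is controlled by $\sqrt{D_2^k}$ after a Cauchy--Schwarz with weight $\sqrt{|\up(\vr u)|}$; the companion factor $\bigl(\sum_\Gamma\int_\Gamma |\up(\vr u)|(\widehat{\Pi_h^V v}-v)^2\,dS\bigr)^{1/2}$ is estimated by trace plus interpolation, with the $\up(\vr u)$ weight bounded in H\"older using $\vr_h$ and $\widetilde u_h$, delivering $h^{1/2}$ directly. The volume term $P_4(v)$ needs no trace inequality: since $\vr_h$, $\Div_h u_h$, and $\ov u_h$ are piecewise constant, I would apply H\"older with $\vr_h\in L^\infty_tL^\gamma_x$, $\Div_h u_h\in L^2_tL^2_x$, $\ov u_h\in L^2_tL^p_x$ (invoking the inverse estimate of Lemma~\ref{lemma:inverse} whenever $p>6$, i.e.\ when $\gamma<6$), and $\nabla v\in L^\infty_tL^\gamma_x$, combined with $\|\Pi_h^V v-v\|_{L^q(E)}\le Ch^s\|\nabla^s v\|_{L^q(E)}$ for $s\in\{1,2\}$; optimising $p$ under $2/\gamma+1/2+1/p=1$ yields the exponent $2(\gamma-9/4)/\gamma$.

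The main obstacle is the absence of a uniform positive lower bound on the weights $\vr_{\dagger\dagger}$ and $P''(\vr_\dagger)=a\gamma(\gamma-1)\vr_\dagger^{\gamma-2}$ appearing in $D_1^k$ and $D_3^k$: a naive Cauchy--Schwarz on $|\jump{\vr_h}|$ simply loses control. I plan to circumvent this by interpolating multiplicatively between the $D_1^k$ bound (which supplies the velocity weight and the superlinear density factor $\vr_\dagger^{\gamma-2}$) and the $D_3^k$ bound (which supplies only the small $h^{1-\eps}$ weight), then redistributing the surplus density powers via H\"older against $\vr_h\in L^\infty_tL^\gamma_x$. It is precisely this redistribution that produces the loss $\min\{3(4-\gamma)/(4\gamma),0\}$ in \eqref{eq:p1}--\eqref{eq:p2} and forces the threshold $\gamma>3$ for the combined inequality \eqref{eq:pcombined} to retain a strictly positive power of $h$.
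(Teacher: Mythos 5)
Your decomposition and your use of the trace inequality, interpolation error, and energy norms for the factors carrying $\vr_h$, $\widetilde u_h$, $\ov u_h$, and the test function is in the spirit of the paper's proof, and your arguments for $P_3$ (Cauchy--Schwarz against $D_2^k$) and $P_4$ (H\"older plus inverse estimate on a volume integral, no trace inequality) match what the paper does up to minor reorganization. However, there is a genuine gap in your handling of the density-jump factor in $P_1$ and $P_2$, and you have in fact put your finger on it yourself without resolving it. The dissipation terms $D_1^k$ and $D_3^k$ from Proposition \ref{prop:energy} carry density weights $P''(\vr_\dagger)=a\gamma(\gamma-1)\vr_\dagger^{\gamma-2}$ and $\vr_{\dagger\dagger}$, respectively, which are not bounded below: they vanish wherever the intermediate density value is small, so no multiplicative interpolation $ \vr_\dagger^{\theta(\gamma-2)}\vr_{\dagger\dagger}^{1-\theta}h^{(1-\theta)(1-\eps)}$ can ever dominate the flat weight $\big(|\widetilde u_h\cdot\nu|+h^{1-\eps}\big)$ you need. ``Redistributing the surplus density powers via H\"older against $\vr_h\in L^\infty_tL^\gamma_x$'' moves extra positive powers of $\vr_h$ onto another factor; it does not replace the missing inverse power $\vr_\dagger^{-(\gamma-2)}$ that would be required to strip the weight off $D_1^k$. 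In addition, whatever fraction $1-\theta>0$ of $D_3^k$ you draw on contributes a parasitic $h^{(1-\theta)(1-\eps)}$ factor on the wrong side of the inequality, further eroding your $h$-budget.

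The paper avoids this dead end entirely. It does not touch $D_1^k$ or $D_3^k$ for $P_1,P_2$. Instead it reapplies the renormalization identity of Lemma \ref{lem:renorm} with the \emph{quadratic} renormalizer $B(z)=\tfrac12 z^2$, for which $B''\equiv 1$, and reads off directly
\begin{equation*}
\sum_\Gamma\int_0^T\!\int_\Gamma \jump{\vr_h}^2_\Gamma\big(|\widetilde u_h\cdot\nu|+h^{1-\eps}\big)\,dS\,dt
\;\leq\; C\|\vr_h\|_{L^\infty(0,T;L^4(\Om))}^2\big(1+\|\Div_h u_h\|_{L^2(0,T;L^2(\Om))}\big),
\end{equation*}
a weighted jump estimate with a density-independent weight. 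The remaining $L^4$ density norm is then converted to $L^\gamma$ via the inverse estimate (Lemma \ref{lemma:inverse}), and that conversion is what produces the loss $h^{-\max\{3(4-\gamma)/(4\gamma),0\}}$ appearing in \eqref{eq:p1}--\eqref{eq:p2}. So the mechanism you attributed to the energy dissipation is actually supplied by a second, independent invocation of Lemma \ref{lem:renorm}. Once you use that input, the rest of your outline for $P_1,P_2$ (trace inequality at cost $h^{-1/2}$, interpolation error at gain $h$, $L^2_tL^6_x$ for the velocity, and the stated test-function norm) closes exactly as you describe.
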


\begin{proof}
We will prove one inequality at the time. 

\vspace{0.2cm}
\noindent
\textit{1. Bound on $P^h_1$:} An application of the Cauchy-Schwartz inequality yields
\begin{equation}\label{eq:p1start}
	\begin{split}
		\int_0^T|P_1^h|~dt & := \int_0^T\left|\sum_E \int_{\partial E} \jump{\vr_h}_{\partial E}(u_h \cdot \nu)^-(\Pi_h^Q\phi - \phi)~dS(x)\right|~dt \\
		&\leq \left(\int_0^T\sum_E \int_{\partial E}\jump{\vr_h}_{\partial E}^2 |u_h \cdot \nu|~dS(x)dt\right)^\frac{1}{2}\\
		& \qquad \qquad \times\left(\int_0^T\sum_E \int_{\partial E} |u_h|(\Pi_h^Q\phi - \phi)^2~dS(x)dt\right)^\frac{1}{2} \\
		&:= \sqrt{I_1}\times \sqrt{I_2}.
	\end{split}
\end{equation}	
By setting $B(z) = \frac{1}{2}z^2$ in Lemma \ref{lem:renorm} and integrating in time, we obtain 
\begin{equation}\label{eq:p11}
	\begin{split}
		I_1 &\leq \frac{1}{2}\int_\Om (\vr_h)^2~dx + \int_0^T\int_\Om (\vr_h)^2 \Div_h u_h~dxdt \\
			&\leq  C\|\vr_h\|_{L^\infty(0,T;L^4(\Om))}^2 + \sqrt{T}\|\vr_h\|_{L^\infty(0,T;L^4(\Om)}^2\|\Div_h u_h\|_{L^2(0,T;L^2(\Om))}\\
			&\leq C\|\vr_h\|_{L^\infty(0,T;L^4(\Om))}^2(1+\sqrt{T}).
	\end{split}
\end{equation}
By applying the Trace Lemma \ref{lemma:toolbox} and the H\"older inequality, we deduce
\begin{equation}\label{eq:p12}
	\begin{split}
		I_2 &\leq h^{-1}\int_0^T\int_\Om |u_h|(\Pi_h^Q\phi - \phi)^2~dxdt \\
			&\leq h^{-1}C\int_0^T\|u_h\|_{L^6(\Om)}\\
			&\qquad \qquad \qquad \times \left(\|\Pi_h^Q \phi - \phi\|_{L^\frac{12}{5}(\Om)}^2 + h^2\|\Grad \phi\|_{L^4(0,T;L^\frac{12}{5}(\Om))}^2\right)dt \\
			& \leq hC\|u_h\|_{L^2(0,T;L^6(\Om))}\|\Grad \phi\|_{L^4(0,T;L^\frac{12}{5}(\Om))}^2,
	\end{split}
\end{equation}
where the last inequality is the error estimate on the interpolation error of $\Pi_h^Q$ 
in $L^p$ (see Lemma \ref{lemma:interpolationerror}).

Consequently, by applying \eqref{eq:p11} and \eqref{eq:p12} in \eqref{eq:p1start}, we see that
\begin{equation}\label{eq:p13}
	\begin{split}
		\int_0^T|P_1|~dt \leq h^\frac{1}{2}C\|\vr_h\|_{L^\infty(0,T;L^4(\Om))}\|\Grad \phi\|_{L^4(0,T;L^\frac{12}{5}(\Om))}.
	\end{split}
\end{equation}
From the standard inverse inequality (Lemma \ref{lemma:inverse}),  we have that
\begin{equation}\label{eq:p1inv}
	\|\vr_h\|_{L^\infty(0,T;L^4(\Om))} \leq h^{\max\{-3\frac{4-\gamma}{4\gamma},0\}}C\|\vr_h\|_{L^\infty(0,T;L^\gamma(\Om))},
\end{equation}
where the last inequality follows from $\gamma > 3$. Applying this in \eqref{eq:p13} finally yields
\begin{equation*}
	\int_0^T|P_1|~dt \leq h^{\frac{1}{2}-\min\{3\frac{4-\gamma}{4\gamma},0\}}C\|\vr_h\|_{L^\infty(0,T;L^\gamma(\Om))}\|\Grad \phi\|_{L^4(0,T;L^\frac{12}{5}(\Om))}.
\end{equation*}
Since Corollary \ref{cor:energy} provides $\vr_h \inb L^\infty(0,T;L^\gamma(\Om))$, we can conclude \eqref{eq:p1}.

\vspace{0.5cm}
\noindent
\textit{2. Bound on $P^h_2$:}
An application of the Cauchy-Schwartz inequality yields
\begin{equation}\label{eq:p2start}
	\begin{split}
		\int_0^T|P_2|~dt & := \int_0^T\left|\sum_E \int_{\partial E} \jump{\vr_h}_{\partial E}(u_h \cdot \nu)^-\ov u_h(\Pi_h^V v - v)~dS(x)\right|~dt \\
		&\leq \left(\int_0^T\sum_E \int_{\partial E}\jump{\vr_h}_{\partial E}^2 |u_h \cdot \nu|~dS(x)dt\right)^\frac{1}{2}\\
		& \qquad \times\left(\int_0^T\sum_E \int_{\partial E} |u_h||\ov u_h|(\Pi_h^Vv - v)^2~dS(x)dt\right)^\frac{1}{2} \\
		&=: \sqrt{I_1}\times \sqrt{I_2}.
	\end{split}
\end{equation}	
From \eqref{eq:p11} and \eqref{eq:p1inv}, we have that
\begin{equation}\label{eq:p21}
	\sqrt{I_1} \leq h^{\max\{-3\frac{4-\gamma}{4\gamma},0\}}C\|\vr_h\|_{L^\infty(0,T;L^\gamma(\Om))}.
\end{equation}
Using the Trace inequality in Lemma \ref{lemma:toolbox}, the H\"older inequality, the stability of $\Pi_h^Q$ in $L^6$, and 
the interpolation error estimate for $\Pi_h^V$ in $L^3$ (Lemma \ref{lemma:interpolationerror}), we deduce
\begin{equation}\label{eq:p22}
	\begin{split}
		I_2 &\leq h^{-1}\int_0^T \left(\int_\Om |u_h|^3\left|\Pi_h^Q u_h\right|^3~dx\right)^\frac{1}{3}\\
		&\quad \qquad \qquad \times \left(\|\Pi_h^V v - v\|_{L^3(\Om)}^2 + h^2C\|\Grad v\|^2_{L^3(\Om)}\right)~dt \\
		&\leq h^{-1}C\int_0^T \|u_h\|_{L^6(\Om)}\|u_h\|_{L^6(\Om)} \\
		&\quad \qquad \qquad \times \left(\|\Pi_h^V v - v\|_{L^3(\Om)}^2 + h^2C\|\Grad v\|^2_{L^3(\Om)}\right)~dt \\
		& \leq hC\|u_h\|_{L^2(0,T;L^6(\Om))}^2\|\Grad v\|_{L^\infty(0,T;L^3(\Om))}^2,
	\end{split}
\end{equation}
where the term involving $u_h$ is bounded by Corollary \ref{cor:energy}.
 
Now, setting \eqref{eq:p21} and \eqref{eq:p22} in \eqref{eq:p2start} yields
\begin{equation*}
	\int_0^T|P_2|~dt \leq h^{\frac{1}{2}-\min\{3\frac{4-\gamma}{4\gamma},0\}}C\|\Grad v\|_{L^\infty(0,T;L^3(\Om))},
\end{equation*}
which concludes our proof of \eqref{eq:p2}.

\vspace{0.5cm}
\noindent
\textit{3. Bound on $P^h_3$:}
An application of the Cauchy-Schwartz inequality yields
\begin{equation*}
	\begin{split}
		&\int_0^T |P_3(v)|~dt \\
		&\qquad = \int_0^T \left|\sum_E \int_{\partial E} \up_-(\vr u)\jump{\ov u_h}_{\partial E}\left(\Pi_h^Q\Pi_h^V v - v \right)~dS(x)\right|~dt \\
		&\qquad\leq \left(2\sum_\Gamma \int_0^T\int_\Gamma \left|\up(\vr u)\right|\jump{\ov u_h}^2~dS(x)dt \right)^\frac{1}{2}\\
		&\qquad\qquad\quad \times\left(\sum_E\int_0^T\int_{\partial E}\left|\up(\vr u)\right|\left(\Pi_h^Q\Pi_h^V v - v \right)^2~dS(x)dt \right)^\frac{1}{2}\\
		&:= I_1\times I_2.
	\end{split}
\end{equation*}
From the energy estimate (Proposition \ref{prop:energy}), we have that
\begin{equation*}
	I_1 = \left(\sum_{k=1}^M \Delta t D^k_2\right)^\frac{1}{2} \leq C,
\end{equation*}
and hence it only remains to bound $I_2$.

Using the definition \eqref{def:up} of $\up(\vr u)$ and the H\"older inequality
\begin{equation*}
	\begin{split}
		I_2^2 & =  \sum_E\int_0^T\int_{\partial E}\left|\up(\vr u)\right|\left(\Pi_h^Q\Pi_h^V v - v \right)^2~dS(x)dt \\
		&\leq  \sum_E\int_0^T\int_{\partial E}(\vr_+ + \vr_-)|u_h \cdot \nu|\left([\Pi_h^Q\Pi_h^V v]_+ - v \right)^2~dS(x)dt \\
		&\leq  \sum_E \int_0^T \left(\|\vr_+ u_h\|_{L^\frac{6\gamma}{6+\gamma}(\partial E)} + \|\vr_- u_h\|_{L^\frac{6\gamma}{6+\gamma}(\partial E)}\right) \\
		&\qquad \qquad \quad \times \left\|[\Pi_h^Q\Pi_h^V v] - v\right\|_{L^\frac{6\gamma}{5\gamma-6}(\partial E)}^2~dt.
		\end{split}
\end{equation*}
To proceed, we apply the trace Lemma \ref{lemma:toolbox} followed by the inverse estimate (Lemma \ref{lemma:inverse})
\begin{equation*}
	\begin{split}
		I_2^2 &\leq  2h^{-1}C\|\vr_h u_h\|_{L^2(0,T;L^\frac{6\gamma}{6+\gamma}(\Om))}  \\
		&\qquad \times \left(\left\|\Pi_h^Q\Pi_h^V v - v\right\|_{L^4(0,T;L^\frac{6\gamma}{5\gamma-6}(\Om))}^2 + h^2\|\Grad v\|_{L^4(0,T;L^\frac{6\gamma}{5\gamma-6}(\Om))}^2\right) \\
		&\leq 2h^{-1}C\|\vr_h u_h\|_{L^2(0,T;L^\frac{6\gamma}{6+\gamma}(\Om))}\\
		&\qquad \qquad \times \left(\left\|\Pi_h^Q\Pi_h^V v - \Pi_h^V v\right\|_{L^4(0,T;L^\frac{6\gamma}{5\gamma-6}(\Om))}^2\right. \\
		&\qquad \quad \qquad \qquad + \left.  \left\|\Pi_h^V v - v\right\|_{L^4(0,T;L^\frac{6\gamma}{5\gamma-6}(\Om))}^2+ h^2\|\Grad v\|_{L^4(0,T;L^\frac{6\gamma}{5\gamma-6}(\Om))}^2\right) \\
		&\leq hC4\|\vr_h u_h\|_{L^2(0,T;L^\frac{6\gamma}{6+\gamma}(\Om))}\|\Grad v\|_{L^4(0,T;L^\frac{6\gamma}{5\gamma-6}(\Om))}^2,
	\end{split}
\end{equation*}
where we in the last inequality have used both the interpolation error of $\Pi_h^Q$ and $\Pi_h^V$ (see Lemma \ref{lemma:interpolationerror}). 
In particular, we have used the following estimate 
\begin{equation*}
	\begin{split}
		\left\|\Pi_h^Q\Pi_h^V v - \Pi_h^V v\right\|_{L^\frac{6\gamma}{5\gamma-6}(\Om)}^2 
		&\leq h^2C\sum_E \|\Grad \Pi_h^V v\|_{L^\frac{6\gamma}{5\gamma-6}(\Om)}^2\\
		&\leq h^2C\|\Grad v\|_{L^\frac{6\gamma}{5\gamma-6}(\Om)}^2.		
	\end{split}
\end{equation*}
Now, from Corollary \ref{cor:energy} we have that $\|\vr_h u_h\|_{L^2(0,T;L^\frac{6\gamma}{6+\gamma}(\Om))} \leq C$, 
and hence we can actually conclude that 
\begin{equation*}
	\int_0^T |P_3(v)|~dt \leq h^\frac{1}{2}C\|\Grad v\|_{L^4(0,T;L^\frac{6\gamma}{5\gamma-6}(\Om))},
\end{equation*}
which is \eqref{eq:p3}.

\vspace{0.5cm}
\noindent
\textit{4. Bound on $P^h_4$:}
By direct calculation using the H\"older inequality
\begin{equation*}
	\begin{split}
		&\int_0^T |P_4(v)|~dt \\
		&\qquad \leq \|\Div_h u_h\|_{L^2(0,T;L^2(\Om))}\left(\int_0^T\int_\Om |\vr_h \ov u_h|^2\left(\Pi_h^Vv -v\right)^2~dxdt \right)^\frac{1}{2} \\
		&\qquad \leq hC \|\vr_h \ov u_h\|_{L^2(0,T;L^\frac{2\gamma}{\gamma-2}(\Om))}\|\Grad v\|_{L^\infty(0,T;L^\gamma(\Om))},
	\end{split}
\end{equation*}
where we have used that $\Div_h u_h \inb L^2(0,T;L^2(\Om))$ and the interpolation error Lemma \ref{lemma:interpolationerror}.
Now, we apply the inverse estimate in Lemma \ref{lemma:inverse} to obtain
\begin{equation*}
	\begin{split}
			h\|\vr_h \ov u_h\|_{L^2(0,T;L^\frac{2\gamma}{\gamma-2}(\Om))} &\leq h^\frac{2(\gamma-9/4)}{\gamma}C\|\vr_h \ov u_h\|_{L^2(0,T;L^\frac{6\gamma}{3+\gamma}(\Om))}.
	\end{split}
\end{equation*}
We conclude \eqref{eq:p4} by recalling that the last term is bounded by Corollary \ref{cor:energy}.
\end{proof}

\subsection{The material momentum transport operator}
In our proof of convergence we will need an identity like \eqref{eq:moment-trans} 
 for the discretization of both terms in the discrete material 
transport operator ($(\vr u)_t + \Div (\vr u \otimes u)$) combined. 
To obtain the desired estimates, we will rely on the following 
weak time-continuity result:
\begin{lemma}\label{lem:timederiv}
Let $(\vr_h, u_h)$ satisfy the energy estimate \eqref{eq:energy}. Then,
	\begin{equation*}
		\|\vr_h u_h - (\vr_h u_h)(-\Delta t)\|_{L^\frac{6}{5}(\Delta t,T;L^\frac{3}{2}(\Om))} \leq (\Delta t)^\frac{1}{\gamma}C
	\end{equation*}
	As a consequence,
	\begin{equation*}
		\|D_t (\vr_h u_h)\|_{L^\frac{6}{5}(\Delta t,T;L^\frac{3}{2}(\Om))} \leq (\Delta t)^{\frac{1-\gamma}{\gamma}}C.
	\end{equation*}
\end{lemma}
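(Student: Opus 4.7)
The natural route is by duality. Fix a smooth vector test function $v$ and estimate
$$I := \int_{\Delta t}^T \int_\Om \left(\vr_h u_h - (\vr_h u_h)(\cdot - \Delta t)\right) \cdot v \, dx \, dt,$$
with the goal of bounding $I$ by $(\Delta t)^{1/\gamma} C \|v\|_{L^6(\Delta t, T; L^3(\Om))}$. Taking the supremum over such $v$ gives the first estimate, and dividing by $\Delta t$ delivers the bound on $D_t(\vr_h u_h)$.

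The first reduction is to the projected quantity $\vr_h \widehat u_h$. Writing $\vr_h u_h = \vr_h \widehat u_h + \vr_h(u_h - \widehat u_h)$, the element-wise Poincar\'e inequality of Lemma \ref{lemma:toolbox} together with $\widehat u_h = \Pi_h^Q u_h$ gives $\|u_h - \widehat u_h\|_{L^2(E)} \leq Ch\|\Grad u_h\|_{L^2(E)}$. Paired with $\vr_h \inb L^\infty(0,T;L^\gamma(\Om))$ and $\Grad_h u_h \inb L^2(0,T;L^2(\Om))$ from Corollary \ref{cor:energy}, the contribution of $\vr_h(u_h - \widehat u_h)$ to $I$ is of order $h = O(\Delta t)$, which is subdominant to $(\Delta t)^{1/\gamma}$ since $\gamma > 1$.

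For the main term $\vr_h \widehat u_h - (\vr_h \widehat u_h)(\cdot - \Delta t)$, I would further split $v = \Pi_h^V v + (v - \Pi_h^V v)$. The remainder $v - \Pi_h^V v$ is $O(h\|\Grad v\|)$ by Lemma \ref{lemma:interpolationerror}, and paired with $\vr_h \widehat u_h \inb L^\infty(0,T;L^{2\gamma/(\gamma+1)}(\Om))$ yields a negligible contribution. For the $\Pi_h^V v$ piece, apply \eqref{fem:moment} pointwise-in-time with $v_h = \Pi_h^V v(t, \cdot)$, producing
$$\int_\Om \left[\vr_h \widehat u_h - (\vr_h \widehat u_h)(\cdot - \Delta t)\right] \cdot \Pi_h^V v \, dx = \Delta t \cdot \mathrm{RHS}(t, v),$$
where $\mathrm{RHS}$ collects the convective flux, the diffusion, the pressure, and the $h^{1-\eps}$ stabilization.

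The convection is rewritten via \eqref{eq:moment-trans} of Lemma \ref{lem:transport} as $\int \vr_h \widetilde u_h \otimes \widehat u_h : \Grad v + \sum_{i=2}^4 P_i(v)$, with the $P_i$ controlled by Proposition \ref{pro:transport}. The diffusion simplifies by Lemma \ref{lem:amazing} to $\int \Grad_h u_h : \Grad v$, and the pressure reduces to $\int p(\vr_h) \Div v$ using \eqref{eq:commute} and the fact that $p(\vr_h) \in Q_h$. Estimating each piece via H\"older with the integrability of Corollary \ref{cor:energy} shows that the diffusion, convection, and stabilization all contribute at most $O(\Delta t)$ to $I$. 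The main obstacle is the pressure term $\Delta t \int_{\Delta t}^T \int_\Om p(\vr_h) \Div v \, dx \, dt$: since at this stage only $p(\vr_h) \inb L^\infty(0,T;L^1(\Om))$ is available, one must balance the $L^6 L^3$ regularity of $v$ against this weak spatial integrability via an inverse estimate on $\Pi_h^V v$, carefully leveraging $\gamma > 3$, to extract the sharp rate $(\Delta t)^{1/\gamma}$ rather than the crude $O(\Delta t)$. Taking the supremum over $v$ then concludes the first inequality, and the discrete derivative bound follows by division by $\Delta t$.
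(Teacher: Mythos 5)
Your duality setup (test against $v$ and unpack via the momentum scheme \eqref{fem:moment}) is a genuinely different route from the paper's, and I do not see how it can be pushed through. The paper never touches \eqref{fem:moment} to prove this lemma; its entire argument lives inside the energy estimate of Proposition \ref{prop:energy}. Concretely, the dissipation term $D_4^k$ together with a Taylor expansion of $P$ gives $\sum_k\norm{\vr_h^k-\vr_h^{k-1}}_{L^\gamma(\Om)}^\gamma\leq C$, the dissipation term $D_5^k$ gives $\sum_k\norm{\sqrt{\vr_h^{k-1}}(\ov u_h^k-\ov u_h^{k-1})}_{L^2(\Om)}^2\leq C$, and a discrete product rule plus H\"older in $k$ assembles these two into the stated $L^{6/5}_tL^{3/2}_x$ bound, with the $D_5$ branch contributing $(\Delta t)^{1/2}$ and the $D_4$ branch the limiting $(\Delta t)^{1/\gamma}$.

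The step in your sketch that cannot be repaired is the pressure contribution $\Delta t\int_0^T\!\!\int_\Om p(\vr_h)\Div_h(\Pi_h^V v)\,dx\,dt$. At this point in the development only $p(\vr_h)\inb L^\infty(0,T;L^1(\Om))$ is available (Corollary \ref{cor:energy}; Proposition \ref{pro:higher} is established later and already relies on the present lemma through Lemma \ref{lem:consistency}, so invoking it would be circular). To pair $p(\vr_h)\in L^1$ against $\Div_h(\Pi_h^V v)$ you must put the divergence in $L^\infty$, and the inverse estimate $\norm{\Div_h(\Pi_h^V v)}_{L^\infty}\leq Ch^{-1}\norm{\Grad v}_{L^3}$ eats exactly the factor $\Delta t\sim h$ you gained from the time step, leaving no rate in $\Delta t$ at all --- not the ``crude $O(\Delta t)$'' (which would in fact be a \emph{stronger} bound than $(\Delta t)^{1/\gamma}$ for $\gamma>1$), and certainly not $(\Delta t)^{1/\gamma}$. ``Leveraging $\gamma>3$'' does not help here: $\vr_h\inb L^\infty L^\gamma$ gives $p(\vr_h)\inb L^\infty L^1$ and nothing more at this stage. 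This is precisely why the bound the paper \emph{does} extract from \eqref{fem:moment} --- Lemma \ref{lem:weaktime} --- only asserts $D_t(\vr_h u_h)\inb L^1(0,T;W^{-1,3/2}(\Om))$, where the negative Sobolev index absorbs the derivative that the pressure term demands, and no rate in $\Delta t$ is claimed. Absent the dissipation terms $D_4$, $D_5$, the $L^{3/2}_x$ (rather than $W^{-1,3/2}_x$) target of the present lemma is out of reach by this route. A secondary issue is that your duality class $v\in L^6(\Delta t,T;L^3(\Om))$ is too rough for $\Pi_h^V$ to be defined, so the pairing must be restricted to more regular $v$ from the start; but this is a minor wrinkle next to the pressure obstruction.
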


\begin{proof}
From the energy estimate \eqref{eq:energy}, we have that
\begin{equation*}
	\sum_k \int_\Om P''(\vr_\ddagger)[\vr_h^k - \vr_h^{k-1}]~dx \leq C,
\end{equation*}
where $P''(\vr_\ddagger)$ is determined as the remainder in a Taylor expansion
and the mean value theorem. In particular, a simple calculation gives
\begin{equation*}
	\begin{split}
		\int_\Om P''(\vr_\ddagger) [\vr_h^k - \vr_h^{k-1}]^2~dx \geq \nu(\gamma)\int_\Om[\vr_h^k - \vr_h^{k-1}]^\gamma~dx,
	\end{split}
\end{equation*}
where $\nu(\gamma)$ only depend on $\gamma$. Hence, 
\begin{equation}\label{lions}
	\sum_k \|\vr_h^k - \vr_h^{k-1}\|_{L^\gamma(\Om)}^\gamma \leq C.
\end{equation}
By adding and subtracting, we have that
\begin{equation*}
	\begin{split}
		&\|\vr^k_h \ov u^k_h - \vr^{k-1}_h \ov u^{k-1}_h\|_{L^\frac{3}{2}(\Om)} \\
		&= \|\vr^{k-1}_h[\ov u_h^k - \ov u_h^{k-1}] + \ov u_h^k[\vr_h^k - \vr_h^{k-1}]\|_{L^\frac{3}{2}(\Om)} \\
		&\leq \|\vr_h^{k-1}\|_{L^3(\Om)}^\frac{1}{2}\left\|\sqrt{\vr_h^{k-1}}[\ov u_h^k - \ov u_h^{k-1}]\right\|_{L^2(\Om)} \\
		&\qquad \quad C\|\ov u_h^k\|_{L^6(\Om)}\|\vr^k_h - \vr^{k-1}_h\|_{L^\gamma(\Om)}
	\end{split}
\end{equation*}
We then integrate in time, apply several applications 
of the H\"older inequality, and utilize \eqref{lions} to deduce
\begin{equation*}
	\begin{split}
		&\sum_k\Delta t\|\vr^k_h \ov u^k_h - \vr^{k-1}_h \ov u^{k-1}_h\|_{L^\frac{3}{2}(\Om)}^\frac{6}{5} \\
		&\qquad \leq \Delta t\sum_k \|\vr_h^{k-1}\|_{L^3(\Om)}^\frac{6}{10}\left\|\sqrt{\vr_h^{k-1}}[\ov u_h^k - \ov u_h^{k-1}]\right\|^\frac{6}{5}_{L^2(\Om)} \\
		&\qquad \quad + C\Delta t\sum_k\|\ov u_h^k\|^\frac{6}{5}_{L^6(\Om)}\|\vr^k_h - \vr^{k-1}_h\|_{L^\gamma(\Om)}^\frac{6}{5} \\
		&\qquad \leq (\Delta t)^\frac{3}{5} \|\vr_h\|_{L^\infty(0,T;L^\gamma(\Om))}^\frac{2}{5}\left(\sum_k \Delta t\right)^\frac{2}{5}\\
		&\qquad \qquad \qquad \times\left(\sum_k\left\|\sqrt{\vr_h^{k-1}}[\ov u_h^k - \ov u_h^{k-1}]\right\|^2_{L^2(\Om)}\right)^\frac{3}{5} \\
		&\qquad \qquad + \Delta tC\left(\sum_k \|u_h^k\|_{L^6(\Om)}^2\right)^\frac{3}{5}\left(\sum_k \|\vr_h^k - \vr_h^{k-1}\|_{L^\gamma(\Om)}^3\right)^\frac{2}{5} \\
		&\qquad \leq (\Delta t)^\frac{3}{5}C T^\frac{2}{5} + (\Delta t)^\frac{6}{5\gamma}C\left(\sum_k \Delta t\right)^\frac{2(\gamma-3)}{5\gamma}\left(\sum_k \|\vr_h^k - \vr_h^{k-1}\|_{L^\gamma(\Om)}^\gamma\right)^\frac{6}{5\gamma}\\
		&\qquad \leq C\left((\Delta t)^\frac{3}{5} + (\Delta t)^\frac{6}{5\gamma} \right).
	\end{split}
\end{equation*}
This concludes the proof.
\end{proof}

Using the previous lemma, we 
are now ready to prove the following 
result and error bound.

\begin{lemma}\label{lem:consistency}
Let $(\vr_h, u_h)$ be the numerical solution
obtained through Definition \ref{def:scheme} and \eqref{def:schemeII}.
Then, for all $v \in L^\infty(0,T;W^{1,6}(\Om))$,
\begin{equation*}
	\begin{split}
		&\int_0^T\int_\Om D_t^h(\vr_h \ov u_h)  \Pi_h^Vv~dx + \sum_{E} \int_0^T\int_{\partial E} \up(\vr u\otimes \ov u)\ov{ \Pi_h^Vv_+}~dS(x)dt \\
		& \qquad = \int_0^T D_t^h(\vr_h \ov u_h) v - \vr_h \widetilde u_h \otimes \ov u_h:\Grad v~dxdt + F(v),
	\end{split}
\end{equation*}
where the reminder is bounded as
\begin{equation}\label{eq:fbound}
	|F(v)| \leq h^\frac{1}{\gamma}C\|\Grad v\|_{L^\infty(0,T;L^3(\Om))},
\end{equation}
with constant independent of $h$ and $\Delta t$.
\end{lemma}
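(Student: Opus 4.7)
The plan is to use the transport identity \eqref{eq:moment-trans} from Lemma \ref{lem:transport} to convert the discrete upwind flux into a Lagrangian form, and then control the defect introduced by testing against $\Pi_h^V v$ rather than $v$ with the help of Lemma \ref{lem:timederiv}. I first split the first ingredient as
$$\int_\Om D_t^h(\vr_h \ov u_h)\,\Pi_h^V v\,dx = \int_\Om D_t^h(\vr_h \ov u_h)\,v\,dx + \int_\Om D_t^h(\vr_h \ov u_h)\,(\Pi_h^V v - v)\,dx.$$
For the flux term, $\up(\vr u \otimes \ov u)$ is single-valued across faces and $v$ is continuous with $v|_{\pOm}=0$, so element-wise summation by parts gives
$$\sum_E \int_{\partial E} \up(\vr u \otimes \ov u)\,\ov{\Pi_h^V v_+}\,dS = -\sum_\Gamma \int_\Gamma \up(\vr u \otimes \ov u)\,\jump{\widehat{\Pi_h^V v}}_\Gamma\,dS.$$
Identity \eqref{eq:moment-trans} then rewrites the right-hand side as $-\int_\Om \vr_h \widetilde u_h \otimes \ov u_h : \Grad v\,dx - \sum_{i=2}^4 P_i(v)$, and matching terms with the statement produces the claimed decomposition with
$$F(v) = \int_0^T\!\!\int_\Om D_t^h(\vr_h \ov u_h)(\Pi_h^V v - v)\,dx\,dt - \int_0^T\!\sum_{i=2}^4 P_i(v)\,dt.$$

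To prove \eqref{eq:fbound} I would bound the two pieces of $F$ separately. For the $P_i$ contributions I invoke Proposition \ref{pro:transport} together with the assumption $\gamma > 3$, the regularity $v \in L^\infty(0,T;W^{1,6}(\Om))$, and the bounded-domain embeddings between Lebesgue spaces, so that each $\int_0^T |P_i(v)|\,dt$ is controlled by at worst $h^{1/\gamma}\,\|\Grad v\|_{L^\infty(0,T;L^3(\Om))}$. For the first piece I exploit that $\vr_h$ and $\ov u_h$ are piecewise constant and hence so is $D_t^h(\vr_h \ov u_h)$; this lets me replace $\Pi_h^V v - v$ by $\widehat{\Pi_h^V v} - v$ inside the integrand without changing its value. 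H\"older in time with conjugate exponents $(6/5,6)$ and in space with $(3/2,3)$ then gives
$$\left|\int_0^T\!\!\int_\Om D_t^h(\vr_h \ov u_h)(\widehat{\Pi_h^V v} - v)\,dx\,dt\right| \leq \|D_t^h(\vr_h \ov u_h)\|_{L^{6/5}(L^{3/2})}\,\|\widehat{\Pi_h^V v} - v\|_{L^6(L^3)}.$$
The first factor is bounded by $Ch^{1/\gamma-1}$ via Lemma \ref{lem:timederiv} (recalling $\Delta t = c h$), and the second by $Ch\,\|\Grad v\|_{L^\infty(0,T;L^3(\Om))}$ after combining the interpolation estimate for $\Pi_h^V v - v$ from Lemma \ref{lemma:interpolationerror}, an elementwise Poincar\'e inequality (Lemma \ref{lemma:toolbox}) for $\widehat{\Pi_h^V v} - \Pi_h^V v$, and the finite-horizon inclusion $L^\infty(0,T)\hookrightarrow L^6(0,T)$. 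Multiplying the two factors yields the target $h^{1/\gamma}$.

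The main obstacle is the balancing act in the time-derivative remainder: Lemma \ref{lem:timederiv} supplies only the negative power $h^{1/\gamma-1}$ on $D_t^h(\vr_h\ov u_h)$, and exactly one compensating power of $h$ must be extracted from the interpolation error on the test-function side. This is the step which dictates the final rate in \eqref{eq:fbound} and is the reason the exponent is $1/\gamma$ rather than something sharper. By comparison, absorbing the $P_i$ pieces is largely mechanical once the estimates in Proposition \ref{pro:transport} are combined with the $W^{1,6}$ regularity of $v$ and the elementary inclusions between $L^p$ spaces on $\Om$.
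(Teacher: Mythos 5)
Your argument follows the paper's route: the upwind terms are converted via the identity \eqref{eq:moment-trans}, and the defect $F(v)$ is the sum of the $P_i$ errors plus $\int_0^T\int_\Om D_t^h(\vr_h\ov u_h)(\Pi_h^V v - v)\,dx\,dt$, with the rate $h^{1/\gamma}$ emerging exactly as you describe by pairing Lemma \ref{lem:timederiv}'s $(\Delta t)^{1/\gamma-1}$ against one power of $h$ from the interpolation error. Your detour through $\widehat{\Pi_h^V v}$ (valid since $D_t^h(\vr_h\ov u_h)$ is piecewise constant and $\Pi_h^Q$ preserves element means) and your $L^{6/5}\times L^6$ temporal H\"older are cosmetic variants of the paper's direct use of $\|\Pi_h^V v - v\|_{L^3}\lesssim h\|\Grad v\|_{L^3}$ with $(1,\infty)$ temporal H\"older, and lead to the same bound.
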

\begin{proof}
Using \eqref{eq:moment-trans}, we have the identity
\begin{equation*}
	\begin{split}
		&\int_0^T\int_\Om D_t^h(\vr_h \ov u_h)  \Pi_h^V v~dx + \sum_{E} \int_0^T\int_{\partial E} \up(\vr u\otimes \ov u)\widehat{ \Pi_h^V v_+}~dS(x)dt \\
		& \qquad = \int_0^T D_t^h(\vr_h \ov u_h)  \Pi_h^V v\ - \vr_h u_h \otimes \ov u_h:\Grad  v~dxdt + \sum_{i=2}^4 \int_0^T P_i\left( v\right)~ dt \\
		&\qquad = \int_0^T D_t^h(\vr_h \ov u_h) v - \vr_h u_h \otimes \ov u_h:\Grad v~dxdt  \\
		&\qquad \qquad +   \sum_{i=2}^4 \int_0^T P_i\left( v\right)~ dt +\int_0^T D_t^h(\vr_h \ov u_h) \left(\Pi_h^V v - v\right)~ dxdt.
	\end{split}
\end{equation*}
We thus define $F(v)$ by
\begin{equation*}\label{eq:theF}
	\begin{split}
		\left|F(v) \right| &:= \left|\sum_{i=2}^4 \int_0^T P_i~ dt +\int_0^T D_t^h(\vr_h \ov u_h) \left(\Pi_h^V v - v\right)~ dxdt \right| \\
		&\leq \sum_{i=2}^4\int_0^T |P_i|~ dt + h\|\Grad v\|_{L^\infty(0,T;L^3(\Om))}\int_0^T\left\|D_t^h(\vr_h \ov u_h)\right\|_{L^\frac{3}{2}(\Om)}~dt \\
		&\leq \sum_{i=2}^4\int_0^T |P_i|~ dt + h(\Delta t)^{-1}(\Delta t)^\frac{1}{\gamma}\|\Grad v\|_{L^\infty(0,T;L^3(\Om))} \\
		&=\sum_{i=2}^4\int_0^T |P_i|~ dt + h^\frac{1}{\gamma}\|\Grad v\|_{L^\infty(0,T;L^3(\Om))},
	\end{split}
\end{equation*}
where we have used Lemma \ref{lem:timederiv} and $h = a\Delta t$.

Finally, by applying \eqref{eq:pcombined} to bound the $P_i$ terms, we obtain 
the desired \eqref{eq:fbound}.

\end{proof}

\subsection{The artificial stabilization terms}
To prove convergence of the numerical method
we will need to prove that the artificial stabilization terms 
converges to zero. Moreover, we will need that these terms 
are small in a suitable Lebesgue space.

\begin{lemma}\label{lem:stupid}
If $(\vr_h, u_h)$ is the numerical solution obtained 
by Definition \ref{def:scheme} and \eqref{def:schemeII}, 
the artificial stabilization terms satisfies
\begin{align}
		&\left|h^{1-\eps}\sum_\Gamma \int_0^T\int_\Gamma \jump{\vr_h}_\Gamma\jump{\Pi_h^Q \phi}_\Gamma~dS(x)dt\right| \nonumber\\
		&\qquad\quad
		\leq h^\frac{11 -6\eps}{12}C\|\Grad \phi\|_{L^2(0,T;L^2(\Om))},	\label{eq:stupid1} \\
		&\left|h^{1-\epsilon}\sum_\Gamma \int_0^T\int_{\Gamma} \left(\frac{\ov u_- + \ov u_+}2\right) \jump{\vr_h}_\Gamma\jump{\ov{ \Pi_h^Vv}}_\Gamma~dS(x)dt\right| \nonumber\\
		&\qquad\quad
		 \leq h^\frac{13-6\eps}{12}C\|\Grad v\|_{L^\infty(0,T;L^3(\Om))}, \label{eq:stupid2}
\end{align}
for all sufficiently smooth $\phi$ and $v$.
\end{lemma}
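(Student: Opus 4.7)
The two bounds follow from Cauchy--Schwarz, combined with the energy estimate applied to $\jump{\vr_h}^2$ and interpolation error estimates for the other factor.

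The key stability input is the renormalization identity in Lemma \ref{lem:renorm} applied with $B(z)=z^2/2$ (so $B'' \equiv 1$), which together with the energy bound in Proposition \ref{prop:energy} gives the crucial estimate
\begin{equation*}
  h^{1-\eps}\sum_\Gamma \int_0^T\!\!\int_\Gamma \jump{\vr_h}_\Gamma^2 \, dS(x)\,dt \;\leq\; C .
\end{equation*}

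\emph{Proof of \eqref{eq:stupid1}.} First I would split the prefactor as $h^{1-\eps}=h^{(1-\eps)/2}\cdot h^{(1-\eps)/2}$ and apply Cauchy--Schwarz to bound the left-hand side by
\begin{equation*}
  \Big(h^{1-\eps}\sum_\Gamma\int\!\!\int \jump{\vr_h}^2\Big)^{1/2}
  \Big(h^{1-\eps}\sum_\Gamma\int\!\!\int \jump{\Pi_h^Q\phi}^2\Big)^{1/2} .
\end{equation*}
The first factor is $\leq C$ by the stability estimate above. For the second, since $\phi$ is continuous across faces, $\jump{\Pi_h^Q\phi}_\Gamma = \jump{\Pi_h^Q\phi-\phi}_\Gamma$, and combining the trace inequality from Lemma \ref{lemma:toolbox} with the interpolation bound $\|\Pi_h^Q\phi-\phi\|_{L^2(E)}\leq Ch\|\Grad\phi\|_{L^2(E)}$ (Lemma \ref{lemma:interpolationerror}) gives $\sum_\Gamma\int\int\jump{\Pi_h^Q\phi}^2 \leq Ch\,\|\Grad\phi\|_{L^2(L^2)}^2$. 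Putting these together yields a bound of order $h^{(2-\eps)/2}\|\Grad\phi\|_{L^2(L^2)}$, which implies \eqref{eq:stupid1}.

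\emph{Proof of \eqref{eq:stupid2}.} I would proceed analogously, now grouping the velocity factor with $\jump{\ov{\Pi_h^Vv}}$:
\begin{equation*}
  \Big(h^{1-\eps}\sum\int\!\!\int\jump{\vr_h}^2\Big)^{1/2}
  \Big(h^{1-\eps}\sum\int\!\!\int |\ov u|^2 \jump{\ov{\Pi_h^Vv}}^2\Big)^{1/2} .
\end{equation*}
The first factor is again $\leq C$. For the second, apply Hölder on each face with exponents $(3,3/2)$, then sum using the trace inequality and the fact that $\ov u$ is piecewise constant to obtain $\sum_\Gamma\int_\Gamma|\ov u|^6 \leq Ch^{-1}\|u_h\|_{L^6(\Om)}^6$, together with the trace-plus-interpolation bound $\sum_\Gamma\int_\Gamma\jump{\ov{\Pi_h^Vv}}^3 \leq Ch^{2}\|\Grad v\|_{L^3(\Om)}^3$. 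After tracking the $h$-powers and integrating in time using the $L^2(L^6)$ bound on $u_h$ from Corollary \ref{cor:energy}, this produces
\begin{equation*}
  \sum_\Gamma\int_0^T\!\!\int_\Gamma |\ov u|^2 \jump{\ov{\Pi_h^Vv}}^2 \,dS\,dt \;\leq\; C h\,\|\Grad v\|_{L^\infty(L^3)}^2 ,
\end{equation*}
which combines with the first factor to give the desired bound.

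\textbf{Main obstacle.} The routine part is Cauchy--Schwarz and invoking the energy estimate for the $\jump{\vr_h}$ factor. The delicate step is the estimation of $\sum\int\int|\ov u|^2\jump{\ov{\Pi_h^Vv}}^2$: one has to pick the Hölder exponents on faces so that the available $L^2(0,T;L^6(\Om))$ integrability of $u_h$ (and nothing stronger in time) can be used, while the test velocity $v$ is only controlled in $L^\infty(L^3)$. This is where the choice $(3,3/2)$ is essentially forced, and the precise $h$-power then follows from careful accounting of trace and interpolation estimates; any sharper exponent only requires more refined juggling of these same tools.
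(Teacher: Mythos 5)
The overall structure of your argument — splitting $h^{1-\eps}$ symmetrically, applying Cauchy--Schwarz, and quantifying the bracket by trace and interpolation estimates — is the same one the paper uses. But there is a concrete gap at the very start: the claim
\begin{equation*}
  h^{1-\eps}\sum_\Gamma \int_0^T\!\!\int_\Gamma \jump{\vr_h}_\Gamma^2 \, dS(x)\,dt \;\leq\; C
\end{equation*}
is \emph{not} what the available estimates provide, and you do not close the argument that would give it.

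Here is why it fails. The energy estimate (Proposition \ref{prop:energy}) controls $D_3^k$, which is the \emph{weighted} quantity $h^{1-\eps}\sum_\Gamma\int_\Gamma \vr_{\dagger\dagger}\,\jump{\vr_h}^2$; the intermediate value $\vr_{\dagger\dagger}$ has no uniform positive lower bound (the scheme only guarantees $\vr_h > 0$, not $\vr_h \geq \text{const} > 0$), so this does not yield a bound on the unweighted jump. Your alternative route via Lemma \ref{lem:renorm} with $B(z)=z^2/2$ is the right idea, but when you sum in time it produces the term $\int_0^T\!\int_\Om \vr_h^2\,\Div_h u_h\,dx\,dt$, which by Cauchy--Schwarz requires $\vr_h \in L^\infty(0,T;L^4(\Om))$. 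The energy estimate only gives $\vr_h \in L^\infty(0,T;L^\gamma(\Om))$, and for $3 < \gamma < 4$ (which the theorem allows) there is no way to reach $L^4$ without the inverse inequality of Lemma \ref{lemma:inverse} — and the inverse inequality introduces a negative power of $h$. This is exactly what the paper does: it establishes $h^{1-\eps}\sum_\Gamma\int_0^T\!\int_\Gamma \jump{\vr_h}^2 \leq h^{-1/6}C$ rather than $\leq C$. The extra factor $h^{-1/12}$ from taking a square root of this then propagates into the final exponents and is why the lemma's bounds are $h^{(11-6\eps)/12}$ and not the cleaner $h^{(2-\eps)/2}$ that your computation would give. (Your estimate of the ``second factor'' in each case — the trace-plus-interpolation part — is sound and consistent with the paper; the choice of Hölder exponents and the $h$-accounting there are fine.)

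To repair the proof, you should either restrict to $\gamma \geq 4$ (in which case your $\leq C$ claim is valid and you actually obtain a slightly sharper exponent than stated), or, for the full range $\gamma > 3$, insert the inverse estimate $\|\vr_h\|_{L^4(E)} \leq Ch^{\min\{0,\,3/4 - 3/\gamma\}}\|\vr_h\|_{L^\gamma(E)}$ into the control of $\int \vr_h^2\,\Div_h u_h$, and then carry the resulting negative power of $h$ through the Cauchy--Schwarz split. Doing so lands you on the paper's exponents rather than the ones your write-up implicitly produces.
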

\begin{proof}
We will begin by proving \eqref{eq:stupid1}.
By direct calculation, using the Cauchy-Schwartz inequality, we deduce
\begin{align}\label{stupid:1}
		&h^{1-\eps}\sum_\Gamma \int_0^T\int_\Gamma \jump{\vr_h}_\Gamma\jump{\Pi_h^Q \phi}_\Gamma~dS(x)dt \nonumber\\
		&\qquad = -h^{1-\eps} \sum_E \int_0^T \int_{\partial E}\jump{\vr_h}_\Gamma\left(\Pi_h^Q \phi \right)~dS(x)dt \nonumber\\
		&\qquad = -h^{1-\eps} \sum_E \int_0^T \int_{\partial E}\jump{\vr_h}_\Gamma\left(\Pi_h^Q \phi - \phi\right)~dS(x)dt \nonumber\\
		&\qquad\leq h^{1-\eps}\left(\sum_\Gamma \int_0^T \int_\Gamma \jump{\vr_h}_\Gamma^2~dS(x)dt\right)^\frac{1}{2} \\
		&\qquad \qquad \times \left(\sum_E \int_{\partial E}(\Pi_h^Q \phi - \phi)^2~dS(x)dt\right)^\frac{1}{2}\nonumber\\
		&\qquad \leq h^\frac{1}{2}h^{1-\eps}\left(\sum_\Gamma \int_0^T \int_\Gamma \jump{\vr_h}_\Gamma^2~dS(x)dt\right)^\frac{1}{2}\|\Grad \phi\|_{L^2(0,T;L^2(\Om))},\nonumber
\end{align}
where the last inequality comes from the trace theorem (Lemma \ref{lemma:toolbox})
and the interpolation error estimate for $\Pi_h^Q$.
Now, to bound the jump term, we apply Lemma \ref{lem:renorm} with $B(\vr) = \vr^2$
to obtain
\begin{equation}\label{stupid:2}
	\begin{split}
			&h^{1-\eps}\sum_\Gamma\int_0^T\int_\Gamma \jump{\vr_h}^2_\Gamma~dS(x)dt \\
			&\qquad \leq \int_\Om \vr_0^2~dx + \int_0^T\int_\Om \vr_h^2 \Div u_h~dxdt \\
			&\qquad \leq \|\vr_0\|_{L^2(\Om)}^2 + C\|\vr_h\|^2_{L^\infty(0,T;L^4(\Om))}\|\Div u_h\|_{L^2(0,T;L^2(\Om))} \\
			&\qquad \leq \|\vr_0\|_{L^2(\Om)}^2 + h^{\max\{-\frac{3(\gamma-4)}{2\gamma},0\}}C\|\vr_h\|_{L^\infty(0,T;L^\gamma(\Om))}^2 \\
			&\qquad \leq \|\vr_0\|_{L^2(\Om)}^2 + h^{-\frac{1}{6}}C\|\vr_h\|_{L^\infty(0,T;L^\gamma(\Om))}^2 \leq h^{-\frac{1}{6}}C,
	\end{split}
\end{equation}
where we have applied the inverse estimate (Lemma \ref{lemma:inverse}), the energy bound,
 and that $\gamma > 3$. Finally, we apply \eqref{stupid:2} in \eqref{stupid:1} to discover
\begin{equation*}
	\begin{split}
		&h^{1-\eps}\sum_\Gamma \int_0^T\int_\Gamma \jump{\vr_h}_\Gamma\jump{\Pi_h^Q \phi}_\Gamma~dS(x)dt \\
		&\leq h^{\frac{1}{2}}h^{\frac{1-\eps}{2}}\left(h^{1-\eps}\sum_\Gamma\int_0^T\int_\Gamma \jump{\vr_h}^2_\Gamma~dS(x)dt\right)^\frac{1}{2}\|\Grad \phi\|_{L^2(0,T;L^2(\Om))} \\
		&\leq h^{1 - \frac{\eps}{2}-\frac{1}{12}}C\|\Grad \phi\|_{L^2(0,T;L^2(\Om))},
	\end{split}
\end{equation*}
which is \eqref{eq:stupid1}.

Next, we prove \eqref{eq:stupid2}. 
Since $\int_\Gamma \jump{\Pi_h^V v}_\Gamma~dS(x) = 0$, we have the identity
\begin{equation*}
	\begin{split}
		&-h^{1-\epsilon}\sum_\Gamma \int_0^T\int_{\Gamma} \left(\frac{\ov u_- + \ov u_+}2\right) \jump{\vr_h}_\Gamma\jump{\ov{ \Pi_h^Vv}}_\Gamma~dS(x)dt \\
		&= h^{1-\epsilon}\sum_E \int_0^T\int_{\partial E} \left(\frac{\ov u_- + \ov u_+}2\right) \jump{\vr_h}_\Gamma\left(\ov{ \Pi_h^Vv} -  \Pi_h^V v\right)~dS(x).
	\end{split}
\end{equation*}
An application of the H\"older inequality and several applications of the trace theorem (Lemma \ref{lemma:toolbox}) 
allow us to deduce
\begin{align}\label{stupid:3}
		&\left|h^{1-\epsilon}\sum_\Gamma \int_0^T\int_{\Gamma} \left(\frac{\ov u_- + \ov u_+}2\right) \jump{\vr_h}_\Gamma\jump{\ov{ \Pi_h^Vv}}_\Gamma~dS(x)dt\right|  \nonumber\\
		&\quad \leq h^{1-\epsilon}\left(\sum_\Gamma \int_0^T\int_\Gamma \jump{\vr_h}_\Gamma^2~dS(x)\right)^\frac{1}{2}\nonumber\\
		&\qquad \qquad\times
		\sup_t\left(\sum_E \int_{\partial E}\left|\ov{ \Pi_h^Vv} -  \Pi_h^V v\right|^3~dS(x)\right)^\frac{1}{3}\\
		&\qquad \qquad \times 
		\left(\int_0^T\left(\sum_E\int_{\partial E}\left|\frac{\ov u_- + \ov u_+}2\right|^6~dS(x)~\right)^\frac{2}{6}~dt\right)^\frac{1}{2} \nonumber\\
		&\quad \leq h^{1-\eps}C\left(\sum_\Gamma \int_0^T\int_\Gamma \jump{\vr_h}_\Gamma^2~dS(x)\right)^\frac{1}{2} \nonumber\\
		&\qquad \qquad \times \left(h^{-\frac{1}{3}}\|\widehat{\Pi_h^V v} - \Pi_h^V v\|_{L^\infty(0,T;L^3(\Om))}+ h^\frac{2}{3}\|\Grad v\|_{L^\infty(0,T;L^3(\Om))}\right) \nonumber\\
		&\qquad \qquad \times Ch^{-\frac{1}{6}}\|\widehat u_h\|_{L^2(0,T;L^6(\Om))}.\nonumber
\end{align}
Next, we apply \eqref{stupid:2} and the fact that $\gamma > 3$ to \eqref{stupid:3} to get
\begin{equation*}
	\begin{split}
		&\left|h^{1-\epsilon}\sum_\Gamma \int_0^T\int_{\Gamma} \left(\frac{\ov u_- + \ov u_+}2\right) \jump{\vr_h}_\Gamma\jump{\ov{ \Pi_h^Vv}}_\Gamma~dS(x)dt\right|  \\
		&\qquad \leq h^{\frac{1-\eps}{2}+\frac{2}{3} - \frac{1}{6} - \frac{1}{12}}C\|\Grad v\|_{L^\infty(0,T;L^3(\Om))} \\
		&\qquad = h^\frac{13-6\eps}{12}C\|\Grad v\|_{L^\infty(0,T;L^3(\Om))},
	\end{split}
\end{equation*}	
which is \eqref{eq:stupid2}.
\end{proof}

\subsection{Weak time control}
We conclude this section by proving $h$-independent bounds 
on the discrete time derivates in the numerical method. 
\begin{lemma}\label{lem:weaktime}
	Under the conditions of the previous lemma,
	\begin{equation}\label{eq:dtvr}
		 D_t \vr_h \inb L^\frac{4}{3}(0,T;W^{-1,\frac{3}{2}}(\Om)),
	\end{equation}
	\begin{equation}\label{eq:dtm}
		D_t (\vr_h u_h)\inb L^1(0,T;W^{-1, \frac{3}{2}}(\Om)).
	\end{equation}
\end{lemma}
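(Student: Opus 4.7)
The plan is to obtain both bounds by duality: test the numerical schemes against canonical projections of smooth test functions and harvest the estimates already established in Proposition \ref{pro:transport}, Lemma \ref{lem:consistency}, Lemma \ref{lem:stupid}, and Corollary \ref{cor:energy}.

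For \eqref{eq:dtvr}, I would fix $\phi \in L^4(0,T;W^{1,3}_0(\Om))$ and substitute $q_h = \Pi_h^Q\phi$ into \eqref{fem:cont}. Since $D_t^h\vr_h$ is piecewise constant, one has $\int D_t^h\vr_h\,\Pi_h^Q\phi\,dx = \int D_t^h\vr_h\,\phi\,dx$, and identity \eqref{eq:cont-trans} recasts the convective term as $\int \vr_h\widetilde u_h\cdot\Grad\phi\,dx + P_1(\phi)$. H\"older's inequality combined with the embedding $\vr_h\widetilde u_h\in L^2(0,T;L^{6\gamma/(3+\gamma)}(\Om))\hookrightarrow L^{4/3}(0,T;L^{3/2}(\Om))$ from Corollary \ref{cor:energy} controls the main contribution, while the consistency error is absorbed by \eqref{eq:p1} and the artificial diffusion by \eqref{eq:stupid1}, both producing positive powers of $h$. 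Taking the supremum over $\|\phi\|_{L^4(W^{1,3})}\leq 1$ delivers \eqref{eq:dtvr}.

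For \eqref{eq:dtm}, I would first exploit the identity $\vr_h\widehat u_h = \Pi_h^Q(\vr_h u_h)$, valid because $\vr_h\in Q_h$, so that testing \eqref{fem:moment} against Crouzeix--Raviart functions is equivalent, up to projection errors, to pairing with $D_t^h(\vr_h u_h)$. Given $v\in L^\infty(0,T;W^{1,3}_0(\Om))$, I would substitute $v_h = \Pi_h^V v$ and invoke Lemma \ref{lem:consistency} to land the transport terms on $\int \vr_h\widetilde u_h\otimes\widehat u_h:\Grad v\,dx + F(v)$. The convective contribution is then bounded by H\"older together with $\vr_h\widetilde u_h\otimes\widehat u_h\in L^2(0,T;L^{3\gamma/(3+\gamma)}(\Om))$ from Corollary \ref{cor:energy}, the diffusion by the energy bound $\Grad_h u_h\in L^2(0,T;L^2(\Om))$, the stabilization by \eqref{eq:stupid2}, and $F(v)$ by \eqref{eq:fbound}. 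A small correction $\int D_t^h(\vr_h u_h)(v-\Pi_h^V v)\,dx$ is handled by the interpolation error of $\Pi_h^V$ together with the bound one derives for $D_t^h(\vr_h u_h)$ in a weaker norm.

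The hard part is the pressure term $\int p(\vr_h)\Div v_h\,dx$: the energy estimate gives only $p(\vr_h)\in L^\infty(0,T;L^1(\Om))$, and closing its pairing against $\Div v$ in the $W^{1,3}$ norm is tight. The crucial ingredient is the commutation $\Div\Pi_h^V v = \Pi_h^Q\Div v$ from \eqref{eq:commute}, which transfers the pairing onto a piecewise constant test against $p(\vr_h)$; combining this with the assumption $\gamma>3$ is exactly what forces the $L^1$ (rather than a stronger) time integrability on the left-hand side of \eqref{eq:dtm}, and any attempt to tighten this exponent would require the higher density integrability established only later.
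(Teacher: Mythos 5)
Your proposal follows essentially the same route as the paper: for both bounds you test the schemes with the canonical interpolants of a smooth dual function, invoke the transport identities \eqref{eq:cont-trans} and Lemma~\ref{lem:consistency}, and then harvest the error bounds of Proposition~\ref{pro:transport} and Lemma~\ref{lem:stupid} together with Corollary~\ref{cor:energy}. That is precisely the paper's argument.

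Two secondary remarks. First, the ``small correction'' $\int D_t^h(\vr_h\ov u_h)(v-\Pi_h^Vv)\,dxdt$ that you propose to treat separately is already absorbed into $F(v)$ in the definition of Lemma~\ref{lem:consistency}; handling it again is superfluous (and if you did, you would need Lemma~\ref{lem:timederiv} to estimate $D_t^h(\vr_h\ov u_h)$ in $L^{6/5}(0,T;L^{3/2})$ first). Second, your discussion of the pressure term overstates what the commutation $\Div_h\Pi_h^Vv=\Pi_h^Q\Div v$ accomplishes: it merely replaces $\int p(\vr_h)\Div_h v_h$ by $\int p(\vr_h)\Div v$ (since $p(\vr_h)\in Q_h$), which changes nothing about the integrability clash. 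With only the energy bound $p(\vr_h)\inb L^\infty(0,T;L^1(\Om))$, the pairing against $\Div v\in L^\infty(0,T;L^3(\Om))$ is not directly controlled in three dimensions; the paper's written factor $\|p(\vr_h)\|_{L^\infty(0,T;L^\gamma(\Om))}$ is not furnished by the energy estimate either, so you have inherited rather than resolved a soft spot. Finally, the $L^1$ time exponent in \eqref{eq:dtm} is dictated not by the pressure term but by the $\|\Grad v\|_{L^\infty(0,T;L^3(\Om))}$ norms appearing in the $F(v)$ and stabilization bounds, so the causal claim in your last paragraph is backwards. None of this alters the verdict that your overall approach matches the paper's.
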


\begin{proof}
Let $\phi \in C_0^\infty([0,T)\times \Om)$ be arbitrary and set $\Pi_h^Q \phi$ as test-function
in the continuity scheme \eqref{fem:cont} to obtain
\begin{equation}\label{weak:1}
	\begin{split}
		&\int_0^T\int_\Om D^h_t(\vr_h)\phi~dxdt \\
		&= \sum_\Gamma \int_0^T\int_\Gamma\up(\vr u)\jump{\Pi_h^Q \phi}_\Gamma + h^{1-\eps}\jump{\vr_h}_\Gamma\jump{\Pi_h^Q \phi}_\Gamma~dS(x)dt  \\
		&= \int_0^T\int_\Om \vr_h \tilde u_h \Grad \phi~dxdt + P_1(\phi) \\
		&\qquad \qquad + h^{1-\eps}\sum_\Gamma\int_0^T\int_\Gamma \jump{\vr_h}_\Gamma\jump{\Pi_h^Q \phi}_\Gamma~dS(x)dt,
	\end{split}
\end{equation}
where the last equality is \eqref{eq:cont-trans}. From Proposition \ref{pro:transport},  have that
\begin{equation}\label{weak:2}
	|P_1(v)| \leq h^\frac{1}{4}C\|\Grad \phi\|_{L^4(0,T;L^\frac{12}{5}(\Om))}.
\end{equation}
Moreover, from Lemma \ref{lem:stupid}, we have that
\begin{equation}\label{weak:3}
	h^{1-\eps}\sum_\Gamma\int_0^T\int_\Gamma\jump{\vr_h}_\Gamma\jump{\Pi_h^Q \phi}_\Gamma~dS(x)dt \leq h^{\frac{11-6\eps}{12}}\|\Grad \phi\|_{L^2(0,T;L^2(\Om))}.
\end{equation}
Hence, by applying \eqref{weak:2}, \eqref{weak:3}, and the H\"older inequality to \eqref{weak:1}, we conclude
\begin{equation*}
	\begin{split}
			&\left|\int_0^T\int_\Om D^h_t(\vr_h)\phi~dxdt\right| \\
			& \qquad \leq \|\vr_h  u_h\|_{L^2(0,T;L^3(\Om))}\|\Grad \phi\|_{L^2(0,T;L^\frac{3}{2}(\Om))} \\
			&\qquad\qquad + h^\frac{1}{4}C\|\Grad \phi\|_{L^4(0,T;L^\frac{12}{5}(\Om))} + h^{\frac{11-6\eps}{12}}\|\Grad \phi\|_{L^2(0,T;L^2(\Om))}.
	\end{split}
\end{equation*}
We conclude \eqref{eq:dtvr} by recalling that $\phi$ was chosen arbitrarily.

Next, let $v \in [C_0^\infty([0,T)\times \Om)]^d$ be an arbitrary vector and set $v_h = \Pi^V_h v$ 
as test-function in the momentum scheme \eqref{fem:moment} to obtain
\begin{equation*}
	\begin{split}
		&\int_0^T\int_\Om D_t^h(\vr_h u_h)v_h~dxdt + \sum_E \int_0^T\int_{\partial E}\up(\vr u\otimes u)\ov  v_h dS(x)dt \\
		& \qquad  +\int_0^T\int_\Om \Grad_h u_h \Grad_h v_h- p(\vr_h)\Div_h v_h~dxdt \\
 		& \qquad \qquad +h^{1-\epsilon}\sum_E \int_{\partial E} \left(\frac{\ov u_- + \ov u_+}2\right) \jump{\vr_h}_{\partial E}\ov v_h~dS(x) = 0.
	\end{split}
\end{equation*}
Now, to the first two terms we apply the identity in Lemma \ref{lem:consistency} and reorder terms to obtain
\begin{equation}\label{weak:start}
	\begin{split}
		&\int_0^T\int_\Om D_t^h (\vr_h u_h)v~dxdt \\
		&\qquad= \int_0^T\int_\Om \vr_h \tilde u_h\otimes \ov u_h:\Grad v~dxdt + F(v)\\
		&\qquad\qquad\quad +\int_0^T\int_\Om \Grad_h u_h \Grad_h v_h- p(\vr_h)\Div_h v_h~dxdt \\
		&\qquad\qquad\quad + h^{1-\epsilon}\sum_E \int_0^T\int_{\partial E} \left(\frac{\ov u_- + \ov u_+}2\right) \jump{\vr_h}_{\partial E}\ov v_h~dS(x)dt.
	\end{split}
\end{equation}
From Lemma \ref{lem:consistency}, we also have the bound
\begin{equation}\label{weak:thef}
	|F(v)| \leq h^\frac{1}{4}C\|\Grad v\|_{L^\infty(0,T;L^3(\Om))}.
\end{equation}
Next, we invoke Lemma \ref{lem:stupid} to obtain the bound
\begin{equation}\label{weak:stupid}
	\begin{split}
		&\left|h^{1-\epsilon}\sum_E \int_0^T\int_{\partial E} \left(\frac{\ov u_- + \ov u_+}2\right) \jump{\vr_h}_{\partial E}\ov v_h~dS(x)dt\right| \\
		&\qquad \leq h^\frac{13-6\epsilon}{12}C\|\Grad v\|_{L^\infty(0,T;L^3(\Om))}.
	\end{split}
\end{equation}
By applying \eqref{weak:thef} and \eqref{weak:stupid}, together with the 
H\"older inequality in \eqref{weak:start}, we deduce
\begin{equation*}
	\begin{split}
		\left|\int_0^T\int_\Om D_t^h (\vr_h u_h)v~dxdt\right| 
		&\leq \|\vr_h |u_h|^2\|_{L^2(0,T;L^\frac{3}{2}(\Om))}\|\Grad \phi\|_{L^2(0,T;L^3(\Om))} \\
		&\quad + h^\frac{1}{4}C\|\Grad v\|_{L^\infty(0,T;L^3(\Om))} \\
		&\quad + \|p(\vr_h)\|_{L^\infty(0,T;L^\gamma(\Om))}\|\Div v\|_{L^1(0,T;L^\frac{3}{2}(\Om))}\\
		&\quad + h^\frac{13-6\eps}{12}C\|\Grad v\|_{L^\infty(0,T;L^3(\Om))}.
	\end{split}
\end{equation*}
From this we easily conclude \eqref{eq:dtm}.

\end{proof}

\section{Higher integrability on the density}\label{sec:higher}
In order to pass to the limit in the pressure, 
we will need higher integrability on the density. 
That is, from the energy estimate (Corollary \ref{cor:energy}) we only know that
$$
p(\vr_h)  \inb L^\infty(0,T;L^1(\Om)),
$$
and $L^1$ is not weakly closed. Hence, it is not clear 
that $p(\vr_h)$ converges to an integrable function. 
To prove higher integrability, we shall make use
of the operator $\mathcal{A}^i[\cdot]:L^p(\Om) \mapsto W^{1,p}(\Om)$
\begin{equation*}
	\mathcal{A}^i[q] = \left. \frac{d}{dx_i}\Delta^{-1}\left[\Pi^{E} q\right]\right|_{\Om}, \quad i=1,\ldots, 3.
\end{equation*}
Here, $\Pi_E$ is the extension by zero operator to all of $\R^n$
 and $|_\Om$ denotes the restriction to $\Om$. The $\Delta^{-1}$ operator
is the usual convolution with the Newtonian potential
\begin{equation*}
	\Delta^{-1}\phi = -\lambda \int_{\R^3}\frac{\phi(y)}{|x-y|}~dy, \quad \lambda > 0.
\end{equation*}
Using $\mathcal{A}_i$, we define two operators $\mathcal{A}^\Grad$ and $\mathcal{A}^\text{div}$
acting on scalars and vectors, respectively.  For a scalar $q$ and a vector $v = [v_1, v_2, v_3]^T$
they are defined
\begin{equation}\label{def:aop}
	\mathcal{A}^\Grad[q] = 
	\begin{pmatrix}
		\mathcal{A}^1[q] \\
		\mathcal{A}^2[q] \\
		\mathcal{A}^3[q] \\
	\end{pmatrix}, \qquad 
		\mathcal{A}^\text{div}[v] = \mathcal{A}^1[v_1] + \mathcal{A}^2[v_2] + \mathcal{A}^3[v_3].
\end{equation}
By direct calculation, one easily verifies the following lemma.
\begin{lemma}\label{lem:duality}
For any two $f \in L^p(\Om)$ and $g \in L^q(\Om)$ with $\frac{1}{p} + \frac{1}{q} = 1$ 
and $1 < p,q < \infty$, the following identity holds
	\begin{equation*}
		\int_\Om v\mathcal{A}^\Grad[g\phi]~dx = -\int_\Om \mathcal{A}^\text{div}[f]\phi g~dx, \quad \forall \phi \in C_0^\infty(\Om).
	\end{equation*}
Moreover, there is a constant $C$ such that,
\begin{equation*}
	\|A^i[f]\|_{L^p(\Om)} + \|\Grad A^i[f]\|_{L^q(\Om)}  \leq C\|f\|_{L^q(\Om)}, \quad p < \frac{3q}{3-q}.
\end{equation*}
\end{lemma}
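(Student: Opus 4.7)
The plan is to handle the two assertions separately: first the duality identity by reducing to a single Fourier-multiplier manipulation, then the mapping estimates by Calder\'on--Zygmund theory plus a Sobolev embedding.

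For the duality identity, I would first expand the vector dot product component-wise, writing
\begin{equation*}
\int_\Om f\cdot \mathcal{A}^\Grad[g\phi]\,dx = \sum_{i=1}^3 \int_\Om f_i\, \mathcal{A}^i[g\phi]\,dx,
\end{equation*}
and then push each integral out to $\R^3$ using the zero-extensions $\Pi^E f_i$ and $\Pi^E(g\phi)$ that are built into the definition of $\mathcal{A}^i$. The key algebraic identity is that on $\R^3$ the Newtonian-potential convolution $\Delta^{-1}$ is self-adjoint and commutes with $\partial_{x_i}$, since both are Fourier multipliers. Combining classical integration by parts in $x_i$ with this self-adjointness and re-commuting $\partial_{x_i}$ back inside $\Delta^{-1}$ converts
\begin{equation*}
\int_{\R^3} (\Pi^E f_i)\, \partial_{x_i}\Delta^{-1}[\Pi^E(g\phi)]\,dx = -\int_{\R^3} \partial_{x_i}\Delta^{-1}[\Pi^E f_i]\cdot \Pi^E(g\phi)\,dx,
\end{equation*}
which after restricting to $\Om$ is exactly $-\int_\Om \mathcal{A}^i[f_i]\, g\phi\,dx$. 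Summation over $i$ assembles $\mathcal{A}^{\mathrm{div}}[f]$ on the right-hand side and finishes the identity. To make the integration by parts rigorous I would first run the argument for $f\in [C_0^\infty(\Om)]^3$ and $g\in C_0^\infty(\Om)$, where all manipulations are classical, and then pass to $f\in L^p$, $g\in L^q$ by density, using the continuity estimates from the second part of the statement together with H\"older's inequality to justify both sides.

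For the mapping estimates, the plan is entirely standard. Since $\partial_j \mathcal{A}^i[f] = \partial_j\partial_i\Delta^{-1}[\Pi^E f]\big|_{\Om}$ is a second-order Riesz transform of the zero-extension of $f$, the Calder\'on--Zygmund theorem on $\R^3$ gives
\begin{equation*}
\|\Grad \mathcal{A}^i[f]\|_{L^q(\Om)} \leq \|\Grad \partial_i\Delta^{-1}[\Pi^E f]\|_{L^q(\R^3)} \leq C\|f\|_{L^q(\Om)}, \quad 1<q<\infty.
\end{equation*}
Then the Sobolev embedding $W^{1,q}(\Om)\hookrightarrow L^{p}(\Om)$ in the range $p\le 3q/(3-q)$ (for $q<3$) upgrades this into the $L^p$-control on $\mathcal{A}^i[f]$ itself, which is exactly the stated inequality.

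I do not anticipate a serious obstacle: the single delicate point is the density argument used to justify the integration by parts at the level of $L^p$ functions rather than smooth ones, but this is routine once the Calder\'on--Zygmund estimate is in hand and guarantees continuous dependence in both variables. Everything else reduces to elementary Fourier-multiplier bookkeeping.
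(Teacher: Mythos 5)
The paper states this lemma with the remark ``By direct calculation, one easily verifies the following lemma'' and gives no written proof, so there is no argument in the paper to compare against. Your proposal is correct and is the standard route: the duality identity follows from the skew-adjointness of the odd-kernel convolution operator $\partial_{x_i}\Delta^{-1}$ (first on smooth compactly supported functions, then extended to $L^p\times L^q$ by density, the continuity of both sides being guaranteed by H\"older and the mapping estimates applied with $p=q$, which is within the admissible range since $q < 3q/(3-q)$); and the mapping estimates follow from Calder\'on--Zygmund boundedness of the second-order Riesz transform $\partial_j\partial_i\Delta^{-1}$ on $L^q(\mathbb{R}^3)$ applied to the zero extension, together with the Sobolev embedding $W^{1,q}(\Om)\hookrightarrow L^p(\Om)$ for $p\le 3q/(3-q)$, $q<3$, which the stated range $p < 3q/(3-q)$ sits comfortably inside.
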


We are now ready to prove higher integrability 
of the numerical density.
\begin{proposition}\label{pro:higher}
Let $(\vr_h, u_h)$ be the numerical approximation constructed 
through Definition \ref{def:scheme} and \eqref{def:schemeII}.
The following integrability estimate holds
	\begin{equation*}
		\vr_h \inb L^{\gamma + 1}_\text{loc}([0,T]\times \Om).
	\end{equation*}
\end{proposition}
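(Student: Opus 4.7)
The natural strategy is to imitate the Bogovskii/Lions argument in the discrete setting: we test the momentum scheme \eqref{fem:moment} with a function whose divergence looks (modulo the commuting diagram of Section~\ref{sec:CRM}) like $\psi(t)\phi(x)\vr_h$, thereby generating $\int\psi\phi\,\vr_h^{\gamma+1}\,dx$ from the pressure contribution. Concretely, fix a cut-off $\phi\in C_0^\infty(\Om)$ with $0\le\phi\le 1$ and $\psi\in C_0^\infty([0,T))$, and use as test function
\begin{equation*}
v_h^k \;=\; \Pi_h^V\!\left(\psi(t^k)\,\phi\,\mathcal{A}^\Grad\!\left[\vr_h^k\right]\right),\qquad k=1,\ldots,M.
\end{equation*}
By Lemma~\ref{lem:duality}, $\mathcal{A}^\Grad[\vr_h^k]\in W^{1,\gamma}(\Om)$ uniformly (since $\vr_h\inb L^\infty(0,T;L^\gamma(\Om))$), so $v_h^k$ is admissible and its gradient enjoys the integrability required by Propositions~\ref{pro:transport} and Lemma~\ref{lem:consistency}. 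The key algebraic identity comes from the commuting relation $\Div_h \Pi_h^V = \Pi_h^Q \Div$ in \eqref{eq:commute}:
\begin{equation*}
\int_\Om p(\vr_h^k)\Div_h v_h^k\,dx \;=\; \int_\Om p(\vr_h^k)\,\Pi_h^Q\!\bigl(\psi\phi\vr_h^k + \psi\,\Grad\phi\cdot\mathcal{A}^\Grad[\vr_h^k]\bigr)\,dx,
\end{equation*}
and since $p(\vr_h^k)\in Q_h(\Om)$, the projection drops, yielding $a\int \psi\phi\,(\vr_h^k)^{\gamma+1}\,dx$ plus a lower-order term controlled by $\|\mathcal{A}^\Grad[\vr_h]\|_{L^\gamma}$.

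Next I would collect all the remaining contributions after multiplying by $\Delta t$ and summing in $k$, and bound them independently of $h$. The diffusion term $\int \Grad_h u_h\,\Grad_h v_h\,dx$ is controlled via $\|\Grad_h v_h\|_{L^2}\lesssim \|\Grad\mathcal{A}^\Grad[\vr_h]\|_{L^2}\lesssim \|\vr_h\|_{L^2}$ (Lemma~\ref{lemma:interpolationerror}, Lemma~\ref{lem:duality}) together with the energy bound on $\Grad_h u_h$. The convective contribution is handled by Lemma~\ref{lem:consistency}, which converts the discrete $D_t^h(\vr_h\ov u_h)$ and the Lax--Friedrichs flux into
\begin{equation*}
\int_0^T\!\!\int_\Om D_t^h(\vr_h\ov u_h)\,v - \vr_h \widetilde u_h\otimes\ov u_h\!:\!\Grad v\,dx\,dt + F(v),
\end{equation*}
with $v=\psi\phi\mathcal{A}^\Grad[\vr_h]$; the second integrand is absorbed using the $L^\infty L^\gamma$ bound on $\Grad\mathcal{A}^\Grad[\vr_h]$ together with $\vr_h\widetilde u_h\otimes\ov u_h\inb L^2 L^{3\gamma/(3+\gamma)}$ from Corollary~\ref{cor:energy}. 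The stabilization contributions are handled directly by Lemma~\ref{lem:stupid}.

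The main obstacle is the genuine time--derivative piece $\int_0^T\!\int_\Om D_t^h(\vr_h\ov u_h)\,(\psi\phi\mathcal{A}^\Grad[\vr_h])\,dx\,dt$. To treat it I would perform discrete summation by parts in $k$, moving $D_t^h$ onto $\psi\phi\mathcal{A}^\Grad[\vr_h]$. The resulting discrete time difference $\mathcal{A}^\Grad[\vr_h^k - \vr_h^{k-1}]/\Delta t$ is then rewritten by invoking the continuity scheme \eqref{fem:cont}: the duality relation in Lemma~\ref{lem:duality} converts it into an expression of the form $-\mathcal{A}^{\text{div}}[\vr_h\widetilde u_h]$ plus terms coming from the upwind numerical flux and the $h^{1-\eps}$ stabilization. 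The first of these produces $\int \vr_h\ov u_h\cdot\mathcal{A}^{\text{div}}[\vr_h\widetilde u_h]\,dx$, which is bounded by $\|\vr_h\ov u_h\|_{L^2 L^{m_2}}\|\vr_h\widetilde u_h\|_{L^2 L^{m_2}}$ (uniform by Corollary~\ref{cor:energy}, using $\gamma>3$ so the Calder\'on--Zygmund exponent restrictions in Lemma~\ref{lem:duality} are satisfied); the upwind and stabilization error pieces are absorbed using Proposition~\ref{pro:transport} and Lemma~\ref{lem:stupid}.

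Once every non-pressure term is bounded by a constant depending only on the energy and on $\|\psi\phi\|$, the identity rearranges to
\begin{equation*}
a\int_0^T\!\!\int_\Om \psi\phi\,\vr_h^{\gamma+1}\,dx\,dt \;\le\; C(\psi,\phi),
\end{equation*}
uniformly in $h$. Since $\psi$ and $\phi$ were arbitrary compactly supported cut-offs, this gives $\vr_h\inb L^{\gamma+1}_{\text{loc}}([0,T]\times\Om)$, as claimed.
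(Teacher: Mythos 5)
Your plan follows essentially the same route as the paper: Bogovskii-type test function $v_h=\Pi_h^V\bigl(\text{cutoff}\times\mathcal{A}^\Grad[\vr_h]\bigr)$, the commuting identity $\Div_h\Pi_h^V=\Pi_h^Q\Div$ to make the pressure produce $\int\phi\,p(\vr_h)\vr_h$, Lemma~\ref{lem:consistency} for the convection, Lemma~\ref{lem:stupid} for the stabilization, and then discrete summation by parts plus the continuity scheme plus the duality Lemma~\ref{lem:duality} for the time-derivative term. The only deviations are cosmetic: the paper places the cutoff both inside and outside, $v=\phi\mathcal{A}^\Grad[\phi\vr_h]$, and uses no time cutoff (instead bounding the discrete boundary terms at $k=0$ and $k=M$ via $v\inb L^\infty L^\infty$, which is where $\gamma>3$ and $W^{1,\gamma}\hookrightarrow L^\infty$ enter); you insert a $\psi\in C_0^\infty([0,T))$ which kills the final-time term at the cost of one extra product-rule term $D_t^h\psi$, which is harmless. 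Two small imprecisions worth flagging when you write it up: the lower-order pressure term $\int p(\vr_h)\psi\Grad\phi\cdot\mathcal{A}^\Grad[\vr_h]\,dx$ needs $\mathcal{A}^\Grad[\vr_h]\in L^\infty$ (via $W^{1,\gamma}\hookrightarrow L^\infty$, $\gamma>3$) because $p(\vr_h)$ is only in $L^\infty L^1$, not ``controlled by $\|\mathcal{A}^\Grad[\vr_h]\|_{L^\gamma}$'' as written; and bounding the $P_1$ error produced when the continuity scheme is inserted requires an inverse estimate in time (the $L^4$-in-time integrability demanded by Proposition~\ref{pro:transport} forces the factor $(\Delta t)^{-1/4}$ absorbed by $\Delta t=ch$), a step your sketch glosses over but which the paper uses explicitly.
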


\begin{proof} 
Let $\phi \in C_0^\infty(\Om)$ be arbitrary and define the test-functions
\begin{equation*}
	v = \phi\mathcal{A}^\Grad[\vr_h \phi], \qquad v_h = \Pi_h^V v.
\end{equation*}
Since $\vr_h$ is piecewise constant in time, so is $v$ and hence also $v_h$.
Moreover, since $\phi$ vanishes at the boundary, the degrees of freedom of $v_h$ 
is zero at the boundary.
As a consequence, $v_h$ is a valid test-function in the momentum scheme \eqref{fem:moment}.

Note that the energy estimate, Lemma \ref{lem:duality}, and the H\"older inequality provides the bound
\begin{equation}\label{eq:dv}
	\|\Grad v\|_{L^\infty(0,T;L^p(\Om))} \leq C\|\phi\|^2_{W^{1,\infty}(\Om)}\|\vr_h\|_{L^\infty(0,T;L^\gamma(\Om))} \leq C\|\phi\|^2_{W^{1,\infty}(\Om)},
\end{equation}
for any $p \leq \gamma$.

Now, by applying $v_h$ as test-function in \eqref{fem:moment}, integrating in time, and reordering terms, 
we obtain
\begin{equation}\label{high:start}
	\begin{split}
		\int_0^T\int_\Om p(\vr_h)\Div_h v_h~dxdt & =  I_1 + I_2 + I_3,
	\end{split}
\end{equation}
where
\begin{align*}
	I_1 &= \int_0^T\int_\Om \Grad_h u_h\Grad_h  v_h~dxdt, \\
	I_2 &= h^{1-\eps}\sum_\Gamma \int_0^T\int_\Gamma \left(\frac{\ov u_+ +\ov u_-}{2} \right)\jump{\vr_h}_\Gamma\jump{\ov v_h}_\Gamma~dS(x)dt, \\
	I_3 &= \int_0^T\int_\Om D_t^h(\vr_h \ov u_h)v_h~dxdt-\sum_\Gamma \int_0^T \int_{\Gamma}\up(\vr  u \otimes \ov u)\jump{\ov v_h}_\Gamma~dS(x)dt, \\
\end{align*}
Before we start deriving bounds for $I_1$, $I_2$, and $I_3$, let 
us first consider the term on the left-hand side of \eqref{high:start}.
 For this purpose, recall from Section 2 that the finite element spaces 
are chosen such that
\begin{equation*}
	\Div_h \Pi_h^V v = \Pi_h^Q \Div v.
\end{equation*}
Hence, we have that
\begin{equation}\label{eq:thecalc}
	\begin{split}
		\int_\Om p(\vr_h)\Div_h v_h~dx &= \int_\Om p(\vr_h)\Pi_h^Q \left[\Div\left( \phi\mathcal{A}^\Grad[\phi \vr_h]\right)\right]~dx \\ 
		&= \int_\Om p(\vr_h)\Div\left( \phi\mathcal{A}^\Grad[\phi \vr_h]\right)~dx \\
		&=\int_\Om p(\vr_h)\Grad \phi \cdot \mathcal{A}^\Grad[\phi \vr_h] + \phi^2 p(\vr_h)\vr_h~dx.
	\end{split}
\end{equation}
By setting this expression in \eqref{high:start}, we see that
\begin{equation}\label{high:start2}
	\int_0^T\int_\Om \phi^2 p(\vr_h)\vr_h~dx = I_1 + I_2 + I_3 + I_4, 
\end{equation}
where now
\begin{equation*}
	I_4 = - \int_\Om p(\vr_h)\Grad \phi \cdot \mathcal{A}^\Grad[\phi \vr_h]~dx.
\end{equation*}
Thus, the proof follows provided we can bound 
 $I_1$, $I_2$, $I_3$, and $I_4$.

\vspace{0.2cm}
\noindent
\textit{1. Bounds on $I_1$ and $I_2$:}	
Using the Cauchy-Schwartz inequality and the 
energy estimate (Corollary \ref{cor:energy}), 
we have that
\begin{equation}\label{high:i3}
	\begin{split}
		|I_1| &\leq 2\|\Grad_h u_h\|_{L^2(0,T,L^2(\Om))}\|\Grad_h v_h\|_{L^2(0,T;L^2(\Om))} \\
		&\leq C\|\Grad v\|_{L^2(0,T;L^2(\Om))} \leq C\|\phi\|_{W^{1,\infty(\Om)}}^2,
	\end{split}
\end{equation}
where the last inequality is \eqref{eq:dv}.	

To bound the $I_2$ term, we apply Lemma \ref{lem:stupid} to obtain
\begin{equation}\label{high:i2}
	|I_2| \leq h^\frac{1}{2}\|\Grad v\|_{L^\infty(0,T;L^\gamma(\Om))}\leq h^\frac{1}{2}C\|\phi\|_{W^{1,\infty}(\Om)}^2,
\end{equation}
where again the last inequality is  \eqref{eq:dv}.

\vspace{0.2cm}
\noindent
\textit{2. Bound on $I_3$:}	
From Lemma \ref{lem:consistency}, we have the identity
\begin{equation}\label{high:i1start}
	\begin{split}
		I_3 &= \int_0^T\int_\Om D_t^h(\vr_h \ov u_h)v - \vr_h \widetilde u_h \otimes \ov u_h:\Grad v~dxdt + F(v),
	\end{split}
\end{equation}
where $F(v)$ is bounded by \eqref{eq:fbound} and \eqref{eq:dv} as
\begin{equation*}\label{high:i1f}
	\begin{split}
		|F(v)| &\leq h^\frac{1}{4}C\|\Grad v\|_{L^\infty(0,T;L^\gamma(\Om))} 
			  	\leq h^\frac{1}{4}C\|\phi\|_{W^{1,\infty}(\Om)}^2.
	\end{split}
\end{equation*}
It remains to bound the two other terms in $I_3$. Let us begin with 
the easiest term. For this purpose, we apply the H\"older inequality 
and \eqref{eq:dv} to deduce
\begin{equation}\label{high:i11}
	\begin{split}
		&\left|\int_0^T\int_\Om \vr_h \widetilde u_h \otimes \ov u_h:\Grad v~dxdt\right| \\
		&\qquad \leq \|\vr_h \widetilde u_h \ov u_h\|_{L^2(0,T;L^\frac{\gamma}{\gamma-1}(\Om))}\|\Grad v \|_{L^\infty(0,T;L^\gamma(\Om))} \\
		&\qquad \leq C \|\phi\|_{W^{1,\infty}(\Om)}^2\|\vr_h \widetilde u_h \ov u_h\|_{L^2(0,T;L^\frac{\gamma}{\gamma-1}(\Om))} 
		\leq C\|\phi\|_{W^{1,\infty}(\Om)}^2,
	\end{split}
\end{equation}
where the last bound comes from Corollary \ref{cor:energy} using that
$$
\frac{3\gamma}{3+\gamma} \geq \frac{\gamma}{\gamma -1} \quad \text{for}\quad \gamma \geq 3.
$$
The remaining term in $I_3$ is more complicated. By summation by parts in time followed
by Lemma \ref{lem:duality}, we calculate
\begin{align}
		&\int_0^T\int_\Om D_t^h(\vr_h \ov u_h)v~dxdt \nonumber\\
		&\qquad = \Delta t \sum_{k=1}^M \int_\Om \frac{\vr_h^k \ov u_h^k - \vr_h^{k-1} \ov u_h^{k-1}}{\Delta t}v^k~dx \nonumber\\
		&\qquad= - \Delta t \sum_{k=1}^M \int_\Om \vr_h^{k-1} \ov u_h^{k-1}\frac{v^k- v^{k-1}}{\Delta t}~dx \nonumber\\
		&\qquad\qquad \qquad \qquad- \int_\Om \vr_h^0\ov u_h^0 v^1~dx 
		 + \int_\Om \vr_h^M \ov u_h^M v^M~dx \nonumber.
\end{align}
To this identity, we apply the definition of $v^k$ and Lemma \ref{lem:duality} and write
\begin{align}
		&\int_0^T\int_\Om D_t^h(\vr_h \ov u_h)v~dxdt \nonumber\\
		&\qquad=-\Delta t \sum_{k=1}^M \int_\Om \vr_h^{k-1} \ov u_h^{k-1} \phi \mathcal{A}^\Grad\left[\phi \frac{\vr^k_h - \vr^{k-1}_h}{\Delta t}\right]~dx \nonumber\\
		&\qquad \qquad\qquad \qquad - \int_\Om \vr_h^0\ov u_h^0 v^1~dx 
		 +\int_\Om \vr_h^M \ov u_h^M v^M~dx \nonumber\\
		&\qquad=-\Delta t \sum_{k=1}^M \int_\Om \mathcal{A}^\text{div}\left[\phi \vr_h^{k-1} \ov u_h^{k-1}\right] \phi \frac{\vr^k_h - \vr^{k-1}_h}{\Delta t}~dx \label{high:i12}\\
		&\qquad \qquad\qquad \qquad- \int_\Om \vr_h^0\ov u_h^0 v^1~dx 
		 +\int_\Om \vr_h^M \ov u_h^M v^M~dx.\nonumber
\end{align}
The last two terms are easily bounded since \eqref{eq:dv} and $\gamma > 3$ gives that $v \inb L^\infty(0,T;L^\infty(\Om))$
and hence
\begin{equation*}
	\begin{split}
		&\left|- \int_\Om \vr_h^0\ov u_h^0 v^1~dx  +\int_\Om \vr_h^M \ov u_h^M v^M~dx\right|
		\leq C\|\phi\|^2_{L^\infty(\Om)}\|\vr_h u_h\|_{L^\infty(0,T;L^1(\Om))},
	\end{split}
\end{equation*}
where the last term is bounded by Corollary \ref{cor:energy}. To bound the other term in \eqref{high:i12}
we apply the continuity scheme \eqref{fem:cont} with 
$$
q_h = \Pi_h^Q\left[\mathcal{A}^\text{div}\left[\phi \vr_h^{k-1} \ov u_h^{k-1}\right] \phi\right]
$$
which gives
\begin{equation*}
	\begin{split}
		&-\Delta t \sum_{k=1}^M \int_\Om \mathcal{A}^\text{div}\left[\phi \vr_h^{k-1} \ov u_h^{k-1}\right] \phi \frac{\vr^k_h - \vr^{k-1}_h}{\Delta t}~dx \\
		&\qquad = \Delta t \sum_{k=1}^M\sum_\Gamma \int_\Gamma\up^k(\vr u)\jump{\Pi_h^Q\left[\mathcal{A}^\text{div}\left[\phi\vr_h^{k-1} \ov u_h^{k-1}\right] \phi\right]}_\Gamma~dS(x) \\
		&\qquad = \Delta t \sum_{k=1}^M\int_\Om \vr^k_h \widetilde u^k_h \Grad \left(\mathcal{A}^\text{div}\left[\phi \vr_h^{k-1} \ov u_h^{k-1}\right] \phi\right)~dx\\
		&\qquad \qquad \qquad \qquad 
		+ P_1\left(\mathcal{A}^\text{div}\left[\phi \vr_h^{k-1} \ov u_h^{k-1}\right] \phi\right),
	\end{split}
\end{equation*}
where the last equality is \eqref{eq:cont-trans} in Lemma \ref{lem:transport}. Now, by applying Proposition \ref{pro:transport} (i.e \eqref{eq:pcombined})
and the H\"older inequality in the previous identity, we obtain
\begin{equation}\label{high:i13}
	\begin{split}
		&\left|-\Delta t \sum_{k=1}^M \int_\Om \mathcal{A}^\text{div}\left[\phi\vr_h^{k-1} \ov u_h^{k-1}\right] \phi \frac{\vr^k_h - \vr^{k-1}_h}{\Delta t}~dx\right|\\
		&\quad \leq 2\|\vr_h \widetilde u_h\|_{L^2(0,T;L^2(\Om))}\|\phi\|^2_{W^{1,\infty}(\Om)}\left\|\mathcal{A}^\text{div}\left[\vr_h \ov u_h\right]\right\|_{L^2(0,T;W^{1,2}(\Om))} \\
		&\qquad \qquad \qquad + h^\frac{1}{4}C\|\phi\|^2_{W^{1,\infty}}\left\|\mathcal{A}^\text{div}\left[\vr_h \ov u_h\right]\right\|_{L^4(0,T;W^{1,3}(\Om))} \\
		&\quad \leq C\|\phi\|^2_{W^{1,\infty}(\Om)}\left(1 +h^\frac{1}{4}\|\vr_h \ov u_h\|_{L^4(0,T;L^3(\Om))} \right),
	\end{split}
\end{equation}
where the last inequality follows from the properties of $\mathcal{A}^i$ (Lemma \ref{lem:duality}) 
together with Corollary \ref{cor:energy} giving $\vr_h \widetilde u_h \inb L^2(0,T;L^2(\Om))$. Next, we apply 
the inverse estimate in Lemma \ref{lemma:inverse} to bound the last term
\begin{equation*}
	h^\frac{1}{4}\|\vr_h \ov u_h\|_{L^4(0,T;L^3(\Om))} \leq h^\frac{1}{4}(\Delta t)^{-\frac{1}{4}}C\|\vr_h \ov u_h\|_{L^2(0,T;L^3(\Om))},
\end{equation*}
which is bounded by Corollary \ref{cor:energy}. By applying this in \eqref{high:i13}, using that $h = a \Delta t$,  and 
setting the result into \eqref{high:i12} we obtain
\begin{equation*}
	\begin{split}
		\left|\int_0^T\int_\Om D_t^h(\vr_h \ov u_h)v~dxdt \right|
		\leq C\|\phi\|^2_{W^{1,\infty}(\Om)}.
	\end{split}
\end{equation*}
This, together with \eqref{high:i11} and \eqref{high:i1start}, yields
\begin{equation}\label{high:i1}
	|I_3| \leq C\|\phi\|^2_{W^{1,\infty}(\Om)}.
\end{equation}

\vspace{0.2cm}
\noindent
\textit{3. Bound on $I_4$:}
To bound the $I_4$, we will use that $\gamma > 3$ 
yielding $W^{1,\gamma}$ embedded in $L^\infty$.  
The resulting calculation is 
\begin{equation}\label{high:i4}
	\begin{split}
		|I_4| &=  \left|\int_\Om p(\vr_h)\Grad \phi \cdot \mathcal{A}^\Grad[\phi \vr_h]~dx\right| \\
		&\leq C\|p(\vr_h)\|_{L^\infty(0,T;L^1(\Om))}\|\Grad \phi\|_{L^\infty(\Om)}\|\mathcal{A}^\Grad[\phi \vr_h]\|_{L^\infty(0,T;L^\infty(\Om))} \\
		&\leq C\|\Grad \phi\|_{L^\infty(\Om)}\|\mathcal{A}^\Grad[\phi \vr_h]\|_{L^\infty(0,T;W^{1,\gamma}(\Om))} 
		\leq C\|\Grad \phi\|_{L^\infty(\Om)}^2,
	\end{split}
\end{equation}
where the last inequality is derived as in \eqref{eq:dv}. Here, we have also used Corollary \ref{cor:energy}
which tell us that $p(\vr_h) \inb L^\infty(0,T;L^1(\Om))$.

\vspace{0.2cm}
\noindent
\textit{4. Conclusion:}	
By applying \eqref{high:i3}, \eqref{high:i2}, \eqref{high:i1}, and \eqref{high:i4} 
in \eqref{high:start2}, we obtain
\begin{equation*}
	\int_0^T\int_\Om \phi^2 p(\vr_h)\vr_h~ dxdt \leq C\|\phi\|_{W^{1,\infty}(\Om)}^2.
\end{equation*}
Since $p(\vr_h)= a\vr_h^\gamma$ and $\phi \in C_0^\infty(\Om)$ can be chosen arbitrary,
this concludes the proof.

\end{proof}

\section{Weak convergence}\label{sec:weak}
In this section, we will pass 
to the limit in the numerical method 
and conclude that the limit of $(\vr_h, u_h)$
is almost a weak solution of the compressible 
Navier-Stokes equations. What remains in order to prove 
the main theorem is that $p(\vr_h) \weak p(\vr)$, which 
will be the topic of the ensuing sections.

Our starting point is that Corollary \ref{cor:energy} and Proposition \ref{pro:higher}
allow us to assert the existence of functions 
\begin{equation*}
	\begin{split}
		\vr \in L^\infty(0,T;L^\gamma(\Om)), \quad  u \in L^2(0,T;W^{1,2}_0(\Om)),
	\end{split}
\end{equation*}
and a subsequence $h_j \rightarrow 0$ such that
\begin{equation}\label{eq:conv1}
	\begin{split}
		\vr_h &\weakstar \vr \text{ in }L^\infty(0,T;L^\gamma(\Om)), \\
		u_h &\weak u \text{ in }L^2(0,T;L^2(\Om)), \\
		\Grad_h u_h & \weak \Grad u \text{ in }L^2(0,T;L^2(\Om)).
	\end{split}
\end{equation}
From Corollary \ref{cor:energy}, we immediately obtain the integrability
\begin{equation*}
	\vr u \in L^\infty(0,T; L^{m_\infty}(\Om))\cap L^2(0,T;L^{m_2}(\Om)),
\end{equation*}
\begin{equation*}
	\vr u \otimes u \in L^\infty(0,T;L^1(\Om))\cap L^2(0,T;L^{c_2}(\Om)),
\end{equation*}
with exponents
\begin{equation*}
	m_\infty = \frac{2\gamma}{\gamma + 1}> \frac{3}{2}, \quad m_2 = \frac{6\gamma}{3+\gamma} > 3, \quad c_2 = \frac{3\gamma}{3+\gamma}> \frac{3}{2}.
\end{equation*}
Moreover, Lemma \ref{lem:timecompactness} together with Lemma \ref{lem:weaktime} provides the bounds
\begin{equation*}
\vr_h \inb C(0,T;L^\gamma(\Om)), \qquad	\vr_h u_h \inb C(0,T;L^{m_\infty}(\Om)).
\end{equation*}

In the following lemma, we prove that the above convergences are sufficient 
to pass to the limit in both nonlinear terms involving $u$.

\begin{lemma}\label{lem:product}
Given the convergences \eqref{eq:conv1},
	\begin{align}
		\vr_h  u_h, ~ \vr_h\tilde u_h &\weak \vr u \text{ in } L^2(0,T;L^{m_2}(\Om)), \nonumber\\
		\vr_h \widehat u_h &\weak \vr u \text{ in } C(0,T;L^{m_\infty}(\Om)).\label{eq:conv2} \\
		\vr_h \tilde u_h \otimes \widehat u_h &\weak \vr u \otimes u \text{ in }L^2(0,T;L^{c_2}(\Om)), \nonumber
	\end{align}
\end{lemma}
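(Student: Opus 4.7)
The plan is to successively identify the weak limits of the three products by combining the discrete Aubin--Lions Lemma \ref{lemma:aubinlions} with the weak sequential time-compactness Lemma \ref{lem:timecompactness}. A preliminary observation is that $u_h$, $\widehat u_h$ and $\widetilde u_h$ all share the same weak limit $u$. Indeed, the interpolation error estimates of Lemma \ref{lemma:interpolationerror} give
\begin{equation*}
    \|\widehat u_h - u_h\|_{L^2(0,T;L^2(\Om))} + \|\widetilde u_h - u_h\|_{L^2(0,T;L^2(\Om))} \leq Ch\|\Grad_h u_h\|_{L^2(0,T;L^2(\Om))} \to 0,
\end{equation*}
while $\Grad_h u_h$ is bounded in $L^2(0,T;L^2(\Om))$ by Corollary \ref{cor:energy}. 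Interpolating this against the uniform $L^2(0,T;L^6(\Om))$ bound from the same corollary upgrades the convergence to any $L^2(0,T;L^p(\Om))$ with $p<6$.

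To identify $\vr u$ as the weak limit of $\vr_h u_h$ and $\vr_h \widetilde u_h$, I would apply Lemma \ref{lemma:aubinlions} with $g_h=\vr_h$ and $f_h\in\{u_h,\widetilde u_h\}$ and dual exponents $p_1=p_2=2$, $(q_1,q_2)=(6/5,6)$. Conditions $(1)$ and $(2)$ follow from \eqref{eq:conv1} together with $\gamma>3>6/5$ on a bounded domain, condition $(3)$ is exactly \eqref{eq:dtvr} from Lemma \ref{lem:weaktime}, and condition $(4)$ is precisely Lemma \ref{lem:translation}. The resulting distributional convergence, combined with the uniform bound in $L^2(0,T;L^{m_2}(\Om))$ from Corollary \ref{cor:energy}, gives the first line of \eqref{eq:conv2}. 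For the time-continuous statement I would apply Lemma \ref{lem:timecompactness} with $X^\star=L^{m_\infty}(\Om)$: the uniform bound comes from Corollary \ref{cor:energy}, while equicontinuity of $t\mapsto \int_\Om \vr_h\widehat u_h\cdot \Phi\,\dx$ for $\Phi\in C_0^\infty(\Om)$ follows by testing the momentum scheme \eqref{fem:moment} against $\Pi_h^V\Phi$ and invoking the weak time-derivative control \eqref{eq:dtm} from Lemma \ref{lem:weaktime} (the quantity stepped by the scheme is in fact $\vr_h\widehat u_h$). The extracted limit in $C([0,T];L^{m_\infty}_{\mathrm{weak}}(\Om))$ must agree with $\vr u$ by uniqueness of the distributional limit.

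Finally, for the tensor product I would apply Lemma \ref{lemma:aubinlions} a second time, now with $g_h=\vr_h\widehat u_h$ (whose weak time-derivative has just been controlled) and $f_h=\widetilde u_h$ (whose spatial translation is again governed by Lemma \ref{lem:translation}). The H\"older bound from Corollary \ref{cor:energy} then promotes the distributional limit to weak convergence in $L^2(0,T;L^{c_2}(\Om))$. The main technical hurdle lies in the equicontinuity argument underlying Step~3: because the momentum scheme carries several nonstandard contributions (the Lax--Friedrichs upwind flux written in terms of the element-averaged velocity, and the artificial $h^{1-\eps}$-stabilization), controlling the time-slope of $\vr_h\widehat u_h$ requires the weak error estimates of Section \ref{sec:operators} together with Lemma \ref{lem:weaktime}, and this is the only place where the various consistency bounds from Proposition \ref{pro:transport} and Lemma \ref{lem:stupid} play a nontrivial role.
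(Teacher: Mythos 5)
Your proof follows essentially the same route as the paper: two applications of the discrete Aubin--Lions lemma, first with $g_h=\vr_h$ (using \eqref{eq:dtvr} for the slope and Lemma~\ref{lem:translation} for the translation), then with $g_h=\vr_h\widehat u_h$ (using \eqref{eq:dtm}), together with interpolation-error arguments to show that $u_h$, $\widehat u_h$, $\widetilde u_h$ share a weak limit and to transfer the conclusions between the various projected velocities; and you use Lemma~\ref{lem:timecompactness} for the $C(0,T;L^{m_\infty})$ statement, which the paper invokes on the line immediately preceding the lemma rather than in its proof.

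One imprecision worth flagging: in the tensor step you take $f_h=\widetilde u_h$ and claim its space translates are ``again governed by Lemma~\ref{lem:translation},'' but that lemma is stated only for elements of the Crouzeix--Raviart space $V_h(\Om)$, whereas $\widetilde u_h=\Pi_h^{\mathcal N}u_h\in\mathcal N_h(\Om)$. The paper avoids this by applying Aubin--Lions with $f_h=u_h$, getting $\vr_h\widehat u_h\otimes u_h\weak\vr u\otimes u$, and only afterwards replacing $u_h$ by $\widetilde u_h$ through the decomposition $\vr_h\widetilde u_h\otimes\widehat u_h=\vr_h u_h\otimes\widehat u_h+\vr_h(\Pi_h^{\mathcal N}u_h-u_h)\otimes\widehat u_h$, estimating the remainder with Lemma~\ref{lemma:interpolationerror}. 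Your preliminary observation already contains the interpolation-error estimate needed to do this, so the gap is cosmetic rather than substantive; still, the cleaner route is to run Aubin--Lions only on $u_h$ and pass to $\widetilde u_h$ and $\widehat u_h$ afterwards, exactly as you did in the preliminary paragraph.
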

\begin{proof}
Lemma \ref{lem:translation} tell us that $u_h$ is spatially compact in $L^2(0,T;L^p(\Om))$ for any
$p < 6$. From Lemma \ref{lem:weaktime} we have that $D_t^h \vr_h \inb L^\frac{4}{3}(0,T;W^{-1,\frac{3}{2}}(\Om))$.
As a consequence, we can apply Lemma \ref{lemma:aubinlions}, with $g_h = \vr_h$ and $f_h = u_h$, to obtain
\begin{equation*}
	\vr_h  u_h \weak \vr u \text{ in } L^2(0,T;L^{m_2}(\Om)).
\end{equation*}
To conclude weak convergence of $\vr_h \widetilde u_h$ and $\vr_h \ov u_h$, we write 
\begin{equation*}
	\vr_h \widetilde u_h = \vr_h u_h + \vr_h (\Pi_h^\mathcal{N} u_h - u_h), \qquad \vr_h \ov u_h = \vr_h u_h + \vr_h (\Pi_h^Q u_h - u_h).
\end{equation*}
The interpolation error estimate (Lemma \ref{lemma:interpolationerror}) tell us that
\begin{equation*}
	\|\Pi_h^\mathcal{N} u_h - u_h\|_{L^2(0,T;L^2(\Om))} +  \|\Pi_h^Q u_h - u_h\|_{L^2(0,T;L^2(\Om))} \leq C\|\Grad_h u_h\|_{L^2(0,T;L^2(\Om))},
\end{equation*}
and hence $\vr_h \widetilde u_h \weak \vr u$, $\vr_h \ov u_h$ $\weak \vr u$ in the sense of 
distributions on $(0,T)\times \Om$.

Next, since $D_t^h(\vr_h \widehat u_h) \inb L^1(0,T;W^{-1,\frac{3}{2}}(\Om))$ (from Lemma \ref{lem:weaktime}), 
another application of Lemma \ref{lemma:aubinlions}, this time with $g_h = \vr_h \widehat u_h$ and $f_h = u_h$, 
renders
\begin{equation*}
	\vr_h \ov u_h \otimes u_h \weak \vr u \otimes u \text{ in }L^2(0,T;L^{c_2}(\Om)).
\end{equation*}
Clearly, this also implies that $\vr_h u_h \otimes \ov u_h \weak \vr u\otimes u$. The 
convergence of $\vr_h \widetilde u_h \otimes \ov u_h$ then follows 
by writing
\begin{equation*}
	\vr_h \widetilde u_h \otimes \ov u_h = \vr_h u_h \otimes \ov u_h  + \vr_h(\Pi_h^\mathcal{N} u_h - u_h)\otimes \ov u_h,
\end{equation*}
and applying the interpolation error estimate on the remainder.

\end{proof}

Using the convergences we have derived thus far, 
we are able to pass to the limit in the continuity 
approximation \eqref{fem:cont}.

\begin{lemma}\label{lem:cont}
Let $(\vr_h, u_h)$ be the numerical solution 
obtained by Definition \ref{def:scheme} and \eqref{def:schemeII}. The limit $(\vr, u)$
 is a weak solution of continuity equation:
\begin{equation*}
	\vr_t + \Div(\vr u) = 0 \quad \text{in }\mathcal{D}'([0,T)\times \overline{\Om}).
\end{equation*}
\end{lemma}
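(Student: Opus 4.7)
The plan is to pass to the limit in the discrete continuity equation \eqref{fem:cont} after testing it against the piecewise constant projection of a smooth test function. Fix $\phi \in C_0^\infty([0,T)\times \overline{\Om})$ and set $\phi^k = \phi(t^k, \cdot)$. Use $q_h = \Pi_h^Q \phi^k$ in the $k$-th time step, multiply by $\Delta t$, and sum over $k = 1,\ldots,M$. This produces three contributions: the discrete time-derivative term, the upwind convection term, and the artificial $h^{1-\eps}$ stabilization term.

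The stabilization term is the easiest: by inequality \eqref{eq:stupid1} of Lemma \ref{lem:stupid} it is bounded by $h^{(11-6\eps)/12}C\|\Grad\phi\|_{L^2(0,T;L^2(\Om))}$, which vanishes as $h \to 0$ since $\eps > 1/6$ is small. For the convection term I would apply identity \eqref{eq:cont-trans} of Lemma \ref{lem:transport} at each time step, obtaining
\begin{equation*}
\Delta t \sum_{k=1}^M \sum_\Gamma \int_\Gamma \up^k(\vr u)\jump{\Pi_h^Q \phi^k}_\Gamma~dS(x)
= \int_0^T \int_\Om \vr_h \widetilde u_h \cdot \Grad \phi^h~dxdt + \int_0^T P_1(\phi^h)~dt,
\end{equation*}
where $\phi^h$ is the piecewise-constant-in-time interpolant of $\phi$. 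The remainder $P_1$ is controlled by \eqref{eq:pcombined} in Proposition \ref{pro:transport} and goes to $0$, while the main term passes to $\int_0^T\int_\Om \vr u \cdot \Grad \phi~dxdt$ thanks to the weak convergence $\vr_h \widetilde u_h \weak \vr u$ in $L^2(0,T;L^{m_2}(\Om))$ established in Lemma \ref{lem:product}, combined with the strong convergence $\Grad \phi^h \to \Grad \phi$ in every $L^p$.

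For the time-derivative term I would perform discrete Abel summation by parts. Since $\vr_h^k$ is piecewise constant on $E_h$, the $L^2$ projection $\Pi_h^Q$ is self-adjoint against $\vr_h^k$, and the terminal contribution at $k = M$ vanishes because $\phi(T,\cdot) = 0$. This gives
\begin{equation*}
\Delta t \sum_{k=1}^M \int_\Om \frac{\vr_h^k - \vr_h^{k-1}}{\Delta t}\Pi_h^Q \phi^k~dx
= -\int_\Om \vr_h^0 \phi^1~dx - \sum_{k=1}^{M-1} \Delta t \int_\Om \vr_h^k \frac{\phi^{k+1} - \phi^k}{\Delta t}~dx.
\end{equation*}
The boundary term converges to $-\int_\Om \vr_0 \phi(0,\cdot)~dx$ by the choice of initial data \eqref{def:init}, and the Riemann sum converges to $-\int_0^T\int_\Om \vr \partial_t \phi~dxdt$ using the weak-$\star$ convergence $\vr_h \weakstar \vr$ in $L^\infty(0,T;L^\gamma(\Om))$ combined with the uniform convergence of the difference quotients of $\phi$ to $\partial_t \phi$.

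Collecting these three limits gives exactly the weak form
\begin{equation*}
\int_0^T \int_\Om \vr(\phi_t + u \cdot \Grad \phi)~dxdt = -\int_\Om \vr_0 \phi(0,\cdot)~dx,
\end{equation*}
i.e., the claimed distributional identity. No step should be especially delicate: the hardest ingredient, namely the weak convergence of the nonlinear product $\vr_h \widetilde u_h$, has already been absorbed into Lemma \ref{lem:product} via the Aubin--Lions-type argument of Lemma \ref{lemma:aubinlions} together with the weak time control of Lemma \ref{lem:weaktime}, so the only point requiring some care here is bookkeeping of the various interpolation/stabilization errors, all of which are quantitatively controlled by the estimates of Section \ref{sec:operators}.
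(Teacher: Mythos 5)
Your proposal follows exactly the paper's route: test with $\Pi_h^Q\phi$, invoke identity \eqref{eq:cont-trans} of Lemma \ref{lem:transport} to rewrite the upwind flux, kill $P_1$ via Proposition \ref{pro:transport} and the stabilization term via Lemma \ref{lem:stupid}, do discrete summation by parts in time, and pass to the limit using $\vr_h\widetilde u_h\weak\vr u$ from Lemma \ref{lem:product}. The bookkeeping of the Abel summation and the self-adjointness of $\Pi_h^Q$ against the piecewise-constant density is correct, so the argument is sound and essentially identical to the one in the paper.
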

\begin{proof}
Let $\phi \in C_0^\infty([0,T)\times \overline{\Om})$ be arbitrary
and set $\Pi_h^Q \phi$ as test-function in the continuity scheme \eqref{fem:cont}
to obtain
\begin{equation}\label{cont:start}
	\begin{split}
		&\int_0^T\int_\Om D_t^h(\vr_h)\phi~dxdt \\
		&\qquad = \sum_\Gamma \int_0^T \int_\Gamma \up(\vr u)\jump{\Pi_h^Q \phi} - h^{1-\eps}\jump{\vr_h}_\Gamma \jump{\Pi_h^Q \phi}~dS(x)dt \\
		&\qquad = \int_0^T\int_\Om \vr_h \tilde u_h \Grad \phi~dxdt + P_1(\phi) \\
		&\qquad \qquad - h^{1-\eps}\sum_\Gamma \int_0^T \int_\Gamma\jump{\vr_h}_\Gamma \jump{\Pi_h^Q \phi}~dS(x)dt,
	\end{split}
\end{equation}
where the last equality is Lemma \ref{lem:transport}. 

From Proposition \ref{pro:transport}, we have 
that 
\begin{equation}\label{cont:1}
	|P_1(\phi)| \leq h^\frac{1}{4}C\|\phi\|_{L^4(0,T;L^\frac{12}{5}(\Om))}.
\end{equation}

From Lemma \ref{lem:stupid}, we have the bound
\begin{equation}\label{cont:2}
	\left|\sum_\Gamma \int_0^T \int_\Gamma h^{1-\eps}\jump{\vr_h}_\Gamma \jump{\Pi_h^Q \phi}~dS(x)dt\right|
	\leq h^\frac{11-6\eps}{12}C\|\Grad \phi\|_{L^2(0,T;L^2(\Om))}.
\end{equation}

Summation by parts provides the identity
\begin{equation}\label{cont:3}
	\begin{split}
		&\int_0^{T}\int_\Om D_t^h(\vr_h)\phi~dxdt  \\
		&\qquad \qquad = - \int_0^{T-\Delta t}\int_\Om \vr_h D_t^h(\phi(+\Delta t))~dxdt - \int_\Om \vr_h^0 \phi(\Delta t)~dx.
	\end{split}
\end{equation}

Now, we apply \eqref{cont:3} in \eqref{cont:start} and send $h \rightarrow 0$
along the subsequence where $\vr_h \tilde u_h \weak \vr u$ and apply
\eqref{cont:1} and \eqref{cont:2} to obtain
\begin{equation*}
	\int_0^T\int_\Om \vr (\phi_t + u\cdot \Grad \phi)~dxdt = -\int_\Om \vr_0\phi(0, \cdot)~dx.
\end{equation*}
This concludes the proof.

\end{proof}

Next, we prove that the limit $(\vr, u)$ is almost 
a weak solution of the momentum equation \eqref{eq:moment}
and hence that it only remains to prove strong convergence of the density.

\begin{lemma}\label{lem:weakmoment}
Let $(\vr_h, u_h)$ be as in the previous lemma. The limit $(\vr, u)$
satisfies
\begin{equation}\label{eq:limbar}
	\begin{split}
		(\vr u)_t + \Div(\vr u \otimes u) + \Grad \overline{p(\vr)}- \Delta u = 0\quad \text{in }\mathcal{D}'([0,T)\times \Om),
	\end{split}
\end{equation}
where $\overline{p(\vr)}$ is the weak limit of $p(\vr_h)$.
\end{lemma}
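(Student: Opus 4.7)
The plan is to test the momentum scheme \eqref{fem:moment} against $v_h := \Pi_h^V v$ for an arbitrary $v \in [C_0^\infty([0,T)\times \Om)]^3$, rewrite every term using the consistency results of Section \ref{sec:operators}, and then pass to the limit using the weak convergences assembled in \eqref{eq:conv1} and Lemma \ref{lem:product}. The pressure term is the only one whose limit cannot be identified at this stage, which is why the identity must be stated with $\overline{p(\vr)}$ in place of $p(\vr)$.

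After testing and integrating in time, Lemma \ref{lem:consistency} converts the discrete material derivative into
\[
\int_0^T\!\!\int_\Om D_t^h(\vr_h \ov u_h)\,v \;-\; \vr_h \widetilde u_h \otimes \ov u_h : \Grad v \, dxdt + F(v),
\]
with $|F(v)| \le h^{1/\gamma}C\|\Grad v\|_{L^\infty(0,T;L^3)}$, so $F(v)\to 0$. For the diffusion term I invoke Lemma \ref{lem:amazing}, which reduces $\int \Grad_h u_h : \Grad_h v_h\,dxdt$ to $\int \Grad_h u_h : \Grad v\,dxdt$ and thus passes to $\int \Grad u : \Grad v\,dxdt$ by the weak convergence in \eqref{eq:conv1}. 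For the pressure I use the commuting relation $\Div_h \Pi_h^V v = \Pi_h^Q \Div v$ (Section \ref{sec:CRM}): the strong convergence $\Pi_h^Q\Div v \to \Div v$ in $L^q$ for every $q < \infty$, together with the $L^\infty(0,T;L^\gamma)$ bound on $p(\vr_h)$ from Corollary \ref{cor:energy}, gives
\[
\int_0^T\!\!\int_\Om p(\vr_h)\Div_h v_h\,dxdt \longrightarrow \int_0^T\!\!\int_\Om \overline{p(\vr)}\Div v\,dxdt,
\]
where $\overline{p(\vr)}$ is, by definition, the weak-$\star$ limit of $p(\vr_h)$ in $L^\infty(0,T;L^\gamma)$ (extracting a further subsequence; the higher integrability of Proposition \ref{pro:higher} makes this extraction uniform on compact sets). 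The artificial stabilization contribution vanishes directly by Lemma \ref{lem:stupid}, since $\|\Grad v\|_{L^\infty(0,T;L^3)}$ is finite.

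For the discrete time-derivative term I apply summation by parts, obtaining
\[
\int_0^T\!\!\int_\Om D_t^h(\vr_h \ov u_h)\,v\,dxdt = -\int_0^{T-\Delta t}\!\!\int_\Om (\vr_h \ov u_h)(t)\,D_t^h v(t+\Delta t)\,dxdt - \int_\Om \vr_h^0 \ov u_h^0\,v(\Delta t,\cdot)\,dx.
\]
The convergence $\vr_h \widehat u_h \weak \vr u$ in $C(0,T;L^{m_\infty})$ from \eqref{eq:conv2} handles both the interior term (against $v_t \in L^1(0,T;L^\infty)$) and the boundary term (using that $\vr_h^0 u_h^0 \to m_0$ in $L^{m_\infty}$ by the definition \eqref{def:init} and $\kappa h \to 0$). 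The remaining convective term passes to $\int \vr u \otimes u : \Grad v$ thanks to the weak convergence $\vr_h \widetilde u_h \otimes \widehat u_h \weak \vr u \otimes u$ in $L^2(0,T;L^{c_2})$, again from Lemma \ref{lem:product}.

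The main obstacle is the convective term $\vr_h \widetilde u_h \otimes \widehat u_h$: both factors are only weakly convergent, so a direct product-of-weak-limits argument would fail. However, this obstacle has already been overcome in Lemma \ref{lem:product} by combining the space-translation estimate of Lemma \ref{lem:translation} (giving strong compactness of $u_h$ in $L^2(0,T;L^p)$ for $p<6$) with the weak time-continuity of $\vr_h$ and $\vr_h \widehat u_h$ from Lemma \ref{lem:weaktime}, via the Aubin--Lions lemma \ref{lemma:aubinlions}. Collecting all the limits and recalling that $v$ was arbitrary yields \eqref{eq:limbar} in $\mathcal{D}'([0,T)\times \Om)$.
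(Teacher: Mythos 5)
Your proof takes essentially the same route as the paper: test with $\Pi_h^V v$, apply Lemma~\ref{lem:consistency} for the discrete material derivative (with $F(v)\to 0$), use Lemma~\ref{lem:stupid} for the stabilization term, summation-by-parts for the time derivative, and the limits collected in \eqref{eq:conv1} and Lemma~\ref{lem:product} for everything else. The extra detail you supply on the diffusion term (via Lemma~\ref{lem:amazing}) and the pressure term (via $\Div_h\Pi_h^V v=\Pi_h^Q\Div v$) is left implicit in the paper but is correct and aligned with the intended argument.

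One slip to correct: Corollary~\ref{cor:energy} gives $p(\vr_h)\inb L^\infty(0,T;L^1(\Om))$, \emph{not} $L^\infty(0,T;L^\gamma(\Om))$. A uniform $L^1$ bound does not yield a weakly convergent subsequence (it would only give a measure-valued limit), so the $L^1$ estimate alone is insufficient to define $\overline{p(\vr)}$. What actually makes the extraction legitimate is Proposition~\ref{pro:higher}, which yields $\vr_h\inb L^{\gamma+1}_{\mathrm{loc}}$, hence $p(\vr_h)\inb L^{(\gamma+1)/\gamma}_{\mathrm{loc}}$ with $(\gamma+1)/\gamma>1$, and this is the bound that produces a genuine $L^p$ weak limit on compact subsets. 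You mention Proposition~\ref{pro:higher} parenthetically, so the correct ingredient is present; simply replace the incorrect $L^\gamma$ attribution with the correct $L^1$ bound of Corollary~\ref{cor:energy} plus the local higher integrability of Proposition~\ref{pro:higher} as the reason $\overline{p(\vr)}$ exists.
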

\begin{proof}
Let $v \in [C_0^\infty([0,T)\times \Om)]^3$ be arbitrary and set $v_h = \Pi_h^V v$
as test-function in the momentum scheme \eqref{fem:moment}. After an application 
of Lemma \ref{lem:consistency}, we obtain the identity
\begin{equation}\label{mom:start}
	\begin{split}
		&\int_0^T\int_\Om D_t^h(\vr_h \ov u_h)v - \vr_h \tilde u_h \otimes \ov u_h:\Grad v~dxdt + F(v)\\
		&\qquad + \int_0^T\int_\Om \Grad_h u_h \Grad_h v_h - p(\vr_h)\Div_h v_h~dxdt \\
		&\qquad-h^{1-\eps}\sum_\Gamma \int_0^T\int_\Gamma \jump{\vr_h}_\Gamma \left(\frac{\ov u_+ + \ov u_-}{2}\right)\jump{\ov v_h}_\Gamma~dS(x)dt = 0
	\end{split}
\end{equation} 
Lemma \ref{lem:consistency} also provides the bound
\begin{equation}\label{mom:1}
	|F(v)| \leq h^\frac{1}{4}C\|\Grad v\|_{L^\infty(0,T;L^3(\Om))}.
\end{equation}
Moreover, from Lemma \ref{lem:stupid}, we have the bound
\begin{equation}\label{mom:2}
	\begin{split}
		&\left|h^{1-\eps}\sum_\Gamma \int_0^T\int_\Gamma \jump{\vr_h}_\Gamma \left(\frac{\ov u_+ + \ov u_-}{2}\right)\jump{\ov v_h}_\Gamma~dS(x)dt \right| \\
		&\qquad \leq h^\frac{13-6\eps}{12}C\|\Grad v\|_{L^\infty(0,T;L^3(\Om))}.
	\end{split}
\end{equation}
By applying summation by parts to the first term in \eqref{mom:start}, sending $h \rightarrow 0$ along
the subsequence for which $\vr_h \tilde u_h\otimes \ov u_h \weak \vr u\otimes u$ (Lemma \ref{lem:product}), 
and applying \eqref{mom:1} and \eqref{mom:2} we obtain
\begin{equation*}
	\int_0^T\int_\Om -\vr u v_t - \vr u \otimes u:\Grad v + \Grad u\Grad v - \overline{p(\vr)}\Div v~dxdt
	= \int_\Om m_0 v(0, \cdot)~dx,
\end{equation*}
which concludes our proof.

\end{proof}

\section{The discrete Laplace operator}\label{sec:hodge}
In the next section we  establish 
the most important ingredient in our
proof of compactness of the density approximation, namely 
weak continuity of the effective viscous flux.  
However, before we can embark on this proof, we will 
need to establish some properties related to our 
numerical Laplace operator.

In contrast to a standard continuous approximation 
scheme, our discrete  Laplace operator 
does not respect Hodge decompositions. 
More specifically, to prove the upcoming Proposition \ref{pro:effectiveflux}
we shall need to use test-functions  of the form
\begin{equation*}
	v = \mathcal{A}^\Grad[\vr],
\end{equation*}
where we for the purpose of this discussion does not require 
$v$ to satisfy boundary conditions. 
At the continuous level, testing with this $v$ is equivalent to 
applying $\mathcal{A}^\text{div}[\cdot]$ to the equation. In particular, 
since $\Curl \mathcal{A}^\Grad = 0$, $v$ satisfies
\begin{equation*}
	\int_\Om \Grad u \Grad v~dx = \int_\Om \Div u \vr~ dx.
\end{equation*}
This property is not true for our discrete Laplace operator. However, 
in the upcoming analysis it is essential that, at least,
\begin{equation}\label{eq:prop}
	\int_\Om \Grad_h u_h \Grad_h \left(\Pi_h^V \mathcal{A}^\Grad[\vr_h]\right)~dx = \int_\Om \Div_h u_h \vr_h~ dx + \mathcal{O}(h^\alpha), \quad \alpha > 0.
\end{equation}
This is the result that we will prove in this section. 

As will become evident, the property \eqref{eq:prop} is not trivially satisfied 
for our discretization. It is the extra "artifical" stabilization terms in 
the scheme (adding diffusion in all directions) that will provide 
the needed ingredient. In fact, the property \eqref{eq:prop} 
is the only reason for the presence of these terms. 
This  discretization strategy was devised by Eymard et. al for the stationary compressible 
Stokes equations \cite{Gallouet2}. In fact, most of the 
material contained in this section can be found there with only slight modifications 
to fit the present case.

In our proof, we shall need the
 operator $\Pi_h^L: Q_h(\Om) \mapsto \mathcal{P}_h(\Om)$ interpolating piecewise 
constant functions in the space of continuous piecewise linears $\mathcal{P}_h$ (Lagrange element space). 
This operator is defined by
\begin{equation*}
	\left(\Pi_h^L q_h\right)(\tau) = \frac{1}{\operatorname{card}(N_{\tau})}\sum_{E \in N_E}q_h|_E, 
\end{equation*}
for all vertices $\tau$ in the discretization, where $N_\tau$ is the set of elements 
having $\tau$ as a vertex. Note that shape regularity of $E_h$ renders the cardinality of $N_\tau$ bounded.
The following result can be found in \cite[Lemma 5.8]{Gallouet2}:
\begin{lemma}\label{lem:}
Let $q_h \in Q_h(\Om)$. There exists a constant $C$, depending only on 
the shape-regularity of $E_h$ such that
	\begin{align}
		\|\Grad \left(\Pi_h^L q_h\right)\|_{L^2(\Om)} &\leq C\left(\sum_\Gamma\int_\Gamma \frac{\jump{q_h}_\Gamma^2}{h}~dS(x) \right)^\frac{1}{2},\label{eq:pil1}\\
		\|\Pi_h^Lq_h - q_h\|_{L^2(\Om)} &\leq hC\left(\sum_\Gamma\int_\Gamma \frac{\jump{q_h}_\Gamma^2}{h}~dS(x) \right)^\frac{1}{2}.\label{eq:pil2}
	\end{align}
\end{lemma}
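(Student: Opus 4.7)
The plan is to reduce both estimates to a single vertex-value bound by first estimating differences of the form $(\Pi_h^L q_h)(\tau) - q_h|_E$ in terms of face jumps. Concretely, I would first establish that for every vertex $\tau$ and every $E \in N_\tau$,
\begin{equation*}
\bigl((\Pi_h^L q_h)(\tau) - q_h|_E\bigr)^2 \leq C \sum_{\Gamma \in \mathcal{P}(\tau)} \jump{q_h}_\Gamma^2,
\end{equation*}
where $\mathcal{P}(\tau)$ denotes the (bounded) set of interior faces separating two elements of $N_\tau$. To obtain this, expand
\begin{equation*}
(\Pi_h^L q_h)(\tau) - q_h|_E = \frac{1}{|N_\tau|}\sum_{E' \in N_\tau}(q_h|_{E'} - q_h|_E),
\end{equation*}
and use that, by shape regularity of $E_h$, any two $E, E' \in N_\tau$ can be connected through a chain of elements in $N_\tau$ of uniformly bounded length whose consecutive members share a face $\Gamma_i \in \mathcal{P}(\tau)$. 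Telescoping gives $q_h|_{E'} - q_h|_E = \sum_i \pm \jump{q_h}_{\Gamma_i}$, and Cauchy--Schwarz followed by averaging yields the bound.

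With this in hand I would prove \eqref{eq:pil1} as follows. Since $\Pi_h^L q_h$ is affine on each $E$, standard scaling on shape-regular tetrahedra gives
\begin{equation*}
\|\Grad \Pi_h^L q_h\|_{L^\infty(E)}^2 \leq Ch^{-2}\max_{i,j}\bigl((\Pi_h^L q_h)(\tau_i) - (\Pi_h^L q_h)(\tau_j)\bigr)^2.
\end{equation*}
Applying the triangle inequality through $q_h|_E$ and the vertex-value bound, and then integrating over $E$ (factor $|E| \sim h^3$), yields
\begin{equation*}
\int_E |\Grad \Pi_h^L q_h|^2\, dx \leq Ch \sum_{\Gamma \in \mathcal{P}(E)}\jump{q_h}_\Gamma^2, \qquad \mathcal{P}(E) := \bigcup_{\tau \in E}\mathcal{P}(\tau).
\end{equation*}
Summing over $E$, using that each face belongs to $\mathcal{P}(E)$ for only a uniformly bounded number of elements, and rewriting $h\jump{q_h}_\Gamma^2 \sim h^{-1}\int_\Gamma \jump{q_h}_\Gamma^2\, dS$ (since $|\Gamma| \sim h^2$), gives \eqref{eq:pil1}.

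For \eqref{eq:pil2}, I would use that on each $E$ the convex-combination identity $\Pi_h^L q_h(x) - q_h|_E = \sum_i \lambda_i(x)\bigl((\Pi_h^L q_h)(\tau_i) - q_h|_E\bigr)$ in terms of the barycentric coordinates $\lambda_i \in [0,1]$ implies
\begin{equation*}
\bigl(\Pi_h^L q_h(x) - q_h|_E\bigr)^2 \leq \max_i \bigl((\Pi_h^L q_h)(\tau_i) - q_h|_E\bigr)^2 \leq C \sum_{\Gamma \in \mathcal{P}(E)}\jump{q_h}_\Gamma^2
\end{equation*}
by the vertex-value bound. Integrating over $E$, summing with bounded multiplicity, and writing $h^3 \jump{q_h}_\Gamma^2 = h^2 \cdot h^{-1}\int_\Gamma \jump{q_h}_\Gamma^2\, dS$ yields \eqref{eq:pil2}.

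The main obstacle is the combinatorial input underlying the vertex-value bound: a uniform upper bound on $|N_\tau|$, the existence of bounded-length face chains inside $N_\tau$ connecting any two of its elements, and the bounded multiplicity with which a single face can appear in the patches $\mathcal{P}(E)$. All three are standard consequences of shape regularity of $\{E_h\}_h$, but they are the genuine content of the lemma; once available, the remaining steps reduce to routine affine scaling and Cauchy--Schwarz.
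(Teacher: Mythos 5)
The paper itself does not prove this lemma: it is taken verbatim from \cite[Lemma~5.8]{Gallouet2} and used as a black box. So there is no in-paper proof to compare against; what you have written is a self-contained argument that fills that reference. Your argument is essentially the standard one for estimates of averaging (or Oswald/Scott--Zhang-type) interpolation operators, and it is correct. The vertex-value bound via face-connected chains in the patch $N_\tau$ is the right key step, and the rest is routine affine scaling; the reduction $h\jump{q_h}_\Gamma^2 \sim h^{-1}\int_\Gamma \jump{q_h}_\Gamma^2\,dS(x)$ is valid because $q_h$ is piecewise constant so $\jump{q_h}_\Gamma$ is constant on each face.

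Two minor points worth making explicit. First, the chains through $N_\tau$ need the vertex patch to be face-connected; this is standard for a conforming simplicial complex triangulating a domain, but it is the actual geometric input beyond mere angle bounds, and for boundary vertices it is worth a one-line justification. Second, your scalings $|E|\sim h^3$, $|\Gamma|\sim h^2$ with the \emph{global} mesh size $h$ presuppose quasi-uniformity, not just shape-regularity; with only shape-regularity you would obtain the estimate with $h_\Gamma^{-1}$ inside the sum rather than the global $h^{-1}$. Since the paper assumes $\Delta t = ch$ for a fixed constant and repeatedly invokes inverse estimates with the global $h$, quasi-uniformity is implicitly assumed throughout, so this matches the paper's setting, but flagging it makes the argument cleaner.
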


We shall need the following auxiliary result.
\begin{lemma}\label{lem:ortho}
Let $u_h \in V_h(\Om)$ and $v \in W^{2,2}(\Om)$ be arbitrary. Then, 
\begin{equation}\label{eq:curlid}
	\begin{split}
		&\int_\Om \Grad_h u_h \Grad v ~dx 
		 = \int_\Om \Curl_h u_h \Curl_h v + \Div_h u_h \Div v~dx + E(v, u_h).
	\end{split}
\end{equation}
Furthermore, there is a constant $C > 0$, depending only on the shape-regularity of $E_h$
such that
\begin{equation*}
	\left|E(v,u_h)\right| \leq h C\|\Grad_h u_h\|_{L^2(\Om)}\|\Grad^2 v\|_{L^{2}(\Om)}.
\end{equation*}

\end{lemma}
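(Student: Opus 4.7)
My approach would be element-wise integration by parts combined with the Crouzeix--Raviart mean-zero jump property, which is the sole mechanism by which the boundary contributions pick up the order of $h$.

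First I would establish the pointwise divergence identity
\[
\Grad u : \Grad v - \Curl u \cdot \Curl v - \Div u\,\Div v = \Div\bigl((u\cdot\Grad) v - u\,\Div v\bigr),
\]
which is a short index computation: both sides expand to $\partial_i u_j\partial_j v_i - \partial_i u_i \partial_j v_j$, the stray second-derivative terms $u_j\partial_i\partial_j v_i$ and $u_i\partial_i\partial_j v_j$ cancelling after relabeling since mixed partials of $v$ commute (this step uses $v\in W^{2,2}$). Applying this on each $E \in E_h$, where $u_h$ is linear and hence smooth, and using the divergence theorem produces the desired identity with
\[
E(v,u_h) = \sum_E \int_{\partial E}\bigl((u_h\cdot\Grad)v\cdot\nu_E - (u_h\cdot\nu_E)\Div v\bigr)\,dS.
\]
Summing over elements, opposite outward normals on shared faces combine the element contributions into jumps of $u_h$, while on boundary faces only a single element contributes; the Crouzeix--Raviart condition $\int_\Gamma u_h\,dS=0$ there puts boundary faces on the same footing as interior faces, yielding
\[
E(v,u_h) = -\sum_{\Gamma\in\Gamma_h} \int_\Gamma \bigl((\jump{u_h}_\Gamma\cdot\Grad)v\cdot\nu - (\jump{u_h}_\Gamma\cdot\nu)\Div v\bigr)\,dS.
\]

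The decisive observation is that $\int_\Gamma \jump{u_h}_\Gamma\,dS = 0$ on every face, so I may replace the bracketed vector by its deviation from the $E$-average of $\Grad v$ and $\Div v$ (for either adjacent $E$) without changing the integral. The estimate then comes from two Poincar\'e--trace arguments producing an $h^{1/2}$ each. For the test function, the trace inequality of Lemma~\ref{lemma:toolbox} combined with Poincar\'e in $E$ gives
\[
\|\Grad v - \overline{\Grad v}_E\|_{L^2(\Gamma)} \leq Ch^{1/2}\|\Grad^2 v\|_{L^2(E)},
\]
and an analogous bound for $\Div v$. For the discrete jump, the standard Crouzeix--Raviart estimate
\[
\|\jump{u_h}_\Gamma\|_{L^2(\Gamma)} \leq Ch^{1/2}\bigl(\|\Grad u_h\|_{L^2(E_+)} + \|\Grad u_h\|_{L^2(E_-)}\bigr)
\]
follows similarly from Poincar\'e applied to the zero-mean $\jump{u_h}_\Gamma$ on the 2D face together with an inverse inequality, using that $\Grad u_h$ is piecewise constant. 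Multiplying the two $h^{1/2}$ factors yields a per-face bound $Ch\,\|\Grad u_h\|_{L^2(E_\pm)}\|\Grad^2 v\|_{L^2(E_\pm)}$, and Cauchy--Schwarz over faces (combined with the shape regularity of $E_h$) concludes.

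The main technical point is that \emph{both} $h^{1/2}$ gains must be extracted cleanly: one from the zero-mean of $\jump{u_h}_\Gamma$ on $\Gamma$, and one from $v\in W^{2,2}$ allowing a Poincar\'e approximation of $\Grad v$ by a constant on each element. Any slack in either step would degrade the stated $h^1$ rate, which is precisely the rate needed in Section~\ref{sec:hodge} to guarantee that the discrete Laplace operator asymptotically respects the Hodge decomposition used against test functions of the form $\mathcal{A}^\Grad[\vr_h]$.
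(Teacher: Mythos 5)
Your proof is correct and takes essentially the same approach as the paper: element-wise integration by parts to expose the curl-div-grad structure, followed by using the Crouzeix--Raviart mean-zero jump property together with trace and Poincar\'e inequalities, each yielding $h^{1/2}$, to obtain the $\mathcal{O}(h)$ error. Your presentation is arguably a little cleaner in making the face-by-face cancellation and the replacement of $\Grad v$ by its deviation from an element average explicit, but it is the same argument, not a different route.
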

\begin{proof}
Using the Stoke's theorem and the identity 
$-\Delta = \Curl \Curl -\Grad \Div$,
\begin{equation*}
	\begin{split}
		& \int_\Om \Grad_h u_h \Grad v~dx\\
		&\quad = \sum_E \int_E - u_h \Delta v~dxdt + \int_0^T\int_{\partial E}(\Grad v\cdot \nu)u_h~dS(x)\\
		&\quad =\sum_E \int_E  u_h \Curl \Curl v - u_h \Grad \Div v~dxdt + \int_0^T\int_{\partial E}(\Grad v\cdot \nu)u_h~dS(x)\\
		&\quad =\int_\Om \Curl_h u_h \Curl v + \Div u_h \Div v~dx \\
		&\quad \quad + \sum_E\int_{\partial E}(\Curl v\times \nu)u_h \times \nu - (\Div v)u_h \cdot \nu + (\Grad v\cdot \nu)u_h~dS(x),
	\end{split}
\end{equation*}
which is \eqref{eq:curlid} with 
$$
E(v)=\sum_E\int_{\partial E}(\Curl v\times \nu)u_h \times \nu - (\Div v)u_h \cdot \nu + (\Grad v\cdot \nu)u_h~dS(x).
$$.

Next, we use that the trace of $\Grad v$ is continuous across edges to deduce
\begin{equation*}
	\begin{split}
		|E(v)| &\leq \sum_E\int_{\partial E}|\Grad v|u_h~dS(x) \\
		&= \sum_E \int_{\partial E}|\Grad v| (u_h - \Pi_h^Q u_h)~dS(x) \\
		&= \sum_E \int_{\partial E}|\Grad v - \Pi_h^Q(\Grad v)| (u_h - \Pi_h^Q u_h)~dS(x) \\
		&\leq h^{-\frac{1}{2}}C\left(\|\Grad v - \Pi_h^Q(\Grad v)\|_{L^2(\Om)} + h\|\Grad^2 v\|_{L^2(\Om)}\right) \\
		&\qquad \times h^{-\frac{1}{2}}\left(\| u_h - \Pi_h^Qu_h\|_{L^2(\Om)} + h\|\Grad_h u_h\|_{L^2(\Om)}\right) \\
		&\leq h C\|\Grad^2 v\|_{L^2(\Om)}\|\|\Grad u_h\|_{L^2(\Om)},
	\end{split}
\end{equation*}
where we have applied the trace theorem (Lemma \ref{lemma:toolbox}) and the 
interpolation error estimate (Lemma \ref{lemma:interpolationerror}). This concludes the proof.
\end{proof}

As in \cite{Gallouet2}, we obtain the following identity.

\begin{lemma}\label{lem:yes}
Let $(\vr_h, u_h)$ be the numerical solution obtained by Definition \ref{def:scheme} 
and let $\mathcal{A}^\Grad[\cdot]$ be given by \eqref{def:aop}. For any $\psi \in C^\infty([0,T)\times \Om)$
and $\phi \in C_0^\infty(\Om)$, 
\begin{equation*}
	\begin{split}
		& \int_0^T\int_\Om \Grad_h u_h \Grad_h \Pi_h^V \left(\psi \mathcal{A}^\Grad[\phi \vr_h] \right)~dxdt \\
		&\qquad = \int_0^T\int_\Om \Curl_h u_h \Grad \psi \times \mathcal{A}^\Grad[\phi \vr_h] + \Div_h u_h \Grad \psi \cdot \mathcal{A}^\Grad[\phi \vr_h]~dxdt\\
		&\quad \qquad + \int_0^T\int_\Om \phi \psi \Div_h u_h \vr_h~dxdt +  \mathcal{F}(\vr_h),
	\end{split}
\end{equation*}
where the $\mathcal{F}$ term is bounded as
\begin{equation}\label{eq:fb}
	\left|\mathcal{F}(\vr_h)\right| \leq h^{\frac{1}{2}{(\epsilon - \frac{1}{6})}}C,,
\end{equation}
with $C$ independent of $h$ and where we recall the requirement $\eps > \frac{1}{6}$.
\end{lemma}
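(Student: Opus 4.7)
The plan is to combine three ingredients: the orthogonality identity from Lemma \ref{lem:amazing}, the discrete curl-divergence decomposition from Lemma \ref{lem:ortho}, and a regularization of $\vr_h$ via its Lagrange interpolant $\Pi_h^L$. The guiding observation is that if $v = \psi \mathcal{A}^\Grad[\phi \vr_h]$ had two derivatives in $L^2$, one could compute at the continuous level
\[
\Div v = \Grad \psi \cdot \mathcal{A}^\Grad[\phi \vr_h] + \psi \phi \vr_h, \qquad \Curl v = \Grad \psi \times \mathcal{A}^\Grad[\phi \vr_h],
\]
using $\Div \mathcal{A}^\Grad = \mathrm{Id}$ and $\Curl \mathcal{A}^\Grad \equiv 0$. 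Matching these against the target identity shows that, up to controllable errors, one only has to bridge between the discrete form $\int \Grad_h u_h \cdot \Grad_h \Pi_h^V v$ and this continuous computation.

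First I would apply Lemma \ref{lem:amazing} to remove the velocity interpolation, yielding
\[
\int_\Om \Grad_h u_h \cdot \Grad_h \Pi_h^V v \, dx = \int_\Om \Grad_h u_h \cdot \Grad v \, dx.
\]
Since $\vr_h$ is only piecewise constant, $v \notin W^{2,2}$ and Lemma \ref{lem:ortho} cannot be applied directly. I would therefore introduce $\vr_h^L := \Pi_h^L \vr_h$ and the regularized test function $v^L := \psi \mathcal{A}^\Grad[\phi \vr_h^L]$. The difference $v - v^L$ is controlled by $\|\vr_h - \vr_h^L\|_{L^2}$ through the $L^p$-boundedness of $\mathcal{A}^\Grad$ (Lemma \ref{lem:duality}), which via \eqref{eq:pil2} is bounded by $hC\bigl(\sum_\Gamma \int_\Gamma \jump{\vr_h}_\Gamma^2/h\, dS \bigr)^{1/2}$.

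Next I would apply Lemma \ref{lem:ortho} to $v^L$, now in $W^{2,2}$, to obtain
\[
\int_\Om \Grad_h u_h \cdot \Grad v^L \, dx = \int_\Om \Curl_h u_h \cdot \Curl v^L + \Div_h u_h \, \Div v^L \, dx + E(v^L, u_h),
\]
and substitute the explicit formulas for $\Curl v^L$ and $\Div v^L$ displayed above but with $\vr_h^L$ in place of $\vr_h$. Swapping $\vr_h^L$ back to $\vr_h$ in the resulting integrals generates further remainders that are again controlled by the $L^2$ difference $\vr_h - \vr_h^L$. The leftover term $E(v^L, u_h)$ is, by Lemma \ref{lem:ortho}, bounded by $h C \|\Grad_h u_h\|_{L^2} \|\Grad^2 v^L\|_{L^2}$, and a Calder\'on--Zygmund estimate reduces $\|\Grad^2 v^L\|_{L^2}$ to $\|\vr_h^L\|_{L^2} + \|\Grad \vr_h^L\|_{L^2}$, the second piece of which is once more a jump sum through \eqref{eq:pil1}.

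The main obstacle, and the precise reason the stabilization $h^{1-\eps}$ sits in the scheme, is the bookkeeping of exponents. The energy estimate (Proposition \ref{prop:energy}, diffusion term $D_3^k$) delivers $h^{1-\eps}\sum_k \Delta t \sum_\Gamma \int_\Gamma \jump{\vr_h^k}_\Gamma^2 \leq C$, so the recurring quantity $\bigl(\sum_\Gamma \int_\Gamma \jump{\vr_h}_\Gamma^2/h\bigr)^{1/2}$ scales at worst like $h^{(\eps-2)/2}$ after $L^2(0,T)$ integration. Combining this with the explicit $h$ in front of each error, and with the $h^{-1/6}$-type loss from the inverse estimate used to pass from the $L^\gamma$ control of $\vr_h$ to $L^4$ (Lemma \ref{lemma:inverse}, since $\gamma > 3$), every remainder lands in $h^{(\eps - 1/6)/2}$, as claimed. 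The algebraic identity itself is routine once the regularization and curl-divergence decomposition are in place; keeping each piece within the $h^{(\eps - 1/6)/2}$ budget is where the real work lies.
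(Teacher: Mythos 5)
Your overall strategy matches the paper's proof: introduce $\vr_h^L = \Pi_h^L\vr_h$, replace $\Pi_h^V$ via Lemma~\ref{lem:amazing}, apply the curl--div decomposition of Lemma~\ref{lem:ortho} to the regularized test function $v_L = \psi\mathcal{A}^\Grad[\phi\vr_h^L]$, compute $\Div$ and $\Curl$ explicitly, and collect all errors as sums of the discontinuity $\vr_h-\Pi_h^L\vr_h$, controlled through \eqref{eq:pil1}--\eqref{eq:pil2}. That is precisely the route the paper takes.

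There is, however, a genuine slip in the control of $\sum_\Gamma\int_0^T\int_\Gamma\jump{\vr_h}^2_\Gamma$. You invoke the energy estimate's diffusion term $D_3^k$ to claim $h^{1-\eps}\sum_k\Delta t\sum_\Gamma\int_\Gamma\jump{\vr_h^k}^2_\Gamma\le C$. But $D_3^k$ inherits the density-dependent weight $B''(\vr_\dagger)$ from Lemma~\ref{lem:renorm} applied with $B\propto p$; that weight is proportional to $\vr_\dagger^{\gamma-2}$ and is not bounded below, so the energy estimate does not yield a lower bound on the \emph{unweighted} jump sum. To obtain what you want one must apply Lemma~\ref{lem:renorm} afresh with $B(\vr)=\vr^2$, for which $B''\equiv 2$ is constant; this is exactly the content of the paper's inequality \eqref{stupid:2}. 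The price of this second application is the new right-hand term $\int_0^T\int_\Om\vr_h^2\Div_h u_h\,dx\,dt$, and it is in bounding \emph{that} quantity that the $L^\gamma\to L^4$ inverse estimate (Lemma~\ref{lemma:inverse}) and its $h^{-1/6}$ cost actually enter. In your write-up the $h^{-1/6}$ factor appears as a separate, unmotivated correction layered on top of an already-too-strong jump bound; the arithmetic only closes by coincidence. With the jump sum attributed to the $B=\vr^2$ renormalization and the $h^{-1/6}$ loss placed where it belongs, the computation $\sqrt{h}\cdot\left(h^{\eps-1-\frac16}\right)^{1/2}=h^{\frac12(\eps-\frac16)}$ comes out cleanly.
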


\begin{proof}
To simplify notation, let
\begin{equation*}
	v = \psi \mathcal{A}^\Grad[\phi \vr_h], \qquad v_L = \psi \mathcal{A}^\Grad[\phi \Pi_h^L\vr_h],
\end{equation*}
and observe that linearity of $\mathcal{A}^\Grad$ provides the identity
\begin{equation*}\label{ort:id}
	v-v_L = \psi \mathcal{A}^\Grad[\phi(\vr_h - \Pi_h^L \vr_h)].
\end{equation*}
By using Lemma \ref{lem:amazing} and adding and subtracting $v_L$, we deduce
\begin{equation*}
	\begin{split}
		&\int_0^T\int_\Om \Grad_h u_h \Grad_h \Pi_h^V v~dxdt \\
		&\qquad= \int_0^T\int_\Om \Grad_h u_h \Grad  v~dxdt \\
		&\qquad = \int_0^T\int_\Om \Grad_h u_h \Grad v_L~dxdt + \int_0^T\int_\Om \Grad_h u_h \Grad (v - v_L)~dxdt \\
		&\qquad = \int_0^T\int_\Om \Curl_h u_h \Curl v_L + \Div_h u_h \Div v_L~dxdt \\
		&\qquad\qquad + \int_0^T\int_\Om \Grad_h u_h \Grad (v - v_L)~dxdt + E(v_L, u_h),
	\end{split}
\end{equation*}
where the last equality is Lemma \ref{lem:ortho}. Next, we once more add and subtract $v_l$
to obtain
\begin{equation}\label{ort:start}
	\begin{split}	
		&\int_0^T\int_\Om \Grad_h u_h \Grad_h \Pi_h^V v~dxdt \\
		&\qquad = \int_0^T\int_\Om \Curl_h u_h \Curl v + \Div_h u_h \Div v~dxdt \\
		&\qquad\qquad +\int_0^T\int_\Om \Curl_h u_h \Curl (v_L-v) + \Div_h u_h \Div (v_L-v)~dxdt \\
		&\qquad\qquad + \int_0^T\int_\Om \Grad_h u_h \Grad (v - v_L)~dxdt + E(v_L, u_h)\\
		&\qquad  = \int_0^T\int_\Om \Curl_h u_h \Grad \psi \times \mathcal{A}^\Grad[\phi \vr_h] + \Div_h u_h \Grad \psi \cdot \mathcal{A}^\Grad[\phi \vr_h]~dxdt\\
		&\quad \qquad + \int_0^T\int_\Om \phi \psi \Div_h u_h \vr_h~dxdt +  \mathcal{F}(\vr_h),
	\end{split}
\end{equation}
where we have used the definition of $v_l$ and introduced the quantity
\begin{equation*}
	\begin{split}
		\mathcal{F}(\vr_h) 
		&:= \int_0^T\int_\Om \Curl_h u_h \Curl (v_L-v) + \Div_h u_h \Div (v_L-v)~dxdt \\
		&\qquad \int_0^T\int_\Om \Grad_h u_h \Grad (v - v_L)~dxdt + E(v_L, u_h).
	\end{split}
\end{equation*}
In view of \eqref{ort:start}, it only remains to prove \eqref{eq:fb}.

Some applications of the Cauchy-Schwartz inequality together with 
the properties of $\mathcal{A}^\Grad[\cdot]$ gives
\begin{equation*}
	\begin{split}
		|\mathcal{F}(\vr_h)| &\leq C\|\Grad_h u_h\|_{L^2(\Om)}\|\phi(\vr_h - \Pi_h^L \vr_h)\|_{L^2(0,T;L^2(B))} + |E(v_L, u_h)| \\
		&\leq hC\left(\sum_{\Gamma \subset B} \int_0^T\int_{\Gamma} \frac{\jump{\vr_h}^2_\Gamma}{h}~dS(x)dt \right)^\frac{1}{2}+ h\|\Grad^2 v\|_{L^2(0,T;L^2(\Om))} \\
		&\leq \sqrt{h}2C\left(\sum_{\Gamma \subset B} \int_0^T\int_\Gamma \jump{\vr_h}^2_\Gamma~dS(x)dt \right)^\frac{1}{2}.
	\end{split}
\end{equation*}
where the second last inequality is \eqref{eq:pil2} and Lemma \ref{lem:ortho} and the last inequality is 
and application of \eqref{eq:pil2}. To bound the last term, we invoke \eqref{stupid:2} yielding
\begin{equation*}
	|\mathcal{F}(\vr_h)| \leq h^{\frac{1}{2}{(\epsilon - \frac{1}{6})}}C,
\end{equation*}
which is \eqref{eq:fb}

\end{proof}

\section{The effective viscous flux}\label{sec:flux}

The main tool that will allow 
us to conclude strong convergence 
of the density is a remarkable 
result  discovered by P.~L.~Lions \cite{Lions:1998ga}
for a continuous approximation scheme. 
The result states that the effective viscous flux
\begin{equation*}
	\Div u - p(\vr),
\end{equation*}
behaves as if it is converging strongly.
More specific, in our numerical setting, the result 
goes as follows:

\begin{proposition}\label{pro:effectiveflux}
Let $(\vr_h, u_h)$ be the numerical solution 
obtained using Definition \ref{def:scheme} and \eqref{def:schemeII}.  
Moreover, let $(\vr, u)$ be given through the 
convergences \eqref{eq:conv1}. Then,
\begin{equation*}\label{eq:effective}
	\begin{split}
		&\lim_{h \rightarrow 0}\int_0^T\int_\Om \phi \psi\left(\Div_h u_h - p(\vr_h)\right)\vr_h~dxdt \\
		&\qquad =\int_0^T\int_\Om \phi \psi\left(\Div u - \overline{ p(\vr)}\right)\vr~dxdt,
	\end{split}
\end{equation*}	
for all $\phi \in C_0^\infty(\Om)$ and $\psi \in C_0^\infty([0,T)\times \Om)$.
\end{proposition}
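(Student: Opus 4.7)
The plan is to carry out the classical Lions--Feireisl proof of weak continuity of the effective viscous flux at the discrete level. I will test the momentum scheme \eqref{fem:moment} with
\[
v_h = \Pi_h^V\bigl(\psi\,\mathcal{A}^\Grad[\phi\,\vr_h]\bigr)
\]
and the limiting momentum equation \eqref{eq:limbar} with $v = \psi\,\mathcal{A}^\Grad[\phi\,\vr]$, then compare the two identities. Both test functions are admissible: $\phi\in C_0^\infty(\Om)$ makes the boundary degrees of freedom of $v_h$ vanish, Lemma \ref{lem:duality} gives $\mathcal{A}^\Grad[\phi\vr_h]\inb L^\infty(0,T;W^{1,\gamma}(\Om))$, and $v\in L^\infty(0,T;L^\infty(\Om))$ because $\gamma>3$.

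Substituting $v_h$ into \eqref{fem:moment}, the pressure contribution collapses via $\Div_h\Pi_h^V=\Pi_h^Q\Div$ and $\Div\mathcal{A}^\Grad[\phi\vr_h]=\phi\vr_h$ into $\int \phi\psi\,p(\vr_h)\vr_h + \int p(\vr_h)\Grad\psi\cdot\mathcal{A}^\Grad[\phi\vr_h]$. The diffusion is reorganized by Lemma \ref{lem:yes}, yielding the key term $\int \phi\psi\,\Div_h u_h\,\vr_h$ together with curl--gradient commutators against $\Grad\psi$ and a remainder $\mathcal{F}(\vr_h)=o(1)$ since $\eps>\tfrac16$. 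The time--derivative and convective pair is grouped via Lemma \ref{lem:consistency} with an error $F(v_h)=o(1)$ coming from the uniform $W^{1,\gamma}$ bound on $v$ and $\gamma>3$, and the stabilization vanishes by Lemma \ref{lem:stupid}. Rearranging produces
\[
\int_0^T\!\!\int_\Om \phi\psi\bigl(\Div_h u_h - p(\vr_h)\bigr)\vr_h\,dx\,dt \;=\; \mathcal{R}_h + o(1),
\]
where $\mathcal{R}_h$ collects the material--derivative term $\int D_t^h(\vr_h\ov u_h)\cdot v$, the convective tensor $\int \vr_h\widetilde u_h\otimes\ov u_h:\Grad v$, the curl--gradient commutators produced by Lemma \ref{lem:yes}, and the pressure--against--$\Grad\psi$ term. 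The analogous computation with $v$ in \eqref{eq:limbar} yields the counterpart $\mathcal{R}$ with $(\vr_h,u_h,p(\vr_h))$ replaced by $(\vr,u,\overline{p(\vr)})$; it therefore suffices to show $\mathcal{R}_h\to \mathcal{R}$.

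The curl--gradient and pressure--against--$\Grad\psi$ contributions pass to the limit by Aubin--Lions compactness: the discrete time--derivative bound of Lemma \ref{lem:weaktime} combined with the derivative gain of $\mathcal{A}^\Grad$ (Lemma \ref{lem:duality}) yields strong convergence $\mathcal{A}^\Grad[\phi\vr_h]\to\mathcal{A}^\Grad[\phi\vr]$ in $C([0,T];L^p(\Om))$ for every $p<\infty$. Coupled with the weak convergences of $\Curl_h u_h$ and $\Div_h u_h$ from \eqref{eq:conv1} and the higher integrability of $p(\vr_h)$ from Proposition \ref{pro:higher}, these terms converge to their continuous counterparts.

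The main obstacle is the material--derivative/convective pair, which is the discrete analogue of the compensated--compactness step in the Lions--Feireisl argument. Summation by parts in time produces, up to boundary--in--time contributions, the quantity $-\int \vr_h\ov u_h\cdot\psi\,\mathcal{A}^\Grad[\phi\,D_t^h\vr_h]$. Testing the continuity scheme \eqref{fem:cont} with $\Pi_h^Q\bigl(\psi\,\mathcal{A}^{\text{div}}[\phi\vr_h\ov u_h]\bigr)$ and invoking Lemma \ref{lem:transport} together with Proposition \ref{pro:transport} rewrites $D_t^h\vr_h$ in terms of $\vr_h\widetilde u_h$ up to $o(1)$ errors, making the resulting expression symmetric in the roles of $\vr_h\ov u_h$ and $\mathcal{A}^\Grad[\phi\vr_h]$. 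This symmetry is what enables the div--curl cancellation: the vector $\vr_h\ov u_h$ is weakly time--compact by Lemma \ref{lem:weaktime}, the field $\mathcal{A}^\Grad[\phi\vr_h]$ is spatially compact through the derivative gain, and Lemma \ref{lemma:aubinlions} identifies the limit of their product as the corresponding quantity in $\mathcal{R}$. Combined with the easy passage to the limit in the remaining terms, this yields $\mathcal{R}_h\to\mathcal{R}$ and completes the proof.
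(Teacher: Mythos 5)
Your proposal is correct and follows essentially the same route as the paper: test the momentum scheme with $\Pi_h^V(\psi\mathcal{A}^\Grad[\phi\vr_h])$ and the limit equation with $\psi\mathcal{A}^\Grad[\phi\vr]$, use Lemma \ref{lem:yes} to recover $\int\phi\psi\,\Div_h u_h\,\vr_h$ from the discrete diffusion, collapse the pressure via the commuting diagram, control errors with Lemmas \ref{lem:consistency} and \ref{lem:stupid}, and after summation by parts feed $D_t^h\vr_h$ back through the continuity scheme so that the div--curl Lemma \ref{lem:divcurl} applies to the resulting commutator. One small imprecision: in the paper the final product of the weakly convergent commutator $\mathcal{H}^h$ with the nonconforming velocity $\widetilde u_h$ is passed to the limit by mollifying $u_h$ in space and invoking the translation estimate of Lemma \ref{lem:translation}, rather than by Lemma \ref{lemma:aubinlions}, because $u_h$ lies in a nonconforming space; you also elide the $\Delta t$ time shift that arises when substituting the continuity scheme (the paper's $K_3$ term).
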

Hence, the product of the weakly converging effective viscous flux 
and the weakly converging density converges to the product of 
the weak limits. Our proof of this proposition 
will rely on a number of auxiliary results which we will prove 
first. The proof is concluded in Section \ref{sec:prop}.

\subsection{The numerical commutator estimate}
In the upcoming analysis, the following lemma will be essential. A proof 
based on the div-curl lemma can be found in \cite{Feireisl}.
\begin{lemma}\label{lem:divcurl}
Let $v_n$ and $w_n$ be sequences of vector valued functions such that $v_n \weak v$ in $L^p(\Om)$ and $w_n \weak w$ in $L^q(\Om)$, 
respectively,  where 
$1<p,q<\infty$ and $\frac{1}{p}+ \frac{1}{q} \leq 1$. Moreover, let $B_n \weak B$ in $L^p(\Om)$.
Then, 
\begin{equation*}
	\begin{split}
		&(1)\quad v_n\Grad \mathcal{A}^\text{div}[w_n] - w_n \Grad \mathcal{A}^\text{div}[v_n]
		\quad \longrightarrow\quad v\Grad \mathcal{A}^\text{div}[w] - w\Grad \mathcal{A}^\text{div}[v]\\
		&(2)\quad B_n\Grad \mathcal{A}^\text{div}[w_n] - w_n \Grad \mathcal{A}^\nabla[B_n]
		\quad \longrightarrow\quad B\Grad \mathcal{A}^\text{div}[w] - w\Grad \mathcal{A}^\nabla[B]\\
	\end{split}
\end{equation*}
in the sense of distributions on $\Om$.
\end{lemma}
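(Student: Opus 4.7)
The plan is to treat both identities as commutator cancellations, reducing them via integration by parts to products of strongly and weakly convergent sequences. The starting observation is that the Newton potential construction gives, for any vector $w$, the distributional identity $\Delta \mathcal{A}^{\mathrm{div}}[w] = \mathrm{div}\,w$ on $\Omega$; similarly $\mathrm{div}\,\mathcal{A}^{\nabla}[B] = B$ and $\partial_j \mathcal{A}^i[B] = \partial_i \mathcal{A}^j[B]$ by the symmetry $\partial_i\partial_j = \partial_j\partial_i$. Combined with the $L^p$-continuity in Lemma \ref{lem:duality}, this yields that $U_n := \mathcal{A}^{\mathrm{div}}[w_n]$ and $V_n := \mathcal{A}^{\mathrm{div}}[v_n]$ are bounded in $W^{1,q}_{\mathrm{loc}}$ and $W^{1,p}_{\mathrm{loc}}$ respectively. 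By Rellich--Kondrachov they converge strongly in $L^s_{\mathrm{loc}}$ (for suitable $s$ compatible with $1/p + 1/q \leq 1$) to $U := \mathcal{A}^{\mathrm{div}}[w]$ and $V := \mathcal{A}^{\mathrm{div}}[v]$.

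For part (1), test the commutator $\Phi_n := v_n \cdot \nabla U_n - w_n \cdot \nabla V_n$ against $\phi \in C_0^\infty(\Omega)$. Writing $v_n \cdot \nabla U_n = \mathrm{div}(U_n v_n) - U_n\,\mathrm{div}\,v_n$ (as distributions) and using $\mathrm{div}\,v_n = \Delta V_n$, one obtains
\begin{equation*}
\langle \Phi_n, \phi\rangle = -\int_\Omega \bigl(U_n v_n - V_n w_n\bigr)\cdot \nabla\phi\,dx - \int_\Omega \phi\bigl(U_n \Delta V_n - V_n \Delta U_n\bigr)\,dx.
\end{equation*}
The key algebraic identity $U_n \Delta V_n - V_n \Delta U_n = \mathrm{div}(U_n \nabla V_n - V_n \nabla U_n)$ lets us rewrite the last integral as $\int_\Omega \nabla\phi\cdot(U_n\nabla V_n - V_n\nabla U_n)\,dx$. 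At this point every product appearing is of the form (strongly convergent in $L^{s}_{\mathrm{loc}}$)$\times$(weakly convergent in $L^{s'}_{\mathrm{loc}}$), so passage to the limit is immediate; reversing the integration by parts with $v, w, U, V$ in place of $v_n, w_n, U_n, V_n$ recovers $\langle \Phi, \phi\rangle$ with $\Phi = v\cdot\nabla U - w\cdot\nabla V$.

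Part (2) follows the same scheme with $B_n$ (scalar) in the role of $v_n$, and $V_n := \mathcal{A}^{\nabla}[B_n]$ (vector) in the role of $V_n$, exploiting the symmetry $\partial_j V_n^i = \partial_i V_n^j$ in place of the scalar Laplacian identity; this symmetry ensures that the difference $B_n \nabla U_n - w_n\cdot\nabla V_n$ rearranges into a divergence of products $U_n\nabla\cdot V_n$ versus $V_n\nabla U_n$ that again consist solely of strong$\times$weak pairings after one integration by parts. The main obstacle to watch is justifying the integration by parts rigorously, since $\mathrm{div}\,v_n$ and $\mathrm{div}\,w_n$ are a priori only distributions and $\mathcal{A}^{i}$ is defined via zero-extension and restriction; this is handled by first mollifying $v_n, w_n$ and using the density of smooth functions together with the continuity estimates of Lemma \ref{lem:duality} to pass to the limit in the mollification parameter before taking $n\to\infty$. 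A detailed proof based on exactly this div--curl structure is given in Feireisl \cite{Feireisl}.
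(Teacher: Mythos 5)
The paper does not actually prove this lemma; it only cites Feireisl, whose argument invokes the div-curl lemma as a black box. Your proposal inlines the compensated-compactness mechanism directly: write the commutator, integrate by parts to expose $U_n\Delta V_n - V_n\Delta U_n$, rewrite it as $\Div(U_n\Grad V_n - V_n\Grad U_n)$, and integrate by parts once more so that every term is a product of a Rellich-compact factor (the $\mathcal{A}$-potentials, bounded in $W^{1,p}_{\mathrm{loc}}$ or $W^{1,q}_{\mathrm{loc}}$) with a weakly convergent one. Under $1/p + 1/q \le 1$ the Sobolev gain of $1/3$ from Rellich makes all these products integrable, so the limit passage is legitimate. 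This is correct and is essentially an unpacked version of the cited div-curl argument rather than a fundamentally different route; its merit is that it is self-contained.

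The place where your sketch is too thin is part (2). You say the symmetry $\partial_i\mathcal{A}^j = \partial_j\mathcal{A}^i$ lets the commutator ``rearrange into a divergence of products \ldots after one integration by parts,'' but the actual structure is different and less immediate than in part (1). Writing $\mathcal{W}_n = \mathcal{A}^\Grad[B_n]$ and $U_n = \mathcal{A}^{\mathrm{div}}[w_n]$, the scalar identities $B_n = \Div\mathcal{W}_n$ and $\Delta U_n = \Div w_n$ together with the Riesz symmetry $\partial_i\mathcal{W}_n^j = \partial_j\mathcal{W}_n^i$ must be inserted, and after integrating by parts the $\Grad\phi$ terms appear as expected, but the dangerous terms left over are $-\int \phi\,\mathcal{W}_n^k\,\partial_k\partial_j U_n\,dx$ and $+\int\phi\,\mathcal{W}_n^j\,\Delta U_n\,dx$; a further integration by parts on each produces matching terms $\pm\int\phi\,(\partial_k\mathcal{W}_n^j)(\partial_k U_n)\,dx$ that cancel exactly by virtue of the symmetry. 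That cancellation, not a single integration by parts, is what renders the remainder a sum of strong-times-weak products. So the conclusion is right but the stated mechanism is inaccurate as written; you should either carry out this two-step computation or fall back on the div-curl lemma as the paper suggests.

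Two minor points: Rellich only gives subsequential strong convergence, but here the limit of $U_n$, $V_n$, $\mathcal{W}_n$ is identified by the continuity of $\mathcal{A}$ on weakly converging sequences, so the whole sequence converges; and the distributional identities $\Delta\mathcal{A}^{\mathrm{div}}[\cdot] = \Div(\cdot)$, $\Div\mathcal{A}^\Grad[\cdot]=(\cdot)$ hold in $\mathcal{D}'(\Om)$ rather than on all of $\R^3$ because of the zero extension, which is all that is needed when testing against $\phi\in C_0^\infty(\Om)$ — you are right that this is where the mollification step must be invoked to justify each intermediate integration by parts.
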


\begin{lemma}\label{lem:fundamental}
Given the convergences \eqref{eq:conv1} - \eqref{eq:conv2},
\begin{equation*}
	\begin{split}
		&\lim_{h \rightarrow 0}\int_0^T\int_\Om \phi \vr_h \tilde u_h\Grad \left(\mathcal{A}^\text{div}[\psi \vr_h \widehat u_h] \right) 
			- \psi\vr_h \tilde u_h \otimes \widehat u_h:\Grad \left(\mathcal{A}^\Grad[\phi \vr_h]\right)~dxdt \\
		&\qquad= \int_0^T\int_\Om \phi \vr  u\Grad \left(\mathcal{A}^\text{div}[\psi \vr  u] \right) - \psi\vr  u \otimes  u:\Grad \left(\mathcal{A}^\Grad[\phi \vr]\right)~dxdt.
	\end{split}
\end{equation*}

\end{lemma}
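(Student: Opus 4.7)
The plan is to recognize the integrand as $(\widetilde u_h)_i G_{h,i}$, where, writing $R_{ij} := \partial_i \partial_j \Delta^{-1}$ for the Riesz-type operators underlying $\Grad \mathcal{A}^{\text{div}}$ and $\Grad \mathcal{A}^\Grad$,
$$G_{h,i} = \phi \vr_h \sum_j R_{ij}\!\left[\psi (\vr_h \widehat u_h)_j\right] - \sum_j \psi (\vr_h \widehat u_h)_j R_{ij}[\phi \vr_h].$$
This is exactly the bilinear commutator appearing in Lemma \ref{lem:divcurl}(2) with $B_h = \phi \vr_h$ and $w_h = \psi \vr_h \widehat u_h$. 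The integrability condition of that lemma reduces to $1/\gamma + 1/m_\infty = (\gamma+3)/(2\gamma) < 1$, which holds precisely because of the standing hypothesis $\gamma > 3$. Consequently, for a.e.\ $t \in (0,T)$,
$$G_h(t,\cdot) \to G(t,\cdot) := \phi \vr \Grad \mathcal{A}^{\text{div}}[\psi \vr u] - \psi \vr u \cdot \Grad \mathcal{A}^\Grad[\phi \vr]$$
in $\mathcal{D}'(\Om)$, and a direct computation shows that contracting $G_i$ with $u_i$ reproduces exactly the right-hand side of the claim.

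Next, I would upgrade this distributional limit to weak convergence in a Lebesgue space. The $L^p$-boundedness of the operators $R_{ij}$ combined with H\"older's inequality and the energy bounds $\vr_h \inb L^\infty(L^\gamma)$, $\vr_h \widehat u_h \inb L^2(L^{m_2})$ from Corollary \ref{cor:energy} places $G_h$ uniformly in $L^2(0,T;L^s(\Om))$ with $s = 6\gamma/(\gamma+9) > 3/2 > 6/5$ for $\gamma > 3$. Combined with the a.e.\ distributional convergence above, this yields $G_h \weak G$ in $L^2(L^s)$, and in particular in $L^2(L^{6/5})$. To then pass to the limit in the spacetime integral $\int_0^T\!\int_\Om \widetilde u_h \cdot G_h\, dxdt$, I would apply Lemma \ref{lemma:aubinlions} componentwise with $f_h = (\widetilde u_h)_i$ and $g_h = G_{h,i}$: the weak convergences populate the dual pair $L^2(L^6)$ and $L^2(L^{6/5})$; the spatial translation estimate for $\widetilde u_h$ required by hypothesis (iv) of that lemma follows from Lemma \ref{lem:translation} together with the interpolation bound $\|\widetilde u_h - u_h\|_{L^2} \le Ch\|\Grad_h u_h\|_{L^2}$ from Lemma \ref{lemma:interpolationerror}; and piecewise-constancy in time is inherited directly from $\vr_h$ and $u_h$.

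The main obstacle is verifying the remaining hypothesis (iii) of Lemma \ref{lemma:aubinlions}, namely the discrete time-derivative bound $D_t^h G_h \inb L^1(0,T;W^{-1,1}(\Om))$. This requires a discrete product-rule decomposition of $G_h$ into pieces involving $D_t^h(\phi \vr_h)$ and $D_t^h(\psi \vr_h \widehat u_h)$, which are controlled in negative Sobolev norms via Lemma \ref{lem:weaktime}, with the Riesz operators $R_{ij}$ absorbed through their $L^p$-mapping properties. The bilinear commutator structure of $G_h$ is essential here, as it ensures that the worst terms cancel or combine into integrable remainders at the delicate endpoint $W^{-1,1}$; this step is directly analogous to the time-regularity argument used for the effective viscous flux in the continuous Lions--Feireisl theory.
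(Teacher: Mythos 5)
Your first two paragraphs are sound and closely track the paper's own start: identifying the integrand as $\widetilde u_h \cdot \mathcal{H}^h$ with $\mathcal{H}^h$ the bilinear commutator of Lemma \ref{lem:divcurl}(2), verifying the integrability requirement from $\gamma > 3$, and establishing uniform $L^p$ bounds on $\mathcal{H}^h$ via Corollary \ref{cor:energy} and Lemma \ref{lem:duality}. Where the proposal breaks down is the final step. To invoke Lemma \ref{lemma:aubinlions} with $g_h = G_h$ you must supply hypothesis (iii), namely $D_t^h G_h \inb L^1(0,T;W^{-1,1}(\Om))$, and the sketch offered for this — ``a discrete product-rule decomposition\dots controlled\dots via Lemma \ref{lem:weaktime}, with the Riesz operators absorbed\dots; the commutator structure\dots ensures that the worst terms cancel'' — does not hold up. Applying the discrete Leibniz rule to $G_h$ produces four terms of the schematic form $\vr_h(t)\cdot \Grad\mathcal{A}^{\mathrm{div}}[D_t^h(\vr_h\widehat u_h)]$, $D_t^h\vr_h\cdot\Grad\mathcal{A}^{\mathrm{div}}[(\vr_h\widehat u_h)(t-\Delta t)]$, $D_t^h(\vr_h\widehat u_h)\cdot\Grad\mathcal{A}^\Grad[\vr_h(t-\Delta t)]$, and $(\vr_h\widehat u_h)(t)\cdot\Grad\mathcal{A}^\Grad[D_t^h\vr_h]$. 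In each case you are multiplying a negative-order Sobolev function ($D_t^h\vr_h \inb L^{4/3}(W^{-1,3/2})$, $D_t^h(\vr_h\widehat u_h)\inb L^1(W^{-1,3/2})$, preserved in regularity by $\Grad\mathcal{A}^\cdot$ since these are order-zero operators) by a merely $L^\gamma$- or $L^3$-integrable coefficient. Such products are not defined, let alone estimable in $W^{-1,1}$, and the commutator structure of $G_h$ does not rescue them: that structure cancels \emph{spatial} derivative loss (Riesz transforms nearly commute with smooth multiplication, which is what Lemma \ref{lem:divcurl} exploits), but the evaluation times $t$ and $t-\Delta t$ on the two sides of each bracket differ, so there is no algebraic cancellation in time to appeal to.

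The paper avoids this obstacle by never attempting to differentiate $\mathcal{H}^h$ in time. Instead it upgrades the div-curl conclusion to $\mathcal{H}^h \weak \mathcal{H}$ in $C(0,T;L^{2\gamma/(3+\gamma)}(\Om))$ (the uniform-in-time weak continuity of $\vr_h$ and $\vr_h\widehat u_h$ from Lemma \ref{lem:timecompactness}/Lemma \ref{lem:weaktime} passes through the commutator unchanged), which by compact embedding gives strong convergence $\mathcal{H}^h \to \mathcal{H}$ in $L^2(0,T;W^{-1,p}(\Om))$ for $p<3/2$. It then writes $\widetilde u_h = (\widetilde u_h - u_h) + (u_h - R^\delta\star u_h) + R^\delta\star u_h$: the first term is $O(h^{1/2})$ by the interpolation error, the second is $O((h^2+\delta^2)^{1/4})$ uniformly in $h$ by Lemma \ref{lem:translation}, and for fixed $\delta$ the smooth factor $R^\delta\star u_h$ can be paired with the $W^{-1,p}$-strong limit $\mathcal{H}$. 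Sending $h\to 0$ then $\delta\to 0$ finishes the argument. The moral difference is that the needed time-regularity is invoked on the raw sequences $\vr_h$ and $\vr_h\widehat u_h$ \emph{before} forming the nonlinear commutator, rather than on the commutator itself. If you wish to salvage your approach you would need to apply Lemma \ref{lemma:aubinlions} at that earlier stage too, not to the composite $G_h$.
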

\begin{proof}
We begin by observing the following identity
	\begin{equation}\label{eq:J11}
		\begin{split}
			 & \int_0^T\int_\Om \phi\vr_h \tilde u_h\Grad \left(\mathcal{A}^\text{div}[\psi \vr_h \widehat u_h] \right) - \psi\vr_h \tilde u_h \otimes \widehat u_h:\Grad \left( \mathcal{A}^\Grad[\phi \vr_h]\right)~dxdt \\
			&=\int_0^T\int_\Om \tilde u_h \cdot \left[\Grad \left(\mathcal{A}^\text{div}[\psi \vr_h \widehat u_h] \right)\phi\vr_h - \Grad \left(\mathcal{A}^\Grad[\phi \vr_h]\right)\cdot \psi \vr_h \widehat u_h\right]~dxdt \\
			&=:\int_0^T\int_\Om \tilde u_h \mathcal{H}^h~dxdt, 
		\end{split}
	\end{equation}
	where we have introduced $\mathcal{H}^h$ given by
	$$
	\mathcal{H}^h = \Grad \left(\mathcal{A}^\text{div}[\psi \vr_h \widehat u_h] \right)\phi\vr_h - \Grad \left(\mathcal{A}^\Grad[\phi \vr_h]\right)\cdot \psi \vr_h \widehat u_h.
	$$

	From Lemma \ref{lem:product}, we have that 
	\begin{equation}\label{spec:w1}
		\begin{split}
			\vr_h &\weak \vr \text{ in $C(0,T;L^\gamma(\Om))\cap L^2(0,TL^\gamma(\Om))$}, \\
			\vr_h \widehat u_h &\weak \vr u \text{ in  $C(0,T;L^\frac{2\gamma}{\gamma+1}(\Om))\cap L^2(0,T;L^3(\Om))$}.
		\end{split}
	\end{equation}
	Since in addition $\gamma > 3$, 
	the H\"older inequality, Lemma \ref{lem:duality}, and Corollary \ref{cor:energy}, provides the estimate
	\begin{equation}\label{spec:w2}
		\begin{split}
			&\|\mathcal{H}^h\|_{L^2(0,T;L^\frac{3}{2}(\Om))}
			\leq C\|\vr_h \widehat u_h\|_{L^2(0,T;L^3(\Om))}\|\vr_h\|_{L^\infty(0,T;L^\gamma(\Om))} \leq C,
		\end{split}
	\end{equation}
	and similarly, 
	\begin{equation}\label{spec:w3}
		\begin{split}
			&\|\mathcal{H}^h\|_{L^\infty(0,T;L^{\frac{2\gamma}{3+\gamma}}(\Om))} 
			\leq C\|\vr_h \widehat u_h\|_{L^\infty(0,T;L^\frac{2\gamma}{\gamma+1}(\Om))}\|\vr_h\|_{L^\infty(0,T;L^\gamma(\Om))} \leq C,
		\end{split}
	\end{equation}
	where $\frac{2\gamma}{\gamma+1} > 1$ since $\gamma > 3$.
	
	By virtue of \eqref{spec:w1}-\eqref{spec:w3}, we can apply Lemma \ref{lem:divcurl}, with $v_h = \psi \vr_h \widehat u_h$
	and $B_h = \phi \vr_h$, to conclude the weak convergence
	\begin{equation}\label{def:HH}
		\begin{split}
			\mathcal{H}^h &= \Grad \left(\mathcal{A}^\text{div}[\psi \vr_h \widehat u_h] \right)\phi\vr_h- \Grad \left(\mathcal{A}^\Grad[\phi \vr_h]\right)\cdot \psi \vr_h \widehat u_h \\
			&\qquad\qquad \weak \Grad \left(\mathcal{A}^\text{div}[\psi \vr  u] \right)\phi\vr- \Grad \left(\mathcal{A}^\Grad[\phi \vr]\right)\cdot \psi \vr  u 
			=: \mathcal{H},
		\end{split}
	\end{equation}
	in $L^2(0,T;L^\frac{3}{2}(\Om)) \cap C(0,T;L^\frac{2\gamma}{3+\gamma}(\Om))$ as $h \rightarrow 0$. 

	To proceed we will need the standard mollifier which we will denote by $R^\delta$.
 	It will be a standing assumption throughout that $\delta$ is sufficiently small. 
	
	Now, by adding and subtracting we write
	\begin{equation}\label{spec:1}
		\begin{split}
		&\int_0^T\int_\Om \tilde u_h \mathcal{H}^h~dxdt \\
		&\qquad = \int_0^T\int_\Om (\tilde u_h -u_h)\mathcal{H}^h +  ( u_h - R^\delta \star u_h)\mathcal{H}^h + (R^\delta \star u_h)\mathcal{H}^h~dxdt \\
		\end{split}
	\end{equation}
	The first term in \eqref{spec:1} converges to zero as
	\begin{equation}\label{spec:2}
		\begin{split}
			&\int_0^T\int_\Om (\tilde u_h -u_h)\mathcal{H}^h~dxdt  \\
			&\qquad\qquad \leq \|\tilde u_h - u_h\|_{L^2(0,T;L^3(\Om))}\|\mathcal{H}^h\|_{L^2(0,T;L^\frac{3}{2}(\Om))} \\
			&\qquad\qquad \leq h\|\Grad_h u_h\|_{L^2(0,T;L^3(\Om))}\|\mathcal{H}^h\|_{L^2(0,T;L^\frac{3}{2}(\Om))} \\
			&\qquad\qquad \leq h^\frac{1}{2}\|\Grad_h u_h\|_{L^2(0,T;L^2(\Om))}\|\mathcal{H}^h\|_{L^2(0,T;L^\frac{3}{2}(\Om))} 
			\leq h^\frac{1}{2}C.
		\end{split}
	\end{equation}
	 To bound the second term in \eqref{spec:1}, we apply the H\"older inequality and the space translation estimate of Lemma \ref{lem:translation}, 
	\begin{equation}\label{spec:3}
		\begin{split}
			&\int_0^T\int_\Om  ( u_h - R^\delta \star u_h)\mathcal{H}^h~dxdt \\
			&\qquad\qquad \leq \left\|u_h - R^\delta \star u_h\right\|_{L^2(0,T;L^3(\Om))}\|\mathcal{H}^h\|_{L^2(0,T;L^\frac{3}{2}(\Om))} \\
			&\qquad\qquad \leq C\left(h^2 + \delta^2\right)^\frac{1}{4}\|\Grad_h u_h\|_{L^2(0,T;L^2(\Om))}
			\leq  C\left(h^2 + \delta^2\right)^\frac{1}{4}.
		\end{split}
	\end{equation}
	Next, since $\mathcal{H}^h \weak \mathcal{H}$ in $C(0,T;L^\frac{2\gamma}{3+\gamma}(\Om))$, we have in particular that 
	$$
		\mathcal{H}^h \rightarrow \mathcal{H} \text{ in $L^2(0,T;W^{-1, p}(\Om))$}, \quad p < \frac{3}{2}.
	$$
	Hence, for each fixed $\delta$, we can conclude that
	\begin{equation*}
		\lim_{h \rightarrow 0}\int_0^T\int_\Om (R^\delta \star u_h)\mathcal{H}^h~dxdt = \int_0^T\int_\Om (R^\delta \star u)\mathcal{H}~dxdt.
	\end{equation*}
 	
	Finally, we pass to the limit in \eqref{spec:1} and apply \eqref{spec:2} and \eqref{spec:3} to obtain
	\begin{equation*}
		\lim_{h \rightarrow 0}\int_0^T\int_\Om \tilde u_h \mathcal{H}^h~dxdt = \int_0^T\int_\Om (R^\delta \star u)\mathcal{H}~dxdt + \mathcal{O}(\sqrt{\delta}).
	\end{equation*}
	We conclude the proof by sending $\delta \rightarrow 0$ and recalling \eqref{eq:J11} and \eqref{def:HH}.
	
\end{proof}

\begin{lemma}\label{lem:commutator}
Let $(\vr_h, u_h)$ be the numerical solution obtained
through Definition \ref{def:scheme} and \eqref{def:schemeII}. Let $(\vr, u)$ be the corresponding weak limit pair 
obtained from the convergences \eqref{eq:conv1}. Then, for any $\phi \in C_0^\infty(\Om)$
and $\psi \in C_0^\infty([0,T)\times \Om)$,
\begin{equation*}
	\begin{split}
		&\lim_{h \rightarrow 0}\left(\sum_\Gamma \int_0^T\int_\Gamma\up(\vr u)\jump{\Pi_h^Q \left[\phi\mathcal{A}^\text{div}[\psi \vr_h \widehat u_h]\right]}_\Gamma \right.\\
		&\qquad \qquad \qquad\left. - \up(\vr u \otimes u)\jump{\Pi_h^Q\Pi_h^V \left[\psi \mathcal{A}^\Grad[\phi \vr_h]\right]}_\Gamma~dS(x)dt \right)	\\
		&= \int_0^T \int_\Om \vr u \Grad (\phi \mathcal{A}^\text{div}(\psi \vr u)) - \vr u \otimes u:\Grad \left(\psi \mathcal{A}^\Grad[\phi \vr]\right)~dxdt,
	\end{split}
\end{equation*}
where the operators $\mathcal{A}^\Grad[\cdot]$ and $\mathcal{A}^\text{div}[\cdot]$ are given by \eqref{def:aop}.

\end{lemma}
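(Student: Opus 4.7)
The plan is to apply Lemma \ref{lem:transport} to each of the two flux sums, reducing them to volume integrals against $\Grad\Phi$ and $\Grad V$ plus the error functionals $P_i$, where $\Phi := \phi\,\mathcal{A}^\text{div}[\psi \vr_h \widehat u_h]$ and $V := \psi\,\mathcal{A}^\Grad[\phi \vr_h]$ are the composite test functions extracted from the arguments on the left-hand side of the claim. Passage to the limit then splits into two tasks: controlling the $P_i$-errors, and computing the limit of the bulk integrals.

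For the first task, the Calder\'on--Zygmund type bounds of Lemma \ref{lem:duality}, combined with Corollary \ref{cor:energy}, Proposition \ref{pro:higher}, and Lemma \ref{lem:product}, yield $h$-uniform bounds on $\|\Grad V\|_{L^\infty(0,T;L^\gamma(\Om))}$ and on $\|\Grad\Phi\|_{L^4(0,T;L^{12/5}(\Om))}$. The latter is obtained by interpolating the $L^\infty(L^{m_\infty})$ and $L^2(L^{m_2})$ controls on $\vr_h \widehat u_h$ supplied by Corollary \ref{cor:energy} and Lemma \ref{lem:product}, using $\gamma>3$ to ensure the resulting intermediate exponent dominates $12/5$. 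With these uniform bounds in place, \eqref{eq:pcombined} of Proposition \ref{pro:transport} produces a strictly positive power of $h$ in front of each $P_i$-integral, so each error term tends to zero.

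For the bulk integrals, we expand $\Grad\Phi$ and $\Grad V$ by the product rule. The principal parts, which carry $\Grad\mathcal{A}^\text{div}[\psi\vr_h\widehat u_h]$ and $\Grad\mathcal{A}^\Grad[\phi\vr_h]$ respectively, assemble into exactly the quantity whose limit is computed in Lemma \ref{lem:fundamental}, and hence converge to the analogous expression formed from $(\vr,u)$. The side parts carry $\Grad\phi$ or $\Grad\psi$ multiplied by $\mathcal{A}^\text{div}[\psi\vr_h\widehat u_h]$ or $\mathcal{A}^\Grad[\phi\vr_h]$; for these, the one-derivative gain of Lemma \ref{lem:duality} combined with Rellich--Kondrachov promotes the weak convergences $\vr_h\weakstar\vr$ and $\vr_h\widehat u_h\weak \vr u$ (Lemmas \ref{lem:cont} and \ref{lem:product}) to strong convergence of the $\mathcal{A}$-images in suitable $L^p_\text{loc}$ spaces, which, paired with the weak convergences $\vr_h\widetilde u_h\weak \vr u$ and $\vr_h\widetilde u_h\otimes\widehat u_h\weak \vr u\otimes u$, allows passage to the limit. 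Reassembling the principal and side limits via the product rule reproduces the expression $\int_0^T\!\!\int_\Om \vr u\cdot \Grad(\phi\mathcal{A}^\text{div}[\psi\vr u]) - \vr u\otimes u:\Grad(\psi\mathcal{A}^\Grad[\phi\vr])\,dx\,dt$, completing the proof.

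The main obstacle will be the $h$-uniform control on the $P_i$-errors: since the test functions $\Phi$ and $V$ depend on $h$ through $\vr_h$ and $\vr_h\widehat u_h$, Proposition \ref{pro:transport} only applies after one verifies, via Calder\'on--Zygmund together with the sharp density/momentum integrability available under $\gamma>3$, that $\Grad\Phi$ and $\Grad V$ sit in precisely the function spaces appearing in \eqref{eq:pcombined}. Once this interpolation bookkeeping is done, the remainder of the argument is a routine combination of Lemma \ref{lem:fundamental} with weak-times-strong convergence for the cut-off derivatives.
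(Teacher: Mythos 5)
Your overall strategy is the same as the paper's: apply Lemma~\ref{lem:transport} to reduce the flux sums to volume integrals plus $P_i$-errors, drive the $P_i$-errors to zero, and then split the remaining bulk integral by the product rule into a principal part (Lemma~\ref{lem:fundamental}) and side parts handled by strong convergence of $\mathcal{A}^\Grad[\phi\vr_h]$ and $\mathcal{A}^\text{div}[\psi\vr_h\widehat u_h]$. That decomposition and the side-term argument all match the paper.

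There is, however, a concrete gap in how you dispose of $P_1\bigl(\phi\,\mathcal{A}^\text{div}[\psi\vr_h\widehat u_h]\bigr)$. You claim a uniform-in-$h$ bound on $\norm{\Grad\bigl(\phi\,\mathcal{A}^\text{div}[\psi\vr_h\widehat u_h]\bigr)}_{L^4(0,T;L^{12/5}(\Om))}$ by interpolating the $L^\infty(L^{m_\infty})$ and $L^2(L^{m_2})$ bounds on $\vr_h\widehat u_h$, asserting that $\gamma>3$ guarantees the intermediate spatial exponent exceeds $12/5$. This is not so. Interpolating between $L^\infty_tL^{m_\infty}_x$ and $L^2_tL^{m_2}_x$ at temporal exponent~$4$ forces $\theta=1/2$, and yields a spatial exponent
\begin{equation*}
q=\left(\tfrac{1}{2m_\infty}+\tfrac{1}{2m_2}\right)^{-1}=\frac{6\gamma}{2\gamma+3},
\end{equation*}
which satisfies $q\ge 12/5$ only for $\gamma\ge 6$; for $3<\gamma<6$ one obtains $q<12/5$, and \eqref{eq:pcombined} cannot be invoked directly. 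The paper resolves this by paying with an inverse estimate \emph{in time}: since $\vr_h\widehat u_h$ is piecewise constant in time on a mesh of size $\Dt$, one has $\norm{\vr_h\widehat u_h}_{L^4(0,T;L^3(\Om))}\le(\Dt)^{-1/4}\norm{\vr_h\widehat u_h}_{L^2(0,T;L^3(\Om))}$, and the latter is controlled by Corollary~\ref{cor:energy}. Combined with the sharper bound \eqref{eq:p1} for $P_1$ and $\Dt=ch$, one obtains $|P_1|\le Ch^{\frac14-3\frac{4-\gamma}{4\gamma}}$, whose exponent is positive for all $\gamma>3$ (vanishing only as $\gamma\to 3^+$). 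Once you replace your interpolation step by this inverse-in-time argument, the rest of your proof closes exactly as in the paper.
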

\begin{proof}
Our starting point is provided by Lemma \ref{lem:transport} which in 
this case gives
\begin{equation}\label{com:start}
	\begin{split}
		&\sum_\Gamma \int_0^T\int_\Gamma\up(\vr u)\jump{\Pi_h^Q \left[\mathcal{A}^\text{div}[\psi \vr_h \widehat u_h]\right]}_\Gamma \\
		&\qquad \qquad \qquad - \up(\vr u \otimes u)\jump{\Pi_h^Q\Pi_h^V \left[\psi \mathcal{A}^\Grad[\phi \vr_h]\right]}_\Gamma~dS(x)dt \\
		& = \int_0^T\int_\Om \vr_h \widetilde u_h\Grad \left(\phi\mathcal{A}^\text{div}[\psi \vr_h \widehat u_h] \right) - \vr_h \widetilde u_h \otimes \widehat u_h:\Grad \left(\psi \mathcal{A}^\Grad[\phi \vr_h]\right)~dxdt \\
		&\qquad \qquad + P_1\left(\phi\mathcal{A}^\text{div}[\psi \vr_h \widehat u_h] \right) + \sum_{i=2}^4P_i\left(\psi \mathcal{A}^\Grad[\phi \vr_h]]\right).
	\end{split}
\end{equation}
Let us first prove that the $P_i$ terms are converging to zero 
as $h \rightarrow 0$. 

\vspace{0.2cm}
\noindent
\textit{1. Convergence of $P_i$ to zero:}
From \eqref{eq:p1}, we have the following bound 
\begin{equation}\label{com:p11}
	\begin{split}
		&\left|P_1\left(\phi\mathcal{A}^\text{div}[\psi \vr_h \widehat u_h] \right)\right|  \\
		&\qquad \leq h^{\frac{1}{2} - 3\frac{4-\gamma}{4\gamma}}C\left\|\Grad \left(\phi\mathcal{A}^\text{div}[\psi \vr_h \widehat u_h] \right)\right\|_{L^4(0,T;L^3(\Om))} \\
		&\qquad \leq h^{\frac{1}{2} - 3\frac{4-\gamma}{4\gamma}}C\|\phi\|_{W^{1,\infty}(\Om)}\left\||\mathcal{A}^\text{div}[\psi \vr_h \widehat u_h]\right\|_{L^4(0,T;W^{1,3}(\Om))}.
	\end{split}
\end{equation}
From the regularization property of $\mathcal{A}^\text{div}$ (Lemma \ref{lem:duality}) 
together with an inverse estimate in time (Lemma \ref{lemma:inverse}) we deduce
\begin{equation}\label{com:p12}
	\begin{split}
		&\left\||\mathcal{A}^\text{div}[\psi \vr_h \widehat u_h]\right\|_{L^4(0,T;W^{1,3}(\Om))} \\
		&\qquad \leq C\|\psi \vr_h \widehat u_h\|_{L^4(0,T;L^3(\Om))} \\
		&\qquad \leq C\|\psi\|_{L^\infty((0,T)\times \Om)}\|\vr_h \widehat u_h\|_{L^4(0,T;L^3(\Om))} \\
		&\qquad \leq (\Delta)^{-\frac{1}{4}}C\|\psi\|_{L^\infty((0,T)\times \Om)}\|\vr_h \widehat u_h\|_{L^2(0,T;L^3(\Om))},
	\end{split}
\end{equation}
where the last term is bounded by the energy (Corollary \ref{cor:energy}).
By setting \eqref{com:p12} in \eqref{com:p11} and using that $\Delta t = a h$, 
we obtain
\begin{equation}\label{com:P1}
	\left|P_1\left(\phi\mathcal{A}^\text{div}[\psi \vr_h \widehat u_h] \right)\right|
	\leq h^{\frac{1}{4} - 3\frac{4-\gamma}{4\gamma}}C,
\end{equation}
where the exponent for $h$ is strictly positive as $\gamma > 3$.

For the remaining $P^i$ terms, we apply \eqref{eq:pcombined} in Lemma \ref{lem:transport}
to obtain
\begin{equation}\label{com:P2}
	\begin{split}
		&\sum_{i=2}^4 \left|P_i\left(\psi \mathcal{A}^\Grad[\phi \vr_h]]\right)\right| \\
		&\qquad \leq h^\frac{1}{4}C\left\|\Grad\left(\psi \mathcal{A}^\Grad[\phi \vr_h]]\right)\right\|_{L^\infty(0,T;L^\gamma(\Om))} \\
		&\qquad \leq h^\frac{1}{4}C\|\psi\|_{L^\infty(0,T;W^{1,\infty}(\Om))}\|\phi\|_{L^\infty(\Om)}\|\vr_h\|_{L^\infty(0,T;L^\gamma(\Om))} \leq h^\frac{1}{4}C,
	\end{split}
\end{equation}
where we in the second last inequality have applied Lemma \ref{lem:duality}. The last inequality follows 
from $p(\vr_h) \inb L^\infty(0,T;L^1(\Om))$.

Consequently, \eqref{com:P1} and \eqref{com:P2} allow us to conclude that 
\begin{equation}\label{com:Pconv}
\lim_{h \rightarrow 0}\left(\left|P_1\left(\phi\mathcal{A}^\text{div}[\psi \vr_h \widehat u_h] \right)\right| +	\sum_{i=2}^4 \left|P_i\left(\psi \mathcal{A}^\Grad[\phi \vr_h]]\right)\right|\right)
= 0.
\end{equation}

\vspace{0.3cm}
\noindent
\textit{2. Convergence of the commutator term:}
To prove convergence of the first term 
after the equality in \eqref{com:start} we will need 
to rewrite this term on a form for which Lemma \ref{lem:fundamental} 
is applicable:
\begin{align}\label{com:com1}
		&\int_0^T\int_\Om \vr_h \tilde u_h\Grad \left(\phi\mathcal{A}^\text{div}[\psi \vr_h \widehat u_h] \right) 
			- \vr_h \tilde u_h \otimes \widehat u_h:\Grad \left(\psi \mathcal{A}^\Grad[\phi \vr_h]\right)~dxdt \nonumber\\
		&=\int_0^T\int_\Om \phi \vr_h \tilde u_h\Grad \left(\mathcal{A}^\text{div}[\psi \vr_h \widehat u_h] \right) 
			- \psi\vr_h \tilde u_h \otimes \widehat u_h:\Grad \left(\mathcal{A}^\Grad[\phi \vr_h]\right)~dxdt \\
		& \quad + \int_0^T\int_\Om\vr_h \tilde u_h \Grad \phi \cdot \mathcal{A}^\text{div}[\psi \vr_h \widehat u_h] -
		 \vr_h \tilde u_h \otimes \widehat u_h:\Grad \psi \otimes \mathcal{A}^\Grad[\phi \vr_h]~dxdt.\nonumber
\end{align}
Now,  observe that the first term after the equality is precisely  the one covered by Lemma \ref{lem:fundamental}. Hence,
\begin{equation}\label{com:com2}
	\begin{split}
		&\lim_{h \rightarrow 0}\int_0^T\int_\Om \phi \vr_h \widetilde u_h\Grad \left(\mathcal{A}^\text{div}[\psi \vr_h \widehat u_h] \right)
		 	- \psi\vr_h \widetilde u_h \otimes \widehat u_h:\Grad \left(\mathcal{A}^\Grad[\phi \vr_h]\right)~dxdt \\
		&\qquad= \int_0^T\int_\Om \phi \vr  u\Grad \left(\mathcal{A}^\text{div}[\psi \vr  u] \right) - \psi\vr  u \otimes  u:\Grad \left(\mathcal{A}^\Grad[\phi \vr]\right)~dxdt.
	\end{split}
\end{equation}
Moreover, in view of the convergences in Lemma \ref{lem:product}, we have that
\begin{equation*}
	\begin{split}
				\mathcal{A}^\Grad[\phi \vr_h] &\rightarrow \mathcal{A}^\Grad[\phi \vr]~ \text{in $C(0,T;L^p(\Om))$ for any $p < \infty$}\\
			\mathcal{A}^\text{div}[\psi \vr_h \widehat u_h] &\rightarrow \mathcal{A}^\text{div}(\psi \vr u) ~ \text{ in $C(0,T;L^3(\Om))$}.
	\end{split}
\end{equation*}
As a consequence, there is no problems with concluding that
\begin{equation}\label{com:com3}
	\begin{split}
		&\lim_{h \rightarrow 0}
		\int_0^T\int_\Om\vr_h \widetilde u_h \Grad \phi \cdot \mathcal{A}^\text{div}[\psi \vr_h \widehat u_h] 
			- \vr_h \widetilde u_h \otimes \widehat u_h:\Grad \psi \otimes \mathcal{A}^\Grad[\phi \vr_h]~dxdt \\
		&= \int_0^T\int_\Om \vr u \Grad \phi \cdot \mathcal{A}^\text{div}[\psi \vr u] - \vr \tilde u \otimes  u:\Grad \psi \otimes \mathcal{A}^\Grad[\phi \vr]~dxdt.
	\end{split}
\end{equation}
Finally, we send $h \rightarrow 0$ in \eqref{com:com1} and apply \eqref{com:com2} and \eqref{com:com3} to conclude
\begin{align}\label{com:com4}
		&\lim_{h \rightarrow 0}\int_0^T\int_\Om \vr_h \widetilde u_h\Grad \left(\phi\mathcal{A}^\text{div}[\psi \vr_h \widehat u_h] \right) - \vr_h \widetilde u_h \otimes \widehat u_h:\Grad \left(\psi \mathcal{A}^\Grad[\phi \vr_h]\right)~dxdt \nonumber\\
		&=\int_0^T\int_\Om \phi \vr  u\Grad \left(\mathcal{A}^\text{div}[\psi \vr  u] \right) - \psi\vr  u \otimes  u:\Grad \left(\mathcal{A}^\Grad[\phi \vr]\right)~dxdt \nonumber \\
		&\quad \qquad \int_0^T\int_\Om \vr u \Grad \phi \cdot \mathcal{A}^\text{div}[\psi \vr u] - \vr \tilde u \otimes  u:\Grad \psi \otimes \mathcal{A}^\Grad[\phi \vr]~dxdt \\
		&= \int_0^T \int_\Om \vr u \Grad (\psi \mathcal{A}^\text{div}(\phi \vr u)) - \vr u \otimes u:\Grad \left(\psi \mathcal{A}^\Grad[\phi \vr]\right)~dxdt.\nonumber
\end{align}

\vspace{0.3cm}
\noindent
\textit{3. Conclusion:}
We conclude the proof  by sending $h \rightarrow 0$ in \eqref{com:start}
and applying \eqref{com:Pconv} and \eqref{com:com4}.

\end{proof}

\subsection{Proof of Proposition \ref{pro:effectiveflux}}\label{sec:prop}
Define the test-function 
\begin{equation*}
	v = \psi \mathcal{A}^\Grad[\phi \vr_h], \qquad v_h = \Pi_h^V v.
\end{equation*}
By setting $v_h$ as test-function in the momentum scheme \eqref{fem:moment}, 
integrating in time,  applying Lemma \ref{lem:yes}
for the term involving $\Grad_h u_h\Grad_h v_h$,
the calculation \eqref{eq:thecalc} for the term involving the pressure,
and reordering terms, we obtain
\begin{equation}\label{eff:start}
	\begin{split}
		\int_0^T\int_\Om \phi \psi(p(\vr_h) -  \Div_h u_h )\vr_h~dxdt = \sum_{i=1}^4 J_i^h + \mathcal{F}(\vr_h),
	\end{split}
\end{equation}
where $\mathcal{F}(\vr_h)$ is given by \eqref{eq:fb} and
\begin{align*}
	J^h_1 &= - \int_0^T\int_\Om p(\vr_h)\Grad \phi \cdot \mathcal{A}^\Grad[\phi \vr_h]~dxdt,\\
	J^h_2 &= \int_0^T\int_\Om \Curl_h u_h \Grad \psi \times \mathcal{A}^\Grad[\phi \vr_h] + \Div_h u_h \Grad \psi \cdot \mathcal{A}^\Grad[\phi \vr_h]~dxdt, \\
	J^h_3 &= \int_0^T\int_\Om D_t^h(\vr_h \ov u_h)v_h~dxdt-\sum_\Gamma \int_0^T \int_{\Gamma}\up(\vr  u \otimes \ov u)\jump{\ov v_h}_\Gamma~dS(x)dt, \\
	J^h_4 &= h^{1-\eps}\sum_\Gamma \int_0^T\int_\Gamma \left(\frac{\ov u_+ +\ov u_-}{2} \right)\jump{\vr_h}_\Gamma\jump{\ov v_h}_\Gamma~dS(x)dt, 
\end{align*}

Next, define the test-function
\begin{equation*}
	w = \psi \mathcal{A}^\Grad [\phi \vr].
\end{equation*}
Observe that this test-function is the limit of $v$. That is, 
\begin{equation*}\label{eq:v2w}
	v \rightarrow w \quad \text{a.e as $h \rightarrow 0$}.
\end{equation*}

Setting $w$ as test-function in the weak limit of the momentum scheme \eqref{eq:limbar}, 
and reordering terms, yields
\begin{equation}\label{eff:end}
	\begin{split}
		\int_0^T\int_\Om \psi \phi (\overline{p(\vr)} - \Div u)\vr~dxdt = J_1 + J_2 + J_3.
	\end{split}
\end{equation}
\begin{align*}
	J_1 &= - \int_0^T\int_\Om \overline{p(\vr)}\Grad \phi \cdot \mathcal{A}^\Grad[\phi \vr]~dxdt,\\
	J_2 &= \int_0^T\int_\Om \Curl u \Grad \psi \times \mathcal{A}^\Grad[\phi \vr] + \Div u \Grad \psi \cdot \mathcal{A}^\Grad[\phi \vr]~dxdt \\
	J_3 &= -\int_0^T\int_\Om \vr u w_t + \vr u \otimes u:\Grad w~dxdt - \int_\Om m_0 w(0,\cdot)~dx.
\end{align*}

Now, observe that the proof of Proposition \ref{pro:effectiveflux} is complete once we prove
$$
J_1^h \rightarrow J_1, \quad J_2^h \rightarrow J_2, \quad J_3^h \rightarrow J_3, \quad J_4^h \rightarrow 0.
$$

Since we have already established the convergences
\begin{equation*}
	\begin{split}
				\mathcal{A}^\Grad[\phi \vr_h] &\rightarrow \mathcal{A}^\Grad[\phi \vr]~ \text{in $C(0,T;L^p(\Om))$ for any $p < \infty$}\\
			\mathcal{A}^\text{div}[\psi \vr_h \widehat u_h] &\rightarrow \mathcal{A}^\text{div}(\psi \vr u) ~ \text{ in $C(0,T;L^3(\Om))$},
	\end{split}
\end{equation*}
we can immediately conclude that
\begin{equation}\label{eq:J1J2}
	J_1^h \rightarrow J_1 \qquad J_2^h \rightarrow J_2.
\end{equation}
For the $J_4^h$ term, the calculation \eqref{high:i2} gives
\begin{equation}\label{eff:j4}
	|J_4^h| \leq h^\frac{1}{2}C \rightarrow 0.
\end{equation}
Hence, the only remaining ingredient in the proof of Proposition \ref{pro:effectiveflux}, 
is to establish $J_3^h \rightarrow J_3$. 
Indeed, let us for the moment take this result as granted (Lemma \ref{lem:j3} below).
Then, we can send $h \rightarrow 0$ 
in \eqref{eff:start}, using \eqref{eq:fb}, \eqref{eq:J1J2}, 
\eqref{eff:j4}, to discover
\begin{equation*}\label{eff:startis}
	\begin{split}
		&\lim_{h \rightarrow 0}\int_0^T\int_\Om \phi \psi(p(\vr_h) -  \Div_h u_h )\vr_h~dxdt \\
		&\qquad = \lim_{h \rightarrow 0}\sum_{i=1}^4 J_i^h + \mathcal{F}(\vr_h) 
		 = J_1 + J_2 + J_3 \\
		&\qquad=\int_0^T\int_\Om \phi \psi(\overline{p(\vr)} -  \Div_h u )\vr~dxdt,
	\end{split}
\end{equation*}
where the last equality is \eqref{eff:end}. This is precisely Proposition \ref{pro:effectiveflux}. 

Hence, the proof is complete once we establish 
the following lemma.

\begin{lemma}\label{lem:j3}
Under the conditions of Proposition \ref{pro:effectiveflux},
\begin{equation*}
	\lim_{h \rightarrow 0}J_3^h = J_3,
\end{equation*}
where $J_3^h$ and $J_3$ are given by \eqref{eff:start} and \eqref{eff:end}, respectively.
\end{lemma}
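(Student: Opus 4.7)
The plan is to reduce $J_3^h$ to a form in which the commutator identity of Lemma \ref{lem:commutator} applies. Setting $v = \psi\mathcal{A}^\Grad[\phi\vr_h]$ and $v_h = \Pi_h^V v$, apply Lemma \ref{lem:consistency} to obtain
\begin{equation*}
J_3^h = \int_0^T\!\!\int_\Om D_t^h(\vr_h\ov u_h)\,v\,dxdt \;-\; \int_0^T\!\!\int_\Om \vr_h\widetilde u_h\otimes \ov u_h : \Grad v\,dxdt \;+\; F(v).
\end{equation*}
Lemma \ref{lem:duality} together with the uniform bound $\vr_h\in L^\infty(0,T;L^\gamma(\Om))$ (with $\gamma>3$) yields $\|\Grad v\|_{L^\infty(0,T;L^3(\Om))}\le C$, so $F(v)\to 0$ by \eqref{eq:fbound}.

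Next, perform discrete summation by parts in time on $\int D_t^h(\vr_h\ov u_h)\,v\,dxdt$. Because $\psi(T,\cdot)=0$, only the $t=0$ boundary contribution survives,
\begin{equation*}
-\int_\Om \vr_h^0\ov u_h^0\,v^1\,dx \;\longrightarrow\; -\int_\Om m_0\,w(0,\cdot)\,dx,
\end{equation*}
by \eqref{def:init} and the strong convergence $v^1\to w(0,\cdot)$ in $L^p(\Om)$. The interior contribution $-\Delta t\sum_k\int_\Om \vr_h^{k-1}\ov u_h^{k-1}\,D_t^h v^k\,dx$ splits, via the linearity of $\mathcal{A}^\Grad$ in its argument, into a piece involving $D_t^h\psi^k$ and a piece involving $\mathcal{A}^\Grad\bigl[\phi(\vr_h^k-\vr_h^{k-1})/\Delta t\bigr]$.

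The $D_t^h\psi$-piece converges to $-\int \vr u\,\psi_t\mathcal{A}^\Grad[\phi\vr]\,dxdt$, since $\mathcal{A}^\Grad[\phi\vr_h]\to\mathcal{A}^\Grad[\phi\vr]$ strongly in $C(0,T;L^p(\Om))$ for any $p<\infty$ (from the $\vr_h\in C(0,T;L^\gamma_{\text{weak}})$ bound provided by Lemmas \ref{lem:timecompactness} and \ref{lem:weaktime} combined with the smoothing of $\mathcal{A}^\Grad$) and $\vr_h\ov u_h\weak\vr u$ by Lemma \ref{lem:product}. For the $\vr_h$-piece, Lemma \ref{lem:duality} allows one to move $\mathcal{A}^\Grad$ off the test function, rewriting it as $-\Delta t\sum_k\int_\Om \mathcal{A}^\text{div}[\psi\vr_h^{k-1}\ov u_h^{k-1}]\,\phi\,(\vr_h^k-\vr_h^{k-1})/\Delta t\,dx$. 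Substituting the continuity scheme \eqref{fem:cont} with test function $q_h=\Pi_h^Q\bigl[\phi\mathcal{A}^\text{div}[\psi\vr_h\ov u_h]\bigr]$ produces an upwind flux sum, a $P_1$-type transport error that vanishes by Proposition \ref{pro:transport}, and an artificial stabilization remnant that vanishes by Lemma \ref{lem:stupid}. Combined with the $-\int\vr_h\widetilde u_h\otimes\ov u_h:\Grad v\,dxdt$ term from the first step, the upwind flux sum is precisely the left-hand side of the identity in Lemma \ref{lem:commutator}, with limit $\int\vr u\,\Grad(\phi\mathcal{A}^\text{div}(\psi\vr u))-\vr u\otimes u:\Grad(\psi\mathcal{A}^\Grad[\phi\vr])\,dxdt$.

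Assembling all pieces and invoking the continuity equation $\vr_t+\Div(\vr u)=0$ (Lemma \ref{lem:cont}) together with the duality of Lemma \ref{lem:duality}, the commutator limit rewrites as $-\int\vr u\,\psi\mathcal{A}^\Grad[\phi\vr_t]-\vr u\otimes u:\Grad w\,dxdt$; adding the $\psi_t$-piece and the initial contribution gives exactly $J_3$. The main obstacle is the bookkeeping required to align the discrete summation-by-parts with the substitution of \eqref{fem:cont} so that the commutator structure of Lemma \ref{lem:commutator} is correctly exposed; once this is done, the nontrivial convergence of the product of the weakly converging $\vr_h\ov u_h$ against the weakly converging $\mathcal{A}^\Grad[\phi\vr_h]$ is supplied by Lemma \ref{lem:commutator}, whose proof rests in turn on the div-curl lemma (Lemma \ref{lem:divcurl}).
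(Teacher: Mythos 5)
Your outline follows the paper's route closely: apply the convective identity to move from the upwind flux to a volume integral, sum by parts in time on the $D_t^h(\vr_h\ov u_h)$ term, split off the $D_t^h\psi$ piece, dualize with Lemma \ref{lem:duality}, insert the continuity scheme for the resulting $D_t^h\vr_h$ factor, and reconstitute the commutator structure of Lemma \ref{lem:commutator}. All of that is correct, and the handling of $F(v)$, the $\psi_t$ piece, the initial-data term, and the final rewrite via the limit continuity equation match the paper.

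However, there is a genuine gap in the step where you insert the continuity scheme. After discrete summation by parts, the interior term pairs $\vr_h^{k-1}\ov u_h^{k-1}$ against $D_t^h v^k = \psi^k\mathcal{A}^\Grad[\phi(\vr_h^k-\vr_h^{k-1})/\Delta t] + (D_t^h\psi^k)\mathcal{A}^\Grad[\phi\vr_h^{k-1}]$. Dualizing then forces the continuity-scheme test function at time level $k$ to be $q_h^k = \Pi_h^Q\bigl[\phi\,\mathcal{A}^\text{div}\bigl[(\psi\vr_h\ov u_h)(\cdot-\Delta t)\bigr]\bigr]$, \emph{with the $-\Delta t$ time shift}, not $\Pi_h^Q[\phi\mathcal{A}^\text{div}[\psi\vr_h\ov u_h]]$ as you write. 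The resulting upwind sum therefore carries $\up^k(\vr u)$ paired with a test quantity built from level $k-1$, while Lemma \ref{lem:commutator} (and the underlying Lemma \ref{lem:fundamental}) is stated with both factors at the same time level. The paper repairs this by adding and subtracting the unshifted quantity, which produces an extra term of the form
\begin{equation*}
K_3 = -\sum_\Gamma\int_{\Delta t}^T\int_\Gamma\up(\vr u)\,\Delta t\,\jump{\Pi_h^Q\bigl[\phi\,\mathcal{A}^\text{div}[D_t^h(\vr_h\ov u_h\psi)]\bigr]}_\Gamma~dS(x)dt,
\end{equation*}
and it then establishes $K_3\to 0$ using an inverse estimate in time together with the $\Delta t$-H\"older time regularity of $\vr_h\ov u_h$ from Lemma \ref{lem:timederiv}, exploiting $\gamma>3$ so the gain in the $h$ exponent survives the inverse inequality. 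Your proposal never mentions this time-shift discrepancy nor the argument for killing it, and it is not a formality: without the estimate on $\vr_h\ov u_h(t)-\vr_h\ov u_h(t-\Delta t)$ the commutator structure does not align and the proof does not close. Everything else you wrote is consistent with the paper, so the gap is the single issue of the $-\Delta t$ shift and its removal.
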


\begin{proof}[Proof of Lemma \ref{lem:j3}]
To prove convergence of $J_3^h$, we shall need to rewrite the 
time derivative term in $J_3^h$
using the continuity scheme.
By adding and subtracting, and applying summation by parts, we deduce
\begin{align}
		&\int_0^{T}\int_\Om D_t^h(\vr_h \ov u_h)v_h~dxdt \nonumber \\
		& \quad = - \int_0^{T-\Delta t}\int_\Om \vr_h\ov u_h D_t^h(v(+\Delta t))~dxdt \nonumber\\
		&\qquad \quad \qquad - \int_\Om \vr_h^0\ov u_h^0 v(\Delta t)~dx + \int_0^{T}\int_\Om D_t^h(\vr_h \ov u_h)(v_h-v)~dxdt\nonumber\\
		&\quad  = - \int_{\Delta t}^{T}\int_\Om (\vr_h\ov u_h)(\cdot -\Delta t)~ D_t^h\left(\psi\mathcal{A}^\Grad\left[\phi \vr_h\right]\right)~dxdt\nonumber\\
		&\qquad \quad \qquad - \int_\Om \vr_h^0\ov u_h^0 v(\Delta t)~dx + \int_0^{T}\int_\Om D_t^h(\vr_h \ov u_h)(v_h-v)~dxdt\nonumber\\
		&\quad = - \int_{\Delta t}^{T}\int_\Om (\vr_h\ov u_h\psi)(\cdot -\Delta t)~ \mathcal{A}^\Grad\left[\phi D_t^h\vr_h\right]~dxdt\nonumber\\
		&\qquad \quad \qquad -\int_{\Delta t}^{T}\int_\Om (\vr_h\ov u_h)(\cdot -\Delta t)~ \mathcal{A}^\Grad\left[\phi\vr_h\right]D_t^h\psi ~dxdt \nonumber\\
		&\qquad \quad \qquad- \int_\Om \vr_h^0\ov u_h^0 v(\Delta t)~dx + \int_0^{T}\int_\Om D_t^h(\vr_h \ov u_h)(v_h-v)~dxdt\nonumber
\end{align}
Next, we apply the integration by parts formula for $\mathcal{A}^\Grad[\cdot]$ (Lemma \ref{lem:duality}) 
\begin{align*}
		&\int_0^{T}\int_\Om D_t^h(\vr_h \ov u_h)v_h~dxdt \nonumber \\
		&  \qquad =  \int_{\Delta t}^{T}\int_\Om \mathcal{A}^\text{div}[(\vr_h\ov u_h\psi)(\cdot -\Delta t)] \phi D_t^h\vr_h \nonumber\\
		&\qquad\qquad   -\int_{\Delta t}^{T}\int_\Om (\vr_h\ov u_h)(\cdot -\Delta t)~ \mathcal{A}^\Grad\left[\phi\vr_h\right]D_t^h\psi ~dxdt \nonumber\\
		&\qquad\qquad  - \int_\Om \vr_h^0\ov u_h^0 v(\Delta t)~dx + \int_0^{T}\int_\Om D_t^h(\vr_h \ov u_h)(v_h-v)~dxdt\nonumber.
\end{align*}
We then rewrite the first term using the continuity scheme \eqref{fem:cont} with 
$$
q_h =\Pi_h^Q\left[\phi\mathcal{A}^\text{div}[(\vr_h\ov u_h\psi)(\cdot -\Delta t)]\right].
$$
The resulting expression reads
\begin{align}\label{eff:j31}
		&\int_0^{T}\int_\Om D_t^h(\vr_h \ov u_h)v_h~dxdt \nonumber \\
		& \qquad = \sum_\Gamma \int_{\Delta t}^T\int_\Gamma \up(\vr u)\jump{\Pi_h^Q\left[\phi\mathcal{A}^\text{div}[(\vr_h\ov u_h\psi)(\cdot -\Delta t)]\right] }_\Gamma~dS(x)dt \nonumber\\
		& \qquad \qquad - h^{1-\eps}\sum_\Gamma \int_0^T\int_\Gamma\jump{\vr_h}\jump{\Pi_h^Q\left[\phi\mathcal{A}^\text{div}[(\vr_h\ov u_h\psi)(\cdot -\Delta t)]\right]}_\Gamma~dS(x)dt \nonumber\\
		&\qquad \qquad  -  \int_{\Delta t}^{T}\int_\Om (\vr_h\ov u_h)(\cdot -\Delta t)~ \mathcal{A}^\Grad\left[\phi\vr_h\right]D_t^h\psi ~dxdt \\
		&\qquad \qquad - \int_\Om \vr_h^0\ov u_h^0 v(\Delta t)~dx + \int_0^{T}\int_\Om D_t^h(\vr_h \ov u_h)(v_h-v)~dxdt,\nonumber
\end{align}
By applying \eqref{eff:j31} in the expression for $J_3^h$, we obtain
\begin{align*}
		J_3^h & = \sum_\Gamma \int_{\Delta t}^T\int_\Gamma \up(\vr u)\jump{\Pi_h^Q\left[\phi\mathcal{A}^\text{div}[(\vr_h\ov u_h\psi)(\cdot -\Delta t)]\right] }_\Gamma~dS(x)dt \nonumber\\
		&\qquad -\sum_\Gamma \int_0^T \int_{\Gamma}\up(\vr  u \otimes \ov u)\jump{\Pi_h^Q\Pi_h^V\left[\psi\mathcal{A}^\Grad[\vr_h \phi]\right]}_\Gamma~dS(x)dt \nonumber\\
		&\qquad - h^{1-\eps}\sum_\Gamma \int_0^T\int_\Gamma\jump{\vr_h}_\Gamma\jump{\Pi_h^Q\left[\phi\mathcal{A}^\text{div}[(\vr_h\ov u_h\psi)(\cdot -\Delta t)]\right]}_\Gamma~dS(x)dt \nonumber\\
		& \qquad -  \int_{\Delta t}^{T}\int_\Om (\vr_h\ov u_h)(\cdot -\Delta t)~ \mathcal{A}^\Grad\left[\phi\vr_h\right]D_t^h\psi ~dxdt \\
		& \qquad- \int_\Om \vr_h^0\ov u_h^0 v(\Delta t)~dx + \int_0^{T}\int_\Om D_t^h(\vr_h \ov u_h)(v_h-v)~dxdt.\nonumber
\end{align*}
In order to apply Lemma \ref{lem:commutator} we need to remove the time shift $-\Delta t$
in the first term. By adding and subtracting, we obtain
\begin{align}\label{eff:j32}
		J_3^h & = \sum_\Gamma \int_{\Delta t}^T\int_\Gamma \up(\vr u)\jump{\Pi_h^Q\left[\phi\mathcal{A}^\text{div}[\vr_h\ov u_h\psi]\right] }_\Gamma~dS(x)dt \nonumber\\
		&\qquad -\sum_\Gamma \int_0^T \int_{\Gamma}\up(\vr  u \otimes \ov u)\jump{\Pi_h^Q\Pi_h^V\left[\psi\mathcal{A}^\Grad[\vr_h \phi]\right]}_\Gamma~dS(x)dt \nonumber\\
		&\qquad -\sum_\Gamma \int_{\Delta t}^T\int_\Gamma \up(\vr u) \Delta t\jump{\Pi_h^Q\left[\phi\mathcal{A}^\text{div}[D_t^h(\vr_h\ov u_h\psi)]\right] }_\Gamma~dS(x)dt\nonumber\\
		&\qquad - h^{1-\eps}\sum_\Gamma \int_{\Delta t}^T\int_\Gamma\jump{\vr_h}\jump{\Pi_h^Q\left[\phi\mathcal{A}^\text{div}[(\vr_h\ov u_h\psi)(\cdot -\Delta t)]\right]}_\Gamma~dS(x)dt \nonumber\\
		& \qquad -  \int_{\Delta t}^{T}\int_\Om (\vr_h\ov u_h)(\cdot -\Delta t)~ \mathcal{A}^\Grad\left[\phi\vr_h\right]D_t^h\psi ~dxdt \\
		& \qquad- \int_\Om \vr_h^0\ov u_h^0 v(\Delta t)~dx + \int_0^{T}\int_\Om D_t^h(\vr_h \ov u_h)(v_h-v)~dxdt.\nonumber \\
		&:= \sum_{i=1}^7 K_i,\nonumber
\end{align}
where the third term contains the time difference. 

Now, observe that $K_1 + K_2$ is exactly the terms 
covered by  Lemma \ref{lem:commutator}. In particular, 
\begin{equation}\label{eff:k1k2}
	\begin{split}
		&\lim_{h \rightarrow 0}K_1 + K_2  \\
		&\quad = \int_0^T\int_\Om \vr u \Grad \left(\phi \mathcal{A}^\text{div}[\psi \vr u]\right) 
						- \vr u \otimes u:\Grad (\psi \mathcal{A}^\Grad [\phi \vr])~dxdt.
	\end{split}
\end{equation}

To bound the $K_3$ term (the new term), we apply Lemma \ref{lem:transport} and Proposition \ref{pro:transport}
to deduce
\begin{align}\label{eq:fuck}
		|K_3| &= \left|\sum_\Gamma \int_0^T\int_\Gamma \up(\vr u) \Delta t\jump{\Pi_h^Q\left[\phi\mathcal{A}^\text{div}[D_t^h(\vr_h\ov u_h\psi)]\right] }_\Gamma~dS(x)dt\right| \nonumber\\
		&= \Delta t\int_{\Delta t}^T\int_\Om \vr_h \tilde u_h: \Grad \left(\phi \mathcal{A}^\text{div}[D_t^h(\vr_h\ov u_h\psi)]\right)~dxdt \nonumber\\
		&\qquad \qquad + (\Delta t)P_1\left(\phi \mathcal{A}^\text{div}[D_t^h(\vr_h\ov u_h\psi)]\right)\\
		&\leq \|\vr_h \tilde u_h\|_{L^2(0,T;L^{\frac{6\gamma}{3+\gamma}}(\Om))}\|\vr_h \ov u_h(t) -\vr_h \ov u_h(t-\Delta t) \|_{L^2(\Delta t,T;L^\frac{6\gamma}{5\gamma - 3}(\Om))} \nonumber\\
		&\qquad + h^{\frac{1}{2}-\max\{3\frac{4-\gamma}{4\gamma},0\}}C\|\vr_h \ov u_h(t) -\vr_h \ov u_h(t-\Delta t) \|_{L^4(\Delta t,T;L^\frac{12}{5}(\Om))} \nonumber\\
		&\leq C\|\vr_h \ov u_h(t) -\vr_h \ov u_h(t-\Delta t) \|_{L^2(\Delta t,T;L^\frac{6\gamma}{5\gamma - 3}(\Om))} \nonumber\\
		&\qquad + h^{\alpha(\delta, \gamma)}C\|\vr_h \ov u_h(t) -\vr_h \ov u_h(t-\Delta t) \|_{L^{2-\delta}(\Delta t,T;L^\frac{12}{5}(\Om))}\nonumber
\end{align}
where $\alpha(\gamma, \delta) = {\frac{1}{4} -\max\{3\frac{4-\gamma}{4\gamma},0\} - \frac{\delta}{2(2-\delta)}}$ 
and $\delta>0$ is a small number.
To achieve the previous inequality, we have applied the inverse inequality in time (Lemma \ref{lemma:inverse}).

Now, from Lemma \ref{lem:timederiv}, we know what 
$$
	\vr_h \ov u_h(t) -\vr_h \ov u_h(t-\Delta t) \rightarrow 0 \quad \text{a.e on $(0,T)\times \Om$}.
$$
Hence, since $\vr_h \ov u_h \in L^\infty(0,T;L^\frac{3}{2}(\Om)) \cap L^2(0,T;L^3(\Om))$,
$$
\vr_h \ov u_h(t) -\vr_h \ov u_h(t-\Delta t) \rightarrow 0 \quad \text{ in $L^{p_1}(0,T; L^{q_1}(\Om))\cap L^{p_2}(0,T; L^{q_2}(\Om))$},
$$
for any $p_1 < \infty$, $q_1 < \frac{2\gamma}{\gamma + 1}$, $p_2 < 2$, and $q_2 < \frac{6\gamma}{3+\gamma}$. Moreover, 
since $\gamma > 3$, we can choose $\delta$ small such that $\alpha(\gamma, \delta)>0$. 
As a consequence, we can pass to the limit in \eqref{eq:fuck} to obtain
\begin{equation}\label{eff:k3}
	\lim_{h \rightarrow 0}|K_3| = 0.
\end{equation}

The $K_4$ term is directly bounded by Lemma \ref{lem:stupid};
\begin{equation}\label{eff:k4}
	\begin{split}
		&|K_4| = \left|h^{1-\eps}\sum_\Gamma \int_{\Delta t}^T\int_\Gamma\jump{\vr_h}_\Gamma\jump{\Pi_h^Q\left[\phi\mathcal{A}^\text{div}[\vr_h\ov u_h\psi(\cdot -\Delta t)]\right]}_\Gamma~dS(x)dt\right| \\
		&\qquad  \leq h^\frac{11-4\eps}{8}C\left\|\Grad \left(\phi\mathcal{A}^\text{div}[\vr_h\ov u_h\psi(\cdot -\Delta t)] \right)\right\|_{L^2(\Delta t,T;L^2(\Om))}\\
		&\qquad  \leq h^\frac{11-4\eps}{8}C\|\vr_h \ov u_h\|_{L^2(0,T;L^2(\Om))},
	\end{split}
\end{equation}
where the norm is bounded by Corollary \ref{cor:energy}.

Next, since 
$$
\vr_h \ov u_h \weak \vr u \quad \text{in $L^2(0,T;L^{m_2}(\Om))$, $m_2 > 3$,}
$$ 
and $\mathcal{A}^\Grad\left[\phi\vr_h\right] \rightarrow \mathcal{A}^\Grad[\phi \vr]$ in $C(0,T;L^p(\Om))$, for 
any $p < \infty$, we see that
\begin{equation}\label{eff:k5k6}
	\begin{split}
		&\lim_{h \rightarrow 0} \left(K_5 + K_6 \right) \\
		&=  - \lim_{h\rightarrow 0} \int_{\Delta t}^{T}\int_\Om (\vr_h\ov u_h)(\cdot -\Delta t)~ \mathcal{A}^\Grad\left[\phi\vr_h\right]D_t^h\psi ~dxdt \\
		&\quad - \lim_{h\rightarrow 0} \int_\Om \vr_h^0\ov u_h^0 v(\Delta t)~dx \\
		&= -\int_0^T\int_\Om \vr u\mathcal{A}^\Grad[\phi \vr]\psi_t~dxdt - \int_\Om m_0w_0~dx.
	\end{split}
\end{equation}

To bound $K_7$, we apply Lemma \ref{lem:timederiv} and the interpolation error for $\Pi_h^V$ 
and obtain
\begin{equation}\label{eff:k7}
	\begin{split}
		|K_7| = \left|\int_0^{T}\int_\Om D_t^h(\vr_h \ov u_h)(v_h-v)~dxdt\right| 
		&\leq h^\frac{1}{\gamma}C\|\Grad v\|_{L^\infty(0,T;L^3(\Om))} \\
		&\leq h^\frac{1}{\gamma}C\|\vr_h\|_{L^\infty(0,T;L^\gamma(\Om))},
	\end{split}
\end{equation}
which is bounded by Corollary \ref{cor:energy}.

At this stage, we can send $h \rightarrow 0$ in \eqref{eff:j32},
using \eqref{eff:k1k2}, \eqref{eff:k3}, \eqref{eff:k4}, \eqref{eff:k5k6}, 
and \eqref{eff:k7}, to obtain
\begin{align}\label{eff:j35}
		\lim_{h \rightarrow 0} J_3^h
		& = \int_0^T\int_\Om \vr u \Grad \left(\phi \mathcal{A}^\text{div}[\psi \vr u]\right) 
						- \vr u \otimes u:\Grad (\psi \mathcal{A}^\Grad [\phi \vr])~dxdt\nonumber\\
		&\quad -\int_0^T\int_\Om \vr u\mathcal{A}^\Grad[\phi \vr]\psi_t~dxdt - \int_\Om m_0w_0~dx. 
\end{align}

Finally, since the limit $(\vr, u)$ is a solution to the continuity equation 
(Lemma \ref{lem:cont}), we have that (recall that $\phi$ does not depend on time)
\begin{equation*}
	\begin{split}
		&\int_0^T\int_\Om \vr u \Grad \left(\phi \mathcal{A}^\text{div}[\psi \vr u]\right) \\ 
		&\qquad= -\int_0^T\int_\Om \phi\vr\left( \mathcal{A}^\text{div}[\psi \vr u]\right)_t~dxdt -\int_\Om\phi \vr_0 \mathcal{A}^\text{div}[\psi(0,\cdot) m_0]~dx\\
		&\qquad =-\int_0^T\int_\Om \left(\mathcal{A}^\Grad[\vr \phi]\right)_t\psi \vr u~dxdt \\
		&\qquad = -\int_0^T\int_\Om \vr u w_t~dxdt+\int_{0}^{T}\int_\Om \vr u~ \mathcal{A}^\Grad\left[\phi\vr\right]\psi_t ~dxdt.
	\end{split}
\end{equation*}
Thus, by using this identity in \eqref{eff:j35}, we obtain
\begin{equation*}\label{eff:J3}
	\begin{split}
		\lim_{h \rightarrow 0} J_3^h &= -\int_0^T\int_\Om \vr u w_t + \vr u\otimes u :\Grad w~dxdt -\int_\Om m_0 w(0)~dx \\
		&= J_3, 
	\end{split}
\end{equation*}
which was the desired result.
\end{proof}

The proof of Proposition \ref{pro:effectiveflux} is now complete.

\section{Strong convergence and proof of Theorem \ref{thm:main}}\label{sec:strong}
Equipped with Proposition \ref{pro:effectiveflux}
we can now obtain strong convergence of 
the density approximation.  
The following argument  uses only the continuity approximation 
and is very similar to the corresponding argument in \cite{Karlsen1, Karlsen2, Karlsen3}. 
We include it here for the sake of completeness.

\begin{lemma}\label{lem:strong}
Let $(\vr_h, u_h)$ be the numerical solution obtain through Definition \ref{def:scheme} 
and \eqref{def:schemeII}. The density approximation converges a.e
	\begin{equation*}
		\vr_h \rightarrow \vr \quad \text{a.e on $(0,T)\times \Om$}.
	\end{equation*}
\end{lemma}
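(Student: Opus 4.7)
The plan is to carry out the Lions--Feireisl renormalization argument in the discrete setting, with Proposition \ref{pro:effectiveflux} playing the role of the continuous weak-sequential-continuity result for the effective viscous flux.

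First I would apply Lemma \ref{lem:renorm} with (a $C^2$-smoothing of) $B(z) = z\log z$, for which $b(z) := zB'(z) - B(z) = z$. Dropping the nonnegative discrete dissipation terms and passing to the limit $h\to 0$ along the subsequence from Section \ref{sec:weak}, using Lemma \ref{lemma:aubinlions} with $g_h = \vr_h\log\vr_h$ and $f_h = u_h$ (whose hypotheses follow from the higher integrability bound of Proposition \ref{pro:higher}, the weak time-derivative estimate \eqref{eq:dtvr}, and the spatial translation estimate in Lemma \ref{lem:translation}), together with the lower-semicontinuity statement in Lemma \ref{lem:prelim}, I obtain the distributional inequality
\begin{equation*}
	\pa_t \overline{\vr\log\vr} + \Div\!\left(\overline{\vr\log\vr}\,u\right) + \overline{\vr\,\Div u} \;\le\; 0,
\end{equation*}
where $\overline{\,\cdot\,}$ denotes the weak $L^1_{\mathrm{loc}}$ limit.

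Next, Proposition \ref{pro:higher} gives $\vr\in L^{\gamma+1}_{\mathrm{loc}}\subset L^2_{\mathrm{loc}}$, so Lemma \ref{lemma:feireisl} applied to the weak solution identified in Lemmas \ref{lem:cont}--\ref{lem:weakmoment} tells us that $\vr$ is a renormalized solution, whence
\begin{equation*}
	\pa_t(\vr\log\vr) + \Div(\vr\log\vr\,u) + \vr\,\Div u \;=\; 0
	\quad\text{in }\mathcal{D}'\!\left([0,T)\times\overline{\Om}\right).
\end{equation*}
Subtracting the two identities, integrating over $\Om$ (justified by $u|_{\pOm}=0$) and using that both renormalized formulations carry the same initial trace $\vr_0\log\vr_0$ (a consequence of \eqref{def:init} and the weak continuity in time of $\vr_h$ in $L^\gamma$), I arrive at
\begin{equation*}
	\int_\Om \!\bigl(\overline{\vr\log\vr}-\vr\log\vr\bigr)(t)\,\dx
	\;\le\; \int_0^t\!\!\int_\Om \!\bigl(\vr\,\Div u - \overline{\vr\,\Div u}\bigr)\,\dxdt.
\end{equation*}

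At this point the effective viscous flux identity (Proposition \ref{pro:effectiveflux}), applied with a sequence of cut-offs $\phi\psi\nearrow 1$, rewrites the right-hand side as
\begin{equation*}
	\overline{\vr\,\Div u} - \vr\,\Div u
	\;=\; \tfrac{1}{a}\bigl(\overline{p(\vr)\vr} - \overline{p(\vr)}\,\vr\bigr)
	\;\ge\; 0,
\end{equation*}
the sign coming from the monotonicity of $p(z)=az^\gamma$ together with Lemma \ref{lem:prelim}. Hence
\begin{equation*}
	\int_\Om \bigl(\overline{\vr\log\vr}-\vr\log\vr\bigr)(t)\,\dx \;\le\; 0,
\end{equation*}
while convexity of $z\log z$ gives the opposite inequality $\overline{\vr\log\vr}\ge \vr\log\vr$ pointwise, again by Lemma \ref{lem:prelim}. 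Therefore $\overline{\vr\log\vr}=\vr\log\vr$ a.e., and the strict-convexity conclusion of Lemma \ref{lem:prelim} yields $\vr_h\to\vr$ a.e.~on $\{\vr>0\}$; on $\{\vr=0\}$ convergence follows from $\vr_h\ge 0$ and $\vr_h\weak 0$ in $L^1$ there, completing the proof.

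The main obstacle is the passage to the limit in the discrete renormalized inequality: one must carefully identify the weak limits of the nonlinear terms $B(\vr_h)u_h$ and $b(\vr_h)\Div_h u_h$ using the compactness machinery of Section \ref{sec:operators}, while verifying that the numerical dissipation contributions in Lemma \ref{lem:renorm} carry the right sign so they can be dropped (or pass to a nonnegative defect measure) rather than polluting the final inequality.
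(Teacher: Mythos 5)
Your proposal follows essentially the same route as the paper: apply Lemma \ref{lem:renorm} with $B(z)=z\log z$ to the discrete solution, Lemma \ref{lemma:feireisl} to the limit, subtract, and then invoke Proposition \ref{pro:effectiveflux} together with monotonicity of $p$ and convexity to control the difference, concluding via the strict-convexity clause of Lemma \ref{lem:prelim}. The only differences are presentational: the paper integrates in space immediately (so no distributional renormalized inequality is ever formed and the terms $B(\vr_h)u_h$ you flag as the ``main obstacle'' never need to be identified), and it keeps the cutoff $\phi^2$ explicit all the way to the end, bounding the uncontrolled region by $C\|1-\phi^2\|_{L^6(\Om)}$ rather than passing informally to $\phi\psi\nearrow 1$.
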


\begin{proof}
From Lemma \ref{lem:renorm} with $B(\vr) = \vr \log \vr$, 
we have the inequality
\begin{equation*}
	\begin{split}
		\int_\Om \vr_h \log \vr_h~ dx~ (t) - \int_\Om \vr^0_h \log \vr^0_h~ dx  & \leq -\int_0^{t}\int_\Om \vr_h\Div u_h~dxdt.
	\end{split}
\end{equation*}
From Lemma \ref{lemma:feireisl}, we know that the limit $(\vr, u)$ is a renormalized solution 
to the continuity equation. In particular, the following identity holds
\begin{equation*}
	\begin{split}
		\int_\Om \vr \log \vr~ dx~ (t) - \int_\Om \vr_0 \log \vr_0~ dx  & = -\int_0^{t}\int_\Om \vr\Div u~dxdt.
	\end{split}
\end{equation*}
Note that there is no problems with integrability of $\vr \Div u$. Indeed, 
since $\vr \in L^\infty(0,T;L^\gamma(\Om ))$
with $\gamma > 3$ and $\Div u \in L^2(0,T;L^2(\Om))$, H\"olders inequality 
provides  $\vr \Div u \in L^2(0,T;L^\frac{6}{5}(\Om))$.

By subtracting the two previous identities, we obtain
\begin{align*}\label{strong:start}
		&\int_\Om \vr_h \log \vr_h - \vr \log \vr~dx~ (t) \nonumber \\
		&\quad \leq \int_0^{t}\int_\Om \vr \Div u - \vr_h \Div_h u_h~dxdt
		+ \int_\Om \vr_h^0 \log \vr_h^0 - \vr_0 \log \vr_0~dx~ \nonumber \\
		&\quad = \int_0^{t^k}\int_\Om \phi^2(\vr \Div u - \vr_h \Div_h u_h)~dxdt \\
		&\qquad \qquad \quad+ \int_0^{t^k}\int_\Om (1-\phi^2)(\vr \Div u - \vr_h \Div_h u_h)~dxdt \nonumber\\
		&\qquad\qquad  \quad+ \int_\Om \vr_h^0 \log \vr_h^0 - \vr_0 \log \vr_0~dx\nonumber .
\end{align*}
From Proposition \ref{pro:effectiveflux}, we have that
\begin{equation*}
	\begin{split}
		&\lim_{h \rightarrow 0}\int_0^{t^k}\int_\Om \phi^2(\vr \Div u - \vr_h \Div u_h)~dxdt \\
		&\qquad = \lim_{h \rightarrow 0}\int_0^{t^k}\int_\Om \phi^2(\vr p(\vr) - \vr_h p(\vr_h))~dxdt \leq 0, 
	\end{split}
\end{equation*}
where the last inequality follows from the  convexity of $p(\vr)$. Hence, 
this, together with the strong convergence of the initial density, allow us to conclude 
\begin{equation*}
	\begin{split}
		\int_\Om \overline{\vr \log \vr} - \vr \log \vr~dx~ (t)
		\leq \int_0^{t}\int_\Om (1 - \phi^2)Q~dxdt, 
	\end{split}
\end{equation*}
where $Q = \vr \Div u - \overline{\vr \Div u} \in L^2(0,T;L^\frac{6}{5}(\Om))$.
Hence, we see that
\begin{equation*}
	\begin{split}
		0\leq \int_\Om \overline{\vr \log \vr} - \vr \log \vr~dx~ (t)
		\leq C\left\|1- \phi^2\right\|_{L^6(\Om)},
	\end{split}
\end{equation*}
where the first inequality follows by convexity of $z \mapsto z \log z$.
Now, for any $\epsilon > 0$, we can choose $\phi \in W^{1,\infty}(\Om)$ such 
that 
\begin{equation*}
	\begin{split}
		0\leq \int_\Om \overline{\vr \log \vr} - \vr \log \vr~dx~ (t)
		\leq \epsilon.
	\end{split}
\end{equation*}
As a consequence, $\overline{\vr \log \vr} - \vr \log \vr = 0$ a.e and hence 
$\vr_h \rightarrow \vr$ a.e in $(0,T) \times \Om$.
\end{proof}

\subsection{Proof of the main result (Theorem \ref{thm:main})}\label{sec:proof}
To conclude the main result, the only remaining part is to prove that $(\vr, u)$ is a weak solution 
of the momentum equation \eqref{eq:moment} and that it satisfies the energy inequality \eqref{eq:cont-energy}. The other 
statements in Theorem \ref{thm:main} are all covered by \eqref{eq:conv1}, Lemma \ref{lem:product}, Lemma \ref{lem:cont}, 
 and Lemma \ref{lem:strong}. 
Since we now know that $\vr_h \rightarrow \vr$ a.e, we 
have in particular
$$
\overline{p(\vr)} = p(\vr) \quad \text{a.e in $(0,T)\times \Om$}.
$$
By applying this information in \eqref{eq:limbar}, we immediately see
that $(\vr, u)$ is a weak-solution of the momentum equation. 

By passing to the limit $h \rightarrow 0$ in the numerical energy inequality 
\eqref{eq:energy} (Lemma \ref{prop:energy}), using convexity, we 
discover that the limit $(\vr, u)$ satisfies the energy inequality \eqref{eq:cont-energy}.

\appendix

\section{Existence of a numerical solution}
Since the numerical method in Definition \ref{def:scheme} 
is nonlinear and implicit it is not trivial that it 
is actually well-defined (i.e admits a solution). 
In addition, the discretization of the momentum transport 
is posed using element averages of the velocity.  
As we will see the only part of the discretization 
that provides sufficient number of equations to 
determine all the degrees of freedom of $u_h$ is 
the discretization of the diffusion operator. 
Hence, in it's present form, our discretization is not
suitable for the Euler equations. 

The purpose of this section, is to prove the following
the existence result which we have relied on in our analysis.

\vspace{0.2cm}
\noindent
\textbf{Proposition \ref{pro:defined}.} 
{\it
For each fixed $h > 0$, there exists 
a solution 
\begin{equation*}
	(\vr_h^k, u_h^k) \in Q_h(\Om)\times V_h(\Om),\quad \vr^k_h(\cdot) > 0, \quad k=1, \ldots, M,
\end{equation*}
to the numerical method posed in Definition \ref{def:scheme}.}
\vspace{0.2cm}

To prove this result, we shall use a topological degree argument. 
The argument is strongly inspired by a very similar argument in the paper \cite{Gallouet:2006lr}.
We will argue the existence of solutions 
to the following finite element map.
\begin{definition}\label{def:H}
Let the finite element map \\$H:Q_h^+(\Om) \times V_h(\Om) \times [0,1] ~\mapsto~ Q_h(\Om) \times V_h(\Om)$
be given by
	\begin{equation*}
		\begin{split}
			&H(\vr_h, u_h, \alpha) = (f_h(\alpha), g_h(\alpha)),
		\end{split}
	\end{equation*}
	where $(f_h(\alpha), g_h(\alpha))$ are obtained through the mappings:
	\begin{equation}\label{fix:cont}
		\begin{split}
			\int_\Om f_h(\alpha)q_h ~dx &= \int_\Om \frac{\vr_h - \vr_h^{k-1}}{\Delta t} q_h~dx \\
			&\qquad - \alpha\sum_\Gamma \int_{\Gamma} \up(\vr u)\jump{q_h}_\Gamma~dS(x) \\
			&\qquad + \alpha h^{1-\epsilon}\sum_\Gamma \int_{\Gamma}\jump{\vr_h}_\Gamma\jump{q_h}_\Gamma~dS(x), 
		\end{split}
	\end{equation}
	for all $q_h \in Q_h(\Om)$ and
	\begin{equation}\label{fix:moment}
		\begin{split}
			\int_\Om g_h(\alpha)v_h~dx &= \int_\Om \frac{\vr_h\ov u_h - \vr_h^{k-1}\ov u^{k-1}_h}{\Delta t} v_h~dx + \int_\Om \Grad_h u_h \Grad_h v_h~dx \\
			&\qquad- \alpha\sum_{ \Gamma} \int_{\Gamma} \up(\vr u\otimes \ov u)\jump{\ov v}_\Gamma ~dS(x) \\
			& \qquad   - \alpha\int_\Om p(\vr_h)\Div v_h~dx \\
			&\qquad- \alpha h^{1-\epsilon}\sum_E \int_{\partial E} \left(\frac{\ov u_- + \ov u_+}2\right) \jump{\vr_h}\ov v_h~dS(x),
			\end{split}
	\end{equation}	
	for all $v_h \in V_h(\Om)$.
\end{definition}
Observe that a solution of $H(\vr_h, u_h, 1) = (0,0)$ is a solution 
to our numerical method as posed in Definition \ref{def:scheme}.

Before proceeding, let us make clear what we mean by topological degree in the 
present finite element context
and denote by $d(F,\Om,y)$ the $\mathbb{Z}$--valued (Brouwer) degree of a continuous function 
$F:\bar{O} \rightarrow \mathbb{R}^M$ at a point $y\in \mathbb{R}^M\backslash F(\partial O)$ 
relative to an open and bounded set $O \subset \mathbb{R}^M$. 

\begin{definition}\label{def:femtop}
Let $S_{h}$ be a finite element space, $\|\cdot \|$ be a norm on this space, and 
introduce the bounded set
$$
\tilde{S}_{h} = \Set{q_{h} \in S_{h}; \|q_{h}\| \leq C},
$$
where $C>0$ is a constant. Let $\{\sigma_{i}\}_{i=1}^M$ be
a basis such that $\operatorname{span}\{\sigma_{i}\}_{i=1}^M = S_{h}$ and 
define the operator $\Pi_{\mathcal{B}}:S_{h}  \rightarrow \mathbb{R}^M$ by 
$$
\Pi_{\mathcal{B}}q_{h} = (q_{1},q_{2}, \ldots, q_{M}),
\qquad q_{h} = \sum_{i=1}^M q_{i}\sigma_{i}.
$$ 
The degree $d_{S_{h}}(F,\tilde{S}_{h},q_{h})$ of a continuous 
mapping $F:\tilde{S}_{h} \rightarrow S_{h}$ at $q_{h} 
\in S_{h}\backslash F(\partial \tilde{S}_h)$
relative to $\tilde{S}_{h}$ is defined as
\begin{equation*}
	d_{S_{h}}(F,\tilde{S}_{h},q_{h}) 
	= d\left( \Pi_{\mathcal{B}} F(\Pi_{\mathcal{B}}^{-1}), \Pi_{\mathcal{B}}
	\tilde{S}_{h}, \Pi_{\mathcal{B}}q_{h}\right).
\end{equation*}
\end{definition}

The next lemma is a consequence of some basic properties 
of the degree, cf.~\cite{Deimling:1985rt}.
\begin{lemma}\label{lem:degree}
Fix a finite element space $S_{h}$, and let  $d_{S_{h}}(F, \tilde{S}_{h},q_{h})$ 
be the associated degree of Definition \ref{def:femtop}. The following properties hold:
\begin{enumerate}
	\item{} $d_{S_{h}}(F,\tilde{S}_{h},q_{h})$ does not depend on the choice of basis for $S_{h}$.
	\item{} $d_{S_{h}}(\mathrm{Id},\tilde{S}_{h}, q_{h}) = 1$.
	\item{} $d_{S_{h}}(H(\cdot,\alpha),\tilde{S}_{h},q_{h}(\alpha))$ is 
	independent of $\alpha \in J:=[0,1]$ for $H\!:\!\tilde{S}_{h}\times J \rightarrow S_{h}$ 
	continuous, $q_h:J \rightarrow S_{h}$ continuous, and 
	$q_{h}(\alpha) \notin H(\partial \tilde{S}_{h},\alpha)$ $\forall \alpha \in [0,1]$.
	\item{}$d_{S_{h}}(F,\tilde{S}_{h},q_{h}) \neq 0 \Rightarrow F^{-1}(q_{h}) \neq \emptyset$.
\end{enumerate}
\end{lemma}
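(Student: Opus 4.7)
The plan is to reduce each of the four claims to a corresponding standard property of the Brouwer degree $d(\cdot,\cdot,\cdot)$ on $\mathbb{R}^M$, transported through the linear isomorphism $\Pi_{\mathcal{B}}:S_h\to\mathbb{R}^M$. Three of the four items will be essentially direct transcriptions of textbook facts about the Brouwer degree, while property (1), the basis-independence, is the one that requires actual work; I expect it to be the main obstacle.

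For property (2), I would observe that $\Pi_{\mathcal{B}}\circ\operatorname{Id}\circ\Pi_{\mathcal{B}}^{-1}=\operatorname{Id}_{\mathbb{R}^M}$, and since $q_h\in\tilde S_h\setminus\operatorname{Id}(\partial\tilde S_h)$ forces $q_h$ into the interior of $\tilde S_h$, the point $\Pi_{\mathcal{B}}q_h$ lies in the open set $\operatorname{int}(\Pi_{\mathcal{B}}\tilde S_h)$; the normalization $d(\operatorname{Id},O,y)=1$ for $y\in O$ then concludes. For property (3), I would set $\tilde H(x,\alpha):=\Pi_{\mathcal{B}}H(\Pi_{\mathcal{B}}^{-1}x,\alpha)$ and $\tilde q(\alpha):=\Pi_{\mathcal{B}}q_h(\alpha)$; because $\Pi_{\mathcal{B}}$ is linear (hence a homeomorphism between $S_h$ and $\mathbb{R}^M$), continuity of $H$ and $q_h$ yields continuity of $\tilde H$ and $\tilde q$, and the admissibility hypothesis $q_h(\alpha)\notin H(\partial\tilde S_h,\alpha)$ transports to $\tilde q(\alpha)\notin \tilde H(\partial(\Pi_{\mathcal{B}}\tilde S_h),\alpha)$ for every $\alpha\in[0,1]$; the classical homotopy invariance of the Brouwer degree then closes the argument. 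Property (4) follows immediately: if $d_{S_h}(F,\tilde S_h,q_h)\neq 0$, the solvability property of the Brouwer degree yields some $\tilde x\in\Pi_{\mathcal{B}}\tilde S_h$ with $\Pi_{\mathcal{B}}F\Pi_{\mathcal{B}}^{-1}(\tilde x)=\Pi_{\mathcal{B}}q_h$, whence $\Pi_{\mathcal{B}}^{-1}\tilde x\in F^{-1}(q_h)$.

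The substantive step is property (1). Given two bases $\mathcal{B},\mathcal{B}'$ of $S_h$ with associated isomorphisms $\Pi,\Pi':S_h\to\mathbb{R}^M$, the change-of-basis map $T:=\Pi'\circ\Pi^{-1}$ is an invertible linear endomorphism of $\mathbb{R}^M$, and one checks directly that
\[
\Pi' F (\Pi')^{-1}=T\,(\Pi F \Pi^{-1})\,T^{-1},\qquad \Pi'\tilde S_h=T(\Pi\tilde S_h),\qquad \Pi'q_h=T(\Pi q_h).
\]
The task therefore reduces to the conjugation invariance
\[
d(G,U,y)\;=\;d\bigl(TGT^{-1},\,T(U),\,T(y)\bigr)
\]
for $G:=\Pi F\Pi^{-1}$, $U:=\Pi\tilde S_h$, $y:=\Pi q_h$ and arbitrary invertible linear $T$. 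To establish this, I would first approximate $G$ uniformly on $\overline{U}$ by a $C^1$ map $G_\varepsilon$ for which $y$ is a regular value (possible by combining a standard mollification with a small perturbation justified by Sard's theorem), exploiting the stability of the Brouwer degree under small continuous perturbations that preserve the condition $y\notin G(\partial U)$. For such a $G_\varepsilon$ the degree admits the signed-count representation
\[
d(G_\varepsilon,U,y)\;=\;\sum_{x\in G_\varepsilon^{-1}(y)}\operatorname{sgn}\det DG_\varepsilon(x).
\]
The preimage of $T(y)$ under $TG_\varepsilon T^{-1}$ is $T(G_\varepsilon^{-1}(y))$, and the Jacobian at $Tx$ equals $T\cdot DG_\varepsilon(x)\cdot T^{-1}$, whose determinant has the same sign as $\det DG_\varepsilon(x)$; hence the two signed counts coincide. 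Passing to the limit $\varepsilon\to 0$ recovers the identity for the original $G$, completes the verification of (1), and, together with the second paragraph, proves the lemma. All four assertions are essentially repackagings of properties found in Deimling's monograph, so no deep new technology is required; the only care needed is in checking that the linear isomorphism $\Pi_{\mathcal{B}}$ carries interiors to interiors and boundaries to boundaries, which is automatic because it is a homeomorphism.
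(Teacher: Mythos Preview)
Your proposal is correct. The paper does not actually prove this lemma: it merely states that ``the next lemma is a consequence of some basic properties of the degree, cf.~\cite{Deimling:1985rt}'' and moves on. What you have written is precisely the verification one would carry out from Deimling's results, so there is nothing to compare --- you have supplied the argument the paper omits.

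Two minor remarks. First, in property~(2) the conclusion $d_{S_h}(\mathrm{Id},\tilde S_h,q_h)=1$ tacitly assumes $q_h$ lies in the interior of $\tilde S_h$; you notice this, but your phrasing ``$q_h\in\tilde S_h\setminus\mathrm{Id}(\partial\tilde S_h)$ forces $q_h$ into the interior'' is not quite accurate, since a priori $q_h$ could lie outside $\tilde S_h$ altogether (giving degree~$0$). The intended reading in the paper is the interior case, which is the only one used downstream. Second, the set $\tilde S_h$ in Definition~\ref{def:femtop} is written as a closed ball while the Brouwer degree is posed on open sets; the standard fix is to work with the interior throughout, which you implicitly do. Neither point affects the substance of your argument.
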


To prove Proposition \ref{pro:defined}, we shall apply 
Lemma \ref{lem:degree} with $q_h = 0$ and mapping $H$ given by 
Definition \ref{def:H}. 
Let us first prove that our mapping $H$ satisfies (3) in Lemma \ref{lem:degree}.

\begin{lemma}\label{lem:}
Let $H:Q_h^+(\Om) \times V_h(\Om) \times [0,1] ~\mapsto~ Q_h(\Om) \times V_h(\Om)$
be the finite element mapping of Definition \ref{def:H}. 
There is a subset $\tilde S_h \subset Q_h^+(\Om) \times V_h(\Om)$
for which $H\!:\!\tilde{S}_{h}\times J \rightarrow S_{h}$ is continuous 
and the zero solution $(0,0) \not \in H(\partial \tilde S, \alpha)$ for all $\alpha \in [0, 1]$.
\end{lemma}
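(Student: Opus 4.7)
The plan is to verify the two hypotheses needed to invoke Lemma \ref{lem:degree}(3): continuity of $H$, and the absence of zeros of $H(\cdot,\cdot,\alpha)$ on $\partial \tilde S_h$ for every $\alpha\in[0,1]$. After fixing the mesh and a nodal basis, every term in \eqref{fix:cont}--\eqref{fix:moment} is a polynomial in the degrees of freedom of $(\vr_h,u_h)$ except for the upwind maps $(\cdot)^{\pm}$, which are globally Lipschitz, and the pressure $p(\vr)=a\vr^\gamma$, which is smooth on $\{\vr>0\}$. Hence $H$ is continuous on $Q_h^+(\Om)\times V_h(\Om)\times [0,1]$, and it only remains to construct $\tilde S_h$.

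For the a priori bound, suppose $(\vr_h,u_h)\in Q_h^+(\Om)\times V_h(\Om)$ satisfies $H(\vr_h,u_h,\alpha)=(0,0)$ for some $\alpha\in[0,1]$. The computation of Proposition \ref{prop:energy} carries through verbatim: the convective, pressure and stabilization terms all carry the same factor $\alpha$, so the cancellation between the momentum scheme tested with $u_h$ and the continuity scheme tested with $|\ov u_h|^2/2$ produces
\begin{equation*}
\int_\Om \frac{\vr_h |\ov u_h|^2}{2} + \frac{\alpha}{\gamma-1}p(\vr_h)\,dx + \Delta t\,\|\Grad_h u_h\|_{L^2(\Om)}^2 \leq \int_\Om \frac{\vr_h^{k-1}|\ov u_h^{k-1}|^2}{2} + \frac{p(\vr_h^{k-1})}{\gamma-1}\,dx.
\end{equation*}
Choosing $q_h\equiv 1$ in \eqref{fix:cont} additionally yields $\int_\Om \vr_h\,dx = \int_\Om \vr_h^{k-1}\,dx$. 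Since the finite element space is fixed and finite dimensional, these integral bounds are equivalent to $L^\infty$ bounds: there exist $M,R>0$ depending on $(\vr_h^{k-1},\ov u_h^{k-1})$ and on $h$, but \emph{not} on $\alpha$, with $\|\vr_h\|_{L^\infty(\Om)}\leq M$ and $\|\Grad_h u_h\|_{L^2(\Om)}\leq R$.

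Strict positivity follows from the $\alpha$-version of Lemma \ref{lemma:vrho-props}: testing \eqref{fix:cont} with the characteristic function of the element $E_\star$ where $\vr_h$ attains its minimum, and using that only outgoing upwind fluxes can decrease this minimum, gives
\begin{equation*}
\min_\Om \vr_h \ \geq\ \frac{\min_\Om \vr_h^{k-1}}{1+\alpha\Delta t\,\|\Div_h u_h\|_{L^\infty(\Om)}} \ \geq\ \frac{\min_\Om \vr_h^{k-1}}{1+\Delta t\, C(R)}\ =:\ \epsilon_0\ >\ 0,
\end{equation*}
uniformly in $\alpha\in[0,1]$. I then take
\begin{equation*}
\tilde S_h \ =\ \Set{(\vr_h,u_h)\in Q_h(\Om)\times V_h(\Om):\ \tfrac{\epsilon_0}{2}<\vr_h<2M\text{ on }\Om,\ \|\Grad_h u_h\|_{L^2(\Om)}<2R},
\end{equation*}
which is open, bounded, and contained in $Q_h^+(\Om)\times V_h(\Om)$. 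By the two preceding displays every solution of $H(\cdot,\alpha)=(0,0)$ lies in the interior of $\tilde S_h$, so $(0,0)\notin H(\partial \tilde S_h,\alpha)$ for every $\alpha\in[0,1]$.

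The main obstacle is confirming that the cancellation behind Proposition \ref{prop:energy} survives the $\alpha$-homotopy; this is why \eqref{fix:cont}--\eqref{fix:moment} were set up so that the upwind flux $\up(\vr u)$, the convective flux $\up(\vr u\otimes \ov u)$, the pressure term and the two stabilization terms all carry \emph{the same} scalar $\alpha$. With $\tilde S_h$ in hand, a direct computation shows that $H(\cdot,0)$ decouples into the identity on $Q_h(\Om)$ (forcing $\vr_h=\vr_h^{k-1}$) and a symmetric positive-definite linear system for $u_h$ on $V_h(\Om)$, so $d_{S_h}(H(\cdot,0),\tilde S_h,0)=1$; the homotopy invariance of Lemma \ref{lem:degree}(3) then gives $d_{S_h}(H(\cdot,1),\tilde S_h,0)=1$, and Lemma \ref{lem:degree}(4) produces the solution asserted in Proposition \ref{pro:defined}.
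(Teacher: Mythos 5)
Your proof is correct and follows essentially the same route as the paper's: continuity from the finite-dimensional structure of the map, mass conservation from testing \eqref{fix:cont} with constants, the $\alpha$-uniform energy bound obtained by rerunning the Proposition \ref{prop:energy} cancellation with every $\alpha$-weighted term kept (and bounding $\alpha p(\vr_h^{k-1})\le p(\vr_h^{k-1})$), positivity from the discrete maximum-principle argument of Lemma \ref{lemma:vrho-props}, and then equivalence of finite-dimensional norms to turn these into $L^\infty$ control. The one place you are more careful than the paper is in building the uniform positive lower bound $\epsilon_0/2$ directly into the definition of $\tilde S_h$, so that $\tilde S_h$ is manifestly open and bounded in $Q_h\times V_h$ and its boundary stays in the domain $Q_h^+$ of $H$; the paper instead defines $\tilde S_h$ inside $Q_h^+$ with only an $L^\infty$ upper bound, implicitly relying on the same positivity estimate to keep $\partial\tilde S_h$ away from $\{\vr_h=0\}$. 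Your variant makes that point explicit without changing the substance of the argument.
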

\begin{proof}
For any subset $\tilde S \subset Q_h^+(\Om) \times V_h(\Om)$
bounded independently of $\alpha$,
the corresponding mapping $H(\tilde S_h, \alpha, 0)$ 
is clearly continuous. This follows directly 
from \eqref{fix:cont} and \eqref{fix:moment}
using the equivalence of finite dimensional norms.
The more involved part  is to determine a subset $\tilde S_h$
for which $(0,0) \not \in H(\partial \tilde S, \alpha)$ independently of $\alpha$.

Now, let us for the moment assert the existence of
of $(\vr, u)$ satisfying
\begin{equation*}
	H(\vr_h, u_h, \alpha) = (0, 0).
\end{equation*}
Then, from Lemma \ref{lemma:vrho-props}, we have that $\vr_h > 0$ and moreover 
\eqref{fix:cont} yields
\begin{equation*}
	\int_\Om \vr_h~dx = \int_\Om \vr_h^{k-1}~dx.
\end{equation*}
Consequently, we can conclude that
\begin{equation}\label{ap:vri}
	\|\vr_h\|_{L^\infty(\Om)} \leq C_\dagger,
\end{equation}
independently of $\alpha$.

To derive a bound on the velocity $u_h$, we can repeat the steps in the proof of 
Proposition \ref{prop:energy} (the energy estimate) while
keeping track of $\alpha$ to obtain
\begin{equation*}
	\begin{split}
		&\int_\Om \frac{\vr_h |\ov u_h|^2}{2} + \frac{\alpha}{\gamma-1}p(\vr_h)~dx + \Delta t\int_\Om |\Grad u_h|^2~dx \\
		&\qquad \leq \int_\Om \frac{\vr^{k-1}_h |\ov u^{k-1}_h|^2}{2} + \frac{\alpha}{\gamma-1}p(\vr^{k-1}_h)~dx \\
		&\qquad \leq \int_\Om \frac{\vr^{k-1}_h |\ov u^{k-1}_h|^2}{2} + \frac{1}{\gamma-1}p(\vr^{k-1}_h)~dx \leq C,
	\end{split}
\end{equation*}
where $C$ is independent of $\alpha$. Together with \eqref{ap:vri}, this allow us to conclude
\begin{equation*}
	\|\vr_h\|_{L^\infty(\Om)} + \|u_h\|_{L^\infty(\Om)} \leq C_\dagger.
\end{equation*}
We can now define the subspace
\begin{equation*}
	\tilde S_h = \Set{(\vr_h, u_h) \in Q_h^+\times V_h;  \|\vr_h\|_{L^\infty(\Om)} + \|u_h\|_{L^\infty(\Om)} \leq C_\dagger},
\end{equation*}
which by definition has the property that $(0,0) \not \in H(\partial \tilde S, \alpha)$ for all $\alpha \in [0,1]$.
This concludes the proof.
\end{proof}

\begin{lemma}\label{lem:}
Let $\tilde S$ be the subspace obtained by the previous lemma.
Then, the topological degree of $H(\tilde S_h, 0)$ at $q_h = 0$ 
is non-zero: 
\begin{equation}\label{eq:degstate}
	d_{S_h}(H(\cdot, 0), \tilde S_h, 0) \neq 0.
\end{equation}
As a consequence, there exists $(\vr_h, u_h) \in \tilde S_h$ such that
\begin{equation*}
	H(\vr_h, u_h, 1) = (0,0),
\end{equation*}
and hence Proposition \ref{pro:defined} holds true.
\end{lemma}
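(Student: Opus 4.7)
The plan is to apply the homotopy invariance property (item (3) of Lemma \ref{lem:degree}) to transport the degree from the trivial endpoint $\alpha = 0$, where the system decouples and admits a unique, explicitly constructible solution, up to $\alpha = 1$. The previous lemma has already supplied precisely the hypothesis needed: the uniform a priori bound gives $(\vr_h, u_h)\notin\partial \tilde S_h$ whenever $H(\vr_h, u_h, \alpha)=(0,0)$ for some $\alpha \in [0,1]$, so $d_{S_h}(H(\cdot,\alpha),\tilde S_h,0)$ is well-defined and independent of $\alpha$. Hence the task reduces to computing the degree at $\alpha = 0$ and then invoking item (4) of Lemma \ref{lem:degree}.

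At $\alpha = 0$, \eqref{fix:cont} reduces to $\vr_h = \vr_h^{k-1}$, which is a single positive element of $Q_h(\Om)$ sitting in the interior of $\tilde S_h$. Substituting this back into \eqref{fix:moment} yields the linear problem
\[
a(u_h, v_h) := \int_\Om \frac{\vr_h^{k-1}}{\Delta t}\,\widehat u_h\cdot\widehat v_h + \Grad_h u_h:\Grad_h v_h\,dx = \int_\Om \frac{\vr_h^{k-1}\,\widehat u_h^{k-1}}{\Delta t}\cdot v_h\,dx,
\]
for all $v_h \in V_h(\Om)$, where I have used that $\vr_h^{k-1}$ is piecewise constant to replace $v_h$ by $\widehat v_h$ in the mass term. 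Because the vanishing face-average boundary condition on $V_h(\Om)$ supports the discrete Poincar\'e inequality, $\|\Grad_h\cdot\|_{L^2(\Om)}$ is a norm on $V_h(\Om)$; together with the nonnegative mass term this makes $a$ symmetric and coercive, and the finite-dimensional Lax--Milgram argument delivers a unique solution $u_h^\star$. Consequently $H(\cdot,0)^{-1}(0) = \{z_0\}$ with $z_0 := (\vr_h^{k-1}, u_h^\star)$, an isolated interior zero.

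It remains to compute the Brouwer degree at $z_0$. Writing the Fr\'echet derivative $DH(\cdot,0)(z_0)$ in the canonical $(Q_h,V_h)$-basis decomposition, one finds a block lower-triangular matrix whose diagonal blocks are $(\Delta t)^{-1}\mathrm{Id}$ on the $Q_h$ factor and the stiffness matrix of the symmetric positive-definite form $a$ on the $V_h$ factor; both blocks have strictly positive determinants, so $\det DH(\cdot,0)(z_0)>0$ and $d_{S_h}(H(\cdot,0),\tilde S_h,0) = +1$. Homotopy invariance then gives $d_{S_h}(H(\cdot,1),\tilde S_h,0) = +1 \neq 0$, which is \eqref{eq:degstate}; property (4) of Lemma \ref{lem:degree} produces $(\vr_h,u_h) \in \tilde S_h$ with $H(\vr_h,u_h,1)=(0,0)$, i.e.~a solution of the scheme in Definition \ref{def:scheme}, and positivity $\vr_h>0$ follows from Lemma \ref{lemma:vrho-props}. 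The main obstacle here is coercivity of $a$ on the nonconforming Crouzeix--Raviart space: without the discrete Poincar\'e inequality one could not rule out spurious kernel directions coming from the element-averaged mass term $\widehat u_h\cdot \widehat v_h$, and without uniqueness at $\alpha=0$ the Jacobian computation would not give the sharp degree $+1$.
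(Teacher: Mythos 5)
Your proof follows the same route as the paper: reduce via homotopy invariance to the decoupled $\alpha=0$ map, solve $\vr_h=\vr_h^{k-1}$ explicitly, and solve the remaining linear problem for $u_h$ by positive-definiteness of $\int_\Om (\Delta t)^{-1}\vr_h^{k-1}\,\ov u_h\cdot \ov v_h + \Grad_h u_h:\Grad_h v_h\,dx$. The paper states somewhat loosely that proving \eqref{eq:degstate} is ``equivalent'' to exhibiting a solution at $\alpha=0$; you correctly supply the missing link — uniqueness of the zero in $\tilde S_h$ together with the block lower-triangular Jacobian $\begin{pmatrix}(\Delta t)^{-1}\mathrm{Id} & 0 \\ \ast & A\end{pmatrix}$ having positive determinant — which is exactly what makes the degree at $\alpha=0$ equal to $+1$ rather than merely showing $H(\cdot,0)^{-1}(0)\neq\emptyset$. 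This is the same argument, carried out more carefully.
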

\begin{proof}
First, we note that proving \eqref{eq:degstate} is equivalent 
to proving the existence of $(\vr_h, u_h) \in Q^+_h \times V_h$ satisfying, 
for all $(q_h, v_h) \in Q_h\times V_h$,
\begin{equation}\label{eq:final}
	\begin{split}
		&\int_\Om \vr_h q_h~dx = \int_\Om \vr^{k-1}_h q_h~dx, \\
		&\int_\Om \vr_h \ov u_h v_h~dx + \Delta t \int_\Om \Grad_h u_h \Grad_h v_h~dx = \int_\Om \vr_h^{k-1}\ov u_h^{k-1} v_h~dx. 
	\end{split}
\end{equation}
The first equation has the solution $\vr_h = \vr_h^{k-1}$. 
Setting this into the
second equation in \eqref{eq:final},  we see that the resulting linear system is 
a sum of a positive matrix $\vr^{k-1}_h \ov u_h v_h$
and a symmetric positive definite matrix $\Delta t\Grad_h u_h \Grad_h v_h$. 
Since the Laplace problem with the Crouzeix-Raviart element space and 
dirichlet conditions is well-defined, there is no problems with 
concluding the existence of $u_h$ satisfying the second 
equation in \eqref{eq:final}.

\end{proof}

\end{document}